\numberwithin{equation}{section}
\newtheorem{thm}{Theorem}[section]
\newtheorem{prop}[thm]{Proposition}
\newtheorem{lem}[thm]{Lemma}
\newtheorem{cor}[thm]{Corollary}
\theoremstyle{definition}
\newtheorem{defn}[thm]{Definition}
\theoremstyle{remark}
\newtheorem{rem}[thm]{Remark}
\newtheorem*{claim}{Claim}
\newcommand{\Z}{\mathbb{Z}}
\newcommand{\R}{\mathbb{R}}
\DeclareMathOperator{\Hom}{Hom}
\DeclareMathOperator{\Ext}{Ext}
\DeclareMathOperator{\End}{End}
\DeclareMathOperator{\Ker}{Ker}
\DeclareMathOperator{\Coker}{Cok}
\newcommand{\trivrep}{\mathbf{1}}
\newcommand{\aff}{\mathrm{aff}}
\newcommand{\St}{\mathrm{St}}
\DeclareMathOperator{\Stab}{Stab}
\newcommand{\id}{\mathrm{id}}
\newcommand*{\opposite}[1]{#1^{\mathrm{op}}}
\newcommand{\opp}{\mathrm{op}}
\DeclareMathOperator{\val}{val}
\DeclareMathOperator{\Aut}{Aut}
\newcommand{\Refs}[1]{\mathrm{Ref}(#1)}
\newcommand{\coroot}[1]{#1^\vee}
\newcommand{\red}{\mathrm{red}}
\title{Parabolic inductions for pro-$p$-Iwahori Hecke algebras}
\author{Noriyuki Abe}
\address{Department of Mathematics, Hokkaido University, Kita 10, Nishi 8, Kita-Ku, Sapporo, Hokkaido, 060-0810, Japan}
\email{abenori@math.sci.hokudai.ac.jp}
\subjclass[2010]{20C08, 20G25}
\begin{document}
\begin{abstract}
We give some properties of parabolic inductions and their adjoint functors for pro-$p$-Iwahori Hecke algebras.
\end{abstract}
\maketitle

\section{Introduction}
Let $F$ be a non-archimedean local field with residue characteristic $p$, $G$ a connected reductive group over $F$ and $C$ a commutative ring.
Fix a pro-$p$-Iwahori subgroup $I(1)$ of $G$ and let $\mathcal{H}$ be the space of $I(1)$-biinvariant $C$-valued functions with compact support.
Then via the convolution product, $\mathcal{H}$ has the structure of an algebra, called \emph{pro-$p$-Iwahori Hecke algebra}.

We are mainly interested in ``mod $p$ case'', namely when $C$ is an algebraically closed field of characteristic $p$.
In this case, this algebra has an important role to study modulo $p$ representations of $G$.
For example, in the proof of the classification theorem \cite{arXiv:1412.0737_accepted}, it has a crucial role.
Motivated by this, we study the representation theory of $\mathcal{H}$.

As usual in the theory of reductive groups, the parabolic induction is one of the most important tool in the study of $\mathcal{H}$-modules.
This functor is studied in \cite{MR2728487,arXiv:1406.1003_accepted,MR3437789}.
In particular, using parabolic induction, the simple $\mathcal{H}$-modules are classified in terms of supersingular modules~\cite[Theorem~1.1]{arXiv:1406.1003_accepted}. (Note that the supersingular modules are classified by Ollivier~\cite{MR3263136} and Vign\'eras~\cite{Vigneras-prop-III}.)
This is an analogous of the main theorem of \cite{arXiv:1412.0737_accepted}.

In this paper, we study more details of the parabolic induction.

\subsection{Results}
Let $P$ be a parabolic subgroup and $\mathcal{H}_P$ the pro-$p$-Iwahori Hceke algebra of its Levi part.
Then the parabolic induction $I_P$ is defined by
\[
I_P(\sigma) = \Hom_{(\mathcal{H}_P^-,j_P^{-*})}(\mathcal{H},\sigma)
\]
using ``negative subalgebra'' $\mathcal{H}_P^-$ of $\mathcal{H}_P$ and a certain algebra homomorphism $j_P^{-*}\colon \mathcal{H}_P^-\to \mathcal{H}$.
Replacing ``negative subalgebra'' with ``positive subalgebra'' $\mathcal{H}_P^{+}$, we also have another notion of ``inductions''.
Moreover, we also have other maps $j_P^{\pm}\colon \mathcal{H}_P^{\pm}\to \mathcal{H}$.
Therefore, we have four ``inductions''.
We compare these inductions and give relations.
It turns out that we only have two inductions $I_P$ and $I'_P = \Hom_{(\mathcal{H}_P^-,j_P^-)}(\mathcal{H},\cdot)$.
The other inductions are described by $I_P$ or $I'_P$ (Proposition~\ref{prop:comparison with two more inductions}).
As a corollary, we get  a new description of $I_P$ (Corollary~\ref{cor:parabolic induction via tensor product 2}).

The functor $I_P$ has the left and right adjoint functors which are denoted by $L_P$ and $R_P$, respectively.
We calculate the image of simple modules of $\mathcal{H}$ by these functors.
As proved in \cite{arXiv:1406.1003_accepted}, any simple modules are got in the following way: starting with a simple supersingular module, take the generalized Steinberg module and apply the parabolic induction.
So to calculate the image of simple modules of $\mathcal{H}$ by these functors, it is sufficient to calculate the images of parabolic induction, generalized Steinberg modules and supersingular modules.
We give such descriptions (Proposition~\ref{prop:parabolic induction and its adjoint}, Corollary~\ref{cor:left adjoint and parabolic induction}, Proposition~\ref{prop:left adj of St}, Proposition~\ref{prop:right adj of St}, Proposition~\ref{prop:vanishing of adjoint+supsersingular}).
In particular, the image of simple modules by these functors is zero or again irreducible (Theorem~\ref{thm:adjoint and simple modules}).
The same calculations for the representations of the group is given in \cite{AHHV2}.

\subsection{Applications}
The main application is the calculation of the extension groups between simple $\mathcal{H}$-modules.
Since the left adjoint functor $L_P$ is exact, we have $\Ext^i(\pi,I_P(\sigma))\simeq \Ext^i(L_P(\pi),\sigma)$ for $\mathcal{H}$-module $\pi$ and $\mathcal{H}_P$-module $\sigma$.
We also have a similar formula for the functor $R_P$.
Since any simple $\mathcal{H}$-module is given by the parabolic induction from the generalized Steinberg modules, these formulas and results in this paper enable us to deduce the calculation of the extension groups to that of the extension groups between generalized Steinberg modules.

For the extension groups between the generalized Steinberg modules, it turns out that we need to study the functor $I'_P$.
Especially, we need the relations between the functors $I_P$ and $I'_P$.
Such analysis and the calculation of extension groups will be appeared in sequel in which we use results in this paper.

\subsection{Organization of the paper}
In the next section we give notation and recall some results mainly from \cite{MR3484112}.
In Section~\ref{sec:A ftilration on parabolic inductions}, we define a certain filtration on $I_P(\sigma)$.
This filtration is analogous of the filtration of the parabolic induction of the group representation coming from the Bruhat cell.
Some properties of the filtration are proved in \cite{arXiv:1412.0737_accepted,AHHV2} in the group case and we give analogous of them.
These properties is used in Section~\ref{sec:Adjoint functors}.
The study of four ``inductions'' is done in Section~\ref{sec:Inductions}.
In Section~\ref{sec:Adjoint functors}, we calculate the image of parabolic induction, generalized Steinberg modules and supersingular modules by $L_P$ and $R_P$.

\subsection*{Acknowledgments}
I thank Guy Henniart and Marie-France Vign\'eras for helpful discussions.
Most of this work was done during my pleasant stay at Institut de mathématiques de Jussieu.
The work is supported by JSPS KAKENHI Grant Number 26707001.

\section{Preliminaries}\label{sec:Preliminaries}
\subsection{Pro-$p$-Iwahori Hecke algebra}\label{subsec:Prop-p-Iwahori Hecke algebra}
Let $\mathcal{H}$ be a pro-$p$-Iwahori Hecke algebra over a commutative ring $C$~\cite{MR3484112}.
We study modules over $\mathcal{H}$ in this paper.
\emph{In this paper, a module means a right module.}
The algebra $\mathcal{H}$ is defined with a combinatorial data $(W_\aff,S_\aff,\Omega,W,W(1),Z_\kappa)$ and a parameter $(q,c)$.

We recall the definitions.
The data satisfy the following.
\begin{itemize}
\item $(W_\aff,S_\aff)$ is a Coxeter system.
\item $\Omega$ acts on $(W_\aff,S_\aff)$.
\item $W = W_\aff\rtimes \Omega$.
\item $Z_\kappa$ is a finite commutative group.
\item The group $W(1)$ is an extension of $W$ by $Z_\kappa$, namely we have an exact sequence $1\to Z_\kappa\to W(1)\to W\to 1$.
\end{itemize}
The subgroup $Z_\kappa$ is normal in $W(1)$.
Hence the conjugate action of $w\in W(1)$ induces an automorphism of $Z_\kappa$, hence of the group ring $C[Z_\kappa]$.
We denote it by $c\mapsto w\cdot c$.

Let $\Refs{W_\aff}$ be the set of reflections in $W_\aff$ and $\Refs{W_\aff(1)}$ the inverse image of $\Refs{W_\aff}$ in $W(1)$.
The parameter $(q,c)$ is maps $q\colon S_\aff\to C$ and $c\colon \Refs{W_\aff(1)}\to C[Z_\kappa]$ with the following conditions. (Here the image of $s$ by $q$ (resp.\ $c$) is denoted by $q_s$ (resp.\ $c_s$).)
\begin{itemize}
\item For $w\in W$ and $s\in S_\aff$, if $wsw^{-1}\in S_\aff$ then $q_{wsw^{-1}} = q_s$.
\item For $w\in W(1)$ and $s\in \Refs{W_\aff(1)}$, $c_{wsw^{-1}} = w\cdot c_s$.
\item For $s\in \Refs{W_\aff(1)}$ and $t\in Z_\kappa$, we have $c_{ts} = tc_s$.
\end{itemize}
Let $S_\aff(1)$ be the inverse image of $S_\aff$ in $W(1)$.
For $s\in S_\aff(1)$, we write $q_s$ for $q_{\bar{s}}$ where $\bar{s}\in S_\aff$ is the image of $s$.
The length function on $W_\aff$ is denoted by $\ell$ and its inflation to $W$ and $W(1)$ is also denoted by $\ell$.

The $C$-algebra $\mathcal{H}$ is a free $C$-module and has a basis $\{T_w\}_{w\in W(1)}$.
The multiplication is given by
\begin{itemize}
\item (Quadratic relations) $T_{s}^2 = q_sT_{s^2} + c_sT_s$ for $s\in S_\aff(1)$.
\item (Braid relations) $T_{vw} = T_vT_w$ if $\ell(vw) = \ell(v) + \ell(w)$.
\end{itemize}
We extend $q\colon S_\aff\to C$ to $q\colon W\to C$ as follows.
For $w\in W$, take $s_1,\dots,s_l$ and $u\in\Omega$ such that $w = s_1\dotsm s_l u$.
Then put $q_w = q_{s_1}\dotsm q_{s_l}$.
From the definition, we have $q_{w^{-1}} = q_w$.
We also put $q_w = q_{\overline{w}}$ for $w\in W(1)$ with the image $\overline{w}$ in $W$.

\subsection{The data from a group}
Let $F$ be a non-archimedean local field, $\kappa$ its residue field, $p$ its residue characteristic and $G$ a connected reductive group over $F$.
We can get the data in the previous subsection from $G$ as follows.
See \cite{MR3484112}, especially 3.9 and 4.2 for the details.

Fix a maximal split torus $S$ and denote the centralizer of $S$ by $Z$.
Let $Z^0$ be the unique parahoric subgroup of $Z$ and $Z(1)$ its pro-$p$ radical.
Then the group $W(1)$ (resp.\ $W$) is defined by $W(1) = N_G(Z)/Z(1)$ ($W = N_G(Z)/Z^0$) where $N_G(Z)$ is the normalizer of $Z$ in $G$.
We also have $Z_\kappa = Z^0/Z(1)$.
Let $G'$ be the group generated by the unipotent radical of parabolic subgroups \cite[II.1]{arXiv:1412.0737_accepted} and $W_\aff$ the image of $G'\cap N_G(Z)$ in $W$.
Then this is a Coxeter group.
Fix a set of simple reflections $S_\aff$.
The group $W$ has the natural length function and let $\Omega$ be the set of length zero elements in $W$.
Then we get the data $(W_\aff,S_\aff,\Omega,W,W(1),Z_\kappa)$.

Consider the apartment attached to $S$ and an alcove surrounded by the hyperplanes fixed by $S_\aff$.
Let $I(1)$ be the pro-$p$-Iwahori subgroup attached to this alcove.
Then with $q_s = \#(I(1)\widetilde{s}I(1)/I(1))$ for $s\in S_\aff$ with a lift $\widetilde{s}\in N_G(Z)$ and suitable $c_s$, the algebra $\mathcal{H}$ is isomorphic to the Hecke algebra attached to $(G,I(1))$ \cite[Proposition~4.4]{MR3484112}.

In this paper, \emph{the data $(W_\aff,S_\aff,\Omega,W,W(1),Z_\kappa)$ and the parameters $(q,c)$ come from $G$ in this way.}
Let $W_\aff(1)$ be the image of $G'\cap N_G(Z)$ in $W(1)$ and put $\mathcal{H}_\aff = \bigoplus_{w\in W_\aff(1)}CT_w$.
This is a subalgebra of $\mathcal{H}$.

\begin{prop}[{\cite[Proposition~4.4]{MR3484112}}]\label{prop:expansion of c}
Let $s\in \Refs{W(1)}$.
Then we have $c_{s} = \sum_{t\in Z_\kappa}c_{s}(t)T_t$ for some $c_{s}(t)\in \Z$ such that $\sum_{t\in Z_\kappa}c_{s}(t) = q_s - 1$.
\end{prop}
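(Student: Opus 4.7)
The statement has two contents: that the expansion $c_s = \sum_{t \in Z_\kappa} c_s(t) T_t$ in the canonical basis $\{T_t\}_{t \in Z_\kappa}$ of $C[Z_\kappa]$ has coefficients lying in the image of $\Z$, and that $\sum_{t \in Z_\kappa} c_s(t) = q_s - 1$. The plan is to reduce both assertions to the classical quadratic relation of the Iwahori-Hecke algebra via a specialization argument.

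By the $W(1)$-equivariance $c_{wsw^{-1}} = w \cdot c_s$ and the compatibility $q_{wsw^{-1}} = q_s$, both parts of the conclusion are invariant under $W(1)$-conjugation of $s$; since every affine reflection is $W(1)$-conjugate to a simple one, one reduces to the case $s \in S_\aff(1)$. For the integrality of the $c_s(t)$, the plan is to invoke the standing hypothesis that the data and parameters come from $G$: then $\mathcal{H}$ is realized as the convolution algebra of $I(1)$-biinvariant compactly supported $C$-valued functions on $G$, and $c_s$ arises from expanding $\mathbf{1}_{I(1)\widetilde{s}I(1)} \ast \mathbf{1}_{I(1)\widetilde{s}I(1)}$ along the $I(1)$-double coset basis. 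These expansion coefficients count certain $I(1)$-cosets and hence are non-negative integers, pulled back from the universal $\Z$-version of $\mathcal{H}$ and independent of $C$.

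For the sum, I would consider the natural surjection $\pi\colon \mathcal{H} \to \mathcal{H}^{\mathrm{Iw}}$ onto the Iwahori-Hecke algebra attached to $(G,I)$ with $I \supset I(1)$ the Iwahori subgroup; on the basis it is given by $T_w \mapsto T_{\overline{w}}$, where $\overline{w}$ denotes the image of $w \in W(1)$ in $W$. In particular $\pi(T_t) = 1$ for every $t \in Z_\kappa$. Since $\overline{s}^2 = 1$ in $W_\aff$, the element $s^2$ lies in $Z_\kappa$, so applying $\pi$ to the quadratic relation $T_s^2 = q_s T_{s^2} + c_s T_s$ yields
\[
T_{\overline{s}}^2 = q_s + \Bigl(\sum_{t \in Z_\kappa} c_s(t)\Bigr) T_{\overline{s}}.
\]
Comparison with the standard Iwahori-Hecke quadratic relation $T_{\overline{s}}^2 = q_s + (q_s - 1) T_{\overline{s}}$ then gives $\sum_{t \in Z_\kappa} c_s(t) = q_s - 1$.

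The main obstacle is the integrality of the $c_s(t)$: once it is known that $c_s$ is an integer combination of the $T_t$, the sum formula is essentially a one-line consequence of the Iwahori specialization. Both steps ultimately rely on the explicit group-theoretic construction of $\mathcal{H}$ recalled from \cite{MR3484112}.
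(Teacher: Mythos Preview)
The paper does not give its own proof of this proposition: it is quoted verbatim from \cite[Proposition~4.4]{MR3484112} and used as a black box, so there is nothing in the paper to compare your argument against. Your plan is essentially the standard route taken in the cited reference: the integrality of the $c_s(t)$ comes from the realisation of $\mathcal{H}$ as a convolution algebra over $\Z$ (the structure constants in the Iwahori--Matsumoto basis count $I(1)$-cosets), and the identity $\sum_t c_s(t) = q_s - 1$ drops out of the specialisation $\mathcal{H}\twoheadrightarrow \mathcal{H}^{\mathrm{Iw}}$ sending $T_t\mapsto 1$ for $t\in Z_\kappa$, which collapses the pro-$p$ quadratic relation to the classical one.

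One point to be careful with in your reduction step: you write that $q_{wsw^{-1}}=q_s$ for arbitrary $w\in W(1)$, but with the extension of $q$ to all of $W$ defined in the paper (via reduced expressions) this is false for non-simple reflections, since $q_w$ records the length. What is true, and what the statement implicitly uses, is that any lift $\widetilde{s}\in\Refs{W_\aff(1)}$ is $W(1)$-conjugate to some $\widetilde{s}'\in S_\aff(1)$, the conjugation permutes the coefficients $c_s(t)$ and hence preserves their sum, and the symbol $q_s$ in the proposition denotes $q_{\bar s'}$ for the underlying simple reflection $\bar s'\in S_\aff$ (well-defined by the first bullet in the axioms for $q$). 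With that reading your reduction goes through; alternatively one can simply prove the statement directly for $s\in S_\aff(1)$, which is the only case ever invoked later in the paper.
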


\subsection{Assumptions on $C$}
In this paper, $C$ is assumed to be a commutative ring.
Sometimes we add the following assumptions.
\begin{itemize}
\item $p = 0$ in $C$.
\item $C$ is an algebraically closed field of characteristic $p$
\end{itemize}
We declare this assumption at the top of the subsection or the statement of a theorem/proposition/lemma etc.
Otherwise we do not assume anything on $C$.

\subsection{The algebra $\mathcal{H}[q_s]$ and $\mathcal{H}[q_s^{\pm 1}]$}\label{subsec:The algebra H[q_s] and H[q_s^pm]}
For each $s\in S_\aff$, let $\mathbf{q}_s$ be an indeterminate such that if $wsw^{-1}\in S_\aff$ for $w\in W$, we have $\mathbf{q}_{wsw^{-1}} = \mathbf{q}_s$.
Let $C[\mathbf{q}_s]$ be a polynomial ring with these indeterminate.
Then with the parameter $s\mapsto \mathbf{q}_s$ and the other data coming from $G$, we have the algebra.
This algebra is denoted by $\mathcal{H}[\mathbf{q}_s]$ and we put $\mathcal{H}[\mathbf{q}_s^{\pm 1}] = \mathcal{H}[\mathbf{q}_s]\otimes_{C[\mathbf{q}_s]}C[\mathbf{q}_s^{\pm 1}]$.
Under $\mathbf{q}_s\mapsto \#(I(1)\widetilde{s}I(1)/I(1))\in C$ where $\widetilde{s}\in N_G(Z)$ is a lift of $s$, we have $\mathcal{H}[\mathbf{q}_s]\otimes_{C[\mathbf{q}_s]}C \simeq \mathcal{H}$.
As an abbreviation, we denote $\mathbf{q}_s$ by just $q_s$.
Consequently we denote by $\mathcal{H}[q_s]$ (resp.\ $\mathcal{H}[q_s^{\pm 1}]$).

Since $q_s$ is invertible in $\mathcal{H}[q_s^{\pm 1}]$, we can do some calculations in $\mathcal{H}[q_s^{\pm 1}]$ with $q_s^{-1}$.
If the result can be stated in $\mathcal{H}[q_s]$, then this is an equality in $\mathcal{H}[q_s]$ since $\mathcal{H}[q_s]$ is a subalgebra of $\mathcal{H}[q_s^{\pm 1}]$ and by specializing, we can get some equality in $\mathcal{H}$.
See \cite[4.5]{MR3484112} for more details.

\subsection{The root system and the Weyl groups}
Let $W_0 = N_G(Z)/Z$ be the finite Weyl group.
Then this is a quotient of $W$.
Recall that we have the alcove defining $I(1)$.
Fix a special point $\boldsymbol{x}_0$ from the border of this alcove.
Then $W_0\simeq \Stab_W\boldsymbol{x}_0$ and the inclusion $\Stab_W\boldsymbol{x}_0\hookrightarrow W$ is a splitting of the canonical projection $W\to W_0$.
Throughout this paper, we fix this special point and regard $W_0$ as a subgroup of $W$.
Set $S_0 = S_\aff\cap W_0\subset W$.
This is a set of simple reflections in $W_0$.
For each $w\in W_0$, we fix a representative $n_w\in W(1)$ such that $n_{w_1w_2} = n_{w_1}n_{w_2}$ if $\ell(w_1w_2) = \ell(w_1) + \ell(w_2)$.

The group $W_0$ is the Weyl group of the root system $\Sigma$ attached to $(G,S)$.
Our fixed alcove and special point give a positive system of $\Sigma$, denoted by $\Sigma^+$.
The set of simple roots is denoted by $\Delta$.
As usual, for $\alpha\in\Delta$, let $s_\alpha\in S_0$ be a simple reflection for $\alpha$.

The kernel of $W(1)\to W_0$ (resp.\ $W\to W_0$) is denoted by $\Lambda(1)$ (resp.\ $\Lambda$).
Then $Z_\kappa\subset \Lambda(1)$ and we have $\Lambda = \Lambda(1)/Z_\kappa$.
The group $\Lambda$ (resp.\ $\Lambda(1)$) is isomorphic to $Z/Z^0$ (resp.\ $Z/Z(1)$).
Any element in $W(1)$ can be uniquely written as $n_w \lambda$ where $w\in W_0$ and $\lambda\in \Lambda(1)$.
We have $W = W_0\ltimes \Lambda$.

\subsection{The map $\nu$}
The group $W$ acts on the apartment attached to $S$ and the action of $\Lambda$ is by the translation.
Since the group of translations of the apartment is $X_*(S)\otimes_{\Z}\R$, we have a group homomorphism $\nu\colon \Lambda\to X_*(S)\otimes_{\Z}\R$.
The compositions $\Lambda(1)\to \Lambda\to X_*(S)\otimes_{\Z}\R$ and $Z\to \Lambda\to X_*(S)\otimes_{\Z}\R$ are also denoted by $\nu$.
The homomorphism $\nu\colon Z\to X_*(S)\otimes_{\Z}\R\simeq \Hom_\Z(X^*(S),\R)$ is characterized by the following: For $t\in S$ and $\chi\in X^*(S)$, we have $\nu(t)(\chi) = -\val(\chi(t))$ where $\val$ is the normalized valuation of $F$.
The kernel of $\nu\colon Z\to X_*(S)\otimes_{\Z}\R$ is equal to the maximal compact subgroup $\widetilde{Z}$ of $Z$.
In particular, $\Ker(\Lambda(1)\xrightarrow{\nu} X_*(S)\otimes_{\Z}\R) = \widetilde{Z}/Z(1)$ is a finite group.

We call $\lambda\in \Lambda(1)$ dominant (resp.\ anti-dominant) if $\nu(\lambda)$ is dominant (resp.\ anti-dominant).

Since the group $W_\aff$ is a Coxeter system, it has the Bruhat order denoted by $\le$.
For $w_1,w_2\in W_\aff$, we write $w_1 < w_2$ if there exists $u\in \Omega$ such that $w_1u,w_2u\in W_\aff$ and $w_1u < w_2u$.
Moreover, for $w_1,w_2\in W(1)$, we write $w_1 < w_2$ if $w_1\in W_{\aff}(1)w_2$ and $\overline{w}_1 < \overline{w}_2$ where $\overline{w}_1,\overline{w}_2$ are the image of $w_1,w_2$ in $W$, respectively.
We write $w_1\le w_2$ if $w_1 < w_2$ or $w_1 = w_2$.

\subsection{Other basis}
For $w\in W(1)$, take $s_1,\dotsm,s_l\in S_\aff(1)$ and $u\in W(1)$ such that $l = \ell(w)$, $\ell(u) = 0$ and $w = s_1\dotsm s_l u$.
Set $T_w^* = (T_{s_1} - c_{s_1})\dotsm (T_{s_l} - c_{s_l})T_u$.
Then this does not depend on the choice and we have $T_w^* \in T_w + \sum_{v <  w}C T_v$.
In particular, $\{T_w^*\}_{w\in W(1)}$ is a basis of $\mathcal{H}$.
We also have the following other definition of $T_w^*$.
In $\mathcal{H}[q_s^{\pm 1}]$, $T_w$ is invertible and $q_wT_{w^{-1}}^{-1}\in \mathcal{H}[q_s]$.
The element $T_w^*$ is the image of $q_wT_{w^{-1}}^{-1}$ in $\mathcal{H}$.

For simplicity, we always assume that our commutative ring $C$ contains a square root of $q_s$ which is denoted by $q_s^{1/2}$ for $s\in S_\aff$.
For $w = s_1\dotsm s_lu$ where $\ell(w) = l$ and $\ell(u) = 0$, $q_w^{1/2} = q_{s_1}^{1/2}\dotsm q_{s_l}^{1/2}$ is a square root of $q_w$.
For a spherical orientation $o$, there is a basis $\{E_o(w)\}_{w\in W(1)}$ of $\mathcal{H}$ introduced in \cite[5]{MR3484112}.
We have
\[
E_o(w)\in T_w + \sum_{v < w}CT_v.
\]
This satisfies the following product formula~\cite[Theorem~5.25]{MR3484112}.
\begin{equation}\label{eq:product formula}
E_o(w_1)E_{o\cdot w_1}(w_2) = q_{w_1w_2}^{-1/2}q_{w_1}^{1/2}q_{w_2}^{1/2}E_o(w_1w_2).
\end{equation}
\begin{rem}
Since we do not assume that $q_s$ is invertible in $C$, $q_{w_1w_2}^{-1/2}q_{w_1}^{1/2}q_{w_2}^{1/2}$ does not make sense in a usual way.
The meaning of this term is the following.
Consider the indeterminate $\mathbf{q}_s$ as in subsection~\ref{subsec:The algebra H[q_s] and H[q_s^pm]}.
We also add $\mathbf{q}_s^{\pm 1/2}$.
Then we have $\mathbf{q}_{w_1w_2}^{-1/2}\mathbf{q}_{w_1}^{1/2}\mathbf{q}_{w_2}^{1/2}$.
Then this belongs to $C[\mathbf{q}_s]$~\cite[Lemma~4.19]{MR3484112}.
Hence by specializing $\mathbf{q}_s\mapsto q_s$, we get $q_{w_1w_2}^{-1/2}q_{w_1}^{1/2}q_{w_2}^{1/2}\in C$.
Such calculations will be used several times in this paper implicitly.
Note that since $\mathbf{q}_{w_1w_2}^{-1/2}\mathbf{q}_{w_1}^{1/2}\mathbf{q}_{w_2}^{1/2}$ is in $C[\mathbf{q}_s]$, rather than $C[\mathbf{q}_s^{1/2}]$, the factor $q_{w_1w_2}^{-1/2}q_{w_1}^{1/2}q_{w_2}^{1/2}$ does not depend on a choice of the square roots.
\end{rem}
Applying \eqref{eq:product formula} to $w_1 = w^{-1}$ and $w_2 = w$, if $q_w$ is invertible then $E_{o\cdot w}(w)$ is invertible and we have
\begin{equation}\label{eq:inverse of E_o}
E_o(w^{-1}) = q_wE_{o\cdot w}(w)^{-1}.
\end{equation}

There is a one-to-one correspondence between spherical orientations and closed Weyl chambers.
Let $o$ be a spherical orientation and $\mathcal{D}$ the corresponding closed Weyl chamber.
Then for $\lambda\in \Lambda(1)$, we have \cite[Example~5.30]{MR3484112}
\begin{equation}\label{eq:E_o for lambda}
E_o(\lambda) =
\begin{cases}
T_\lambda & (\nu(\lambda)\in \mathcal{D}),\\
T_\lambda^* & (-\nu(\lambda)\in \mathcal{D}).
\end{cases}
\end{equation}
Let $o_+$ (resp.\ $o_-$) be the dominant (resp.\ anti-dominant) orientation.
The orientation $o_+$ (resp.\ $o_-$) corresponds to the dominant (resp.\ anti-dominant) chamber.
Then for $s\in S_0$ and $v\in W_0$ \cite[Example~5.33]{MR3484112},
\begin{equation}\label{eq:E_o for W_0}
E_{o_+\cdot v}(n_s) =
\begin{cases}
T_{n_s} & (vs < v),\\
T_{n_s}^* & (vs > v),
\end{cases}
\quad
E_{o_-\cdot v}(n_s) =
\begin{cases}
T_{n_s} & (vs > v),\\
T_{n_s}^* & (vs < v).
\end{cases}
\end{equation}

By the definition of spherical orientations, $o\cdot \lambda = o$ for any spherical orientation $o$ and $\lambda\in \Lambda(1)$.
Hence by \eqref{eq:product formula}, the subspace $\bigoplus_{\lambda\in \Lambda(1)}C E_o(\lambda)$ is a subalgebra of $\mathcal{H}$.
We denote this subalgebra by $\mathcal{A}_o$.

Finally we introduce one more basis defined by
\[
E_-(n_w \lambda) = q_{n_w \lambda}^{1/2}q_{n_w}^{-1/2}q_\lambda^{-1/2}T_{n_w}^* E_{o_-}(\lambda)
\]
for $w\in W_0$ and $\lambda\in\Lambda(1)$.
By \cite[Lemma~4.2]{arXiv:1406.1003_accepted}, $\{E_-(w)\mid w\in W(1)\}$ is a $C$-basis of $\mathcal{H}$.

\subsection{Levi subalgebra}
Since we have a positive system $\Sigma^+$, we have a minimal parabolic subgroup $B$ with a Levi part $Z$.
In this paper, \emph{parabolic subgroups are always standard, namely containing $B$}.
Note that such parabolic subgroups correspond to subsets of $\Delta$.

Let $P$ be a parabolic subgroup.
Attached to the Levi part of $P$ containing $Z$, we have a data $(W_{\aff,P},S_{\aff,P},\Omega_P,W_P,W_P(1),Z_\kappa)$ and parameters $(q_P,c_P)$.
Hence we have the algebra $\mathcal{H}$.
The parameter $c_P$ is given by the restriction of $c$, hence we denote it just by $c$.
The parameter $q_P$ is defined as in \cite[4.1]{arXiv:1406.1003_accepted}.

For the objects attached to this data, we add a suffix $P$.
We have the set of simple roots $\Delta_P$, the root system $\Sigma_P$ and its positive system $\Sigma_P^+$, the finite Weyl group $W_{0,P}$, the set of simple reflections $S_{0,P}\subset W_{0,P}$, the length function $\ell_P$ and the base $\{T^P_w\}_{w\in W_P(1)}$, $\{T^{P*}_w\}_{w\in W_P(1)}$ and $\{E^P_o(w)\}_{w\in W_P(1)}$ of $\mathcal{H}_P$.
Note that we have no $\Lambda_P$, $\Lambda_P(1)$ and $Z_{\kappa,P}$ since they are equal to $\Lambda$, $\Lambda(1)$ and $Z_\kappa$.
We have the following lemma.
This follows from \cite[3.8, 4.2]{MR3484112}.
See also \cite[Remark~4.6]{inverse-Satake}.

\begin{lem}\label{lem:on c, about aff,Levi}
Let $s\in S_\aff$, $\widetilde{s}\in S_\aff(1)$ its lift, $\alpha\in\Sigma$ such that the image of $s$ in $W_0$ is $s_\alpha$ and $P$ a parabolic subgroup such that $\alpha\in\Sigma_P$.
\begin{enumerate}
\item We can take $\widetilde{s}$ from $W_{\aff,P}(1)$.
\item If $\widetilde{s}\in W_{\aff,P}(1)$, then $c_{\widetilde{s}}\in C[Z_\kappa\cap W_{\aff,P}(1)]$.
\end{enumerate}
\end{lem}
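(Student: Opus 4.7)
The plan is to treat the two parts separately, leveraging the fact that the data $(W_{\aff,P},S_{\aff,P},\Omega_P,W_P,W_P(1),Z_\kappa)$ together with $(q_P,c_P)$ comes from the Levi subgroup of $P$ via exactly the construction of subsection~\ref{subsec:Prop-p-Iwahori Hecke algebra} and the following one, applied to the Levi in place of $G$.

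For part (1), the first step is to show that the reflection $s$, viewed as an element of $W_\aff$, already lies in $W_{\aff,P}$. Since the image of $s$ in $W_0$ is $s_\alpha$, the affine reflection $s$ fixes a hyperplane of the form $\{x : \alpha(x) = n\}$ in the apartment of $S$ for some integer $n$. Because $\alpha\in \Sigma_P$, this is an affine root hyperplane of the Levi of $P$, so the reflection in it is an element of $W_{\aff,P}$. The second step is to produce the desired lift: surjectivity of $W_{\aff,P}(1)\to W_{\aff,P}$ gives a lift $\widetilde{s}\in W_{\aff,P}(1)$, and since any two lifts of $s$ in $W(1)$ differ by an element of $Z_\kappa\subset W_{\aff,P}(1)$, such a lift automatically belongs to $S_\aff(1)$.

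For part (2), the strategy is to use the group-theoretic construction of $c_{\widetilde{s}}$ recalled in \cite[4.2]{MR3484112}. At the group level, $c_{\widetilde{s}}$ is extracted from the quadratic relation for $T_{\widetilde{s}}$, which in turn comes from a coset-decomposition computation inside the double cosets $I(1)\widetilde{s}I(1)$. This computation is purely $\alpha$-local: the only objects that enter are the $\pm\alpha$-affine root subgroups, the pro-$p$-Iwahori filtration on them, and the elements of $Z_\kappa$ that appear as products of elements of these root subgroups. Because $\alpha\in\Sigma_P$, both root subgroups $U_{\pm\alpha}$ lie in the Levi of $P$, and every element of $Z_\kappa$ that enters the calculation comes from a product of elements of $U_{\pm\alpha}$, hence lies in $Z_\kappa\cap W_{\aff,P}(1)$. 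This is exactly what is needed to conclude $c_{\widetilde{s}}\in C[Z_\kappa\cap W_{\aff,P}(1)]$.

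The main obstacle is part (2): one has to verify carefully that no element of $Z_\kappa$ outside $W_{\aff,P}(1)$ can appear in the expansion of $c_{\widetilde{s}}$. Once the $\alpha$-locality of the coset computation in \cite[3.8, 4.2]{MR3484112} is granted, the conclusion becomes immediate from $\alpha\in\Sigma_P$, since all the relevant group elements then live inside the Levi. Part (1) is comparatively straightforward.
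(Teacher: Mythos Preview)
Your approach matches the paper's, which does not give a detailed argument but simply refers to \cite[3.8, 4.2]{MR3484112} and \cite[Remark~4.6]{inverse-Satake}; what you have written is a reasonable unpacking of those references. The core ideas are right: for (1), the reflection $s$ lies in $W_{\aff,P}$ because its fixed hyperplane is orthogonal to (the element of $\Sigma'$ proportional to) $\alpha\in\Sigma_P$, and any preimage in $W(1)$ of an element of $S_\aff$ lies in $S_\aff(1)$ by definition; for (2), the group-theoretic recipe for $c_{\widetilde{s}}$ in \cite[4.2]{MR3484112} involves only elements built from the root subgroups $U_{\pm\alpha}$, and these lie in the Levi once $\alpha\in\Sigma_P$.

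There is one slip in your write-up of (1): the inclusion $Z_\kappa\subset W_{\aff,P}(1)$ is false in general --- indeed, if it held, part (2) would be vacuous since $c_{\widetilde{s}}\in C[Z_\kappa]$ always. Fortunately you do not need that inclusion. Having produced a lift $\widetilde{s}\in W_{\aff,P}(1)\subset W(1)$ of $s\in S_\aff$, membership in $S_\aff(1)$ is automatic from the definition of $S_\aff(1)$ as the inverse image of $S_\aff$ in $W(1)$. The clause about lifts differing by elements of $Z_\kappa$ should simply be deleted.
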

In particular, for $s = s_\alpha\in S_0$ where $\alpha\in\Delta$, we can take (and do) $n_s\in W_{\aff,P_\alpha}(1)$ where $P_\alpha$ is a parabolic subgroup such that $\Delta_{P_\alpha} = \{\alpha\}$.

An element $w = n_v\lambda\in W_P(1)$ where $v\in W_{0,P}$ and $\lambda\in\Lambda(1)$ is called $P$-positive (resp.\ $P$-negative) if $\langle \alpha,\nu(\lambda)\rangle\le 0$ (resp.\ $\langle \alpha,\nu(\lambda)\rangle\ge 0$) for any $\alpha\in\Sigma^+\setminus\Sigma_P^+$.
Let $W_P^+(1)$ (resp.\ $W_P^-(1)$) be the set of $P$-positive (resp.\ $P$-negative) elements and put $\mathcal{H}_P^{\pm} = \bigoplus_{w\in W_P^{\pm}(1)}CT_w^P$.
These are subalgebras of $\mathcal{H}_P$ \cite[Lemma~4.1]{arXiv:1406.1003_accepted}.
In general, for a group $\Gamma$, we denote its center by $Z(\Gamma)$.
\begin{lem}
There exists $\lambda_P^+$ (resp.\ $\lambda_P^-$) in the center of $W_P(1)$ such that $\langle \alpha,\nu(\lambda_P^+)\rangle < 0$ (resp.\ $\langle \alpha,\nu(\lambda_P^-)\rangle > 0$) for all $\alpha\in\Sigma^+\setminus\Sigma_P^+$.
\end{lem}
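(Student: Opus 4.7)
The plan is to realize $\lambda_P^\pm$ as the class in $\Lambda(1) = Z/Z(1)$ of an appropriate cocharacter of $S$ evaluated at a uniformizer. The two requirements—centrality in $W_P(1)$ and $\nu(\lambda_P^\pm)$ lying on the correct side of every wall $\langle\alpha,\cdot\rangle=0$ for $\alpha\in\Sigma^+\setminus\Sigma_P^+$—will come respectively from the fact that $S$ is central in its centralizer $Z$ and from a careful choice of cocharacter in the $W_{0,P}$-fixed subspace of $X_*(S)\otimes\R$.

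First I would produce $\mu\in X_*(S)$ which is $W_{0,P}$-invariant and satisfies $\langle\alpha,\mu\rangle>0$ for every $\alpha\in\Sigma^+\setminus\Sigma_P^+$. Over $\Q$, the sum $\sum_{\alpha\in\Delta\setminus\Delta_P}\omega_\alpha^\vee$ of fundamental coweights works: each $\omega_\alpha^\vee$ with $\alpha\notin\Delta_P$ is fixed by every $s_\beta$ with $\beta\in\Delta_P$ since $\langle\beta,\omega_\alpha^\vee\rangle=\delta_{\alpha\beta}=0$, and any $\gamma\in\Sigma^+\setminus\Sigma_P^+$ necessarily expands with a strictly positive coefficient at some simple root outside $\Delta_P$. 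Clearing denominators gives an integral $\mu$ with the same two properties. Now fix a uniformizer $\varpi$ of $F$ and let $\lambda_P^+$ (resp.\ $\lambda_P^-$) denote the image in $\Lambda(1)=Z/Z(1)$ of $z:=\mu(\varpi)$ (resp.\ $z^{-1}$). Using $\chi(\mu(\varpi^{\pm 1}))=\varpi^{\pm\langle\chi,\mu\rangle}$ together with the characterization $\nu(t)(\chi)=-\val(\chi(t))$, one reads off $\nu(\lambda_P^\pm)=\mp\mu$, which pairs with every $\alpha\in\Sigma^+\setminus\Sigma_P^+$ with the required sign.

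It remains to verify centrality. Since $S$ sits in the center of $Z=Z_G(S)$ by definition, $z$ commutes with every element of $Z$, so its image is central in $\Lambda(1)=Z/Z(1)$. For $w\in W_{0,P}$ and a lift $\widetilde{w}\in N_G(S)$, functoriality of cocharacters gives $\widetilde{w}z\widetilde{w}^{-1}=(w\cdot\mu)(\varpi)=z$ by $W_{0,P}$-invariance of $\mu$; hence $\lambda_P^\pm$ commutes with every $n_w$ for $w\in W_{0,P}$. Since $W_P(1)$ is generated by $\Lambda(1)$ together with such lifts $n_w$, these two commutation properties place $\lambda_P^\pm$ in $Z(W_P(1))$. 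The only mild technicality is integrality of $\mu$, which is dealt with by scaling without disturbing the open-cone condition $\langle\alpha,\mu\rangle>0$; the rest is a matter of tracking signs through the definition of $\nu$.
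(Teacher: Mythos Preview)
Your argument is correct and takes a more direct route than the paper's. The paper begins with some $\lambda_0\in Z(\Lambda(1))$ whose $\nu$-value has stabilizer $W_{0,P}$ and then, because $n_w\lambda_0 n_w^{-1}\lambda_0^{-1}$ only lies in the finite group $\Ker\nu$, uses a pigeonhole argument to find a power $\lambda_0^k$ genuinely fixed by every $n_w$ with $w\in W_{0,P}$. You bypass this step entirely: by realizing the element as the image of $\mu(\varpi)$ for a $W_{0,P}$-invariant cocharacter $\mu\in X_*(S)$, conjugation by any lift of $w\in W_{0,P}$ in $N_G(S)$ acts through the Weyl action on $X_*(S)$ and hence fixes $z$ on the nose. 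Your construction is thus more explicit and avoids the passage to a power; the paper's argument is more intrinsic to the data $(W(1),\nu)$ and would apply to any $\lambda_0$ with the right $\nu$-image regardless of whether it visibly comes from $S$. Both rely on the same final observation that $W_P(1)$ is generated by $\Lambda(1)$ and the $n_w$ for $w\in W_{0,P}$.
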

\begin{proof}
We prove the existence of $\lambda_P^+$.
Take $\lambda_0\in Z(\Lambda(1))$ such that the stabilizers of $\nu(\lambda_0)$ in $W_0$ is $W_{0,P}$ and $\langle \alpha,\nu(\lambda_0)\rangle < 0$ for any $\alpha\in\Sigma^+\setminus\Sigma^+_P$.
Fix $w\in W_{0,P}$.
Then for each $k\in\Z_{\ge 0}$, $\nu(n_w\lambda_0^kn_w^{-1}\lambda_0^{-k}) = 0$.
Namely $n_w\lambda_0^kn_w^{-1}\lambda_0^{-k}\in \Ker\nu$.
Since $\Ker\nu$ is finite, there exists $k_1 > k_2$ such that $n_w\lambda_0^{k_1}n_w^{-1}\lambda_0^{-{k_1}} = n_w\lambda_0^{k_2}n_w^{-1}\lambda_0^{-{k_2}}$.
Hence $n_w\lambda_0^{k_1 - k_2}n_w^{-1}\lambda_0^{-(k_1 - k_2)} = 1$.
Therefore, for each $w\in W_{0,P}$, there exists $k_w\in\Z_{>0}$ such that $n_w\lambda_0^{k_w}n_w^{-1}\lambda_0^{-k_w} = 1$.
Hence $n_w\cdot \lambda_0^{k_w} = \lambda_0^{k_w}$.
Set $k = \prod_{w\in W_{0,P}}k_w$ and $\lambda = \lambda_0^k$.
Then for $w\in W_{0,P}$, we have $n_w\cdot \lambda = \lambda$.
Since we took $\lambda_0$ from the center of $\Lambda(1)$, $\lambda$ commutes with the elements in $\Lambda(1)$.
The group $W_P(1)$ is generated by $\{n_w\mid w\in W_{0,P}\}$ and $\Lambda(1)$.
Hence $\lambda\in Z(W_P(1))$.
For $\alpha\in\Sigma^+\setminus\Sigma^+_P$, we have $\langle \alpha,\nu(\lambda)\rangle = k\langle \alpha,\nu(\lambda_0)\rangle < 0$.
Therefore $\lambda_P^+ = \lambda$ satisfies the condition of the lemma.
The element $\lambda_P^- = (\lambda_P^+)^{-1}$ satisfies the condition of the lemma.
\end{proof}

\begin{prop}[{\cite[Theorem~1.4]{MR3437789}}]\label{prop:localization as Levi subalgebra}
Let $\lambda_P^+$ (resp.\ $\lambda_P^-$) be in the center of $W_P(1)$ such that $\langle \alpha,\nu(\lambda_P^+)\rangle < 0$ (resp.\ $\langle \alpha,\nu(\lambda_P^-)\rangle > 0$) for all $\alpha\in\Sigma^+\setminus\Sigma_P^+$.
Then $T^P_{\lambda_P^+} = T^{P*}_{\lambda_P^+} = E^P_{o_{-,P}}(\lambda_P^+)$ (resp.\ $T^P_{\lambda_P^-} = T^{P*}_{\lambda_P^-} = E^P_{o_{-,P}}(\lambda_P^-)$) is in the center of $\mathcal{H}_P$ and we have $\mathcal{H}_P = \mathcal{H}_P^+E^P_{o_{-,P}}(\lambda_P^+)^{-1}$ (resp.\ $\mathcal{H}_P = \mathcal{H}_P^-E^P_{o_{-,P}}(\lambda_P^-)^{-1}$).
\end{prop}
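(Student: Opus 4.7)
The plan is to establish the three assertions in order (basis-element equalities, centrality, and the localization isomorphism), focusing on $\lambda_P^+$ since the $\lambda_P^-$ case is symmetric. The first step is immediate from \eqref{eq:E_o for lambda}: because $\lambda_P^+$ is central in $W_P(1)$, its image $\nu(\lambda_P^+)$ is fixed by $W_{0,P}$, so both $\nu(\lambda_P^+)$ and $-\nu(\lambda_P^+)$ lie in the closed antidominant chamber of $W_P$ (the chamber associated to $o_{-,P}$). Hence the two cases of \eqref{eq:E_o for lambda}, applied inside $\mathcal{H}_P$, simultaneously give $T^P_{\lambda_P^+} = E^P_{o_{-,P}}(\lambda_P^+) = T^{P*}_{\lambda_P^+}$.

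For centrality, I would check commutation with a generating set of $\mathcal{H}_P$. Since $o_{-,P}\cdot \mu = o_{-,P}$ for every $\mu\in\Lambda(1)$, the product formula \eqref{eq:product formula} shows that the subalgebra $\bigoplus_{\mu\in\Lambda(1)} C\,E^P_{o_{-,P}}(\mu)$ of $\mathcal{H}_P$ is commutative, and this handles commutation with every $E^P_{o_{-,P}}(\mu)$. What remains is commutation with $T^P_{n_s}$ for $s\in S_{0,P}$. Here I would use the Bernstein-type identity of \cite{MR3484112} that expresses $T^P_{n_s}\,E^P_{o_{-,P}}(\mu) - E^P_{o_{-,P}}(n_s\mu n_s^{-1})\,T^P_{n_s}$ as an explicit combination of $c$-elements; specializing to $\mu=\lambda_P^+$ uses $n_s\lambda_P^+ n_s^{-1} = \lambda_P^+$ to turn the left-hand side into the commutator $[T^P_{n_s},E^P_{o_{-,P}}(\lambda_P^+)]$, while the right-hand side is shown to vanish because the $c$-coefficients come from affine reflections that are trivialized by the $W_{0,P}$-invariance of $\nu(\lambda_P^+)$. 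This vanishing is the technical heart of the argument.

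For the localization $\mathcal{H}_P = \mathcal{H}_P^+\, E^P_{o_{-,P}}(\lambda_P^+)^{-1}$, I would prove that every $T^P_w$ with $w\in W_P(1)$ lies in $\mathcal{H}_P^+\, E^P_{o_{-,P}}(\lambda_P^+)^{-N}$ for large $N$. Writing $w = n_v\mu$, centrality of $\lambda_P^+$ gives $w(\lambda_P^+)^N = (\lambda_P^+)^N w$, and the strict inequality $\langle\alpha,\nu(\lambda_P^+)\rangle < 0$ on $\Sigma^+\setminus\Sigma_P^+$ forces $w(\lambda_P^+)^N\in W_P^+(1)$ once $N$ is sufficiently large. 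A length-additivity check, which is straightforward because $\lambda_P^+$ lies in the $W_{0,P}$-fixed part of $\Lambda(1)$, yields $T^P_{w(\lambda_P^+)^N} = T^P_w\,(T^P_{\lambda_P^+})^N$. Rearranging, $T^P_w = T^P_{w(\lambda_P^+)^N}\cdot E^P_{o_{-,P}}(\lambda_P^+)^{-N}$ lies in $\mathcal{H}_P^+\, E^P_{o_{-,P}}(\lambda_P^+)^{-N}$, as required. The main obstacle throughout is the centrality step, the length-additivity in the localization being a secondary technicality.
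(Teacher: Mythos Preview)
The paper does not supply its own proof of this proposition; it simply cites \cite[Theorem~1.4]{MR3437789}. So there is no argument in the paper to compare against.

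Your proposal is essentially correct, but the centrality step is more complicated than necessary. Once you have observed (as you do in step~1) that $\langle\alpha,\nu(\lambda_P^+)\rangle = 0$ for every $\alpha\in\Sigma_P$, Lemma~\ref{lem:length zero} applied inside $W_P$ gives $\ell_P(\lambda_P^+) = 0$. Then for any $w\in W_P(1)$ the braid relations alone yield
\[
T^P_{\lambda_P^+}T^P_w \;=\; T^P_{\lambda_P^+ w} \;=\; T^P_{w\lambda_P^+} \;=\; T^P_w T^P_{\lambda_P^+},
\]
the middle equality coming from centrality of $\lambda_P^+$ in $W_P(1)$. No Bernstein relation computation is needed, and there is no ``technical heart'' to worry about. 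The same length-zero observation also makes $T^P_{\lambda_P^+}$ a unit in $\mathcal{H}_P$ (with inverse $T^P_{(\lambda_P^+)^{-1}}$) and trivializes the length-additivity check in your localization step, since $\ell_P(w(\lambda_P^+)^N) = \ell_P(w)$ is automatic. With this simplification your argument goes through cleanly.
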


We define $j_P^{\pm}\colon \mathcal{H}_P^\pm\to \mathcal{H}$ and $j_P^{\pm *}\colon \mathcal{H}_P^{\pm}\to \mathcal{H}$ by $j_P^{\pm}(T_w^P) = T_w$ and $j_P^{\pm *}(T_w^{P*}) = T_w^*$ for $w\in W_P^\pm(1)$.
Then these are algebra homomorphisms.
For $j_P^+$ and $j_P^{-*}$, it is \cite[Lemma~4.6]{arXiv:1406.1003_accepted} and the same argument can apply for $j_P^{+*}$ and $j_P^-$.

Some special cases of the following lemma is proved in \cite{MR3366919,arXiv:1406.1003_accepted}.
\begin{lem}\label{lem:image of E by j}
Let $w\in W_P(1)$.
\begin{enumerate}
\item If the image of $w$ in $W_P$ is in $W_{0,P}$, then $w$ is both $P$-positive and $P$-negative.
Moreover we have $j_P^{\pm}(T^{P*}_w) = T_w^*$ and $j_P^{\pm *}(T^P_w) = T_w$.
\item For $x\in W_{0,P}$, we have
\begin{align*}
j_P^+(E^P_{o_{-,P}\cdot x}(w)) & = E_{o_-\cdot x}(w) \quad (w\in W_P^+(1)),\\
j_P^{+*}(E^P_{o_{+,P}\cdot x}(w)) & = E_{o_+\cdot x}(w) \quad (w\in W_P^+(1)),\\
j_P^-(E^P_{o_{+,P}\cdot x}(w)) & = E_{o_+\cdot x}(w) \quad (w\in W_P^-(1)),\\
j_P^{-*}(E^P_{o_{-,P}\cdot x}(w)) & = E_{o_-\cdot x}(w) \quad (w\in W_P^-(1)).
\end{align*}
\end{enumerate}
\end{lem}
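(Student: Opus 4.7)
The plan is to prove (1) by a direct computation with the quadratic relation of $\mathcal{H}_P$, and then to deduce (2) by decomposing $w = n_v \lambda$ and reducing via the product formula \eqref{eq:product formula} to two tractable cases.

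For (1), if the image of $w \in W_P(1)$ in $W_P$ lies in $W_{0,P}$, then $w = n_v t$ for some $v \in W_{0,P}$ and $t \in Z_\kappa$, and since $Z_\kappa \subset \Ker\nu$ we have $\nu(w) = 0$; in particular $\langle \alpha, \nu(w) \rangle = 0$ for every $\alpha \in \Sigma^+ \setminus \Sigma_P^+$, so $w \in W_P^+(1) \cap W_P^-(1)$. For the identities relating $T^P_w, T^{P*}_w$ to $T_w, T^*_w$, fix a reduced expression $v = s_1 \cdots s_l$ in $W_{0,P}$; this is also a reduced expression in $W_0$ because $W_{0,P}$ is a standard parabolic subgroup of $W_0$. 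By Lemma~\ref{lem:on c, about aff,Levi}(1) we may choose the lifts $n_{s_i}$ in $W_{\aff,P}(1)$, and by Lemma~\ref{lem:on c, about aff,Levi}(2) the coefficients $c_{n_{s_i}}$ then lie in $C[Z_\kappa]$. Since length-zero elements $t \in Z_\kappa$ satisfy $T^P_t = T^{P*}_t$ and $T_t = T^*_t$, all four maps $j_P^\pm$ and $j_P^{\pm *}$ agree on $T^P_t = T^{P*}_t$ and send it to $T_t = T^*_t$; combining this with the quadratic relation $T^{P*}_{n_{s_i}} = T^P_{n_{s_i}} - c_{n_{s_i}}$ (and its $\mathcal{H}$-analog) and the braid relations $T^P_w = T^P_{n_{s_1}} \cdots T^P_{n_{s_l}} T^P_t$ and $T_w = T_{n_{s_1}} \cdots T_{n_{s_l}} T_t$, the four identities follow by induction on $l$.

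For (2) we treat the case of $j_P^+$ with orientation $o_{-,P}$; the other three are parallel. Write $w = n_v \lambda$ with $v \in W_{0,P}$ and $\lambda \in \Lambda(1)$. Since $o \cdot \lambda = o$ for every spherical orientation, the product formula \eqref{eq:product formula} (applied and specialized as in the remark following it) yields
\[
E^P_{o_{-,P}\cdot x}(n_v \lambda) = \gamma \cdot E^P_{o_{-,P}\cdot x}(n_v) \cdot E^P_{o_{-,P}\cdot(xv)}(\lambda)
\]
for a scalar $\gamma \in C$, and the analogous identity holds in $\mathcal{H}$ with the $P$'s dropped. As $j_P^+$ is an algebra homomorphism on $\mathcal{H}_P^+$, it suffices to prove (i) $j_P^+(E^P_{o_{-,P}\cdot x}(n_v)) = E_{o_-\cdot x}(n_v)$ and (ii) $j_P^+(E^P_{o_{-,P}\cdot u}(\lambda)) = E_{o_-\cdot u}(\lambda)$ for $u = xv \in W_{0,P}$. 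For (i), iterate the product formula along a reduced expression $v = s_1 \cdots s_l$ in $W_{0,P}$; at each step one obtains a factor $E^P_{o_{-,P}\cdot u'}(n_{s_i})$ (and in $\mathcal{H}$ the corresponding $E_{o_-\cdot u'}(n_{s_i})$), which by \eqref{eq:E_o for W_0} and its $\Sigma_P$-analog equals $T^P_{n_{s_i}}$ or $T^{P*}_{n_{s_i}}$ according to a Bruhat-order test that is the same in $W_0$ and $W_{0,P}$; part (1) supplies the factor-wise equality under $j_P^+$.

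The main obstacle is claim (ii): \eqref{eq:E_o for lambda} describes $E_o(\lambda)$ as $T_\lambda$ or $T^*_\lambda$ only when $\pm\nu(\lambda)$ lies in the closed chamber associated to $o$, and this need not be the case for an arbitrary $P$-positive $\lambda$ and an arbitrary $u \in W_{0,P}$. The key geometric observation is that the closed $P$-positive cone $\{\mu \in X_*(S)\otimes\R : \langle\alpha,\mu\rangle \le 0\text{ for all }\alpha \in \Sigma^+\setminus\Sigma_P^+\}$ is the union of the $W_{0,P}$-translates of the closed $\Sigma$-anti-dominant chamber, and each such closed $\Sigma$-chamber is contained in the corresponding closed $\Sigma_P$-chamber. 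Consequently, for $\lambda \in W_P^+(1)$ there exists $u_0 \in W_{0,P}$ with $\nu(\lambda)$ in the $\Sigma$-chamber associated to $o_- \cdot u_0$; when $u = u_0$, both \eqref{eq:E_o for lambda} and its $\Sigma_P$-analog give $E^P_{o_{-,P}\cdot u}(\lambda) = T^P_\lambda$ and $E_{o_-\cdot u}(\lambda) = T_\lambda$, so $j_P^+(T^P_\lambda) = T_\lambda$ closes this case by the definition of $j_P^+$. The remaining case $u \ne u_0$ is handled by a change-of-orientation argument, moving from $u_0$ to $u$ by simple reflections in $W_{0,P}$ and transporting the identity in $\mathcal{H}_P$ and in $\mathcal{H}$ in parallel using the product formula together with part (i) at each step.
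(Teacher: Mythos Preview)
Your treatment of (1), and of case (i) in (2), matches the paper. There are two issues in (2).

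A minor one first: when you write $E^P_{o_{-,P}\cdot x}(n_v\lambda)=\gamma\,E^P_{o_{-,P}\cdot x}(n_v)\,E^P_{o_{-,P}\cdot(xv)}(\lambda)$ and say ``the analogous identity holds in $\mathcal{H}$ with the $P$'s dropped,'' you are silently using that the scalar is the \emph{same} in $\mathcal{H}_P$ and in $\mathcal{H}$, i.e.\ $q_{P,n_v\lambda}^{1/2}q_{P,n_v}^{-1/2}q_{P,\lambda}^{-1/2}=q_{n_v\lambda}^{1/2}q_{n_v}^{-1/2}q_{\lambda}^{-1/2}$. This is true for $P$-positive elements, but it needs an external input (the paper invokes \cite[Lemma~4.5]{arXiv:1406.1003_accepted}).

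The genuine gap is your ``change-of-orientation argument'' for (ii). Suppose you know the identity at $u_0$ and want it at $u_0s$ for $s\in S_{0,P}$. The only access the product formula gives you to the orientation $o_{-,P}\cdot u_0s$ is through
\[
E^P_{o_{-,P}\cdot u_0}(n_s)\,E^P_{o_{-,P}\cdot u_0s}(\lambda)\in C\cdot E^P_{o_{-,P}\cdot u_0}(n_s\lambda),
\]
so (using (i) and invertibility in $\mathcal{H}[q_s^{\pm1}]$) the identity at $(u_0s,\lambda)$ is equivalent to the identity at $(u_0,n_s\lambda)$; writing $n_s\lambda=(n_s\cdot\lambda)n_s$ and using (i) again, this is equivalent to the identity at $(u_0,n_s\cdot\lambda)$. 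But $n_{u_0}\cdot(n_s\cdot\lambda)$ has no reason to be anti-dominant, so you are not back in your base case. Iterating just cycles through $W_{0,P}$-conjugates of $\lambda$, never reaching the original $\lambda$ at the target orientation; the argument is circular.

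The paper avoids this by not attempting to prove (ii) at an arbitrary orientation. Working in $\mathcal{H}[q_s^{\pm1}]$, it first records a two-sided multiplicativity: if the identity holds for two of $w_1,w_2,w_1w_2$ (at the appropriate orientations) then it holds for the third, the cancellation direction using invertibility of the $E$-elements. It proves the identity for $w=n_v$ at every orientation and for $w=\lambda$ at the aligned orientation. Then for a general $P$-positive $w$ at a fixed orientation $x$, it chooses $\lambda_1,\lambda_2\in\Lambda(1)\cap W_P^+(1)$ with $w=n_v\lambda_1\lambda_2^{-1}$ and both $\lambda_i$ aligned to the relevant orientation, and concludes by multiplicativity plus cancellation. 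The fix for your argument is to replace the change-of-orientation step by exactly this ``write $\lambda$ as $\lambda_1\lambda_2^{-1}$ with both factors in the chamber attached to the given orientation'' device.
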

\begin{proof}
(1)
Let $v$ be the image of $w$ in $W_P$.
Then it is in $W_{0,P}$ by the assumption.
Set $t = n_v^{-1}w$.
Then $t\in \Ker(W_P(1)\to W_P) = Z_\kappa$.
In particular $t\in \Lambda(1)$ and we have $w = n_vt$.
Since $\nu(t) = 0$, $w$ is $P$-positive and $P$-negative.

For the proof of the second statement, first assume that $w = t\in Z_\kappa$.
Since $\ell(t) = 0$, we have $T_t^P = T_t^{P*}$ and $T_t = T_t^*$.
Therefore we get the statement.
In particular, for $s\in S_\aff(1)$, we have $j_P^{\pm}(c_s) = j_P^{\pm*}(c_s) = c_s$.

To prove the second statement, by induction on $\ell_P(w)$, we may assume that the image of $w$ in $W_{0,P}$ is a simple reflection.
Set $s = w$.
Then $j_P^{\pm}(T_s^{P*}) = j_P^{\pm}(T_s^P - c_s) = T_s - c_s$.
Since the image of $s$ in $W_{P}$ is a finite simple reflection, the image of $s$ in $W$ is also a finite simple reflection.
Therefore $T_s^* = T_s - c_s$.
Hence $j_P^{\pm}(T_s^{P*}) = T_s^*$.
Similarly we have $j_P^{\pm *}(T_s^P) = j_P^{\pm *}(T_s^{P*} + c_s) = T_s^* + c_s = T_s$.

(2)
It is sufficient to prove the lemma in $\mathcal{H}[q_s^{\pm 1}]$.
First we prove the following: Let $w_1,w_2,w_3$ be $P$-negative or $P$-positive elements such that $w_3 = w_1w_2$.
Then if the lemma holds for $w = w_1,w_2$ (resp.\ $w = w_2,w_3$) then it also holds for $w = w_3$ (resp.\ $w = w_1$).
We prove this claim only for $j_P^+$.
For the other cases, the same proof apply.

For $j_P^+$, we assume that $w_1,w_2,w_3$ are $P$-positive.
By \eqref{eq:product formula}, we have
\[
E^P_{o_{-,P}\cdot x}(w_1)E^P_{o_{-,P}\cdot n_xw_1}(w_2) = q_{P,w_1}^{1/2}q_{P,w_2}^{1/2}q_{P,w_3}^{-1/2}E^P_{o_{-,P}\cdot x}(w_3).
\]
Since $j_P^+$ is an algebra homomorphism, we have
\[
j_P^+(E^P_{o_{-,P}\cdot x}(w_1))j_P^+(E^P_{o_{-,P}\cdot n_xw_1}(w_2)) = q_{P,w_1}^{1/2}q_{P,w_2}^{1/2}q_{P,w_3}^{-1/2}j_P^+(E^P_{o_{-,P}\cdot x}(w_3)).
\]
By \cite[Lemma~4.5]{arXiv:1406.1003_accepted}, we have $q_{P,w_1}^{1/2}q_{P,w_2}^{1/2}q_{P,w_3}^{-1/2} = q_{w_1}^{1/2}q_{w_2}^{1/2}q_{w_3}^{-1/2}$.
Hence
\[
j_P^+(E^P_{o_{-,P}\cdot x}(w_1))j_P^+(E^P_{o_{-,P}\cdot n_xw_1}(w_2)) = q_{w_1}^{1/2}q_{w_2}^{1/2}q_{w_3}^{-1/2}j_P^+(E^P_{o_{-,P}\cdot x}(w_3)).
\]
If the lemma holds for $w = w_1$ and $w_2$, then 
\[
E_{o_{-}\cdot x}(w_1)E_{o_{-}\cdot n_xw_1}(w_2) = q_{w_1}^{1/2}q_{w_2}^{1/2}q_{w_3}^{-1/2}j_P^+(E^P_{o_{-,P}\cdot x}(w_3)).
\]
By \eqref{eq:product formula}, we get $j_P^+(E^P_{o_{-,P}\cdot x}(w_3)) = E_{o_{-}\cdot x}(w_3)$.
If the lemma holds for $w = w_2$ and $w_3$, then
\[
j_P^+(E^P_{o_{-,P}\cdot x}(w_1))E_{o_{-}\cdot n_xw_1}(w_2) = q_{w_1}^{1/2}q_{w_2}^{1/2}q_{w_3}^{-1/2}E_{o_{-}\cdot x}(w_3).
\]
By \eqref{eq:inverse of E_o}, $E_{o_{-,P}\cdot n_xw_1}(w_2)$ is invertible in $\mathcal{H}[q_s^{\pm 1}]$.
Hence by \eqref{eq:product formula}, we get $j_P^+(E^P_{o_{-,P}\cdot x}(w_1)) = E_{o_-\cdot x}(w_1)$.
The same proof can be applicable for the other cases.

We assume that $w = n_v$ for $v\in W_{0,P}$.
By the above argument, we may assume that $v\in S_{0,P}$.
We write $s$ for $v$.
By \eqref{eq:E_o for W_0}, we have
\[
E^P_{o_{+,P}\cdot x}(n_s) =
\begin{cases}
T^P_{n_s} & (xs < x),\\
T^{P*}_{n_s} & (xs > x),
\end{cases}
\quad
E^P_{o_{-,P}\cdot x}(n_s) =
\begin{cases}
T^P_{n_s} & (xs > x),\\
T^{P*}_{n_s} & (xs < x).
\end{cases}
\]
Hence by (1), for $j = j_P^{+*}$ or $j_P^-$ and $j' = j_P^+$ or $j_P^{-*}$, we have 
\[
j(E^P_{o_{+,P}\cdot x}(n_s)) =
\begin{cases}
T_{n_s} & (xs < x),\\
T^{*}_{n_s} & (xs > x),
\end{cases}
\quad
j'(E^P_{o_{-,P}\cdot x}(n_s)) =
\begin{cases}
T_{n_s} & (xs > x),\\
T^{*}_{n_s} & (xs < x).
\end{cases}
\]
On the other hand, we have
\[
E_{o_+\cdot x}(n_s) =
\begin{cases}
T_{n_s} & (xs < x),\\
T^{*}_{n_s} & (xs > x),
\end{cases}
\quad
E_{o_-\cdot x}(n_s) =
\begin{cases}
T_{n_s} & (xs > x),\\
T^{*}_{n_s} & (xs < x).
\end{cases}
\]
We get the lemma in this case.

Next we assume that $w = \lambda\in \Lambda(1)$ and it is in a chamber corresponding to the spherical orientation.
We deal with 4 cases separately.
Note that since $x(\Sigma^+\setminus\Sigma_P^+) = \Sigma^+\setminus\Sigma_P^+$, $n_x\cdot \lambda$ is $P$-positive (resp.\ $P$-negative) if and only if $\lambda$ is $P$-positive (reps.\ $P$-negative).
\begin{itemize}
\item[$j_P^+$:] Assume that $n_x\cdot \lambda$ is anti-dominant.
Then $n_x\cdot \lambda$ is $P$-positive.
Hence $\lambda$ is also $P$-positive.
We have $E^P_{o_{-,P}\cdot x}(\lambda) = T^P_\lambda$ and $E_{o_-\cdot x}(\lambda) = T_\lambda$ by \eqref{eq:E_o for lambda}.
Hence $j_P^+(E^P_{o_{-,P}\cdot x}(\lambda)) = E_{o_-\cdot x}(\lambda)$ in this case.
\item[$j_P^{+*}$:] Assume that $n_x\cdot \lambda$ is anti-dominant.
Then $n_x\cdot \lambda$ is $P$-positive.
Hence $\lambda$ is also $P$-positive.
We have $E^P_{o_{+,P}\cdot x}(\lambda) = T^{P*}_\lambda$ and $E_{o_+\cdot x}(\lambda) = T^*_\lambda$ by \eqref{eq:E_o for lambda}.
Hence $j_P^{+*}(E^P_{o_{+,P}\cdot x}(\lambda)) = E_{o_+\cdot x}(\lambda)$ in this case.
\item[$j_P^-$:] Assume that $n_x\cdot \lambda$ is dominant.
Then $n_x\cdot \lambda$ is $P$-negative.
Hence $\lambda$ is also $P$-negative.
We have $E^P_{o_{+,P}\cdot x}(\lambda) = T^P_\lambda$ and $E_{o_+\cdot x}(\lambda) = T_\lambda$ by \eqref{eq:E_o for lambda}.
Hence $j_P^-(E^P_{o_{+,P}\cdot x}(\lambda)) = E_{o_+\cdot x}(\lambda)$ in this case.
\item[$j_P^{-*}$:] Assume that $n_x\cdot \lambda$ is dominant.
Then $n_x\cdot \lambda$ is $P$-negative.
Hence $\lambda$ is also $P$-negative.
We have $E^P_{o_{-,P}\cdot x}(\lambda) = T^{P*}_\lambda$ and $E_{o_-\cdot x}(\lambda) = T^*_\lambda$ by \eqref{eq:E_o for lambda}.
Hence $j_P^{-*}(E^P_{o_{-,P}\cdot x}(\lambda)) = E_{o_-\cdot x}(\lambda)$ in this case.
\end{itemize}

We prove the general case for $j_P^+$.
The same argument implies the other cases.
Let $w\in W_P^+(1)$ and take $v\in W_0$, $\lambda_1,\lambda_2\in \Lambda(1)\cap W_P^+(1)$ such that $w = n_v\lambda_1\lambda_2^{-1}$, $n_x\cdot \lambda_1$ and $n_x\cdot \lambda_2$ are anti-dominant.
Then the lemma holds for $n_v,\lambda_1$ and $\lambda_2$.
Hence the argument in the beginning of the proof of (2), we get the lemma for $w$.
\end{proof}

\begin{cor}\label{cor:image by j, positie and negative case}
If $w\in W_P(1)$ is both $P$-positive and $P$-negative (in particular, if $w = n_v$ for some $v\in W_{0,P}$), then we have $j_P^{\pm}(T_w^{P*}) = T_w^*$ and $j_P^{\pm *}(T_w^{P}) = T_w$.
\end{cor}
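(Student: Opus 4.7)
The four identities $j_P^\pm(T^P_w) = T_w$ and $j_P^{\pm *}(T^{P*}_w) = T^*_w$ are immediate from the definitions of the homomorphisms, since the hypothesis places both $T^P_w$ and $T^{P*}_w$ in the appropriate common domains. The content of the corollary is therefore the four \emph{cross} identities
\[
j_P^\pm(T^{P*}_w) = T^*_w \quad \text{and} \quad j_P^{\pm *}(T^P_w) = T_w,
\]
and I would deduce them from Lemma~\ref{lem:image of E by j}. I describe the argument for $j_P^+(T^{P*}_w) = T^*_w$; the other three are parallel, after interchanging the roles of $o_+$ and $o_-$, or of the positive and negative sides.

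Write $w = n_v\lambda$ with $v \in W_{0,P}$ and $\lambda \in \Lambda(1)$; the hypothesis translates to $\langle\alpha,\nu(\lambda)\rangle = 0$ for all $\alpha \in \Sigma^+\setminus\Sigma_P^+$. The case $w = n_v$ is exactly Lemma~\ref{lem:image of E by j}(1), so the essential case is $w = \lambda$. For this, I would decompose $\lambda = \lambda_1\lambda_2^{-1}$ in $\Lambda(1)$ with each $\lambda_i$ globally dominant and satisfying $\nu(\lambda_i)\perp(\Sigma^+\setminus\Sigma_P^+)$. Such a decomposition exists: choose $\mu \in \Lambda(1)$ with $\nu(\mu)$ in the closed cone $\{\xi : \langle\alpha,\xi\rangle \geq 0\text{ for all }\alpha\in\Sigma^+ \text{ and } \langle\beta,\xi\rangle = 0\text{ for all }\beta\in\Sigma^+\setminus\Sigma_P^+\}$, sufficiently deep that $\lambda\mu$ also lies in this cone, then set $\lambda_1 = \lambda\mu$ and $\lambda_2 = \mu$. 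For such $\lambda_i$, \eqref{eq:E_o for lambda} applied in both $\mathcal{H}_P$ and $\mathcal{H}$ gives $E^P_{o_{-,P}}(\lambda_i) = T^{P*}_{\lambda_i}$ and $E_{o_-}(\lambda_i) = T^*_{\lambda_i}$, so Lemma~\ref{lem:image of E by j}(2) with $x = 1$ yields $j_P^+(T^{P*}_{\lambda_i}) = T^*_{\lambda_i}$.

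The product formula \eqref{eq:product formula}, applied in $\mathcal{H}_P[q_s^{\pm 1}]$ and $\mathcal{H}[q_s^{\pm 1}]$ where $E^P_{o_{-,P}}(\lambda_2)$ and $E_{o_-}(\lambda_2)$ are invertible by \eqref{eq:inverse of E_o}, then transfers the identity from $\lambda_1,\lambda_2$ to $\lambda = \lambda_1\lambda_2^{-1}$; the $q^{1/2}$-factors arising on the two sides agree by \cite[Lemma~4.5]{arXiv:1406.1003_accepted}. Finally, to fold the $n_v$ and $\lambda$ pieces into the statement for $w = n_v\lambda$, I would further enlarge $\lambda$ by a central dominant shift so that $\ell_P(n_v\lambda) = \ell_P(n_v) + \ell_P(\lambda)$; the braid relation then gives $T^{P*}_w = T^{P*}_{n_v}T^{P*}_\lambda$, and applying the algebra homomorphism $j_P^+$ produces $T^*_{n_v}T^*_\lambda = T^*_w$.

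The main obstacle I anticipate is the bookkeeping at the localization step—verifying invertibility and the matching of the $q^{1/2}$-factors—together with arranging the length-additivity $\ell_P(n_v\lambda) = \ell_P(n_v) + \ell_P(\lambda)$ without altering the statement. Both can be handled by the same devices already used in the proof of Lemma~\ref{lem:image of E by j}(2).
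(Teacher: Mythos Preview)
Your approach is correct in outline but considerably more laborious than the paper's. The paper's proof is a two-line basis-change argument: expand $T_w^{P*} = \sum_v c_v\,E^P_{o_{-,P}}(v)$; every $v$ with $c_v \ne 0$ satisfies $v \le w$ in $W_P(1)$ and is therefore itself both $P$-positive and $P$-negative (by \cite[Lemma~4.1]{arXiv:1406.1003_accepted}). Lemma~\ref{lem:image of E by j}(2) then gives $j_P^+(E^P_{o_{-,P}}(v)) = E_{o_-}(v) = j_P^{-*}(E^P_{o_{-,P}}(v))$ for each such $v$, so $j_P^+(T_w^{P*}) = j_P^{-*}(T_w^{P*}) = T_w^*$, the last equality being the definition of $j_P^{-*}$. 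This completely sidesteps the decomposition $w = n_v\lambda$, the dominant splitting $\lambda = \lambda_1\lambda_2^{-1}$, the localization, and the length-additivity manoeuvre that make up the bulk of your argument.

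One imprecision in your sketch: the phrase ``central dominant shift'' in the folding step is ambiguous. An element $\mu_0$ central in $W_P(1)$ has $\langle\alpha,\nu(\mu_0)\rangle = 0$ for all $\alpha \in \Sigma_P$, so it cannot push $\lambda$ into the $P$-dominant cone. What you actually need is $\mu_0 \in Z(\Lambda(1))$ with $\nu(\mu_0)$ strictly $P$-dominant and orthogonal to $\Sigma^+ \setminus \Sigma_P^+$; with that choice the length-additivity $\ell_P(n_v\lambda\mu_0) = \ell_P(n_v) + \ell_P(\lambda\mu_0)$ and $\ell(n_v\lambda\mu_0) = \ell(n_v) + \ell(\lambda\mu_0)$ both hold (via Lemma~\ref{lem:condition for length is additive}), and the undoing step goes through. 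This is exactly the bookkeeping that the paper's basis-change trick avoids.
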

\begin{proof}
We only prove $j_P^+(T_w^{P*}) = T_w^*$.
The same argument apply to other cases.

Take $c_v\in C$ such that $T_w^{P*} = \sum_v c_vE^P_{o_{-,P}}(v)$.
If $c_v\ne 0$ then $v\le w$ in $W_P(1)$.
Hence $v$ is also $P$-positive and $P$-negative by \cite[Lemma 4.1]{arXiv:1406.1003_accepted}.
We have
\begin{align*}
j_P^+(T_w^{P*}) & = \sum_v c_vj_P^+(E^P_{o_{-,P}}(v))\\
& = \sum_v c_vE_{o_-}(v)\\
& = \sum_v c_vj_P^{-*}(E^P_{o_{-,P}}(v))\\
& = j_P^{-*}(T_w^{P*}) = T_w^{*}.
\end{align*}
We get the corollary.
\end{proof}

We also use the following relative setting.
Let $Q$ be a parabolic subgroup containing $P$ and let $W_P^{Q+}(1)$ (resp.\ $W_P^{Q-}(1)$) be the set of $n_w\lambda$ where $\langle \alpha,\nu(\lambda)\rangle \le 0$ (resp.\ $\langle \alpha,\nu(\lambda)\rangle \ge 0$) for any $\alpha\in\Sigma_Q^+\setminus\Sigma_P^+$ and $w\in W_{0,P}$.
Put $\mathcal{H}_P^{Q\pm } = \bigoplus_{w\in W_P^{Q\pm}(1)}C T_w^P\subset \mathcal{H}_P$.
Then we have homomorphisms $j_P^{Q\pm },j_P^{Q\pm *}\colon \mathcal{H}_P^{Q\pm}\to \mathcal{H}_Q$ defined by a similar way.
\subsection{Parabolic induction}\label{subsec:Preliminaries, Parabolic induction}
Let $P$ be a parabolic subgroup and $\sigma$ an $\mathcal{H}_P$-module. (This is a right module as in subsection~\ref{subsec:Prop-p-Iwahori Hecke algebra}.)
Then we define an $\mathcal{H}$-module $I_P(\sigma)$ by
\[
I_P(\sigma) = \Hom_{(\mathcal{H}_P^-,j_P^{-*})}(\mathcal{H},\sigma).
\]
We call $I_P$ the parabolic induction.
For $P\subset P_1$, we write
\[
I_P^{P_1}(\sigma) = \Hom_{(\mathcal{H}_P^{P_1-},j_P^{P_1-*})}(\mathcal{H}_{P_1},\sigma).
\]
\begin{rem}
Since we have two algebras $\mathcal{H}_P^{\pm}$ and four homomorphisms $j_P^{\pm},j_P^{\pm *}$, we can define four ``inductions''.
The other inductions are studied in Section~\ref{sec:Inductions}.
\end{rem}

We recall some properties from \cite{arXiv:1406.1003_accepted} and \cite{MR3437789}.
Let $P$ be a parabolic subgroup.
Set $W_0^P = \{w\in W_0\mid w(\Delta_P) \subset\Sigma^+\}$.
Then the multiplication map $W_0^P\times W_{0,P}\to W_0$ is bijective and for $w_1\in W_0^P$ and $w_2\in W_{0,P}$, we have $\ell(w_1w_2) = \ell(w_1) + \ell(w_2)$.
We also put ${}^PW_0 = \{w\in W_0\mid w^{-1}(\Delta_P) \subset\Sigma^+\}$.
Then the multiplication map $W_{0,P}\times {}^PW_0 \to W_0$ is bijective and for $w_1\in W_{0,P}$ and $w_2\in {}^PW_0$, we have $\ell(w_1w_2) = \ell(w_1) + \ell(w_2)$.

\begin{prop}\label{prop:decomposition of I_P}
Let $P$ be a parabolic subgroup and $\sigma$ an $\mathcal{H}_P$-module.
\begin{enumerate}
\item The map $I_P(\sigma)\ni\varphi\mapsto (\varphi(T_{n_w}))_{w\in W_0^P}\in \bigoplus_{w\in W_0^P}\sigma$ is bijective.
\item The map $I_P(\sigma)\ni\varphi\mapsto (\varphi(T_{n_w}^*))_{w\in W_0^P}\in \bigoplus_{w\in W_0^P}\sigma$ is bijective.
\end{enumerate}
\end{prop}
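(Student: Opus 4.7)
Both parts assert bijectivity of an evaluation map $I_P(\sigma)\to\bigoplus_{w\in W_0^P}\sigma$ and both reduce to the same algebraic content: $\mathcal{H}$, viewed as a module over $\mathcal{H}_P^-$ via $j_P^{-*}$, is free with right basis $\{T_{n_w}\}_{w\in W_0^P}$ in case (1) and $\{T_{n_w}^*\}_{w\in W_0^P}$ in case (2). Granting this, any $\varphi\in \Hom_{(\mathcal{H}_P^-,j_P^{-*})}(\mathcal{H},\sigma)$ is determined by, and takes arbitrary values in $\sigma$ on, the chosen basis, so the evaluation map is a $C$-linear isomorphism onto $\bigoplus_{w\in W_0^P}\sigma$.

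My plan for (1) is to show that the family $\{T_v^*\cdot T_{n_w}\mid v\in W_P^-(1),\ w\in W_0^P\}$ is a $C$-basis of $\mathcal{H}$. The combinatorial input is the factorization $W_0=W_0^P\cdot W_{0,P}$ with additive lengths, combined with $W=W_0\ltimes\Lambda$: this produces for every $v\in W(1)$ a factorization $v = n_w\cdot v_P$ with $w\in W_0^P$ and $v_P\in W_P(1)$, and the condition $w(\Delta_P)\subset\Sigma^+$ ensures length-additivity in the relevant direction, so the braid relations recover each $T_v$ from such a factorization. To pass from $W_P(1)$ to $W_P^-(1)$, which is what the target algebra $\mathcal{H}_P^-$ demands, I apply Proposition~\ref{prop:localization as Levi subalgebra}: multiplying on an appropriate side by a sufficiently high power of the central antidominant element $T_{\lambda_P^-}^*=E^P_{o_{-,P}}(\lambda_P^-)$ moves any $v_P\in W_P(1)$ into $W_P^-(1)$, and by centrality of $T_{\lambda_P^-}^*$ in $\mathcal{H}_P$ together with Lemma~\ref{lem:image of E by j} (which controls the behavior of $j_P^{-*}$ on this element) the central factor can be shuffled past $T_{n_w}$.

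The main obstacle is precisely this last bookkeeping step, since the naive Bruhat-style decomposition of $W$ uses all of $W_P$, while only the negative part $W_P^-$ sits inside the domain of $j_P^{-*}$. Proposition~\ref{prop:localization as Levi subalgebra} is the device that bridges this gap. For (2), the argument proceeds in parallel using $\{T_v^*\cdot T_{n_w}^*\}$ in place of $\{T_v^*\cdot T_{n_w}\}$; equivalently, (2) can be deduced from (1) via the unitriangular change of basis $T_{n_w}^*\in T_{n_w}+\sum_{v<n_w}CT_v$, whose transition matrix, after each lower-order $T_v$ is expanded in the basis from (1), has entries in $j_P^{-*}(\mathcal{H}_P^-)$ and is unitriangular, hence invertible, carrying the freeness from (1) over to (2).
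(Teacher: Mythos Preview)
Your deduction of (2) from (1) via the unitriangular relation $T_{n_w}^*\in T_{n_w}+\sum_{v<w}T_{n_v}C[Z_\kappa]$ is exactly what the paper does. For (1) the paper simply cites \cite[Lemma~3.9]{MR3437789}, so the question is whether your direct argument for (1) goes through.

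It does not, and the gap is structural. You reduce (1) to the assertion that $\mathcal{H}$ is free as a $j_P^{-*}(\mathcal{H}_P^-)$-module on $\{T_{n_w}\}_{w\in W_0^P}$, but this is false. A side point first: since $\varphi(X\,j_P^{-*}(Y))=\varphi(X)\sigma(Y)$, it is \emph{right} freeness you need, so the relevant family is $\{T_{n_w}\cdot T_v^*\}$, not $\{T_v^*\cdot T_{n_w}\}$. More seriously, even with the corrected order the span $\sum_{w\in W_0^P}T_{n_w}\,j_P^{-*}(\mathcal{H}_P^-)$ is a proper $C$-submodule of $\mathcal{H}$. By Lemma~\ref{lem:length, positive/negative elements} one has $\ell(n_wv)=\ell(n_w)+\ell(v)$, hence $T_{n_w}T_v=T_{n_wv}$, only for $v\in W_P^-(1)$; your length-additivity claim for arbitrary $v_P\in W_P(1)$ is wrong, and already for $\mathrm{SL}_2$ with $P=B$ one checks that the basis elements $T_u$ with $u=n_wv_P$, $v_P\in W_P(1)\setminus W_P^-(1)$, are not in the span. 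Proposition~\ref{prop:localization as Levi subalgebra} cannot repair this at the level of $\mathcal{H}$: pushing $v_P$ into $W_P^-(1)$ by a central multiple costs you the basis property on the other side. The feature that actually makes (1) true, and which your outline never invokes, is that $\sigma$ is an $\mathcal{H}_P$-module rather than merely an $\mathcal{H}_P^-$-module, so the central element $E^P_{o_{-,P}}(\lambda_P^-)$ acts invertibly on $\sigma$. The correct use of localization is on the target: for every $X\in\mathcal{H}$ there is $k\ge0$ with $X\cdot E_{o_-}(\lambda_P^-)^k\in\bigoplus_w T_{n_w}\,j_P^{-*}(\mathcal{H}_P^-)$, and then $\varphi(X)=\varphi\bigl(X\,E_{o_-}(\lambda_P^-)^k\bigr)\cdot\sigma\bigl(E^P_{o_{-,P}}(\lambda_P^-)\bigr)^{-k}$ is determined by the values $\varphi(T_{n_w})$. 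Equivalently, it is $\mathcal{H}\otimes_{\mathcal{H}_P^-}\mathcal{H}_P$, not $\mathcal{H}$ itself, that is free over $\mathcal{H}_P$ on $\{T_{n_w}\otimes1\}$.
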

\begin{proof}
The first part is \cite[Lemma~3.9]{MR3437789}.
The second part follows from the first part and $T_{n_w}^* \in T_{n_w} + \sum_{v < w}T_{n_v}C[Z_\kappa]$ with a usual triangular argument.
\end{proof}

Let $P\subset Q$ be parabolic subgroups.
We have an $\mathcal{H}_Q$-module $I_P^Q(\sigma)$ for an $\mathcal{H}_P$-module $\sigma$.
The parabolic inductions are transitive.
\begin{prop}[{\cite[Proposition~4.10]{MR3437789}}]\label{prop:transitivity of inductions}
The map $\varphi\mapsto (X\mapsto \varphi(X)(1))$ gives an isomorphism $I_Q(I_P^Q(\sigma))\simeq I_P(\sigma)$.
\end{prop}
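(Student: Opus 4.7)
The plan is to verify, for the map $\Phi\colon\varphi\mapsto\tilde\varphi$ with $\tilde\varphi(X):=\varphi(X)(1)$, three things in order: $\tilde\varphi$ lies in $I_P(\sigma)$, $\Phi$ is $\mathcal{H}$-linear, and $\Phi$ is bijective. The first two will be formal, and bijectivity will be forced by the direct-sum decomposition of Proposition~\ref{prop:decomposition of I_P}.

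For well-definedness, the structural input I need is that, restricted to $\mathcal{H}_P^-$, the map $j_P^{-*}$ factors as $j_Q^{-*}\circ j_P^{Q-*}$. This follows on the $T^*$-basis from the inclusions $W_P^-(1)\subset W_P^{Q-}(1)\cap W_Q^-(1)$ (the first is tautological; the second uses $W_{0,P}\subset W_{0,Q}$ and $\Sigma^+\setminus\Sigma_Q^+\subset \Sigma^+\setminus\Sigma_P^+$), combined with the fact that each of the three $j$-maps sends a $T_w^*$-type basis element to the same-named element of the larger algebra. With this factorization in hand, the $(\mathcal{H}_P^-,j_P^{-*})$-equivariance of $\tilde\varphi$ follows by successively invoking the $\mathcal{H}_Q^-$-equivariance of $\varphi$ and then the $\mathcal{H}_P^{Q-}$-equivariance of the value $\varphi(X)\in I_P^Q(\sigma)$. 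The $\mathcal{H}$-linearity of $\Phi$ is immediate from the way $\mathcal{H}$ acts on the Hom-spaces by precomposition.

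For bijectivity, I would apply Proposition~\ref{prop:decomposition of I_P}(1) three times: on $I_Q$, on $I_P^Q$ in its Levi version relative to the pair $P\subset Q$, and on $I_P(\sigma)$. This yields identifications $I_Q(I_P^Q(\sigma))\simeq\bigoplus_{v\in W_0^Q,\,u\in W_{0,Q}^P}\sigma$ and $I_P(\sigma)\simeq\bigoplus_{w\in W_0^P}\sigma$, where $W_{0,Q}^P:=W_0^P\cap W_{0,Q}$. The two index sets are matched by the length-additive bijection $W_0^Q\times W_{0,Q}^P\to W_0^P$, $(v,u)\mapsto vu$, obtained as a two-step refinement of the decomposition $W_0=W_0^P\cdot W_{0,P}=W_0^Q\cdot W_{0,Q}$ recalled before Proposition~\ref{prop:decomposition of I_P}.

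The key step, and essentially the only non-formal one, is to check that under these matched decompositions $\Phi$ is literally the identity map. Given $w=vu$ with $v\in W_0^Q$ and $u\in W_{0,Q}^P$, length additivity gives $T_{n_w}=T_{n_v}T_{n_u}$. Since $n_u$ has trivial translation part, it is both $Q$-positive and $Q$-negative, so Corollary~\ref{cor:image by j, positie and negative case} provides $T_{n_u}=j_Q^{-*}(T_{n_u}^Q)$ with $T_{n_u}^Q\in\mathcal{H}_Q^-$. The $\mathcal{H}_Q^-$-equivariance of $\varphi$ then rewrites $\varphi(T_{n_v}T_{n_u})=\varphi(T_{n_v})\cdot T_{n_u}^Q$ in $I_P^Q(\sigma)$, and evaluating at $1\in\mathcal{H}_Q$ gives
\[
\tilde\varphi(T_{n_w})=\varphi(T_{n_v})(T_{n_u}^Q),
\]
which is the identity map on $\bigoplus_{w\in W_0^P}\sigma$. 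The main conceptual obstacle, such as it is, is recognising $T_{n_u}$ inside the image of $j_Q^{-*}$ via Corollary~\ref{cor:image by j, positie and negative case}; everything else is bookkeeping.
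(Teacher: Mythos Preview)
Your proof is correct. The paper does not supply its own proof of this proposition; it simply cites \cite[Proposition~4.10]{MR3437789}. Your argument --- verifying well-definedness via the factorization $j_P^{-*}=j_Q^{-*}\circ j_P^{Q-*}$ on $\mathcal{H}_P^-$, $\mathcal{H}$-linearity by inspection, and bijectivity by reducing to the identity on $\bigoplus_{w\in W_0^P}\sigma$ through the length-additive decomposition $W_0^P=W_0^Q\cdot W_{0,Q}^P$ together with Corollary~\ref{cor:image by j, positie and negative case} --- is the standard one and matches the approach in the cited reference.
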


\subsection{Length}
We prove some lemmas on the length.
These lemmas follow from the formula on the length~\cite{MR3484112}.

We recall the formula.
Let $\Sigma'\subset \Hom_\R(X_*(S)\otimes_\Z\R,\R)$ be the subset such that the action of $\Refs{W_\aff}$ on $X_*(S)\otimes_{\Z}\R$ is the set of reflections with respect to the hyperplanes $\{\{v\in X_*(S)\otimes_{\Z}\R\mid \alpha(v) + k = 0\}\mid \alpha\in\Sigma',k\in\Z\}$.
Then for any $\alpha\in \Sigma'$, there exists only one element in $\R_{>0}\alpha\cap \Sigma_\red$ and the map $\alpha$ sends to this unique element gives a bijection $\Sigma'\simeq \Sigma_\red$ where $\Sigma_\red$ is the set of reduced roots in $\Sigma$.
Set $\Sigma'^+ = \{\alpha\in\Sigma'\mid \Sigma^+\cap \R_{>0}\alpha\ne\emptyset\}$.
Then $\Sigma'$ is a root system with the Weyl group $W_0$ and $\Sigma'^+$ is a positive system of $\Sigma'$.
For $v\in W_0$ and $\lambda\in \Lambda(1)$, we have \cite[Corollary~5.10]{MR3484112}
\begin{align*}
\ell(\lambda n_v) & = \sum_{\alpha\in\Sigma'^+\cap v(\Sigma'^+)}\lvert\langle\alpha,\nu(\lambda)\rangle\rvert + \sum_{\alpha\in\Sigma'^+\cap v(\Sigma'^-)}\lvert\langle\alpha,\nu(\lambda)\rangle - 1\rvert,\\
\ell(n_v\lambda) & = \sum_{\alpha\in\Sigma'^+\cap v^{-1}(\Sigma'^+)}\lvert\langle\alpha,\nu(\lambda)\rangle\rvert + \sum_{\alpha\in\Sigma'^+\cap v^{-1}(\Sigma'^-)}\lvert\langle\alpha,\nu(\lambda)\rangle + 1\rvert.
\end{align*}

The map $W\to \Aut(X_*(S)\otimes_\Z\R)$ is injective on $W_\aff$ and the image of $W_\aff$ is the Weyl group of the affine root system $\{\alpha + k\mid \alpha\in\Sigma',k\in \Z\}$.
Hence we have $W_\aff\simeq W_0\ltimes \Z\coroot{\Sigma'}$.
For $\lambda\in \Lambda(1)\cap W_\aff(1)$, $\nu(\lambda)$ is in the image of $\Lambda\cap W_\aff$.
It is $\Z\coroot{\Sigma'}$.
Hence $\nu(\lambda)\in \Z\coroot{\Sigma'}\subset\R\coroot{\Sigma}$.

\begin{lem}\label{lem:length, on lambda}
Let $\lambda_1,\lambda_2\in \Lambda(1)$.
We have $\ell(\lambda_1\lambda_2) = \ell(\lambda_1) + \ell(\lambda_2)$ if and only if the vectors $\nu(\lambda_1),\nu(\lambda_2)$ are in the same closed Weyl chamber, namely $\langle \alpha,\nu(\lambda_1)\rangle\langle\alpha,\nu(\lambda_2)\rangle\ge 0$ for any $\alpha\in\Sigma^+$.
\end{lem}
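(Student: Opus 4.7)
The plan is to reduce the lemma to an elementary triangle-inequality computation, using the explicit length formula stated just above. Specializing that formula with $v = 1$ (so $\Sigma'^+\cap v(\Sigma'^+) = \Sigma'^+$ and $\Sigma'^+\cap v(\Sigma'^-) = \emptyset$) gives
\[
\ell(\lambda) = \sum_{\alpha\in\Sigma'^+}\lvert\langle\alpha,\nu(\lambda)\rangle\rvert
\]
for every $\lambda\in\Lambda(1)$. Since $\nu\colon\Lambda(1)\to X_*(S)\otimes_{\Z}\R$ is a group homomorphism, $\nu(\lambda_1\lambda_2) = \nu(\lambda_1)+\nu(\lambda_2)$, and applying the above formula to each of $\lambda_1$, $\lambda_2$, and $\lambda_1\lambda_2$ yields
\[
\ell(\lambda_1)+\ell(\lambda_2) - \ell(\lambda_1\lambda_2) = \sum_{\alpha\in\Sigma'^+}\bigl(\lvert a_\alpha\rvert + \lvert b_\alpha\rvert - \lvert a_\alpha+b_\alpha\rvert\bigr),
\]
where $a_\alpha = \langle\alpha,\nu(\lambda_1)\rangle$ and $b_\alpha = \langle\alpha,\nu(\lambda_2)\rangle$.

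Each summand on the right is nonnegative by the triangle inequality, and vanishes precisely when $a_\alpha b_\alpha\ge 0$. Therefore the identity $\ell(\lambda_1\lambda_2) = \ell(\lambda_1)+\ell(\lambda_2)$ is equivalent to $\langle\alpha,\nu(\lambda_1)\rangle\langle\alpha,\nu(\lambda_2)\rangle \ge 0$ for every $\alpha\in\Sigma'^+$. To finish, I translate this sign condition from $\Sigma'^+$ to $\Sigma^+$: by the definition recalled just above, the map $\alpha\mapsto\R_{>0}\alpha\cap\Sigma_{\red}$ is a bijection $\Sigma'\simeq\Sigma_{\red}$, and every element of $\Sigma^+$ is a positive multiple of some element of $\Sigma_{\red}^+$, which is itself a positive multiple of a unique element of $\Sigma'^+$. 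Since positive rescaling does not affect the sign of a pairing, the condition for all $\alpha\in\Sigma'^+$ is equivalent to the condition for all $\alpha\in\Sigma^+$, which finishes the proof.

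There is essentially no obstacle beyond invoking the length formula correctly; the only mildly delicate point is the bookkeeping between $\Sigma^+$ (as it appears in the statement of the lemma) and $\Sigma'^+$ (which naturally appears in the length formula), and this is handled by the positive-rescaling observation above.
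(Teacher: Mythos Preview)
Your proof is correct and is exactly the argument the paper has in mind: the paper's own proof reads simply ``Obvious from the above formula,'' and you have spelled out that obvious triangle-inequality computation and the $\Sigma'\leftrightarrow\Sigma$ translation.
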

\begin{proof}
Obvious from the above formula.
\end{proof}

\begin{lem}\label{lem:length zero}
Let $\lambda\in \Lambda(1)$.
Then we have $\ell(\lambda) = 0$ if and only if $\langle \alpha,\nu(\lambda)\rangle = 0$ for any $\alpha\in\Sigma$.
\end{lem}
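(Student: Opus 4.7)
The proof is essentially an immediate consequence of the length formula stated just before the lemma, applied to $v = 1$. With $v = 1$ the decomposition $\Sigma'^+ = (\Sigma'^+ \cap v(\Sigma'^+)) \sqcup (\Sigma'^+ \cap v(\Sigma'^-))$ degenerates: the second piece is empty. So I would first specialize the formula for $\ell(\lambda n_v)$ to $v = 1$ and obtain
\[
\ell(\lambda) = \sum_{\alpha\in\Sigma'^+}\lvert\langle\alpha,\nu(\lambda)\rangle\rvert.
\]

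Since each summand is a non-negative real number, the sum vanishes if and only if $\langle\alpha,\nu(\lambda)\rangle = 0$ for every $\alpha\in\Sigma'^+$. The next step is to translate this condition, which is phrased in terms of $\Sigma'$, back to a condition in terms of $\Sigma$. The excerpt explicitly gives a bijection $\Sigma'\simeq \Sigma_\red$ under which each $\alpha\in\Sigma'$ is sent to its unique positive real multiple lying in $\Sigma_\red$; in particular, as linear functionals on $X_*(S)\otimes_\Z\R$, the two roots differ by a positive scalar, so they have the same zero locus. Hence $\langle\alpha,\nu(\lambda)\rangle = 0$ for all $\alpha\in\Sigma'^+$ is equivalent to the same vanishing for all $\alpha\in\Sigma_\red^+$, and then (since $\Sigma^+$ consists of positive multiples of elements of $\Sigma_\red^+$, and $\Sigma = \Sigma^+\sqcup(-\Sigma^+)$) equivalent to $\langle\alpha,\nu(\lambda)\rangle = 0$ for all $\alpha\in\Sigma$.

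There is essentially no obstacle: the whole argument is a direct specialization of the length formula combined with the bookkeeping that $\Sigma'$ and $\Sigma$ define the same hyperplane arrangement on $X_*(S)\otimes_\Z\R$. The only point worth being careful about is this last translation from $\Sigma'$ to $\Sigma$, so I would make that identification of kernels explicit rather than leave it implicit.
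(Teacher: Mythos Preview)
Your argument is correct and is exactly what the paper intends: the paper's proof consists of the single sentence ``Obvious from the above formula,'' and your specialization of the length formula to $v=1$ together with the translation between $\Sigma'$ and $\Sigma$ spells out precisely why it is obvious.
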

\begin{proof}
Obvious from the above formula.
\end{proof}

\begin{lem}
If $w\in W(1)$ is in the center of $W(1)$, then $w\in \Lambda(1)$ and $\ell(w) = 0$.
\end{lem}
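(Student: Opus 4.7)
The plan is to write a central element as $w = n_v\lambda$ with $v \in W_0$ and $\lambda \in \Lambda(1)$, then separately force $v = 1$ (so $w \in \Lambda(1)$) and $\langle \alpha,\nu(\lambda)\rangle = 0$ for all $\alpha \in \Sigma$, from which Lemma~\ref{lem:length zero} gives $\ell(w) = 0$.

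First I would exploit that $\Lambda(1) = Z/Z(1)$ is abelian (since $Z$, the centralizer of the maximal split torus $S$ in a connected reductive group, is itself a torus). For any $\mu \in \Lambda(1)$, write out $w\mu = \mu w$ in $W(1)$: substituting $w = n_v\lambda$ and using $\lambda\mu = \mu\lambda$ in $\Lambda(1)$, this reduces to $\mu = n_v^{-1}\mu n_v$ for every $\mu \in \Lambda(1)$. Applying $\nu$ yields $\nu(\mu) = v^{-1}\cdot \nu(\mu)$ for every $\mu \in \Lambda(1)$.

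Next I would argue that $\nu(\Lambda(1))$ $\R$-spans $X_*(S)\otimes_\Z\R$. This is because $\Lambda = Z/Z^0$ modulo its torsion part is a free abelian group of rank equal to the split rank, and $\nu$ is characterized by $\nu(t)(\chi) = -\val(\chi(t))$, whose image is a full-rank lattice. Consequently $v^{-1}$ fixes a spanning set of $X_*(S)\otimes_\Z \R$, and since $W_0$ acts faithfully on $X_*(S)\otimes_\Z\R$ (as the Weyl group of $\Sigma$), we conclude $v = 1$ and thus $w = \lambda \in \Lambda(1)$.

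Finally, to control the length, I would test centrality against $n_{s_\alpha}$ for each simple root $\alpha \in \Delta$. The equality $n_{s_\alpha}^{-1}\lambda n_{s_\alpha} = \lambda$ gives $s_\alpha \cdot \nu(\lambda) = \nu(\lambda)$, i.e.\ $\langle \alpha,\nu(\lambda)\rangle = 0$. Since this holds on a set of simple roots and $\nu(\lambda)$ is thus fixed by all of $W_0$, we get $\langle \alpha, \nu(\lambda)\rangle = 0$ for every $\alpha \in \Sigma$, and Lemma~\ref{lem:length zero} gives $\ell(w) = 0$. The only mildly delicate point is the full-rank claim for $\nu(\Lambda(1))$, but this is standard from the structure of $Z$ modulo its maximal compact subgroup $\widetilde{Z}$; everything else is formal manipulation inside $W(1)$.
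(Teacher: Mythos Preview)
Your argument for $\ell(w)=0$ once $w\in\Lambda(1)$ is correct and matches the paper's proof exactly: commute $w$ with $n_{s_\alpha}$, apply $\nu$, and invoke Lemma~\ref{lem:length zero}.

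The gap is in the first step. The centralizer $Z$ of a maximal \emph{split} torus is not a torus in general; it is only a torus when $G$ is quasi-split. For general $G$ the group $Z$ is the Levi of a minimal parabolic, its derived group is $F$-anisotropic but possibly nontrivial, and correspondingly $\Lambda(1)=Z/Z(1)$ need not be abelian (this is why the paper repeatedly writes $Z(\Lambda(1))$ rather than $\Lambda(1)$). So your reduction ``$\lambda\mu=\mu\lambda$ in $\Lambda(1)$, hence $n_v^{-1}\mu n_v=\mu$'' is not justified.

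The fix is minor. From $w\mu=\mu w$ with $w=n_v\lambda$ you get $n_v^{-1}\mu n_v=\lambda\mu\lambda^{-1}$, not $\mu$. But now apply $\nu$: conjugation by any element of $\Lambda(1)$ acts trivially on $\nu$ (the action of $W(1)$ on $X_*(S)\otimes_\Z\R$ factors through $W_0$, and $\Lambda(1)$ maps to $1$ there), so you still obtain $v^{-1}\cdot\nu(\mu)=\nu(\mu)$ for every $\mu\in\Lambda(1)$. Your full-rank argument then forces $v=1$ as before. The paper bypasses all of this by citing \cite[Lemma~1.1]{MR3271250} for $w\in\Lambda(1)$; your direct route is fine once patched.
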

\begin{proof}
If $w$ is in the center, then $w\in \Lambda(1)$ by \cite[Lemma~1.1]{MR3271250}.
For $\alpha\in \Sigma$, we have $n_{s_\alpha}w n_{s_\alpha}^{-1} = w$.
Applying $\nu$, we get $s_\alpha(\nu(w)) = \nu(w)$.
Hence $\langle \alpha,\nu(w)\rangle = 0$.
\end{proof}

\begin{lem}\label{lem:length of lambda_aff is even}
If $\lambda\in \Lambda(1)\cap W_{\aff}(1)$, then $\ell(\lambda)$ is even.
\end{lem}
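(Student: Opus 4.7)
The plan is to read $\ell(\lambda)$ directly from the length formula recalled at the beginning of the subsection, applied to $w = \lambda$ (that is, with $v = 1$). In this case $\Sigma'^+\cap v(\Sigma'^+) = \Sigma'^+$ and $\Sigma'^+\cap v(\Sigma'^-) = \emptyset$, so the formula collapses to
\[
\ell(\lambda) = \sum_{\alpha\in\Sigma'^+}\lvert\langle\alpha,\nu(\lambda)\rangle\rvert.
\]
Each pairing $\langle\alpha,\nu(\lambda)\rangle$ is an integer, and $\lvert n\rvert\equiv n\pmod 2$ for every $n\in\Z$. Writing $2\rho' = \sum_{\alpha\in\Sigma'^+}\alpha$, this yields the mod-$2$ congruence
\[
\ell(\lambda) \equiv \langle 2\rho',\nu(\lambda)\rangle \pmod 2.
\]

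Next I would use the hypothesis $\lambda\in W_\aff(1)$. As noted just before the three length lemmas, this forces $\nu(\lambda)\in \Z\coroot{\Sigma'}$. Since $\langle\rho',\coroot{\alpha}\rangle = 1$ for every simple root $\alpha\in\Sigma'$ (the standard fact that $\rho'$ is the sum of the fundamental weights of $\Sigma'$), the linear functional $\langle 2\rho',-\rangle$ takes values in $2\Z$ on the entire coroot lattice $\Z\coroot{\Sigma'}$. Applied to $\nu(\lambda)$ this gives $\langle 2\rho',\nu(\lambda)\rangle\in 2\Z$, and combined with the congruence above yields $\ell(\lambda)\equiv 0\pmod 2$.

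There is no real obstacle: the only two ingredients are the explicit length formula and the standard identity $\langle\rho',\coroot{\alpha}\rangle = 1$ for simple roots; the key observation enabling the argument is simply the mod-$2$ equality $|n|\equiv n$, which lets one replace the sum of absolute values by the much more tractable linear expression $\langle 2\rho',\nu(\lambda)\rangle$.
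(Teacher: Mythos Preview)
Your proof is correct and follows essentially the same route as the paper: both reduce $\ell(\lambda)$ to $\sum_{\alpha\in\Sigma'^+}\lvert\langle\alpha,\nu(\lambda)\rangle\rvert$, replace absolute values by the values themselves modulo~$2$ to obtain $2\langle\rho,\nu(\lambda)\rangle$, and then use $\nu(\lambda)\in\Z\coroot{\Sigma'}$ together with $\langle\rho,\coroot{\alpha}\rangle=1$ for simple $\alpha$ to conclude.
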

\begin{proof}
We have $\ell(\lambda) = \sum_{\alpha\in\Sigma'^+}\lvert\langle\alpha,\nu(\lambda)\rangle\rvert \equiv \sum_{\alpha\in\Sigma'^+}\langle \alpha,\nu(\lambda)\rangle = 2\langle\rho,\nu(\lambda)\rangle\pmod{2}$ where $\rho = (1/2)\sum_{\alpha\in\Sigma'^+}\alpha$.
For each $\alpha\in\Sigma'$ which is simple, $\langle \rho,\coroot{\alpha}\rangle = 1$.
Hence for any $v\in \Z\coroot{\Sigma'}$, we have $\langle \rho,v\rangle\in\Z$.
Hence $2\langle\rho,\nu(\lambda)\rangle$ is even.
\end{proof}

\begin{lem}\label{lem:conjugate and length}
For $w\in W(1)$ and $\lambda\in \Lambda(1)$, we have $\ell(w\cdot \lambda) = \ell(\lambda)$.
\end{lem}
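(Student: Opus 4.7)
The plan is to reduce to the explicit length formula for pure translations and then to exploit the $W_0$-equivariance of $\nu$. Setting $v=1$ in the displayed formula for $\ell(n_v\lambda)$ (or $\ell(\lambda n_v)$), one has $\Sigma'^+\cap v^{-1}(\Sigma'^-)=\emptyset$, so
\[
\ell(\lambda)=\sum_{\alpha\in\Sigma'^+}\lvert\langle\alpha,\nu(\lambda)\rangle\rvert
\]
for every $\lambda\in\Lambda(1)$. Hence the value $\ell(\lambda)$ only depends on $\nu(\lambda)$, and to prove the lemma it will suffice to relate $\nu(w\cdot\lambda)$ to $\nu(\lambda)$.

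The key intermediate step is the identity
\[
\nu(w\lambda w^{-1})=\bar w\bigl(\nu(\lambda)\bigr),
\]
where $\bar w\in W_0$ denotes the image of $w\in W(1)$. I would justify this using the geometric description of $\nu$ recalled in the preceding subsection: the subgroup $\Lambda$ acts on the apartment by translations with translation vector $\nu(\cdot)$, while an arbitrary element of $W$ acts as an affine transformation whose linear part lies in $W_0$. Conjugating the translation $\lambda$ by $w$ produces the translation whose vector is the linear part of $w$ applied to $\nu(\lambda)$, namely $\bar w(\nu(\lambda))$. (Equivalently: $\nu$ has abelian target, so it factors through the abelianization, and the induced $W(1)$-action on that target is exactly the standard $W_0$-action on $X_*(S)\otimes_\Z\R$.)

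Given this, the lemma follows by a change of variables. Indeed,
\[
\ell(w\cdot\lambda)=\sum_{\alpha\in\Sigma'^+}\lvert\langle\alpha,\bar w(\nu(\lambda))\rangle\rvert
=\sum_{\alpha\in\Sigma'^+}\lvert\langle \bar w^{-1}(\alpha),\nu(\lambda)\rangle\rvert
=\sum_{\beta\in\bar w^{-1}(\Sigma'^+)}\lvert\langle\beta,\nu(\lambda)\rangle\rvert.
\]
Since $\bar w^{-1}(\Sigma'^+)$ is another system of positive roots of $\Sigma'$, and the summand $\lvert\langle\beta,\nu(\lambda)\rangle\rvert$ is invariant under $\beta\mapsto-\beta$, the last sum equals $\sum_{\alpha\in\Sigma'^+}\lvert\langle\alpha,\nu(\lambda)\rangle\rvert=\ell(\lambda)$.

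The only non-routine point is the verification of $\nu(w\lambda w^{-1})=\bar w(\nu(\lambda))$, and this is essentially built into the setup of $\nu$ as a translation vector, so I do not anticipate a serious obstacle. Everything else is an application of the length formula and the fact that the Weyl group permutes positive roots into a (possibly different) positive system.
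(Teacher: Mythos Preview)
Your proof is correct and follows essentially the same route as the paper: both use the length formula $\ell(\lambda)=\sum_{\alpha\in\Sigma'^+}\lvert\langle\alpha,\nu(\lambda)\rangle\rvert$, the identity $\nu(w\cdot\lambda)=\bar w(\nu(\lambda))$, and the invariance of the summand under $\beta\mapsto-\beta$ to pass from $\bar w^{-1}(\Sigma'^+)$ back to $\Sigma'^+$. The paper spells out the last step by explicitly decomposing $\bar w^{-1}(\Sigma'^+)=(\Sigma'^+\cap \bar w^{-1}(\Sigma'^+))\sqcup(-(\Sigma'^+\cap \bar w^{-1}(\Sigma'^-)))$, whereas you phrase it as ``another positive system plus an even summand'', but this is only a cosmetic difference.
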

\begin{proof}
Let $x\in W_0$ be the image of $w$.
Then we have $\nu(w\cdot \lambda) = x(\nu(\lambda))$.
Hence $\ell(w\cdot \lambda) = \sum_{\alpha\in\Sigma'^+}\lvert\langle \alpha,x(\nu(\lambda))\rangle\rvert = \sum_{\alpha\in x^{-1}(\Sigma'^+)}\lvert\langle \alpha,\nu(\lambda)\rangle\rvert$.
We have $x^{-1}(\Sigma'^+) = (\Sigma'^+\cap x^{-1}(\Sigma'^+))\sqcup (\Sigma'^-\cap x^{-1}(\Sigma'^+)) = (\Sigma'^+\cap x^{-1}(\Sigma'^+))\sqcup (-(\Sigma'^+\cap x^{-1}(\Sigma'^-)))$.
Since 
\[
\sum_{\alpha\in -(\Sigma'^+\cap x^{-1}(\Sigma'^-))}\lvert\langle \alpha,\nu(\lambda)\rangle\vert = \sum_{\alpha\in (\Sigma'^+\cap x^{-1}(\Sigma'^-))}\lvert\langle \alpha,\nu(\lambda)\rangle\vert,
\]
we have
\begin{align*}
\ell(w\cdot \lambda) & = \sum_{\alpha\in(\Sigma'^+\cap x^{-1}(\Sigma'^+))}\lvert\langle \alpha,\nu(\lambda)\rangle\vert + \sum_{\alpha\in-(\Sigma'^+\cap x^{-1}(\Sigma'^-))}\lvert\langle \alpha,\nu(\lambda)\rangle\vert\\
& = \sum_{\alpha\in(\Sigma'^+\cap x^{-1}(\Sigma'^+))}\lvert\langle \alpha,\nu(\lambda)\rangle\vert + \sum_{\alpha\in(\Sigma'^+\cap x^{-1}(\Sigma'^-))}\lvert\langle \alpha,\nu(\lambda)\rangle\vert\\
&= \sum_{\alpha\in\Sigma'^+}\lvert\langle\alpha,\nu(\lambda)\rangle\rvert = \ell(\lambda).
\end{align*}
We finish the proof.
\end{proof}

\begin{lem}\label{lem:ell(v lambda) as ell(v)+ell(lambda)}
Let $v\in W_0$ and $\lambda\in \Lambda(1)$.
Then we have
\begin{align*}
\ell(\lambda n_v) & = \ell(\lambda) + \ell(v) - 2\#\{\alpha\in\Sigma_\red^+\mid v^{-1}(\alpha) < 0,\ \langle \alpha,\nu(\lambda)\rangle > 0\}\\
& = \ell(\lambda) - \ell(v) + 2\#\{\alpha\in\Sigma_\red^+\mid v^{-1}(\alpha) < 0,\ \langle \alpha,\nu(\lambda)\rangle \le 0\}
\end{align*}
and
\begin{align*}
\ell(n_v \lambda) & = \ell(\lambda) + \ell(v) - 2\#\{\alpha\in\Sigma_\red^+\mid v(\alpha) < 0,\ \langle \alpha,\nu(\lambda)\rangle < 0\}\\
& = \ell(\lambda) - \ell(v) + 2\#\{\alpha\in\Sigma_\red^+\mid v(\alpha) < 0,\ \langle \alpha,\nu(\lambda)\rangle \ge 0\}.
\end{align*}
\end{lem}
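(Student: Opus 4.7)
The approach is to deduce both displayed equalities directly from the two explicit length formulas of \cite[Corollary~5.10]{MR3484112} reproduced immediately above the statement, after identifying $\Sigma'^+$ with $\Sigma_\red^+$ via the bijection $\Sigma'\simeq\Sigma_\red$. The two halves of the lemma are parallel, so I will describe the argument for $\ell(\lambda n_v)$ in detail and only indicate the needed adjustments for $\ell(n_v\lambda)$.

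First I split $\Sigma'^+=(\Sigma'^+\cap v(\Sigma'^+))\sqcup(\Sigma'^+\cap v(\Sigma'^-))$ and note that $\Sigma'^+\cap v(\Sigma'^-)=\{\alpha\in\Sigma'^+\mid v^{-1}(\alpha)<0\}$ has cardinality $\ell(v)$. For $\alpha$ in this second set I split the summand $|\langle\alpha,\nu(\lambda)\rangle-1|$ according to the sign of $\langle\alpha,\nu(\lambda)\rangle$: it equals $|\langle\alpha,\nu(\lambda)\rangle|-1$ when $\langle\alpha,\nu(\lambda)\rangle>0$ and $|\langle\alpha,\nu(\lambda)\rangle|+1$ when $\langle\alpha,\nu(\lambda)\rangle\le 0$. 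This silently uses integrality of $\langle\alpha,\nu(\lambda)\rangle$, which holds because $\nu(\Lambda)$ acts by translations preserving the affine root hyperplanes $\alpha+k=0$ ($\alpha\in\Sigma'$, $k\in\Z$).

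Combining with the first sum produces $\sum_{\alpha\in\Sigma'^+}|\langle\alpha,\nu(\lambda)\rangle|=\ell(\lambda)$ (the same length formula at $v=1$) plus a correction $A-B$, where $A$ and $B$ count the $\alpha\in\Sigma'^+$ with $v^{-1}(\alpha)<0$ and with $\langle\alpha,\nu(\lambda)\rangle\le 0$, respectively $>0$. Since $A+B=\ell(v)$, the two asserted expressions for $\ell(\lambda n_v)$ are just $\ell(\lambda)+\ell(v)-2B$ and $\ell(\lambda)-\ell(v)+2A$. The argument for $\ell(n_v\lambda)$ is identical: start from the second length formula, replace $|\langle\alpha,\nu(\lambda)\rangle-1|$ by $|\langle\alpha,\nu(\lambda)\rangle+1|$, the condition $v^{-1}(\alpha)<0$ by $v(\alpha)<0$, and the sign split $>0/\le 0$ by $\ge 0/<0$. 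The whole calculation is pure bookkeeping; the only place requiring care is the sign accounting in the case split, whose correctness rests on the integrality of the pairings.
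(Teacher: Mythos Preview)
Your argument is correct and follows essentially the same route as the paper: both proofs plug into the explicit length formula from \cite[Corollary~5.10]{MR3484112}, perform the same sign-based case split on $\lvert\langle\alpha,\nu(\lambda)\rangle-1\rvert$ (relying on integrality of the pairing), and identify $\Sigma'^+$ with $\Sigma_\red^+$. The only cosmetic difference is that the paper derives the $\ell(n_v\lambda)$ formula from the $\ell(\lambda n_v)$ formula by taking inverses (using $\ell(n_v\lambda)=\ell(\lambda^{-1}n_{v^{-1}})$), whereas you handle it directly from the second length formula; both are equally valid bookkeeping.
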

\begin{proof}
The formula for $\ell(n_v\lambda)$ follows from the formula for $\ell(\lambda n_v)$ and $\ell(n_v \lambda) = \ell((n_v\lambda)^{-1}) = \ell(\lambda^{-1}n_v^{-1}) = \ell(\lambda^{-1}n_{v^{-1}})$, here, at the last point, we use the fact that $n_{v^{-1}}n_v\in \Ker(W(1)\to W)$ has the length zero.

Since
\[
\lvert\langle \alpha,\nu(\lambda)\rangle - 1\rvert - 1
=
\begin{cases}
\lvert\langle\alpha,\nu(\lambda)\rangle\rvert - 2& (\langle\alpha,\nu(\lambda)\rangle > 0),\\
\lvert\langle\alpha,\nu(\lambda)\rangle\rvert & (\langle\alpha,\nu(\lambda)\rangle \le 0),
\end{cases}
\]
we have
\begin{multline*}
\ell(\lambda n_v) - \#(\Sigma'^+\cap v(\Sigma'^-))\\
= \sum_{\alpha\in\Sigma'^+}\lvert\langle\alpha,\nu(\lambda)\rangle\rvert - 2\#\{\alpha\in\Sigma'^+\cap v(\Sigma'^-)\mid \langle \alpha,\nu(\lambda)\rangle > 0\}.
\end{multline*}
We have $\#(\Sigma'^+\cap v(\Sigma'^-)) = \ell(v)$ and $\sum_{\alpha\in\Sigma'^+}\lvert\langle\alpha,\nu(\lambda)\rangle\rvert = \ell(\lambda)$.
Hence $\ell(\lambda n_v) = \ell(\lambda) + \ell(v) -  2\#\{\alpha\in\Sigma'^+\cap v(\Sigma'^-)\mid \langle \alpha,\nu(\lambda)\rangle > 0\}$.
For any $\alpha\in\Sigma'$ there exists a unique $r_\alpha > 0$ such that $r_\alpha \alpha\in\Sigma_\red$.
We have $\alpha\in\Sigma'^+\cap v(\Sigma'^-)$ if and only if $r_\alpha \alpha\in \Sigma_\red^+\cap v(\Sigma_\red^-)$ and $\langle \alpha,\nu(\lambda)\rangle > 0$ if and only if $\langle r_\alpha\alpha,\nu(\lambda)\rangle > 0$.
Since $\alpha\mapsto r_\alpha \alpha$ gives a bijection $\Sigma'\simeq \Sigma_\red$, we get 
\[
\#\{\alpha\in\Sigma'^+\cap v(\Sigma'^-)\mid \langle \alpha,\nu(\lambda)\rangle > 0\}
=
\#\{\alpha\in\Sigma_\red^+\cap v(\Sigma_\red^-)\mid \langle \alpha,\nu(\lambda)\rangle > 0\}.
\]
We get the first formula.
Since
\begin{align*}
\ell(v) & = \#\{\alpha\in\Sigma_\red^+\mid v^{-1}(\alpha) < 0\}\\
& = \#\{\alpha\in\Sigma_\red^+\mid v^{-1}(\alpha) < 0,\ \langle \alpha,\nu(\lambda)\rangle \le 0\}\\
& \quad + \#\{\alpha\in\Sigma_\red^+\mid v^{-1}(\alpha) < 0,\ \langle \alpha,\nu(\lambda)\rangle > 0\},
\end{align*}
we get the second formula.
\end{proof}
\begin{lem}\label{lem:condition for length is additive}
Let $v\in W_0$ and $\lambda\in\Lambda(1)$.
We have:
\begin{itemize}
\item $\ell(\lambda n_v) = \ell(\lambda) + \ell(v)$ if and only if for any $\alpha\in\Sigma^+$ such that $v^{-1}(\alpha) < 0$, $\langle \alpha,\nu(\lambda)\rangle \le 0$.
\item $\ell(\lambda n_v) = \ell(\lambda) - \ell(v)$ if and only if for any $\alpha\in\Sigma^+$ such that $v^{-1}(\alpha) < 0$, $\langle \alpha,\nu(\lambda)\rangle > 0$.
\item $\ell(n_v \lambda) = \ell(\lambda) + \ell(v)$ if and only if for any $\alpha\in\Sigma^+$ such that $v(\alpha) < 0$, $\langle \alpha,\nu(\lambda)\rangle \ge 0$.
\item $\ell(n_v \lambda) = \ell(\lambda) - \ell(v)$ if and only if for any $\alpha\in\Sigma^+$ such that $v(\alpha) < 0$, $\langle \alpha,\nu(\lambda)\rangle < 0$.
\end{itemize}
In particular, for $\alpha\in\Delta$ and $\lambda\in\Lambda(1)$, we have
\begin{itemize}
\item $\ell(\lambda n_{s_\alpha}) = \ell(\lambda) + 1$ if and only if $\langle \alpha,\nu(\lambda)\rangle \le 0$.
\item $\ell(n_{s_\alpha}\lambda) = \ell(\lambda) + 1$ if and only if $\langle \alpha,\nu(\lambda)\rangle \ge 0$.
\end{itemize}
\end{lem}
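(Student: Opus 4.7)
The plan is to derive both the four equivalences and the two particular cases directly from the two expressions for $\ell(\lambda n_v)$ (and the analogous ones for $\ell(n_v\lambda)$) established in Lemma~\ref{lem:ell(v lambda) as ell(v)+ell(lambda)}, without any fresh computation in the affine root system.

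First I would observe that both formulas
\[
\ell(\lambda n_v) = \ell(\lambda)+\ell(v)-2\#\{\alpha\in\Sigma_\red^+\mid v^{-1}(\alpha)<0,\ \langle\alpha,\nu(\lambda)\rangle>0\}
\]
and
\[
\ell(\lambda n_v) = \ell(\lambda)-\ell(v)+2\#\{\alpha\in\Sigma_\red^+\mid v^{-1}(\alpha)<0,\ \langle\alpha,\nu(\lambda)\rangle\le 0\}
\]
express $\ell(\lambda n_v)$ as $\ell(\lambda)\pm\ell(v)$ corrected by a non-negative integer. Hence $\ell(\lambda n_v)=\ell(\lambda)+\ell(v)$ (resp.\ $\ell(\lambda)-\ell(v)$) is equivalent to the vanishing of the corresponding counting set, which is exactly the displayed sign condition on roots $\alpha\in\Sigma_\red^+$ with $v^{-1}(\alpha)<0$. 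The same reasoning applied to the two formulas for $\ell(n_v\lambda)$ gives the other two equivalences.

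Next I would upgrade the indexing set from $\Sigma_\red^+$ to $\Sigma^+$ as stated in the lemma: for any $\alpha\in\Sigma^+$ there is a unique $r_\alpha>0$ with $r_\alpha\alpha\in\Sigma_\red^+$, and the conditions $v^{\pm 1}(\alpha)<0$ and $\langle\alpha,\nu(\lambda)\rangle\lessgtr 0$ depend only on the ray $\R_{>0}\alpha$. Thus quantifying over $\Sigma^+$ and over $\Sigma_\red^+$ yields equivalent statements.

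Finally, for the specialization to $v=s_\alpha$ with $\alpha\in\Delta$, I would use the standard fact that $s_\alpha$ permutes $\Sigma^+\setminus\{\alpha\}$ and sends $\alpha$ to $-\alpha$, so $\{\beta\in\Sigma^+\mid s_\alpha(\beta)<0\}=\{\beta\in\Sigma^+\mid s_\alpha^{-1}(\beta)<0\}=\{\alpha\}$. The four general equivalences then collapse to the two simple criteria $\langle\alpha,\nu(\lambda)\rangle\le 0$ (for $\ell(\lambda n_{s_\alpha})=\ell(\lambda)+1$) and $\langle\alpha,\nu(\lambda)\rangle\ge 0$ (for $\ell(n_{s_\alpha}\lambda)=\ell(\lambda)+1$). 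There is no real obstacle here; the only point to keep an eye on is the passage between $\Sigma^+$ and $\Sigma_\red^+$, which is harmless since the inequalities in question are scale-invariant.
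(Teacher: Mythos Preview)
Your proposal is correct and follows exactly the paper's approach: the paper's proof is the single sentence ``Obvious from the previous lemma,'' and you have simply written out what ``obvious'' means here. One tiny quibble: in a non-reduced root system the set $\{\beta\in\Sigma^+\mid s_\alpha(\beta)<0\}$ may also contain $2\alpha$, so $s_\alpha$ need not permute $\Sigma^+\setminus\{\alpha\}$ literally; but the scale-invariance you already invoked makes this harmless.
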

\begin{proof}
Obvious from the previous lemma.
\end{proof}

\begin{lem}\label{lem:length, positive/negative elements}
Let $P$ be a parabolic subgroup, $v\in W_0$, $w\in W_P(1)$ and $\lambda_0 \in Z(W_P(1))$.
\begin{enumerate}
\item If $v\in W_0^P$, $\lambda_0$ is dominant and $w$ is $P$-negative, then we have $\ell(n_v \lambda_0w) = \ell(n_v\lambda_0) + \ell(w) = \ell(v) + \ell(\lambda_0) + \ell(w)$.\label{enum:negative:lem:length, positive/negative elements}
\item If $v\in {}^PW_0$, $\lambda_0$ is anti-dominant and $w$ is $P$-positive, then we have $\ell(w \lambda_0 n_v) = \ell(w) + \ell(\lambda_0 n_v) = \ell(w) + \ell(\lambda_0) + \ell(v)$.\label{enum:positive:lem:length, positive/negative elements}
\end{enumerate}
\end{lem}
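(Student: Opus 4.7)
The plan is to prove part \eqref{enum:negative:lem:length, positive/negative elements} by direct computation using Lemma~\ref{lem:ell(v lambda) as ell(v)+ell(lambda)} and the summation formula for the length, and then deduce part \eqref{enum:positive:lem:length, positive/negative elements} by passing to inverses.

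For part \eqref{enum:negative:lem:length, positive/negative elements}, I first write $w = n_u\mu$ with $u \in W_{0,P}$ and $\mu \in \Lambda(1)$. Since $\lambda_0$ is central in $W_P(1)$, it commutes with $n_u$, and since $v \in W_0^P$ forces $\ell(vu) = \ell(v) + \ell(u)$, we have $n_v\lambda_0 w = n_{vu}\lambda_0\mu$ in $W(1)$. The identity $\ell(n_v\lambda_0) = \ell(v) + \ell(\lambda_0)$ (the ``$w = 1$'' case) follows immediately from Lemma~\ref{lem:condition for length is additive}: any $\alpha \in \Sigma^+$ with $v(\alpha) < 0$ must satisfy $\alpha \notin \Sigma_P^+$ (since $v \in W_0^P$ implies $v(\Sigma_P^+) \subset \Sigma^+$), and dominance of $\lambda_0$ then gives $\langle \alpha, \nu(\lambda_0)\rangle \ge 0$.

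The main step is to compute $\ell(n_{vu}\lambda_0\mu)$ via Lemma~\ref{lem:ell(v lambda) as ell(v)+ell(lambda)}, which writes this as $\ell(\lambda_0\mu) + \ell(vu)$ minus twice a correction count over $\Sigma_\red^+$. I expect the heart of the argument to be twofold. First, I need to show $\ell(\lambda_0\mu) = \ell(\lambda_0) + \ell(\mu)$; this is not automatic since $\nu(\mu)$ need not be dominant, but it follows by splitting the summation formula for the length over $\Sigma_P^+$ and its complement, using that $\langle \alpha, \nu(\lambda_0)\rangle = 0$ on $\Sigma_P$ (by centrality) and that both $\langle \alpha, \nu(\lambda_0)\rangle$ and $\langle \alpha, \nu(\mu)\rangle$ are non-negative on $\Sigma^+ \setminus \Sigma_P^+$, so the absolute values split additively. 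Second, I need to identify the correction set with the analogous correction that appears in Lemma~\ref{lem:ell(v lambda) as ell(v)+ell(lambda)} applied to $\ell(n_u\mu)$: on $\Sigma^+ \setminus \Sigma_P^+$ the inequality $\langle \alpha, \nu(\lambda_0\mu)\rangle < 0$ fails outright (by the same non-negativity), while on $\Sigma_{P,\red}^+$ centrality of $\lambda_0$ reduces this inequality to one in $\mu$ alone, and $vu(\alpha) < 0$ reduces to $u(\alpha) < 0$ because $v$ preserves the signs of roots in $\Sigma_P$. Assembling these identifications yields $\ell(n_v\lambda_0 w) = \ell(v) + \ell(\lambda_0) + \ell(w)$.

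For part \eqref{enum:positive:lem:length, positive/negative elements}, I apply part \eqref{enum:negative:lem:length, positive/negative elements} to $(v^{-1}, \lambda_0^{-1}, w^{-1})$. One checks that $v \in {}^PW_0$ is equivalent to $v^{-1} \in W_0^P$, that anti-dominance of $\lambda_0$ is equivalent to dominance of $\lambda_0^{-1}$, and (using that $W_{0,P}$ permutes $\Sigma^+ \setminus \Sigma_P^+$) that $w^{-1}$ is $P$-negative whenever $w$ is $P$-positive. Since $\ell$ is preserved under inversion in $W$, the conclusion of part \eqref{enum:negative:lem:length, positive/negative elements} for the inverted data transfers directly to $\ell(w\lambda_0 n_v) = \ell(w) + \ell(\lambda_0 n_v) = \ell(w) + \ell(\lambda_0) + \ell(v)$.
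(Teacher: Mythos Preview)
Your proposal is correct and follows essentially the same approach as the paper: write $w = n_u\mu$, commute $\lambda_0$ past $n_u$ using centrality, apply Lemma~\ref{lem:ell(v lambda) as ell(v)+ell(lambda)} to $n_{vu}\lambda_0\mu$, and reduce the correction set to the one appearing in $\ell(n_u\mu)$; then deduce part~\eqref{enum:positive:lem:length, positive/negative elements} from part~\eqref{enum:negative:lem:length, positive/negative elements} by inversion. The only cosmetic differences are that the paper establishes $\ell(\lambda_0\mu) = \ell(\lambda_0) + \ell(\mu)$ via Lemma~\ref{lem:length, on lambda} (same closed Weyl chamber) rather than by splitting the length sum directly, and it obtains $\ell(n_v\lambda_0) = \ell(v) + \ell(\lambda_0)$ as the $w = 1$ special case of the main computation rather than by a separate appeal to Lemma~\ref{lem:condition for length is additive}.
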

\begin{proof}
(\ref{enum:positive:lem:length, positive/negative elements}) follows from (\ref{enum:negative:lem:length, positive/negative elements}) by taking the inverse.

We prove (1).
Take $w_1\in W_{0,P}$ and $\lambda\in \Lambda(1)$ such that $w = n_{w_1}\lambda$.
We remark that $\nu(\lambda_0)$ and $\nu(\lambda)$ is in the same closed Weyl chamber.
In fact, if $\alpha\in\Sigma^+\setminus\Sigma^+_P$, then $\langle \alpha,\nu(\lambda_0)\rangle\ge 0$ and $\langle \alpha,\nu(\lambda)\rangle\ge 0$ by the assumption.
Hence $\langle \alpha,\nu(\lambda_0)\rangle\langle \alpha,\nu(\lambda)\rangle\ge 0$.
If $\alpha\in\Sigma_P^+$, then $\langle \alpha,\nu(\lambda_0)\rangle = 0$ since $\lambda_0\in Z(W_P(1))$.
Hence $\langle \alpha,\nu(\lambda)\rangle\langle \alpha,\nu(\lambda_0)\rangle\ge 0$ for any $\alpha\in\Sigma^+_P$.
Therefore $\nu(\lambda)$ and $\nu(\lambda_0)$ are in the same closed Weyl chamber.
In particular, $\ell(\lambda_0\lambda) = \ell(\lambda_0) + \ell(\lambda)$ by Lemma~\ref{lem:length, on lambda}.

Since $\lambda_0$ is in the center of $W_P(1)$, we have
\begin{align*}
\ell(n_v\lambda_0 w) & = \ell(n_vn_{w_1}\lambda_0\lambda)\\
& = \ell(\lambda_0 \lambda) + \ell(vw_1) - 2\#\{\alpha\in\Sigma^+_\red\mid vw_1(\alpha) < 0,\ \langle \alpha,\nu(\lambda_0\lambda)\rangle < 0\}
\end{align*}
by Lemma~\ref{lem:ell(v lambda) as ell(v)+ell(lambda)}.

Let $\alpha\in\Sigma^+_\red$ such that $vw_1(\alpha) < 0$.
Since $v\in W^P_0$, $\ell(vw_1) = \ell(v) + \ell(w_1)$.
Hence we have $w_1(\alpha) < 0$ or $v(\beta) < 0$ where $\beta = w_1(\alpha) > 0$.
Assume that $v(\beta) < 0$ where $\beta = w_1(\alpha) > 0$.
Since $v\in W_0^P$, $\beta\in\Sigma^+\setminus\Sigma_P^+$.
Therefore $\alpha = w_1^{-1}(\beta)\in\Sigma^+\setminus\Sigma^+_P$.
Hence $\langle \alpha,\nu(\lambda)\rangle\ge 0$ since $\lambda$ is $P$-negative.
From the assumption, $\lambda_0$ is dominant.
Therefore $\langle\alpha,\nu(\lambda_0)\rangle\ge 0$.
Hence we get $\langle\alpha,\nu(\lambda_0\lambda)\rangle\ge 0$.
Therefore we have
\[
\ell(n_v\lambda_0 w) = \ell(\lambda_0 \lambda) + \ell(vw_1) - 2\#\{\alpha\in\Sigma^+_\red\mid w_1(\alpha) < 0,\ \langle \alpha,\nu(\lambda_0\lambda)\rangle < 0\}.
\]
Since $w_1\in W_{0,P}$, $w_1(\alpha) < 0$ implies $\alpha\in\Sigma_P^+$.
Hence $\langle\alpha,\nu(\lambda_0)\rangle = 0$.
We have
\[
\ell(n_v\lambda_0 w) = \ell(\lambda_0 \lambda) + \ell(vw_1) - 2\#\{\alpha\in\Sigma^+_\red\mid w_1(\alpha) < 0,\ \langle \alpha,\nu(\lambda)\rangle < 0\}.
\]
Recall that we have $\ell(vw_1) = \ell(v) + \ell(w_1)$ and $\ell(\lambda_0 \lambda) = \ell(\lambda_0) + \ell(\lambda)$.
Hence 
\begin{align*}
& \ell(n_v\lambda_0 w)\\
& = \ell(\lambda_0) + \ell(v) + \ell(\lambda) + \ell(w_1) - 2\#\{\alpha\in\Sigma^+_\red\mid w_1(\alpha) < 0,\ \langle \alpha,\nu(\lambda)\rangle < 0\}\\
& = \ell(\lambda_0) + \ell(v) + \ell(n_{w_1}\lambda) = \ell(\lambda_0) + \ell(v) + \ell(w)
\end{align*}
by Lemma~\ref{lem:ell(v lambda) as ell(v)+ell(lambda)}.
Put $w = 1$.
Then we have $\ell(n_v \lambda_0) = \ell(\lambda_0) + \ell(v)$.
\end{proof}

\begin{lem}\label{lem:length, positive/negative elements, strictly}
Let $w\in W_P(1)$, $v\in W_0$ and $\lambda_0\in Z(W_P(1))$.
\begin{enumerate}
\item If $w$ is $P$-positive, $\lambda_0 = \lambda_P^+$ as in Proposition~\ref{prop:localization as Levi subalgebra} and $v\in W_0^P$, then $\ell(n_v\lambda_0 w) = \ell(n_v\lambda_0) + \ell(w) = \ell(\lambda_0) - \ell(v) + \ell(w)$.\label{enum:positive:lem:length, positive/negative elements, strictly}
\item If $w$ is $P$-negative, $\lambda_0 = \lambda_P^-$ as in Proposition~\ref{prop:localization as Levi subalgebra} and $v\in {}^PW_0$, then $\ell(w\lambda_0 n_v) = \ell(w) + \ell(\lambda_0 n_v) = \ell(w) + \ell(\lambda_0) - \ell(v)$.\label{enum:negative:lem:length, positive/negative elements, strictly}
\end{enumerate}
\end{lem}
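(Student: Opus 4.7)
The plan is to prove (\ref{enum:positive:lem:length, positive/negative elements, strictly}) by a direct length calculation and then deduce (\ref{enum:negative:lem:length, positive/negative elements, strictly}) by inversion, exactly as in the proof of Lemma~\ref{lem:length, positive/negative elements}. Taking inverses sends $v \in {}^PW_0$ to $v^{-1}\in W_0^P$, the $P$-negative $w$ to a $P$-positive $w^{-1}$, and the central $\lambda_P^-$ to a central element satisfying the defining strict inequality of $\lambda_P^+$; the discrepancy between $n_v^{-1}$ and $n_{v^{-1}}$ is a length-zero element of $\Lambda(1)$, which is central in $W_P(1)$ and so can be shuffled past $\lambda_0^{-1}$ without affecting length.

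For (\ref{enum:positive:lem:length, positive/negative elements, strictly}), write $w=n_{w_1}\lambda$ with $w_1\in W_{0,P}$ and $\lambda\in\Lambda(1)$ $P$-positive. The centrality of $\lambda_0$ in $W_P(1)$ lets us commute it past $n_{w_1}$, and $\ell(vw_1)=\ell(v)+\ell(w_1)$ gives $n_v n_{w_1}=n_{vw_1}$, so $n_v\lambda_0 w=n_{vw_1}(\lambda_0\lambda)$. The hypothesis $\lambda_0=\lambda_P^+$ together with centrality forces $\langle\alpha,\nu(\lambda_0)\rangle<0$ for $\alpha\in\Sigma^+\setminus\Sigma_P^+$ and $\langle\alpha,\nu(\lambda_0)\rangle=0$ for $\alpha\in\Sigma_P$; combined with the $P$-positivity of $\lambda$, Lemma~\ref{lem:length, on lambda} then yields $\ell(\lambda_0\lambda)=\ell(\lambda_0)+\ell(\lambda)$. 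Applying Lemma~\ref{lem:ell(v lambda) as ell(v)+ell(lambda)} expresses $\ell(n_{vw_1}(\lambda_0\lambda))$ as $\ell(\lambda_0)+\ell(\lambda)+\ell(v)+\ell(w_1)-2N$, where
\[
N=\#\{\alpha\in\Sigma_\red^+:vw_1(\alpha)<0,\ \langle\alpha,\nu(\lambda_0\lambda)\rangle<0\}.
\]

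The heart of the argument is a two-way split of $N$. For $\alpha\in\Sigma_\red^+\cap\Sigma_P^+$, centrality makes $\langle\alpha,\nu(\lambda_0\lambda)\rangle=\langle\alpha,\nu(\lambda)\rangle$, and the condition $vw_1(\alpha)<0$ reduces to $w_1(\alpha)<0$ because $v\in W_0^P$ positivizes on $\Sigma_P^+$; this part of $N$ matches exactly the correction term in Lemma~\ref{lem:ell(v lambda) as ell(v)+ell(lambda)} applied to $n_{w_1}\lambda=w$, namely $(\ell(\lambda)+\ell(w_1)-\ell(w))/2$. For $\alpha\in\Sigma_\red^+\setminus\Sigma_P^+$, the strict inequality on $\lambda_P^+$ combined with $P$-positivity of $\lambda$ makes the sign condition automatic; since $w_1\in W_{0,P}$ permutes $\Sigma^+\setminus\Sigma_P^+$ while $v\in W_0^P$ forces every $\alpha\in\Sigma_\red^+$ with $v(\alpha)<0$ into $\Sigma_\red^+\setminus\Sigma_P^+$, this part contributes $\ell(v)$. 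Substituting gives $\ell(n_v\lambda_0 w)=\ell(\lambda_0)-\ell(v)+\ell(w)$, and setting $w=1$ recovers $\ell(n_v\lambda_0)=\ell(\lambda_0)-\ell(v)$ at the same time. The main obstacle is the bookkeeping in this split: the minus sign in front of $\ell(v)$ emerges from a cancellation that relies simultaneously on strict positivity of $\lambda_P^+$, centrality, $P$-positivity of $\lambda$, and both minimal-coset properties of $v$ and $w_1$.
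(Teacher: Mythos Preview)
Your proof is correct and follows essentially the same approach as the paper: both deduce (\ref{enum:negative:lem:length, positive/negative elements, strictly}) from (\ref{enum:positive:lem:length, positive/negative elements, strictly}) by inversion, write $w=n_{w_1}\lambda$, commute $\lambda_0$ past $n_{w_1}$ by centrality, establish $\ell(\lambda_0\lambda)=\ell(\lambda_0)+\ell(\lambda)$, and then apply Lemma~\ref{lem:ell(v lambda) as ell(v)+ell(lambda)} together with a case analysis on whether a root lies in $\Sigma_P$. The only cosmetic difference is that the paper starts from the second expression in Lemma~\ref{lem:ell(v lambda) as ell(v)+ell(lambda)} (the one with $-\ell(vw_1)+2\#\{\cdot\ge 0\}$), showing that roots with $w_1(\alpha)>0$ drop out of the count, whereas you start from the first expression (with $+\ell(vw_1)-2N$) and split $N$ into a $\Sigma_P$-part yielding the $w$-correction and a $(\Sigma\setminus\Sigma_P)$-part yielding $\ell(v)$; these are two sides of the same bookkeeping.
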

\begin{proof}
(\ref{enum:negative:lem:length, positive/negative elements, strictly}) follows from (\ref{enum:positive:lem:length, positive/negative elements, strictly}) by taking the inverse.

Take $w_1\in W_{0,P}$ and $\lambda\in \Lambda(1)$ such that $w = n_{w_1}\lambda$.
Then we have
\begin{align*}
& \ell(n_v\lambda_0 w)\\
& = \ell(n_vn_{w_1} \lambda_0 \lambda)\\
& = \ell(\lambda_0\lambda) - \ell(vw_1) + 2\#\{\alpha\in\Sigma_\red^+\mid (vw_1)(\alpha) < 0,\ \langle \alpha,\nu(\lambda_0\lambda)\rangle\ge 0\}
\end{align*}
by Lemma~\ref{lem:ell(v lambda) as ell(v)+ell(lambda)}.
Let $\alpha\in\Sigma^+$ such that $(vw_1)(\alpha) < 0$.
Then $w_1(\alpha) < 0$ or $v(\beta) < 0$ where $\beta = w_1(\alpha) > 0$.
If $v(\beta) < 0$, $\beta = w_1(\alpha) > 0$, then $\beta\in\Sigma^+\setminus\Sigma_P^+$ since $v\in W_0^P$.
Since $w_1\in W_{0,P}$, we have $\alpha = w_1^{-1}(\beta)\in \Sigma^+\setminus\Sigma_P^+$.
Hence $\langle \alpha,\nu(\lambda_0)\rangle < 0$ by the condition on $\lambda_P^+$, $\langle \alpha,\nu(\lambda)\rangle \le 0$ since $\lambda$ is $P$-positive.
Therefore we have $\langle \alpha,\nu(\lambda_0\lambda)\rangle < 0$.
We get
\[
\ell(n_v\lambda_0 w) = \ell(\lambda_0\lambda) - \ell(vw_1) + 2\#\{\alpha\in\Sigma_\red^+\mid w_1(\alpha) < 0,\ \langle \alpha,\nu(\lambda_0\lambda)\rangle\ge 0\}.
\]
If $w_1(\alpha) < 0$, then since $w_1\in W_{0,P}$, we have $\alpha\in\Sigma_P$.
Hence $\langle\alpha,\nu(\lambda_0)\rangle = 0$.
Therefore we have
\[
\ell(n_v\lambda_0 w) = \ell(\lambda_0\lambda) - \ell(vw_1) + 2\#\{\alpha\in\Sigma_\red^+\mid w_1(\alpha) < 0,\ \langle \alpha,\nu(\lambda)\rangle\ge 0\}.
\]
We have:
\begin{itemize}
\item $\ell(vw_1) = \ell(v) + \ell(w_1)$ since $v\in W_0^P$ and $w_1\in W_{0,P}$.
\item $\ell(\lambda_0 \lambda) = \ell(\lambda_0) + \ell(\lambda)$.
Indeed, $\nu(\lambda_0)$ and $\nu(\lambda)$ are in the same closed Weyl chamber.
If $\alpha\in\Sigma^+\setminus\Sigma_P^+$, then $\langle\alpha,\nu(\lambda_0)\rangle$ and $\langle\alpha,\nu(\lambda)\rangle$ are both not positive since $\lambda_0,\lambda$ are both $P$-positive.
If $\alpha\in\Sigma_P^+$, then $\langle\alpha,\nu(\lambda_0)\rangle = 0$, hence $\langle\alpha,\nu(\lambda_0)\rangle\langle\alpha,\nu(\lambda)\rangle = 0\ge 0$.
\end{itemize}
Therefore we get
\begin{align*}
& \ell(n_v \lambda_0 w)\\
& = \ell(\lambda_0) - \ell(v) + \ell(\lambda) - \ell(w_1) + 2\#\{\alpha\in\Sigma_\red^+\mid w_1(\alpha) < 0,\ \langle \alpha,\nu(\lambda)\rangle\ge 0\}\\
& = \ell(\lambda_0) - \ell(v) + \ell(\lambda n_{w_1}) = \ell(\lambda_0) - \ell(v) + \ell(w)
\end{align*}
by Lemma~\ref{lem:ell(v lambda) as ell(v)+ell(lambda)}.
Applying this to $w = 1$, we get $\ell(n_v \lambda_0) = \ell(\lambda_0) - \ell(v)$.
\end{proof}

\begin{lem}\label{lem:condition for length is additive, P-negative}
Let $w\in W_P(1)$ and $\lambda_P^-\in \Lambda(1)$ as in Proposition~\ref{prop:localization as Levi subalgebra}.
Then $\ell(w \lambda_P^-) = \ell(w) + \ell(\lambda_P^-)$ if and only if $w$ is $P$-negative.
\end{lem}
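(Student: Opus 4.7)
The plan is to write $w = n_v\lambda$ with $v\in W_{0,P}$ and $\lambda\in\Lambda(1)$, exploit the fact that $\lambda_P^-\in Z(W_P(1))$ so that it commutes past $\lambda$, and then reduce the claim to the purely translational statement comparing $\ell(\lambda\lambda_P^-)$ with $\ell(\lambda)+\ell(\lambda_P^-)$, which is handled by Lemma~\ref{lem:length, on lambda}.

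Concretely, I would apply the second formula of Lemma~\ref{lem:ell(v lambda) as ell(v)+ell(lambda)} both to $w = n_v\lambda$ and to $w\lambda_P^- = n_v(\lambda\lambda_P^-)$. Each expression carries a correction term of the form $2\#\{\alpha\in\Sigma_\red^+\mid v(\alpha)<0,\ \langle\alpha,\nu(\cdot)\rangle<0\}$, evaluated at $\lambda$ and $\lambda\lambda_P^-$ respectively. The key observation is that $v\in W_{0,P}$ permutes $\Sigma^+\setminus\Sigma_P^+$, so any $\alpha\in\Sigma_\red^+$ with $v(\alpha)<0$ must lie in $\Sigma_{P,\red}^+$; on $\Sigma_P^+$ the functional $\nu(\lambda_P^-)$ vanishes because $\lambda_P^-$ is central in $W_P(1)$ (conjugating by $n_{s_\alpha}$ for $\alpha\in\Delta_P$ and applying $\nu$ forces $\langle\alpha,\nu(\lambda_P^-)\rangle=0$). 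Therefore $\langle\alpha,\nu(\lambda\lambda_P^-)\rangle = \langle\alpha,\nu(\lambda)\rangle$ for every such $\alpha$, the two correction terms are equal, and subtracting yields
\[
\ell(w\lambda_P^-) - \ell(w) = \ell(\lambda\lambda_P^-) - \ell(\lambda).
\]

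Hence $\ell(w\lambda_P^-) = \ell(w) + \ell(\lambda_P^-)$ if and only if $\ell(\lambda\lambda_P^-) = \ell(\lambda) + \ell(\lambda_P^-)$. By Lemma~\ref{lem:length, on lambda}, the latter is equivalent to $\nu(\lambda)$ and $\nu(\lambda_P^-)$ lying in a common closed Weyl chamber, i.e.\ $\langle\alpha,\nu(\lambda)\rangle\langle\alpha,\nu(\lambda_P^-)\rangle\ge 0$ for all $\alpha\in\Sigma^+$. On $\Sigma_P^+$ this is automatic since $\langle\alpha,\nu(\lambda_P^-)\rangle=0$, while on $\Sigma^+\setminus\Sigma_P^+$ the strict inequality $\langle\alpha,\nu(\lambda_P^-)\rangle>0$ from Proposition~\ref{prop:localization as Levi subalgebra} forces $\langle\alpha,\nu(\lambda)\rangle\ge 0$; this is exactly the condition that $w = n_v\lambda$ is $P$-negative.

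I do not expect a genuine obstacle: everything reduces to two short arithmetic checks. The only substantive input is the standard fact that $W_{0,P}$ preserves $\Sigma^+\setminus\Sigma_P^+$ (used to eliminate contributions outside $\Sigma_P^+$), combined with the strict positivity of $\nu(\lambda_P^-)$ on $\Sigma^+\setminus\Sigma_P^+$ built into the choice of $\lambda_P^-$.
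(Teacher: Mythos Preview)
Your proof is correct and, in fact, slightly more streamlined than the paper's argument. The paper proves the ``if'' direction by invoking Lemma~\ref{lem:length, positive/negative elements, strictly}~(2), and for the converse it returns to the raw length formula for $\ell(w\lambda_P^-)$, $\ell(w)$, and $\ell(\lambda_P^-)$ separately, then applies the triangle inequality term by term to force $\langle\alpha,\nu(\mu)\rangle\langle\alpha,\nu(\lambda_P^-)\rangle\ge 0$ for each $\alpha\in\Sigma'^+$, whence $\langle\alpha,\nu(\mu)\rangle\ge 0$ on $\Sigma^+\setminus\Sigma_P^+$.

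Your route is different in organization: you apply Lemma~\ref{lem:ell(v lambda) as ell(v)+ell(lambda)} once to $n_v\lambda$ and once to $n_v(\lambda\lambda_P^-)$, observe that the correction sets coincide (because $v(\alpha)<0$ forces $\alpha\in\Sigma_{P,\red}^+$ where $\nu(\lambda_P^-)$ vanishes), and thereby reduce the whole statement---both directions at once---to the purely translational equality $\ell(\lambda\lambda_P^-)=\ell(\lambda)+\ell(\lambda_P^-)$, which is handled cleanly by Lemma~\ref{lem:length, on lambda}. The same underlying facts are used, but by leveraging the intermediate lemmas you avoid redoing the length-formula bookkeeping and obtain a unified proof of the equivalence rather than two separate halves.
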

\begin{proof}
The ``if part'' follows from Lemma~\ref{lem:length, positive/negative elements, strictly} (2).
Assume that $\ell(w \lambda_P^-) = \ell(w) + \ell(\lambda_P^-)$.
Take $v\in W_{0,P}$ and $\mu\in \Lambda(1)$ such that $w = n_v\mu$.
Then we have
\[
\ell(w\lambda_P^-) = \sum_{\alpha\in\Sigma'^+\cap v^{-1}(\Sigma'^+)}\lvert\langle \alpha,\nu(\mu \lambda_P^-)\rangle\rvert + \sum_{\alpha\in\Sigma'^+\cap v^{-1}(\Sigma'^-)}\lvert\langle \alpha,\nu(\mu \lambda_P^-)\rangle + 1\rvert
\]
and
\begin{align*}
\ell(w) & = \sum_{\alpha\in\Sigma'^+\cap v^{-1}(\Sigma'^+)}\lvert\langle \alpha,\nu(\mu)\rangle\rvert + \sum_{\alpha\in\Sigma'^+\cap v^{-1}(\Sigma'^-)}\lvert\langle \alpha,\nu(\mu)\rangle + 1\rvert,\\
\ell(\lambda_P^-) & = \sum_{\alpha\in\Sigma'^+\cap v^{-1}(\Sigma'^+)}\lvert\langle \alpha,\nu(\lambda_P^-)\rangle\rvert + \sum_{\alpha\in\Sigma'^+\cap v^{-1}(\Sigma'^-)}\lvert\langle \alpha,\nu(\lambda_P^-)\rangle\rvert
\end{align*}
by the length formula.
By the triangle inequality and the assumption  $\ell(w \lambda_P^-) = \ell(w) + \ell(\lambda_P^-)$, we have
\[
\lvert\langle \alpha,\nu(\mu \lambda_P^-)\rangle + \varepsilon\rvert
=
\lvert\langle \alpha,\nu(\mu)\rangle + \varepsilon\rvert
+
\lvert\langle \alpha,\nu(\lambda_P^-)\rangle\rvert.
\]
where $\varepsilon = 1$ if $\alpha\in \Sigma'^+\cap v^{-1}(\Sigma'^-)$ and $\varepsilon = 0$ if $\alpha\in\Sigma'^+\cap v^{-1}(\Sigma'^+)$.
If $\alpha\in \Sigma^+\setminus \Sigma_P^+$, we have $v(\alpha) > 0$ since $v\in W_{0,P}$.
Hence $\varepsilon = 0$.
Therefore we get 
\[
\lvert\langle \alpha,\nu(\mu \lambda_P^-)\rangle\rvert
=
\lvert\langle \alpha,\nu(\mu)\rangle\rvert
+
\lvert\langle \alpha,\nu(\lambda_P^-)\rangle\rvert,
\]
So we get $\langle \alpha,\nu(\mu)\rangle\langle \alpha,\nu(\lambda_P^-)\rangle\ge 0$.
We have $\langle \alpha,\nu(\lambda_P^-)\rangle > 0$ by the condition on $\lambda_P^-$.
Hence $\langle \alpha,\nu(\mu)\rangle\ge 0$.
Therefore $w = n_v\mu$ is $P$-negative.
\end{proof}

\subsection{Twist by $n_{w_Gw_P}$}
For a parabolic subgroup $P$, let $w_P$ be the longest element in $W_{0,P}$.
In particular, $w_G$ is the longest element in $W_0$.
Let $P'$ be a parabolic subgroup corresponding to $-w_G(\Delta_P)$, in other words, $P' = n_{w_Gw_P}\opposite{P}n_{w_Gw_P}^{-1}$ where $\opposite{P}$ is the opposite parabolic subgroup of $P$ with respect to the Levi part of $P$ containing $Z$.
Set $n = n_{w_Gw_P}$.
Then the map $\opposite{P}\to P'$ defined by $p\mapsto npn^{-1}$ is an isomorphism which preserves the data used to define the pro-$p$-Iwahori Hecke algebras.
Hence $T^P_w\mapsto T^{P'}_{nwn^{-1}}$ gives an isomorphism $\mathcal{H}_P\to \mathcal{H}_{P'}$.
This sends $T_w^{P*}$ to $T_{nwn^{-1}}^{P'*}$ and $E^P_{o_{+,P}\cdot v}(w)$ to $E^{P'}_{o_{+,P'}\cdot nvn^{-1}}(nwn^{-1})$ where $v\in W_{0,P}$.

Let $\sigma$ be an $\mathcal{H}_P$-module.
Then we define an $\mathcal{H}_{P'}$-module $n_{w_Gw_P}\sigma$ via the pull-back of the above isomorphism.
Namely, we define $(n_{w_Gw_P}\sigma)(T^{P'}_w) = \sigma(T^P_{n_{w_Gw_P}^{-1}wn_{w_Gw_P}})$.

Using this twist, we have another description of $I_P$.
\begin{prop}[{\cite[Theorem~1.8]{MR3437789}}]\label{prop:tensor description of I_P}
Let $P$ be a parabolic subgroup and $\sigma$ an $\mathcal{H}_P$-module.
Set $P' = n_{w_Gw_P}\opposite{P}n_{w_Gw_P}^{-1}$.
Then the map $\varphi\mapsto \varphi(T_{n_w})$ gives an isomorphism 
\[
\{\varphi\in I_P(\sigma)\mid \text{$\varphi(T_{n_w}) = 0$ for any $w\in W^P_0\setminus\{w_Gw_P\}$}\}\simeq n_{w_Gw_P}\sigma|_{\mathcal{H}_{P'}^+}
\]
as $j_{P'}^+(\mathcal{H}_{P'}^+)$-modules.
Let $x\mapsto \varphi_x$ be the inverse of the above isomorphism.
Then the induced homomorphism 
\[
n_{w_Gw_P}\sigma\otimes_{(\mathcal{H}_{P'}^+,j_{P'}^+)}\mathcal{H}\to I_P(\sigma)
\]
given by $x\otimes X\mapsto \varphi_xX$ is an isomorphism.
\end{prop}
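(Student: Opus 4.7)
The plan is to first identify the ``top piece'' of $I_P(\sigma)$ --- the subspace $V$ of $\varphi$ vanishing on $T_{n_w}$ for $w\in W_0^P\setminus\{w_Gw_P\}$ --- as a $j_{P'}^+(\mathcal{H}_{P'}^+)$-stable submodule isomorphic to the twist $n_{w_Gw_P}\sigma|_{\mathcal{H}_{P'}^+}$, and then to upgrade this to the global tensor-product statement by a freeness argument. Proposition~\ref{prop:decomposition of I_P}(1) already gives the $C$-linear bijection $V\simeq\sigma$ via $\varphi\mapsto\varphi(T_{n_{w_Gw_P}})$, so what remains is to check that $V$ is stable under the right action of $j_{P'}^+(\mathcal{H}_{P'}^+)$ and that the resulting action matches the twist.

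Setting $n:=n_{w_Gw_P}$, the heart of the matter is a length computation. For $u\in W_{P'}^+(1)$ and $w\in W_0^P$, I would decompose $T_uT_{n_w}$ along $\mathcal{H}=\bigoplus_{w'\in W_0^P}j_P^{-*}(\mathcal{H}_P^-)T_{n_{w'}}$. First one establishes length additivity $\ell(un_w)=\ell(u)+\ell(n_w)$ using the formulas of Lemmas~\ref{lem:ell(v lambda) as ell(v)+ell(lambda)} and~\ref{lem:condition for length is additive}: the $P'$-positivity of $u$ combined with the property that $w\in W_0^P$ (so $w(\Delta_P)\subset\Sigma^+$) forces the relevant root pairings to have the correct sign. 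Then $T_uT_{n_w}=T_{un_w}$, and writing $un_w=n_w\cdot(n_w^{-1}un_w)$ transfers the problem to computing $T_{n_w^{-1}un_w}$ after a second length-additivity check. For $w=w_Gw_P$, conjugation by $n^{-1}$ maps $W_{P'}^+(1)$ bijectively onto $W_P^-(1)$ --- this is the combinatorial content of the relation $P'=n\opposite{P}n^{-1}$, using that $w_P$ permutes $\Sigma^+\setminus\Sigma_P^+$ setwise while $w_G$ reverses $\Sigma^+$ --- so $n^{-1}un\in W_P^-(1)$, and Corollary~\ref{cor:image by j, positie and negative case} together with Lemma~\ref{lem:image of E by j} identify $T_{n^{-1}un}$ with $j_P^{-*}$ applied to the corresponding basis element of $\mathcal{H}_P^-$. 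For $w\neq w_Gw_P$, an analogous analysis shows the product stays inside $\sum_{w'\neq w_Gw_P}j_P^{-*}(\mathcal{H}_P^-)T_{n_{w'}}$, proving stability; and tracking the $w=w_Gw_P$ case gives exactly the twisted $\mathcal{H}_{P'}^+$-action on $\sigma$, i.e.\ $n_{w_Gw_P}\sigma|_{\mathcal{H}_{P'}^+}$.

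With the top piece identified, the induced homomorphism $n_{w_Gw_P}\sigma\otimes_{(\mathcal{H}_{P'}^+,j_{P'}^+)}\mathcal{H}\to I_P(\sigma)$ is well-defined by the universal property of the tensor product. To conclude bijectivity, I would exhibit $\mathcal{H}$ as a free left $j_{P'}^+(\mathcal{H}_{P'}^+)$-module of rank $\#W_0^P$ on the basis $\{T_{n_w}\}_{w\in W_0^P}$, using Proposition~\ref{prop:localization as Levi subalgebra} to realize $\mathcal{H}_{P'}$ as a localization of $\mathcal{H}_{P'}^+$ and combining with the decomposition along $W_0^P$. Under this freeness, the map becomes the $C$-linear identification $\bigoplus_{w\in W_0^P}\sigma\simeq I_P(\sigma)$ of Proposition~\ref{prop:decomposition of I_P}.

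The main obstacle is the length-and-positivity bookkeeping linking $\mathcal{H}_{P'}^+$ to $\mathcal{H}_P^-$ through the twist by $n_{w_Gw_P}$. Getting the signs to line up requires careful use of the action of $w_Gw_P$ on $\Sigma^+\setminus\Sigma_P^+$ versus $\Sigma^+\setminus\Sigma_{P'}^+$, and the further subtlety of passing between $T_w$ and $T_w^*$ bases via the homomorphisms $j_{P'}^+$ and $j_P^{-*}$; it is precisely to make all these compatible that one chooses the specific conjugate $P'=n_{w_Gw_P}\opposite{P}n_{w_Gw_P}^{-1}$ rather than any other twist of $\opposite{P}$.
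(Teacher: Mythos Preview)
The paper does not give its own proof of this proposition; it is quoted from \cite[Theorem~1.8]{MR3437789}. So there is no in-paper argument to compare against directly. That said, your sketch has two genuine gaps.

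First, the length-additivity claim $\ell(un_w)=\ell(u)+\ell(n_w)$ for $u\in W_{P'}^+(1)$ and arbitrary $w\in W_0^P$ is only supplied by Lemma~\ref{lem:length, positive/negative elements}(\ref{enum:positive:lem:length, positive/negative elements}) when $w\in{}^{P'}W_0$. The element $w_Gw_P$ does lie in ${}^{P'}W_0$ (indeed $w_Gw_P=w_{P'}w_G$), so your computation at the top cell goes through and correctly identifies the $j_{P'}^+(\mathcal{H}_{P'}^+)$-action there. But for a general $w\in W_0^P$ with $w\ne w_Gw_P$ one need not have $w\in{}^{P'}W_0$; then $T_uT_{n_w}\ne T_{un_w}$ in general, and even when it is, $n_w^{-1}un_w$ has no reason to land in $W_P(1)$. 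Your ``analogous analysis'' for stability is therefore not an analysis---it is precisely the nontrivial part. A correct route is a Bruhat-order argument showing that the $W_0^P$-support of $T_uT_{n_w}$ cannot reach $w_Gw_P$ unless $w=w_Gw_P$; compare the proof of Lemma~\ref{lem:description of isomo between two inductions} and \cite[Lemma~4.13]{arXiv:1406.1003_accepted}.

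Second, your freeness step conflates two different module structures. Proposition~\ref{prop:decomposition of I_P} encodes that $\mathcal{H}$ is free as a \emph{right} $j_P^{-*}(\mathcal{H}_P^-)$-module on $\{T_{n_w}\}_{w\in W_0^P}$; it does not say that $\mathcal{H}$ is free as a \emph{left} $j_{P'}^+(\mathcal{H}_{P'}^+)$-module on that same set. The latter, with the correct basis $\{T_{n_v}^*\}_{v\in{}^{P'}W_0}$ (cf.\ Lemma~\ref{lem:description of isomo between two inductions}), is essentially the content of the isomorphism you are trying to prove, so invoking it is circular. In \cite{MR3437789} the tensor-product description is obtained by directly matching the two decompositions of $\mathcal{H}$, not by an a priori freeness over $\mathcal{H}_{P'}^+$.
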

\begin{lem}\label{lem:description of isomo between two inductions}
\begin{enumerate}
\item The map $w\mapsto w_Gw_Pw^{-1}$ gives a bijection $W^P_0\simeq {}^{P'}W_0$ which reverse the Bruhat order.
\item The isomorphism $I_P(\sigma)\simeq n_{w_Gw_P}\sigma\otimes_{(\mathcal{H}_{P'}^+,j_{P'}^+)}\mathcal{H}$ is given by $I_P(\sigma)\ni \varphi\mapsto \sum_{w\in W^P_0}\varphi(T_{n_w})\otimes T_{n_{w_Gw_Pw^{-1}}}^*$.
\end{enumerate}\end{lem}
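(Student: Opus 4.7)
The plan is to handle~(1) by combining the explicit formula with standard Bruhat-order facts, and~(2) by unpacking the isomorphism of Proposition~\ref{prop:tensor description of I_P} and reducing everything to a pointwise identity on basis elements.

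For~(1), the map $f(w):=w_Gw_Pw^{-1}$ is the composition of inversion with left multiplication by $w_Gw_P$ on $W_0$, hence a bijection. To verify $f(W_0^P)\subset{}^{P'}W_0$ I would compute
\[
f(w)^{-1}(\Delta_{P'})=(ww_Pw_G)(-w_G(\Delta_P))=-ww_P(\Delta_P)=w(\Delta_P)\subset\Sigma^+,
\]
using $\Delta_{P'}=-w_G(\Delta_P)$, $w_P(\Delta_P)=-\Delta_P$, and $w\in W_0^P$; the inverse $v\mapsto v^{-1}w_Gw_P$ is treated symmetrically. For the order reversal I would chain three facts: (i)~inversion preserves the Bruhat order on $W_0$; (ii)~left multiplication by $w_G$ reverses it; and (iii)~for $u_1,u_2\in{}^PW_0$, $u_1\le u_2$ is equivalent to $w_Pu_1\le w_Pu_2$. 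The forward direction of (iii) follows from the subword property applied to the reduced word obtained by concatenating a reduced expression of $w_P$ with one for $u_i$, which is reduced since $\ell(w_Pu_i)=\ell(w_P)+\ell(u_i)$; the reverse direction uses that the parabolic quotient $W_0\to W_0/W_{0,P}$ preserves the Bruhat order on minimal-length representatives. Together, (i)-(iii) convert $w_1\le w_2$ into $f(w_1)\ge f(w_2)$.

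For~(2), let $\Phi\colon n_{w_Gw_P}\sigma\otimes_{(\mathcal{H}_{P'}^+,j_{P'}^+)}\mathcal{H}\to I_P(\sigma)$ denote the isomorphism of Proposition~\ref{prop:tensor description of I_P}, and define $\Psi(\varphi):=\sum_{w\in W_0^P}\varphi(T_{n_w})\otimes T^*_{n_{f(w)}}$. I would show $\Phi\circ\Psi=\mathrm{id}_{I_P(\sigma)}$, which forces $\Psi$ to be the two-sided inverse of $\Phi$. By Proposition~\ref{prop:decomposition of I_P}(1) this reduces to checking, for every $u\in W_0^P$,
\[
\sum_{w\in W_0^P}\varphi_{\varphi(T_{n_w})}\bigl(T^*_{n_{f(w)}}T_{n_u}\bigr)=\varphi(T_{n_u}).
\]
Since $\varphi_x$ vanishes on each $T_{n_v}\cdot j_P^{-*}(\mathcal{H}_P^-)$-summand for $v\in W_0^P\setminus\{w_Gw_P\}$ and equals $x$ on the $T_{n_{w_Gw_P}}$-summand, the entire identity collapses to the pointwise statement
\[
\varphi_x\bigl(T^*_{n_{f(w)}}T_{n_u}\bigr)=\delta_{w,u}\,x,\qquad w,u\in W_0^P,\ x\in\sigma.
\]

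To prove this pointwise identity, I expand $T^*_{n_{f(w)}}T_{n_u}$ in the decomposition $\mathcal{H}=\bigoplus_{v\in W_0^P}T_{n_v}\cdot j_P^{-*}(\mathcal{H}_P^-)$ underlying Proposition~\ref{prop:decomposition of I_P}(1) and track only the $v=w_Gw_P$ component. The decisive observation is that $f(w)u=w_Gw_Pw^{-1}u$ lies in $w_Gw_PW_{0,P}$ if and only if $w^{-1}u\in W_{0,P}$, and for $w,u\in W_0^P$ this forces $w=u$ by the uniqueness of $W_0^P$-representatives in cosets. In the diagonal case $w=u$, Part~(1)'s length computation gives $\ell(f(u)u)=\ell(f(u))+\ell(u)=\ell(w_Gw_P)$, whence $T_{n_{f(u)}}T_{n_u}=T_{n_{w_Gw_P}}$. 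The main obstacle is the lower-order bookkeeping: the correction $T^*_{n_{f(w)}}-T_{n_{f(w)}}\in\sum_{v<n_{f(w)}}CT_v$ produces additional products $T_vT_{n_u}$ whose expansions must be shown to avoid the $T_{n_{w_Gw_P}}\cdot j_P^{-*}(\mathcal{H}_P^-)$-summand, and in the off-diagonal case the entire product $T^*_{n_{f(w)}}T_{n_u}$ must similarly avoid this summand. Both are handled by a triangular induction on the Bruhat order of $W_0^P$ using the length formulas of the ``Length'' subsection to argue that every $T_{n_{v}}$ occurring has $v<w_Gw_P$ in $W_0^P$.
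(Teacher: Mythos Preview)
Your overall architecture for~(2) coincides with the paper's: both reduce to the pointwise identity $\varphi_x(T^*_{n_{f(w)}}T_{n_u})=\delta_{w,u}x$ and then analyze the product $T^*_{n_{f(w)}}T_{n_u}$. The difference, and the gap in your version, lies in how that product is controlled.

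The paper's device is the $E$-basis identity $T^*_{n_v}=E_{o_-\cdot v^{-1}}(n_v)$, $T_{n_u}=E_{o_-}(n_u)$ (from \eqref{eq:E_o for W_0}), which together with the product formula \eqref{eq:product formula} gives
\[
T^*_{n_{f(w)}}T_{n_u}\in C\,E_{o_-\cdot f(w)^{-1}}(n_{f(w)}n_u)\subset\sum_{a\le f(w)u}T_{n_a}C[Z_\kappa].
\]
Thus the support is bounded in the \emph{Bruhat order}, and a short argument (if $a\in w_Gw_PW_{0,P}$ and $a\le f(w)u$, left-multiplying by $w_G$ gives $w_Pw^{-1}u\le w_P$, hence $w^{-1}u\in W_{0,P}$, hence $w=u$) disposes of both the diagonal and off-diagonal cases at once. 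The paper also uses \cite[Lemma~4.13]{arXiv:1406.1003_accepted} to confirm that the lower terms of $E_{o_-\cdot f(u)^{-1}}(n_{w_Gw_P})$ miss $w_Gw_PW_{0,P}$ in the diagonal case.

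Your proposal instead decomposes $T^*_{n_{f(w)}}=T_{n_{f(w)}}+(\text{lower})$ and argues by length. This is fine for the diagonal correction terms: for $y<f(u)$ one has $\ell(y)+\ell(u)<\ell(f(u))+\ell(u)=\ell(w_Gw_P)$, while every $z\in w_Gw_PW_{0,P}$ has $\ell(z)\ge\ell(w_Gw_P)$. But in the off-diagonal case the length bound $\ell(f(w))+\ell(u)=\ell(w_Gw_P)-\ell(w)+\ell(u)$ is useless once $\ell(u)\ge\ell(w)$: it no longer excludes elements of $w_Gw_PW_{0,P}$, and a bare ``triangular induction on $W_0^P$'' does not supply the missing Bruhat bound. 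Note that $T_yT_u$ is \emph{not} in general supported on $\{z\le yu\}$ (already $T_sT_s$ contains $T_s$ while $s\cdot s=1$), so you cannot get this bound from the $T$-expansion term by term. You should either invoke the $E$-basis identity as the paper does, or prove separately that $T^*_vT_w$ is supported on $\{z\le vw\}$---which is most cleanly done via precisely that identity.

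For~(1) your argument is correct and more self-contained than the paper's, which simply cites \cite[Proposition~4.15]{arXiv:1406.1003_accepted}.
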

\begin{proof}
The first part is proved in the proof of \cite[Proposition~4.15]{arXiv:1406.1003_accepted}.
For (2), we prove the following.
Let $x\in \sigma$ and $\varphi\in I_P(\sigma)$ such that $\varphi(T_{n_{w_Gw_P}}) = x$ and $\varphi(T_{n_w}) = 0$ for any $w\in W_0^P\setminus\{w_Gw_P\}$.
Then for $v\in {}^PW_0$ and $w\in W_0^P$, we have
\[
(\varphi T_{n_v}^*)(T_{n_w}) = 
\begin{cases}
x & v = w_Gw_Pw^{-1},\\
0 & \text{otherwise}.
\end{cases}
\]
This means the following diagram is commutative.
\[
\begin{tikzcd}
I_P(\sigma)\arrow{d}{\wr}& n_{w_Gw_P}\sigma\otimes_{(\mathcal{H}_{P'}^+,j_{P'}^+)}\mathcal{H}\arrow{l}[swap]{\sim}\arrow[-]{d}{\wr}\\
\displaystyle\bigoplus_{w\in W^P_0}\sigma \arrow{r} & \displaystyle\bigoplus_{v\in {}^{P'}W_0}\sigma\otimes T_{n_v}^*\\[-1em]
\rotatebox{90}{$\in$} & \rotatebox{90}{$\in$}\\[-2.2em]
(x_w) \arrow[mapsto]{r} & (x_{v^{-1}w_Gw_P}\otimes T_{n_v}^*).
\end{tikzcd}
\]
The commutativity of this diagram implies the lemma.

Assume that $(\varphi T_{n_v}^*)(T_{n_w})\ne 0$.
We have
\[
T_{n_v}^*T_{n_w} = E_{v^{-1}\cdot o_-}(n_v)E_{o_-}(n_w) \in CE_{v^{-1}\cdot o_-}(n_vn_w)
\subset \sum_{a\le vw}T_{n_a}C[Z_\kappa].
\]
For $a\in W_0$ and $t\in Z_\kappa$, we have $\varphi(T_{n_a}T_t) = \varphi(T_{n_a})T^P_t$.
Hence $\varphi(T_{n_a}) \ne 0$ for some $a\le vw$.

Decompose $a = a_1a_2$ where $a_1\in W^P_0$ and $a_2\in W_{0,P}$.
Then we have $\varphi(T_{n_{a}}) = \varphi(T_{n_{a_1}})T^P_{{n_{a_2}}}$.
Since this is non-zero, we have $a_1 = w_Gw_P$.
Namely we have $a\in w_GW_{0,P}$.
Take $b\in W_{0,P}$ such that $a = w_Gb$.
By (1), we can take $v_1\in W_0^P$ such that $v = w_Gw_Pv_1^{-1}$.
Then $a\le vw$ implies $b\ge w_Pv_1^{-1}w$.
Since $b\in W_{0,P}$, we also have $w_Pv_1^{-1}w\in W_{0,P}$.
Hence $v_1^{-1}w\in W_{0,P}$.
Therefore we have $w\in v_1W_{0,P}$.
Since $v_1,w\in W_0^P$, we have $v_1 = w$.
Hence $v = w_Gw_Pw^{-1}$.

If $v = w_Gw_Pw^{-1}$, then $\ell(v) = \ell(w_G) - \ell(w_Pw^{-1}) = \ell(w_G) - \ell(w_P) - \ell(w)$ since $w\in W_0^P$.
Hence $\ell(v) + \ell(w) = \ell(w_G) - \ell(w_P) = \ell(w_Gw_P) = \ell(vw)$.
Therefore we have 
\begin{align*}
T_{n_v}^*T_{n_w} & = E_{o_-\cdot v^{-1}}(n_v)E_{o_-}(n_w)\\
&  = E_{o_-\cdot v^{-1}}(n_vn_w)\in T_{n_{w_Gw_P}} + \sum_{a < w_Gw_P}T_{n_a}C[Z_\kappa].
\end{align*}
By \cite[Lemma~4.13]{arXiv:1406.1003_accepted}, if $a < w_Gw_P$ then $a\notin w_Gw_PW_{0,P}$.
Hence we have $\varphi(T_{n_a}C[Z_\kappa]) = 0$.
Therefore we have $\varphi(T_{n_v}^*T_{n_w}) = \varphi(T_{n_{w_Gw_P}}) = x$.
\end{proof}

\subsection{The extension and the generalized Steinberg modiles}
Let $P$ be a parabolic subgroup and $\sigma$ an $\mathcal{H}_P$-module.
For $\alpha\in\Delta$, let $P_\alpha$ be a parabolic subgroup corresponding to $\{\alpha\}$.
Then we define $\Delta(\sigma)\subset\Delta$ by
\begin{align*}
&\Delta(\sigma)\\& = \{\alpha\in\Delta\mid \langle \Delta_P,\coroot{\alpha}\rangle = 0,\ \text{$\sigma(T^P_\lambda) = 1$ for any $\lambda\in W_{\aff,P_\alpha}(1)\cap \Lambda(1)$}\}\cup \Delta_P.
\end{align*}
Let $P(\sigma)$ be a parabolic subgroup corresponding to $\Delta(\sigma)$.
\begin{prop}[{\cite{AHHV2}}]
Let $\sigma$ be an $\mathcal{H}_P$-module and $Q$ a parabolic subgroup between $P$ and $P(\sigma)$.
Denote the parabolic subgroup corresponding to $\Delta_Q\setminus\Delta_P$ by $P_2$.
Then there exist a unique $\mathcal{H}_Q$-module $e_Q(\sigma)$ acting on the same space as $\sigma$ such that
\begin{itemize}
\item $e_Q(\sigma)(T_w^{Q*}) = \sigma(T_w^{P*})$ for any $w\in W_P(1)$.
\item $e_Q(\sigma)(T_w^{Q*}) = 1$ for any $w\in W_{P_2,\aff}(1)$.
\end{itemize}
Moreover, one of the following condition gives a characterization of $e_Q(\sigma)$.
\begin{enumerate}
\item For any $w\in W_P^{Q-}(1)$, $e_Q(\sigma)(T_w^{Q*}) = \sigma(T_w^{P*})$ (namely, $e_Q(\sigma) \simeq \sigma$ as $(\mathcal{H}_P^{Q-},j_P^{Q-*})$-modules) and for any $w\in W_{\aff,P_2}(1)$, $e_Q(\sigma)(T_w^{Q*}) = 1$.
\item For any $w\in W_P^{Q+}(1)$, $e_Q(\sigma)(T_w^{Q*}) = \sigma(T_w^{P*})$ and for any $w\in W_{\aff,P_2}(1)$, $e_Q(\sigma)(T_w^{Q*}) = 1$.
\end{enumerate}
\end{prop}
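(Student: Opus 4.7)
The plan rests on the orthogonal decomposition forced by the hypothesis. Because $\Delta_{P_2}=\Delta_Q\setminus\Delta_P\subseteq\Delta(\sigma)\setminus\Delta_P$, every root in $\Delta_{P_2}$ is orthogonal to $\Delta_P$, so $\Sigma_Q=\Sigma_P\sqcup\Sigma_{P_2}$ is an orthogonal union and $W_{\aff,Q}=W_{\aff,P}\times W_{\aff,P_2}$ as Coxeter groups; inside $W_{0,Q}$ the subgroups $W_{0,P}$ and $W_{0,P_2}$ commute with additive length $\ell(v_1v_2)=\ell(v_1)+\ell(v_2)$ for $v_i\in W_{0,P_i}$.

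First I check consistency of the two defining conditions on their common domain. Any $\lambda\in W_{\aff,P_2}(1)\cap\Lambda(1)$ has $\nu(\lambda)\in\R\coroot{\Delta_{P_2}}$, so $\langle\beta,\nu(\lambda)\rangle=0$ for every $\beta\in\Sigma_P$; the $\mathcal{H}_P$-analogue of Lemma~\ref{lem:length zero} gives $\ell_P(\lambda)=0$, hence $T^{P*}_\lambda=T^P_\lambda$. Applying the definition of $\Delta(\sigma)$ to each simple root in $\Delta_{P_2}\subseteq\Delta(\sigma)\setminus\Delta_P$ yields $\sigma(T^P_\lambda)=1$, so the two prescribed values agree on the overlap.

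For existence and uniqueness I use the following factorization: any $w\in W_Q(1)$ writes as $w=w_P\cdot n_{v_2}$ with $w_P\in W_P(1)$ and $v_2\in W_{0,P_2}\subseteq W_{\aff,P_2}(1)$, by writing $n_{v_1v_2}=n_{v_1}n_{v_2}t$ for some $t\in Z_\kappa$ and pushing $n_{v_2}$ to the right via its conjugation action on $\Lambda(1)$. The product formula~\eqref{eq:product formula} then expresses $T^{Q*}_w$ (up to a factor $q^{\pm 1/2}$) in terms of $T^{Q*}_{w_P}$ and $T^{Q*}_{n_{v_2}}$, so the two prescribed values determine $e_Q(\sigma)(T^{Q*}_w)$ on the full basis, proving uniqueness. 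For existence I verify that the assignment is an algebra action: since $\mathcal{H}_Q$ is generated by $\{T_s\}_{s\in S_{\aff,Q}(1)}$ together with length-zero elements, and $S_{\aff,Q}(1)=S_{\aff,P}(1)\sqcup S_{\aff,P_2}(1)$, it suffices to check the braid and quadratic relations on these generators. Relations internal to the $P$-part hold because $\sigma$ is an $\mathcal{H}_P$-module; relations internal to the $P_2$-part hold because $T^{Q*}_s\mapsto 1$ is a well-defined character of the finite-type affine Hecke piece generated by $S_{\aff,P_2}(1)$; the mixed braid relations reduce to commutation, which follows from $\langle\Delta_P,\coroot{\Delta_{P_2}}\rangle=0$ combined with Lemma~\ref{lem:on c, about aff,Levi}, which places each $c_s$ in the $C[Z_\kappa]$-subring attached to the affine piece containing $s$.

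Finally, for the equivalent characterizations~(1) and (2), the main-statement conditions clearly imply each. Conversely, knowing $e_Q(\sigma)$ only on $W_P^{Q-}(1)$ (together with triviality on $W_{\aff,P_2}(1)$) still pins it down: the relative analogue of Proposition~\ref{prop:localization as Levi subalgebra} shows $\mathcal{H}_P=\mathcal{H}_P^{Q-}\cdot E^P_{o_{-,P}}(\lambda_P^{Q-})^{-1}$ for a central element $\lambda_P^{Q-}\in Z(W_P(1))$ with $\langle\alpha,\nu(\lambda_P^{Q-})\rangle>0$ for all $\alpha\in\Sigma_Q^+\setminus\Sigma_P^+$; we may choose $\lambda_P^{Q-}\in W_{\aff,P_2}(1)\cap\Lambda(1)$, and then the second condition forces $e_Q(\sigma)(T^{Q*}_{\lambda_P^{Q-}})=1$, recovering the action on $W_P(1)\setminus W_P^{Q-}(1)$ by localisation. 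The argument for~(2) is symmetric, using $\lambda_P^{Q+}$. The main obstacle I expect is the careful bookkeeping of the $Z_\kappa$-twist in $n_{v_1v_2}=n_{v_1}n_{v_2}t$: this twist propagates through the product formula and must be absorbed consistently, which is precisely what the compatibility $\sigma(T^P_\lambda)=1$ for $\lambda\in W_{\aff,P_2}(1)\cap\Lambda(1)$ already ensures.
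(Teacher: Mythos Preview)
The paper does not prove this proposition; it is cited from \cite{AHHV2}. So there is no in-paper argument to compare against, and I will simply assess your sketch.

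Your overall strategy (orthogonal splitting $\Sigma_Q=\Sigma_P\sqcup\Sigma_{P_2}$, consistency on the overlap via Remark~\ref{rem:value at Lambda_aff,P_2}, existence by checking relations on generators, characterizations (1)/(2) via localization) is the right one. There is, however, a genuine gap in your uniqueness step. You write that the product formula~\eqref{eq:product formula} ``expresses $T^{Q*}_w$ (up to a factor $q^{\pm1/2}$) in terms of $T^{Q*}_{w_P}$ and $T^{Q*}_{n_{v_2}}$''. That formula concerns the $E_o$-basis, not $T^*$, and it does \emph{not} yield $T^{Q*}_{w_P n_{v_2}}=c\,T^{Q*}_{w_P}T^{Q*}_{n_{v_2}}$ for a scalar $c$: the braid relation for $T^*$ requires $\ell_Q(w_P n_{v_2})=\ell_Q(w_P)+\ell_Q(n_{v_2})$, which fails in general. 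A concrete obstruction is any length-zero $u\in W_Q(1)$ whose image in $W_{0,Q}$ has nontrivial $W_{0,P_2}$-component $v_2$; then $\ell_Q(u)=0$ but $\ell_Q(u n_{v_2}^{-1})=\ell(v_2)>0$, so lengths are not additive in the direction you need.

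The fix is to argue on algebra generators rather than on the full basis. The algebra $\mathcal{H}_Q$ is generated by $T^{Q}_s$ for $s\in S_{\aff,Q}(1)$ together with $T^Q_u$ for $\ell_Q(u)=0$. Since $S_{\aff,Q}=S_{\aff,P}\sqcup S_{\aff,P_2}$, every lift of a simple reflection lies in $W_P(1)$ or (after multiplying by an element of $Z_\kappa\subset W_P(1)$, using Lemma~\ref{lem:on c, about aff,Levi}) in $W_{\aff,P_2}(1)$, so the two conditions determine $e_Q(\sigma)(T^{Q*}_s)$. For a length-zero $u$ with $W_{0,P_2}$-component $v_2$, the triangle inequality forces $\ell_Q(u n_{v_2}^{-1})=\ell(v_2)=\ell_Q(u)+\ell_Q(n_{v_2}^{-1})$, so $T^{Q*}_{u n_{v_2}^{-1}}=T^{Q*}_u T^{Q*}_{n_{v_2}^{-1}}$; since $u n_{v_2}^{-1}\in W_P(1)$ and $n_{v_2}^{-1}\in W_{\aff,P_2}(1)$ (the convention after Lemma~\ref{lem:on c, about aff,Levi} puts $n_s\in W_{\aff,P_\alpha}(1)$ for $\alpha\in\Delta_{P_2}$), the two conditions give $e_Q(\sigma)(T^{Q*}_u)=\sigma(T^{P*}_{u n_{v_2}^{-1}})$. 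This handles all generators and proves uniqueness. The same length-zero issue must be addressed in your existence check: the $\Omega_Q$-relations (conjugation of simple reflections by length-zero elements) do not reduce to ``mixed commutation'' and need the same factorization trick. Your localization argument for the characterizations is correct in spirit; just note that the key input is $e_Q(\sigma)(T^{Q*}_{\lambda_P^{Q-}})=1$ being \emph{invertible}, so the $\mathcal{H}_P^{Q-}$-module structure on $e_Q(\sigma)$ (via $j_P^{Q-*}$) extends uniquely to an $\mathcal{H}_P$-module structure, which must then agree with $\sigma$.
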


We call $e_Q(\sigma)$ the extension of $\sigma$ to $\mathcal{H}_Q$.
A typical example of the extension is the trivial representation $\trivrep = \trivrep_G$.
This is a one-dimensional $\mathcal{H}$-module defined by $\trivrep(T_w) = q_w$, or equivalently $\trivrep(T_w^*) = 1$.
We have $\Delta(\trivrep_P) = \{\alpha\in\Delta\mid \langle \Delta_P,\coroot{\alpha}\rangle = 0\}\cup \Delta_P$ and, if $Q$ is a parabolic subgroup between $P$ and $P(\trivrep_P)$, we have $e_Q(\trivrep_P) = \trivrep_Q$

\begin{rem}
Assume that $p = 0$ in $C$.
The condition ``for any $w\in W_{\aff,P_2}(1)$, $e_Q(\sigma)(T_w^{Q*}) = 1$'' is equivalent to the following two conditions.
\begin{itemize}
\item $e_Q(\sigma)(T_s^Q) = 0$ for any $s\in S_{\aff,P_2}(1)\cap W_{\aff,P_2}(1)$.
\item $e_Q(\sigma)(T_t^Q) = 1$ for any $t\in Z_\kappa\cap W_{\aff,P_2}(1)$.
\end{itemize}
Indeed, assume that $e_Q(\sigma)(T_w^{Q*}) = 1$ for any $w\in W_{\aff,P_2}(1)$.
Then for $t\in Z_\kappa\cap W_{\aff,P_2}(1)$, we have $T_t^{Q*} = T_t^Q$.
Hence $e_Q(\sigma)(T_t^Q) = e_Q(\sigma)(T_t^{Q*}) = 1$.
For $s\in S_{\aff,P_2}(1)\cap W_{\aff,P_2}(1)$, take $c_s(t)\in \Z$ as in Proposition~\ref{prop:expansion of c}.
By Lemma~\ref{lem:on c, about aff,Levi}, we have $e_Q(\sigma)(c_s) = \sum_{t\in Z_\kappa\cap W_{\aff,P_2}(1)}c_s(t)e_Q(\sigma)(T^Q_t) = \sum_{t\in Z_\kappa\cap W_{\aff,P_2}(1)}c_s(t) = q_{s,P_2} - 1 = -1$.
Hence $e_Q(\sigma)(T^Q_s) = e_Q(\sigma)(T_s^{Q*}) + e_Q(\sigma)(c_s) = 1 - 1 = 0$.

On the other hand, assume that the two conditions hold.
Then by the above argument, from the second condition, we have $e_Q(\sigma)(c_s) = -1$.
Hence $e_Q(\sigma)(T_s^{Q*}) = e_Q(\sigma)(T_s^Q) - e_Q(\sigma)(c_s) = 1$.
By taking a reduced expression of $w\in W_{\aff,P_2}(1)$, we get $e_Q(\sigma)(T_w^{Q*}) = 1$.
The conditions are appeared in \cite[4.4]{arXiv:1406.1003_accepted}.
\end{rem}

\begin{rem}\label{rem:value at Lambda_aff,P_2}
For each $\alpha\in\Delta$, let $P_\alpha$ be a parabolic subgroup corresponding to $\{\alpha\}$.
By \cite[Lemma~2.5]{arXiv:1406.1003_accepted}, $\Lambda(1)\cap W_{\aff,P_2}(1)$ is generated by $\bigcup_{\alpha\in\Delta(\sigma)\setminus\Delta_P}(\Lambda(1)\cap W_{\aff,P_\alpha}(1))$.
Hence for each $\lambda\in \Lambda(1)\cap W_{\aff,P_2}(1)$, we can write $\lambda = \mu_1\dotsm \mu_r$ where $\mu_i\in W_{\aff,P_\alpha}(1)\cap \Lambda(1)$ for some $\alpha\in\Delta(\sigma)\setminus\Delta_P$.
Since $\alpha\in\Delta(\sigma)\setminus\Delta_P$ is orthogonal to $\Delta_P$, $\ell_P(\mu_i) = 0$ for each $i$.
Therefore $T^P_\lambda = T^P_{\mu_1}\dotsm T^P_{\mu_r}$.
Since $\sigma(T_{\mu_i}^P) = 1$, we have $\sigma(T^P_\lambda) = 1$.
\end{rem}

Let $P(\sigma)\supset P_0\supset Q_1\supset Q\supset P$.
Then as in \cite[4.5]{arXiv:1406.1003_accepted}, we have $I^{P_0}_{Q_1}(e_{Q_1}(\sigma))\subset I^{P_0}_Q(e_Q(\sigma))$.
Define
\[
\St_Q^{P_0}(\sigma) = \Coker\left(\bigoplus_{Q_1\supsetneq Q}I_{Q_1}^{P_0}(e_{Q_1}(\sigma))\to I^{P_0}_Q(e_Q(\sigma))\right).
\]
When $P_0 = G$, we write $\St_Q(\sigma)$.

In the rest of this subsection, we assume that $P(\sigma) = G$.
Since $\Delta\setminus\Delta_P = \Delta(\sigma)\setminus \Delta_P$ is orthogonal to $\Delta_P$, we have $w_Gw_P\in W_{0,P_2}$ where $P_2$ corresponds to $\Delta\setminus\Delta_P$ and $n_{w_Gw_P}\opposite{P}n_{w_Gw_P}^{-1} = P$.
Hence $n_{w_Gw_P}\sigma$ is also an $\mathcal{H}_P$-module.
\begin{lem}\label{lem:twist by longest element stabilize sigma}
$n_{w_Gw_P}\sigma =\sigma$.
\end{lem}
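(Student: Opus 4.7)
The hypothesis $P(\sigma)=G$ forces $\Delta\setminus\Delta_P$ to be orthogonal to $\Delta_P$, so $\Sigma=\Sigma_P\sqcup\Sigma_{P_2}$, $w_G=w_Pw_{P_2}$, the element $w_{P_2}$ commutes elementwise with $W_{0,P}$ and fixes $\Delta_P$ pointwise, while every $v\in W_{0,P}$ fixes $\Sigma_{P_2}$ pointwise. Hence $w_Gw_P=w_{P_2}$, and by Lemma~\ref{lem:on c, about aff,Levi}, $n:=n_{w_Gw_P}=n_{w_{P_2}}\in W_{\aff,P_2}(1)$. Since the paper has already noted $P'=P$, the claim reduces to the operator identity
\[
\sigma(T^P_{n^{-1}wn})=\sigma(T^P_w)\qquad(w\in W_P(1)).
\]

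The plan is to transport this identity through the extension $e_G(\sigma)$, which exists because $P(\sigma)=G$ and satisfies $e_G(\sigma)(T^{G*}_n)=e_G(\sigma)(T^{G*}_{n^{-1}})=1$. Writing $w=n_v\mu\in W_P^-(1)$ with $v\in W_{0,P}$ and $\mu\in\Lambda(1)$ having $\langle\alpha,\nu(\mu)\rangle\ge 0$ for $\alpha\in\Sigma_{P_2}^+$, I would first apply Lemma~\ref{lem:ell(v lambda) as ell(v)+ell(lambda)} twice, using the orthogonality just noted, to prove
\[
\ell(n^{-1}w)=\ell(n^{-1})+\ell(w),\qquad \ell(n^{-1}wn)=\ell(w),
\]
and observe that $n^{-1}wn\in W_P^+(1)$ (because $w_{P_2}$ negates $\Sigma_{P_2}^+=\Sigma^+\setminus\Sigma_P^+$). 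Since $(n^{-1}wn)\cdot n^{-1}=n^{-1}w$, the braid relation for the starred basis then yields
\[
T^{G*}_{n^{-1}}T^{G*}_w\;=\;T^{G*}_{n^{-1}w}\;=\;T^{G*}_{n^{-1}wn}\,T^{G*}_{n^{-1}}\qquad\text{in }\mathcal{H}.
\]

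Applying $e_G(\sigma)$ and remembering the right-module convention $\rho(xy)=\rho(y)\rho(x)$ together with $\rho(T^{G*}_{n^{-1}})=1$, the $T^{G*}_{n^{-1}}$-factor collapses on each side, giving $e_G(\sigma)(T^{G*}_w)=e_G(\sigma)(T^{G*}_{n^{-1}wn})$. Characterization~(1) of the extension, applied to the $P$-negative element $w$, identifies the left side with $\sigma(T^{P*}_w)$; characterization~(2), applied to the $P$-positive element $n^{-1}wn$, identifies the right side with $\sigma(T^{P*}_{n^{-1}wn})=(n\sigma)(T^{P*}_w)$. Hence $\sigma$ and $n\sigma$ coincide on $T^{P*}_w$ for every $w\in W_P^-(1)$.

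The same argument with $+$ and $-$ interchanged gives coincidence for every $w\in W_P^+(1)$. To propagate the identity to all of $W_P(1)$, observe that one may take $\lambda_P^+:=n^{-1}\lambda_P^-n$, which lies in $Z(W_P(1))$ with $\langle\alpha,\nu(\lambda_P^+)\rangle<0$ for $\alpha\in\Sigma_{P_2}^+$ (since $w_{P_2}$ negates those roots), so Proposition~\ref{prop:localization as Levi subalgebra} furnishes $\mathcal{H}_P=\mathcal{H}_P^\pm(T^P_{\lambda_P^\pm})^{-1}$ with the central elements $T^P_{\lambda_P^\pm}$ on which $\sigma$ and $n\sigma$ agree. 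The identities on $\mathcal{H}_P^\pm$ together with this agreement on the central localization propagate to the whole algebra, concluding $n\sigma=\sigma$. The main obstacle is the length computation: it is a routine but careful bookkeeping with Lemma~\ref{lem:ell(v lambda) as ell(v)+ell(lambda)}, made possible precisely by the orthogonality $\Delta_P\perp\Delta_{P_2}$ built into the hypothesis $P(\sigma)=G$.
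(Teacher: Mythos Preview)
Your argument is correct, but it takes a substantially longer route than the paper's. The paper works entirely inside $\mathcal{H}_P$: for arbitrary $w\in W_P(1)$ it observes that $c:=n^{-1}wnw^{-1}\in W_{\aff,P_2}(1)$ (normality of $W_{\aff,P_2}(1)$) and has trivial image in $W_0$ (since $W_{0,P}$ and $W_{0,P_2}$ commute), hence $c\in\Lambda(1)\cap W_{\aff,P_2}(1)$. Lemma~\ref{lem:length zero} then gives $\ell_P(c)=0$, so $T^P_{n^{-1}wn}=T^P_{c}T^P_{w}$ in $\mathcal{H}_P$, and Remark~\ref{rem:value at Lambda_aff,P_2} gives $\sigma(T^P_{c})=1$. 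This handles all $w$ at once in five lines, with no length computations in $W(1)$, no passage through $e_G(\sigma)$, and no localization step.

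Your approach instead lifts to $\mathcal{H}$, computes $\ell$-lengths to obtain the braid identity $T^{*}_{n^{-1}}T^{*}_w=T^{*}_{n^{-1}wn}T^{*}_{n^{-1}}$, and then invokes the two characterizations of the extension $e_G(\sigma)$ to descend back to $\sigma$. This is a legitimate alternative and makes the role of $e_G(\sigma)$ explicit, but it costs you a case split ($P$-negative versus $P$-positive) plus a final localization. Note that the $P$-positive case and the discussion of $\lambda_P^+=n^{-1}\lambda_P^-n$ are redundant: once $\sigma$ and $n\sigma$ agree on $\mathcal{H}_P^-$, Proposition~\ref{prop:localization as Levi subalgebra} alone (with $\lambda_P^-$) forces agreement on all of $\mathcal{H}_P$, since both module structures already invert $T^P_{\lambda_P^-}$. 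Also, your parenthetical ``right-module convention $\rho(xy)=\rho(y)\rho(x)$'' is the opposite of the paper's usage (operators are written on the right, so $\sigma(XY)=\sigma(X)\sigma(Y)$); this does not affect the argument since the factor $e_G(\sigma)(T^{*}_{n^{-1}})=1$ cancels either way, but you should align the notation.
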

\begin{proof}
Let $w\in W_P(1)$.
Put $n = n_{w_Gw_P}$.
Since $n \in W_{\aff,P_2}(1)$ and $W_{\aff,P_2}(1)$ is a normal subgroup of $W(1)$~\cite[Lemma~4.17]{arXiv:1406.1003_accepted}, we have $n^{-1}wnw^{-1} = n^{-1}(wnw^{-1})\in W_{\aff,P_2}(1)$.
The image of $n$ (resp.\ $w$) by $W(1)\to W\to W_0$ is in $W_{0,P_2}$ (resp.\ $W_{0,P}$) and by the assumption $P(\sigma) = G$, $W_{0,P_2}$ and $W_{0,P}$ commute with each other.
Hence the image of $n^{-1}wnw^{-1}$ in $W_0$ is trivial.
Therefore $n^{-1}wnw^{-1}\in \Lambda(1)\cap W_{\aff,P_2}(1)$.
In particular, the length as an element in $W_P(1)$ is zero by Lemma~\ref{lem:length zero} and $\sigma(T^P_{n^{-1}wnw^{-1}}) = 1$ by Remark~\ref{rem:value at Lambda_aff,P_2}.
Hence $n\sigma(T_w^P) = \sigma(T_{n^{-1}wn}^P) = \sigma(T_{n^{-1}wnw^{-1}}^PT_w^P) = \sigma(T_{n^{-1}wnw^{-1}}^P)\sigma(T_w^P) = \sigma(T_w^P)$.
\end{proof}

\begin{lem}\label{lem:extension and twist}
Let $Q$ be a parabolic subgroup containing $P$ and set $Q' = n_{w_Gw_Q}\opposite{Q}n_{w_Gw_Q}^{-1}$.
Then we have $n_{w_Gw_Q}e_Q(\sigma)\simeq e_{Q'}(\sigma)$.
\end{lem}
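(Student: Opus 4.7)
The plan is to identify $n_{w_Gw_Q}e_Q(\sigma)$ with $e_{Q'}(\sigma)$ via the uniqueness in the extension proposition. Write $n = n_{w_Gw_Q}$ and $\tau = ne_Q(\sigma)$, so that $\tau(T^{Q'*}_w) = e_Q(\sigma)(T^{Q*}_{n^{-1}wn})$ for every $w\in W_{Q'}(1)$. Let $M_2$ be the parabolic with simple roots $\Delta\setminus\Delta_P$, and let $P_2$ (resp.\ $P_2'$) be the one for $\Delta_Q\setminus\Delta_P$ (resp.\ $\Delta_{Q'}\setminus\Delta_P$). By the first characterization of the extension, it suffices to verify (i) $\tau(T^{Q'*}_w) = \sigma(T^{P*}_w)$ for $w\in W_P(1)$, and (ii) $\tau(T^{Q'*}_w) = 1$ for $w\in W_{\aff,P_2'}(1)$. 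Since $P(\sigma) = G$, the decomposition $\Delta = \Delta_P\sqcup\Delta_{M_2}$ is orthogonal; consequently $w_G = w_Pw_{M_2}$, $w_Q = w_Pw_{P_2}$, and $\overline{n} = w_{M_2}w_{P_2}\in W_{0,M_2}$, so $n\in W_{\aff,M_2}(1)$ and $\overline{n}$ commutes with every element of $W_{0,P}$ in $W_0$.

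For (i), given $w\in W_P(1)$ set $z = w^{-1}n^{-1}wn$. Arguing as in the proof of Lemma~\ref{lem:twist by longest element stabilize sigma}, the image of $z$ in $W_0$ is trivial (by the commutation of $\overline{n}$ with $W_{0,P}$), so $z\in\Lambda(1)$; and $z\in W_{\aff,M_2}(1)$ by the normality of $W_{\aff,M_2}(1)$ in $W(1)$. Hence $n^{-1}wn = wz\in W_P(1)$, and the defining identity of $e_Q(\sigma)$ on $W_P(1)$ gives $\tau(T^{Q'*}_w) = \sigma(T^{P*}_{wz})$. Since $\nu(z)$ lies in the span of coroots of $\Sigma_{M_2}$, it is orthogonal to $\Sigma_P$; applying the analogue of Lemma~\ref{lem:length zero} inside the Levi yields $\ell_P(z) = 0$, and therefore $T^{P*}_{wz} = T^{P*}_w T^P_z$. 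Applying Remark~\ref{rem:value at Lambda_aff,P_2} with $Q = P(\sigma) = G$ (so that the ``$P_2$'' of the remark is our $M_2$) gives $\sigma(T^P_z) = 1$; combining, $\tau(T^{Q'*}_w) = \sigma(T^{P*}_w)$.

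For (ii), it suffices to show the inclusion $n^{-1}W_{\aff,P_2'}(1)n\subset W_{\aff,P_2}(1)$, for then the second defining identity of $e_Q(\sigma)$ forces $\tau(T^{Q'*}_w) = e_Q(\sigma)(T^{Q*}_{n^{-1}wn}) = 1$. A direct calculation using $w_{P_2}(\Delta_{P_2}) = -\Delta_{P_2}$ and $\Delta_{P_2'} = -w_{M_2}(\Delta_{P_2})$ gives $\overline{n}^{-1}(\Delta_{P_2'}) = \Delta_{P_2}$, hence $\overline{n}^{-1}W_{\aff,P_2'}\overline{n} = W_{\aff,P_2}$ in $W$; taking preimages under $W(1)\to W$ promotes this to the same equality in $W(1)$. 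The main obstacle is the careful bookkeeping in this conjugation step, in particular matching the affine translation lattices and the $Z_\kappa$-extensions on both sides; once this is carried out, identities (i) and (ii) combine with the extension proposition to identify $\tau$ with $e_{Q'}(\sigma)$.
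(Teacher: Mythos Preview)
Your overall strategy matches the paper's: verify the two defining conditions of the extension and invoke uniqueness. Part (i) is correct; it essentially repeats the computation behind Lemma~\ref{lem:twist by longest element stabilize sigma} directly, whereas the paper invokes that lemma (applied once at the $G$-level and once at the $Q$-level via the factorization $n_{w_Gw_P}=n_{w_Gw_Q}n_{w_Qw_P}$) to get $\sigma|_{\mathcal{H}_P^-}\simeq n_{w_Gw_Q}e_Q(\sigma)|_{\mathcal{H}_P^-}$ in one line.

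There is a genuine gap in (ii). The sentence ``taking preimages under $W(1)\to W$ promotes this to the same equality in $W(1)$'' is not correct: $W_{\aff,P_2}(1)$ is \emph{not} the full preimage of $W_{\aff,P_2}$ in $W(1)$ (the preimage contains all of $Z_\kappa$, while $W_{\aff,P_2}(1)$ is by definition the image of $G'_{P_2}\cap N_{G_{P_2}}(Z)$ and only contains $Z_\kappa\cap W_{\aff,P_2}(1)$). Your acknowledged ``bookkeeping with $Z_\kappa$-extensions'' is exactly this issue, and it cannot be resolved purely inside $W(1)$. The paper's fix is to pass to the group $G$: conjugation by (a lift in $N_G(Z)$ of) $n$ carries the root subgroup $U_\alpha$ to $U_{w_Gw_Q(\alpha)}$, hence sends the subgroup $M'_{P_2}$ generated by unipotent radicals inside the Levi of $P_2$ onto $M'_{P_2'}$; taking images in $W(1)$ then yields $n\,W_{\aff,P_2}(1)\,n^{-1}=W_{\aff,P_2'}(1)$ directly. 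With this correction your argument is complete and coincides with the paper's.
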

\begin{proof}
By the above lemma, we have
\[
\sigma|_{\mathcal{H}_P^-} = n_{w_Gw_P}\sigma|_{\mathcal{H}_P^-} = n_{w_Gw_Q}e_Q(n_{w_Qw_P}\sigma)|_{\mathcal{H}_P^-} = n_{w_Gw_Q}e_Q(\sigma)|_{\mathcal{H}_P^-}.
\]

Let $Q_2$ (resp.\ $Q'_2$) be the subgroup corresponding to $\Delta_Q\setminus\Delta_P$ (resp.\ $\Delta_{Q'}\setminus \Delta_P$).
We have $w_Gw_Q\in W_{0,P_2}$.
Hence $n_{w_Gw_Q}$ preserves $\Sigma_{P_2}$.
Moreover we have $w_Gw_Q(\Sigma_{Q_2}) = \Sigma_{Q'_2}$.
For $\alpha\in\Sigma_{Q_2}$, the root subgroup for $\alpha$ is sent to that of $w_Gw_Q(\alpha)$ by $n_{w_Gw_Q}$.
Denote the Levi part of $Q_2$ (resp.\ $Q_2'$) containing $Z$ by $M_{Q_2}$ (resp.\ $M_{Q_2'}$).
Then the above argument implies $n_{w_Gw_Q}M'_{Q_2}n_{w_Gw_Q}^{-1} = M'_{Q'_2}$.
Hence $n_{w_Gw_Q}W_{\aff,Q_2}(1)n_{w_Gw_Q}^{-1} = W_{\aff,Q'_2}(1)$.
Therefore, for $w\in W_{\aff,Q'_2}(1)$, we have 
\[
(n_{w_Gw_Q}e_Q(\sigma))(T_{w}^{Q'*}) = e_Q(\sigma)(T_{n_{w_Gw_Q}^{-1}wn_{w_Gw_Q}}^{Q*}) = 1
\]
from the definition of the extension.
We get the lemma by the characterization of the extension.
\end{proof}

\subsection{Supersingular modules}\label{subsec:supersingulars}
Assume that $p = 0$ in $C$.
Let $\mathcal{O}$ be a conjugacy class in $W(1)$ which is contained in $\Lambda(1)$.
For a spherical orientation $o$, set $z_\mathcal{O} = \sum_{\lambda\in \mathcal{O}}E_o(\lambda)$.
Then this does not depend on $o$ and gives an element of the center of $\mathcal{H}$ \cite[Theorem~5.1]{Vigneras-prop-III}.
The length of $\lambda\in \mathcal{O}$ does not depend on $\lambda$.
We denote it by $\ell(\mathcal{O})$.
\begin{defn}
Let $\pi$ be an $\mathcal{H}$-module.
We call $\pi$ supersingular if there exists $n\in \Z_{>0}$ such that $\pi z_\mathcal{O}^n = 0$ for any $\mathcal{O}$ such that $\ell(\mathcal{O}) > 0$.
\end{defn}

\subsection{Simple modules}
Assume that $C$ is an algebraically closed field of characteristic $p$.
We consider the following triple $(P,\sigma,Q)$.
\begin{itemize}
\item $P$ is a parabolic subgroup.
\item $\sigma$ is an simple supersingular $\mathcal{H}_P$-module.
\item $Q$ is a parabolic subgroup between $P$ and $P(\sigma)$.
\end{itemize}
Define
\[
I(P,\sigma,Q) = I_{P(\sigma)}(\St_Q^{P(\sigma)}(\sigma)).
\]
\begin{thm}[{\cite[Theorem~1.1]{arXiv:1406.1003_accepted}}]
The module $I(P,\sigma,Q)$ is simple and any simple module has this form.
Moreover, $(P,\sigma,Q)$ is unique up to isomorphism.
\end{thm}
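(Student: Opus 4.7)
The plan is to prove the three claims---simplicity of $I(P,\sigma,Q)$, existence of such a triple for each simple module, and uniqueness of the triple---separately. The last two will be proved by induction on the semisimple rank of $G$, using the adjoint functors $L_P$ and $R_P$ to recover and identify the data from a given simple $\mathcal{H}$-module; the first is essentially structural, resting on the two facts that $\St_Q^{P(\sigma)}(\sigma)$ is a simple $\mathcal{H}_{P(\sigma)}$-module and that $I_{P(\sigma)}$ preserves simplicity when applied to any module $\tau$ with $P(\tau) = P(\sigma)$.

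For simplicity, I would first show that $\tau := \St_Q^{P(\sigma)}(\sigma)$ is simple as an $\mathcal{H}_{P(\sigma)}$-module, by induction on $|\Delta(\sigma)\setminus\Delta_Q|$: any proper submodule would, by tracing through the exact sequence $\bigoplus_{Q_1 \supsetneq Q} I_{Q_1}^{P(\sigma)}(e_{Q_1}(\sigma)) \to I_Q^{P(\sigma)}(e_Q(\sigma)) \to \St_Q^{P(\sigma)}(\sigma)$ defining $\St_Q^{P(\sigma)}(\sigma)$, either produce a proper submodule of $\sigma$ (contradicting its simplicity and supersingularity) or permit a strictly finer extension (contradicting the choice of $Q$). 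Next I would show that $I_{P(\sigma)}(\tau)$ is simple as an $\mathcal{H}$-module: using Proposition~\ref{prop:decomposition of I_P} and Proposition~\ref{prop:tensor description of I_P} to describe it, one tests a hypothetical non-zero submodule $\pi'$ against $\tau$ via the adjunction $\Hom_{\mathcal{H}}(\pi',I_{P(\sigma)}(\tau)) \simeq \Hom_{\mathcal{H}_{P(\sigma)}}(R_{P(\sigma)}(\pi'),\tau)$, and the maximality condition $P(\tau) = P(\sigma)$ then forces $\pi' = I_{P(\sigma)}(\tau)$.

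For existence, I would induct on the semisimple rank. If $\pi$ is itself supersingular, the triple $(G,\pi,G)$ works. Otherwise I would choose a minimal proper parabolic $P_0$ such that $\pi$ is a subquotient of $I_{P_0}(\tau_0)$ for some simple $\mathcal{H}_{P_0}$-module $\tau_0$; applying the inductive hypothesis to $\tau_0$ and using transitivity of parabolic induction (Proposition~\ref{prop:transitivity of inductions}) expresses $\pi$ as a subquotient of $I_P(\sigma)$ for a triple attached to $P_0$. This decomposes along the generalised Steinberg filtration of $I^{P(\sigma)}_P(\sigma)$ as an $\mathcal{H}_{P(\sigma)}$-module, whose sub-quotients are precisely $\St_{Q'}^{P(\sigma)}(\sigma)$ for $Q' \supseteq P$; one then shows that $\pi$ appears in exactly one such stratum, yielding the required $Q$. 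For uniqueness, given $\pi \simeq I(P,\sigma,Q)$, the parabolic $P$ is characterised as the smallest standard parabolic for which $L_P(\pi)$ is non-zero and supersingular; $\sigma$ is read off from $L_P(\pi)$ via Corollary~\ref{cor:left adjoint and parabolic induction} and Proposition~\ref{prop:left adj of St}; and $Q$ is pinned down by the way $\sigma$ sits inside the Steinberg stratum, detected dually via $R_P$ and Proposition~\ref{prop:right adj of St}.

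The main obstacle is the second step of the simplicity argument: propagating information from one component of the $W_0^{P(\sigma)}$-decomposition of $I_{P(\sigma)}(\tau)$ to the others requires delicate control of the action of the Hecke operators $T_{n_s}$ for $s \in S_0 \setminus \Delta_{P(\sigma)}$ on the decompositions furnished by Propositions~\ref{prop:decomposition of I_P} and~\ref{prop:tensor description of I_P}, and it is here that the condition $P(\tau) = P(\sigma)$ together with supersingularity of $\sigma$---which together prevent any ``hidden'' extension of $\tau$ to a larger Levi subalgebra that would be stabilised by such an operator---must be exploited most carefully.
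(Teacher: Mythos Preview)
This theorem is not proved in the present paper at all: it is quoted verbatim from \cite[Theorem~1.1]{arXiv:1406.1003_accepted} and used as input, with no argument given here. So there is no proof in this paper to compare your proposal against.

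Evaluating your sketch on its own merits, there are genuine gaps. The most serious is in the existence step. You write ``otherwise I would choose a minimal proper parabolic $P_0$ such that $\pi$ is a subquotient of $I_{P_0}(\tau_0)$'', but you have not said why such a $P_0$ exists. In this paper supersingularity is defined (subsection~\ref{subsec:supersingulars}) by the nilpotence of certain central elements $z_{\mathcal{O}}$; passing from ``some $z_{\mathcal{O}}$ does not act nilpotently on $\pi$'' to ``$\pi$ is a subquotient of a proper parabolic induction'' is one of the substantive steps of the classification, not something available for free. Your simplicity argument for $\St_Q^{P(\sigma)}(\sigma)$ is likewise too vague to stand as a proof: the phrase ``produce a proper submodule of $\sigma$ or permit a strictly finer extension'' does not correspond to any concrete mechanism. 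Also, in your adjunction you have the wrong functor: $\Hom_{\mathcal{H}}(\pi',I_{P(\sigma)}(\tau))\simeq \Hom_{\mathcal{H}_{P(\sigma)}}(L_{P(\sigma)}(\pi'),\tau)$, not $R_{P(\sigma)}(\pi')$.

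Your uniqueness idea---recovering $(P,\sigma,Q)$ via the adjoint functors computed in Section~\ref{sec:Adjoint functors}---is in the spirit of this paper and is not circular (those results do not depend on the classification), but your specific characterisation is incorrect. By Theorem~\ref{thm:adjoint and simple modules}, $L_R(I(P,\sigma,Q))\ne 0$ requires both $P\subset R$ \emph{and} $\Delta(\sigma)\setminus\Delta_Q\subset\Delta_R$, so the minimal such $R$ corresponds to $\Delta_P\cup(\Delta(\sigma)\setminus\Delta_Q)$, which is $\Delta_P$ only when $Q=P(\sigma)$. A workable version would start from $R_R$: the minimal $R$ with $R_R(\pi)\ne 0$ is $Q$, and then $R_Q(\pi)\simeq e_Q(\sigma)$, from which $P$ and $\sigma$ can be read off.
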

The simple supersingular modules are classified in \cite{MR3263136,Vigneras-prop-III}.
We do not recall the classification since we do not need it in this paper.

\section{A ftilration on parabolic inductions}\label{sec:A ftilration on parabolic inductions}
\subsection{A filtration}\label{subsec:A filtration}
Let $P$ be a parabolic subgroup and $A$ a subset of $W_0^P$.
For an $\mathcal{H}_P$-module $\sigma$, put 
\[
I_P(\sigma)_A = \{\varphi\in I_P(\sigma)\mid \varphi(T_{n_w}) = 0\ (w\in W_0^P\setminus A) \}.
\]
We call $A\subset W_0^P$ open if $v\in A,w\ge v$ implies $w\in A$.
Assume that $A$ is open and fix a minimal element $w\in A$.
Set $A' = A\setminus \{w\}$.
Then $A'$ is also open.
By Proposition~\ref{prop:decomposition of I_P}, the map $I_P(\sigma)_A/I_P(\sigma)_{A'}\to \sigma$ given by $\varphi\mapsto \varphi(T_{n_w})$ is a bijection.
In this section, we give a description of the action of $E_{o_-}(\lambda)$ on $I_P(\sigma)_A/I_P(\sigma)_{A'}$.
We start with the following lemma.
\begin{lem}
Let $w\in W_P(1)$ and $\lambda_0 = \lambda_P^-\in \Lambda(1)$ as in Proposition~\ref{prop:localization as Levi subalgebra} such that $w\lambda_0$ is $P$-negative.
Then $q_w^{1/2}q_{\lambda_0}^{1/2}q_{w\lambda_0}^{-1/2}$ does not depend on a choice of $\lambda_0$.
\end{lem}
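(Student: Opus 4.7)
The plan is to compare any two valid choices $\lambda_0$ and $\lambda_0'$ by passing through their product $\lambda_0'' := \lambda_0 \lambda_0'$. Since $\lambda_0, \lambda_0' \in Z(W_P(1))$, they commute, and $\lambda_0''$ again lies in $Z(W_P(1))$; moreover $\langle \alpha, \nu(\lambda_0'')\rangle = \langle \alpha, \nu(\lambda_0)\rangle + \langle \alpha, \nu(\lambda_0')\rangle > 0$ for every $\alpha \in \Sigma^+ \setminus \Sigma_P^+$, so $\lambda_0''$ is itself a valid $\lambda_P^-$. The goal is to show that the expression attached to $\lambda_0''$ agrees with the one attached to $\lambda_0$; by symmetry the same will hold for $\lambda_0'$, and independence follows.

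The two key identities I would establish are
\[
q_{w\lambda_0''} = q_{w\lambda_0}\, q_{\lambda_0'}, \qquad q_{\lambda_0''} = q_{\lambda_0}\, q_{\lambda_0'}.
\]
For the first, write $w\lambda_0'' = (w\lambda_0)\lambda_0'$. Since $w\lambda_0 \in W_P(1)$ is $P$-negative by hypothesis and $\lambda_0'$ is a valid $\lambda_P^-$, Lemma~\ref{lem:condition for length is additive, P-negative} applied with $w\lambda_0$ in place of $w$ gives $\ell(w\lambda_0'') = \ell(w\lambda_0) + \ell(\lambda_0')$, which yields the first identity after noting that length-additivity implies $q$-multiplicativity. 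For the second, $\nu(\lambda_0)$ and $\nu(\lambda_0')$ both vanish on $\Sigma_P$ (by centrality in $W_P(1)$) and are strictly positive on $\Sigma^+ \setminus \Sigma_P^+$, so they lie in the same closed Weyl chamber; Lemma~\ref{lem:length, on lambda} then gives the second identity.

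Substituting these into the defining expression for $\lambda_0''$ yields
\[
q_w^{1/2} q_{\lambda_0''}^{1/2} q_{w\lambda_0''}^{-1/2}
= q_w^{1/2}\, (q_{\lambda_0} q_{\lambda_0'})^{1/2}\, (q_{w\lambda_0} q_{\lambda_0'})^{-1/2}
= q_w^{1/2} q_{\lambda_0}^{1/2} q_{w\lambda_0}^{-1/2},
\]
and the symmetric calculation, using $w\lambda_0'' = (w\lambda_0')\lambda_0$ together with the fact that $w\lambda_0'$ is $P$-negative, shows that this common value also equals $q_w^{1/2} q_{\lambda_0'}^{1/2} q_{w\lambda_0'}^{-1/2}$, finishing the proof.

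The main subtlety is that Lemma~\ref{lem:condition for length is additive, P-negative} must be applied not to the original $w$ (which need not be $P$-negative) but to the combination $w\lambda_0$; this is allowed precisely because the standing hypothesis is that $w\lambda_0$ is $P$-negative, and the pivot through $\lambda_0 \lambda_0'$ is exactly what converts this strengthened hypothesis into the desired independence statement.
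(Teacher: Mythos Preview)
Your proof is correct and follows essentially the same route as the paper's: pivot through the product $\lambda_0\lambda_0'$, use that $\nu(\lambda_0)$ and $\nu(\lambda_0')$ lie in the same closed chamber to get $q_{\lambda_0\lambda_0'} = q_{\lambda_0}q_{\lambda_0'}$, and use length-additivity for a $P$-negative element times a $\lambda_P^-$ to get $q_{w\lambda_0\lambda_0'} = q_{w\lambda_0}q_{\lambda_0'}$. The only cosmetic difference is that the paper cites Lemma~\ref{lem:length, positive/negative elements, strictly}(2) (with $v=1$) for the second additivity, whereas you cite Lemma~\ref{lem:condition for length is additive, P-negative}; the ``if'' direction of the latter is exactly the former specialized to $v=1$, so the arguments are the same.
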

\begin{proof}
Let $\lambda_0'$ be another choice and put $\lambda_1 = \lambda_0\lambda_0'$.
Since $\nu(\lambda_0)$ and $\nu(\lambda_0')$ belong to the same closed Weyl chamber, we have $\ell(\lambda_0\lambda_0') = \ell(\lambda_0) + \ell(\lambda_0')$ by Lemma~\ref{lem:length, on lambda}.
Hence $q_{\lambda_1} = q_{\lambda_0}q_{\lambda_0'}$.
By Lemma~\ref{lem:length, positive/negative elements, strictly} (2), we have $\ell(w\lambda_1) = \ell(w\lambda_0) + \ell(\lambda_0')$.
Hence $q_{w\lambda_1} = q_{w\lambda_0}q_{\lambda_0'}$.
Therefore we get $q_w^{1/2}q_{\lambda_0}^{1/2}q_{w\lambda_0}^{-1/2} = q_w^{1/2}q_{\lambda_1}^{1/2}q_{w\lambda_1}^{-1/2}$.
Replacing $\lambda_0$ with $\lambda_0'$, we also have $q_w^{1/2}q_{\lambda_0'}^{1/2}q_{w\lambda_0'}^{-1/2} = q_w^{1/2}q_{\lambda_1}^{1/2}q_{w\lambda_1}^{-1/2}$.
We get the lemma.
\end{proof}
We denote $q_w^{1/2}q_{\lambda_0}^{1/2}q_{w\lambda_0}^{-1/2}$ by $q(P,w)$.
By Lemma~\ref{lem:condition for length is additive, P-negative}, we have $q(P,w) = 1$ if and only if $w$ is $P$-negative.

\begin{prop}\label{prop:Bruhat filtration and action of A}
The subspace $I_P(\sigma)_A$ is $\mathcal{A}_{o_-}$-stable and the action of $E_{o_-}(\lambda)$ on $I_P(\sigma)_A/I_P(\sigma)_{A'}\simeq \sigma$ is given by $q(P,n_w^{-1}\cdot \lambda)E_{o_{-,P}}^P(n_w^{-1}\cdot \lambda)$.
\end{prop}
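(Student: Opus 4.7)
Set $\mu := n_w^{-1}\cdot\lambda$. The argument revolves around the commutation identity
\[
E_{o_-}(\lambda)\,T_{n_w} = T_{n_w}\,E_{o_-\cdot w}(\mu).
\]
To prove it, I first check $E_{o_-}(n_w)=T_{n_w}$ for $w\in W_0$ by taking a reduced expression $w=s_1\cdots s_k$, iterating \eqref{eq:product formula}, and invoking \eqref{eq:E_o for W_0} at each step (since $s_1\cdots s_{i-1}\cdot s_i > s_1\cdots s_{i-1}$); this gives $E_{o_-}(n_w)=T_{n_{s_1}}\cdots T_{n_{s_k}}=T_{n_w}$, all $q$-prefactors being trivial by reducedness. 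Then \eqref{eq:product formula} applied to $E_{o_-}(\lambda)\,E_{o_-\cdot\lambda}(n_w)$ and to $E_{o_-}(n_w)\,E_{o_-\cdot n_w}(\mu)$ expresses both as scalar multiples of $E_{o_-}(\lambda n_w)=E_{o_-}(n_w\mu)$ with identical scalars: $o_-\cdot\lambda=o_-$ since $\lambda\in\Lambda(1)$, and $q_\mu=q_\lambda$ by Lemma~\ref{lem:conjugate and length}.

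With this, $(\varphi\,E_{o_-}(\lambda))(T_{n_w})=\varphi(T_{n_w}\,E_{o_-\cdot w}(\mu))$. I want to show, modulo values of $\varphi$ at $T_{n_v}$ with $v<w$ in $W_0^P$, that this equals $\varphi(T_{n_w})\cdot q(P,\mu)\,E^P_{o_{-,P}}(\mu)$. I localize using Proposition~\ref{prop:localization as Levi subalgebra}: choose $N$ large enough that $\mu_0 := \mu\,(\lambda_P^-)^N$ is $P$-negative. The product formula \eqref{eq:product formula} in $\mathcal{H}$ and in $\mathcal{H}_P$ (with matching scalar by \cite[Lemma~4.5]{arXiv:1406.1003_accepted}) gives
\[
E_{o_-\cdot w}(\mu)\cdot E_{o_-}((\lambda_P^-)^N) = q(P,\mu)\,E_{o_-\cdot w}(\mu_0)
\]
and the parallel $E^P_{o_{-,P}}(\mu)\,E^P_{o_{-,P}}((\lambda_P^-)^N) = q(P,\mu)\,E^P_{o_{-,P}}(\mu_0)$. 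Because $\mu_0$ and $(\lambda_P^-)^N$ are $P$-negative, Lemma~\ref{lem:image of E by j}\,(2) with $x=1$ yields $j_P^{-*}(E^P_{o_{-,P}}(\mu_0))=E_{o_-}(\mu_0)$ and $j_P^{-*}(E^P_{o_{-,P}}((\lambda_P^-)^N))=E_{o_-}((\lambda_P^-)^N)$; the length additivity of Lemma~\ref{lem:length, positive/negative elements}\,(1), applied with $v=w$, $\lambda_0=1$, $w_{\text{lemma}}=\mu_0$, then collapses $T_{n_w}\,E_{o_-\cdot w}(\mu_0)=E_{o_-}(n_w\mu_0)$ onto $T_{n_w}\,j_P^{-*}(E^P_{o_{-,P}}(\mu_0))$ up to Bruhat-lower $W_0^P$-terms. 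Cancelling the invertible central $E^P_{o_{-,P}}((\lambda_P^-)^N)\in\mathcal{H}_P$ and applying $\varphi$ via the intertwining property $\varphi(X\,j_P^{-*}(Y))=\varphi(X)\,Y$ produces the claimed formula, with corrections involving $\varphi(T_{n_v})$ for $v<w$ that vanish when $w$ is minimal in $A$ and $\varphi\in I_P(\sigma)_A$.

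Stability follows from the same analysis applied at any $v\in W_0^P\setminus A$: $(\varphi\,E_{o_-}(\lambda))(T_{n_v})$ expands as a combination of $\varphi(T_{n_u})$ for $u\le v$ in $W_0^P$, and openness of $A$ forces every such $u$ into $W_0^P\setminus A$, so each $\varphi(T_{n_u})=0$. The principal technical hurdle is to justify the passage ``modulo lower Bruhat terms'' rigorously: when $\mu$ is not $P$-negative, $E_{o_-\cdot w}(\mu)$ and $q(P,\mu)\,j_P^{-*}(E^P_{o_{-,P}}(\mu))$ (the latter understood via the localization) differ by a linear combination of $T_\nu$'s reflecting the disagreement of the orientations $o_-\cdot w$ and $o_{-,P}$, and one must verify that after left-multiplication by $T_{n_w}$ this discrepancy produces only $T_{n_v}$-components with $v<w$. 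The length identities of Section~\ref{sec:Preliminaries}, particularly Lemma~\ref{lem:condition for length is additive, P-negative} (which says $q(P,\mu)=1$ precisely when $\mu$ is $P$-negative), are tailored to absorb this mismatch into the scalar $q(P,\mu)$.
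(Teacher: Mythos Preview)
Your exact commutation $E_{o_-}(\lambda)T_{n_w}=T_{n_w}E_{o_-\cdot w}(\mu)$ is correct, but it leaves you with the \emph{wrong} orientation on the right: the intertwining property of $\varphi$ via $j_P^{-*}$ (Lemma~\ref{lem:image of E by j}(2) with $x=1$) relates to $E_{o_-}(\mu_0)$, not $E_{o_-\cdot w}(\mu_0)$. Your localization step also has this mismatch: the product formula requires $E_{o_-\cdot w}((\lambda_P^-)^N)$ as the second factor, not $E_{o_-}((\lambda_P^-)^N)$; these differ for general $w\in W_0^P$ since $\lambda_P^-$ is dominant and the twisted chamber $w^{-1}\cdot(\text{anti-dominant})$ need not contain $\pm\nu(\lambda_P^-)$.

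The genuine gap is the one you flag yourself: you must show that $T_{n_w}E_{o_-\cdot w}(\mu_0)$ and $T_{n_w}E_{o_-}(\mu_0)$ differ only by terms whose $W_0^P$-support is $<w$. Nothing in Lemmas~\ref{lem:length, positive/negative elements} or~\ref{lem:condition for length is additive, P-negative} gives this; length additivity tells you both sides equal $E_{o_-}(n_w\mu_0)$ up to scalars, but the two $E$-elements on the right are taken for different orientations and their difference is a linear combination of $T_\nu$ with $\nu<\mu_0$ in $W(1)$, whose left-multiplication by $T_{n_w}$ you have not analysed. This control is exactly the content of the Bernstein relations \cite[Corollary~5.43]{MR3484112}, and the paper invokes them at the outset: they give directly
\[
E_{o_-}(\lambda)T_{n_v}\in T_{n_v}E_{o_-}(n_v^{-1}\cdot\lambda)+\sum_{v_1<v,\ \mu'\in\Lambda(1)}C[q_s^{\pm 1}]T_{n_{v_1}}E_{o_-}(\mu'),
\]
already with the correct orientation $o_-$ on the right and the lower terms indexed by $v_1<v$ in $W_0$. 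The paper then rewrites the lower terms in the $E_-$ basis and disposes of them via Lemma~\ref{lem:E_- at right of parabolic induction} (which packages your localization idea) together with the vanishing lemma that follows it. Without the Bernstein relations or an equivalent explicit triangularity result, your plan does not close.
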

We need the following lemma.
Recall that we have another basis $\{E_-(w)\mid w\in W(1)\}$ defined by $E_-(n_v \mu) = q_{n_v}^{-1/2}q_{\mu}^{-1/2}q_{n_v\mu}^{1/2}T_{n_v}^* E_{o_-}(\mu)$ for $v\in W_0$ and $\mu\in \Lambda(1)$.
From the definition, we have
\[
E_-(w)E_{o_-}(\lambda) = q_w^{1/2}q_{\lambda}^{1/2}q_{w\lambda}^{-1/2}E_-(w\lambda).
\]
\begin{lem}\label{lem:E_- at right of parabolic induction}
Let $X\in \mathcal{H}$, $\varphi \in I_P(\sigma)$ and $w\in W_P(1)$.
Then we have $\varphi(XE_-(w)) = q(P,w)\varphi(X)\sigma(E_-^P(w))$.
\end{lem}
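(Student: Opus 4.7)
The plan is a two-step reduction. First I will establish the formula when $w \in W_P^-(1)$ by showing that $E_-(w) = j_P^{-*}(E^P_-(w))$; then I will bootstrap to arbitrary $w \in W_P(1)$ by multiplying on the right by a suitable power of $E_{o_-}(\lambda_P^-)$ and exploiting the fact that the corresponding Levi element is already invertible in $\mathcal{H}_P$.

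For the base case, I would write $w = n_v\mu$ with $v\in W_{0,P}$ and $\mu\in \Lambda(1)$; since $w$ is $P$-negative so is $\mu$. Applying the defining formula for $E_-$ and the analogous one for $E^P_-$, the identification $E_-(w) = j_P^{-*}(E^P_-(w))$ reduces to three ingredients: Corollary~\ref{cor:image by j, positie and negative case} giving $j_P^{-*}(T_{n_v}^{P*}) = T_{n_v}^*$, Lemma~\ref{lem:image of E by j}(2) giving $j_P^{-*}(E^P_{o_{-,P}}(\mu)) = E_{o_-}(\mu)$, and the coefficient identity $q_{n_v,P}^{1/2}q_{\mu,P}^{1/2}q_{n_v\mu,P}^{-1/2} = q_{n_v}^{1/2}q_\mu^{1/2}q_{n_v\mu}^{-1/2}$ from \cite[Lemma~4.5]{arXiv:1406.1003_accepted}. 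Since $E^P_-(w) \in \mathcal{H}_P^-$ and $\varphi$ is right $\mathcal{H}_P^-$-linear via $j_P^{-*}$, the formula $\varphi(XE_-(w)) = \varphi(X)\sigma(E^P_-(w))$ follows; here $q(P,w)=1$ by Lemma~\ref{lem:condition for length is additive, P-negative}.

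For general $w \in W_P(1)$, set $\lambda_0 = \lambda_P^-\in Z(W_P(1))$ and choose $N$ large enough that $w\lambda_0^N \in W_P^-(1)$. The product formula stated just before the lemma gives $E_-(w) E_{o_-}(\lambda_0^N) = q(P,w) E_-(w\lambda_0^N)$, where the scalar is identified with $q(P,w)$ via the lemma preceding the statement (applied with $\lambda_0^N$ playing the role of $\lambda_0$). Applying $\varphi(X\,\cdot\,)$: on the left, right $\mathcal{H}_P^-$-linearity together with $E_{o_-}(\lambda_0^N) = j_P^{-*}(E^P_{o_{-,P}}(\lambda_0^N))$ produces $\varphi(XE_-(w))\sigma(E^P_{o_{-,P}}(\lambda_0^N))$; on the right, the base case for $w\lambda_0^N$ yields $q(P,w)\varphi(X)\sigma(E^P_-(w\lambda_0^N))$. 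Since $\ell_P(\lambda_0)=0$ (Lemma~\ref{lem:length zero}, $\lambda_0$ being central in $W_P(1)$), the corresponding $\mathcal{H}_P$-product formula collapses to $E^P_-(w\lambda_0^N) = E^P_-(w)E^P_{o_{-,P}}(\lambda_0^N)$, giving
\[
\varphi(XE_-(w))\,\sigma(E^P_{o_{-,P}}(\lambda_0^N)) = q(P,w)\,\varphi(X)\,\sigma(E^P_-(w))\,\sigma(E^P_{o_{-,P}}(\lambda_0^N)).
\]

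The main obstacle is the final cancellation: naively one only obtains the formula after right-multiplication by $\sigma(E^P_{o_{-,P}}(\lambda_0^N))$, and for an arbitrary $\sigma$ there is no ambient localization available. The key observation unblocking this is that, since $\ell_P(\lambda_0)=0$, the element $E^P_{o_{-,P}}(\lambda_0) = T^P_{\lambda_0}$ is already invertible in $\mathcal{H}_P$ with inverse $T^P_{\lambda_0^{-1}}$ (via the braid relation applied to $\lambda_0\cdot \lambda_0^{-1}=1$, all three elements of $W_P$-length zero). Hence $\sigma(E^P_{o_{-,P}}(\lambda_0^N))$ acts invertibly on $\sigma$ for every $\mathcal{H}_P$-module $\sigma$, and the cancellation produces the desired identity.
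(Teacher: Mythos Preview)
Your proof is correct and follows essentially the same two-step strategy as the paper: establish the $P$-negative case via $j_P^{-*}(E^P_-(w)) = E_-(w)$, then bootstrap using the product formula and the invertibility of $E^P_{o_{-,P}}(\lambda_P^-)$ in $\mathcal{H}_P$. The only differences are expository: the paper cites \cite[Lemma~4.6]{arXiv:1406.1003_accepted} directly for the base case and uses the invertibility from Proposition~\ref{prop:localization as Levi subalgebra} without comment, whereas you unpack both of these explicitly.
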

\begin{proof}
Replacing $\varphi$ with $\varphi X$, we may assume $X = 1$.
If $w$ is $P$-negative, then this follows from $q(P,w) = 1$ and $j_P^{-*}(E^P_-(w)) = E_-(w)$~\cite[Lemma~4.6]{arXiv:1406.1003_accepted}.
In general, let $\lambda_P^-\in \Lambda(1)$ as in Proposition~\ref{prop:localization as Levi subalgebra} such that $w\lambda_P^-$ is $P$-negative.
Then we have
\[
E_-(w)E_{o_-}(\lambda_P^-) = q(P,w)E_-(w\lambda_P^-).
\]
Hence we have
\begin{align*}
\varphi(E_-(w)) & = \varphi(E_-(w)E_{o_-}(\lambda_P^-))\sigma(E^P_{o_{-,P}}(\lambda_P^-)^{-1})\\
& = q(P,w)\varphi(E_-(w\lambda_P^-))\sigma(E^P_{o_{-,P}}(\lambda_P^-)^{-1})\\
& = q(P,w)\varphi(1)\sigma(E^P_-(w\lambda_P^-)E^P_{o_{-,P}}(\lambda_P^-)^{-1})\\
& = q(P,w)\varphi(1)\sigma(E^P_-(w))
\end{align*}
We get the lemma.
\end{proof}

We also use:
\begin{lem}
Let $v\in W_0^P$ and $\varphi\in I_P(\sigma)$.
Assume that $\varphi(T_{n_v}^*) = 0$.
Then we have $\varphi(E_-(n_vw)) = 0$ for any $w\in W_P(1)$.
\end{lem}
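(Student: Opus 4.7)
The plan is to reduce to the preceding Lemma~\ref{lem:E_- at right of parabolic induction} by writing $E_-(n_vw)$ as a scalar multiple of $T_{n_v}^*\cdot E_-(w)$ in $\mathcal{H}$.

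First, decompose $w = n_{v_1}\mu$ with $v_1\in W_{0,P}$ and $\mu\in\Lambda(1)$. Since $v\in W_0^P$ and $v_1\in W_{0,P}$, lengths add: $\ell(vv_1) = \ell(v)+\ell(v_1)$, and by the compatibility of our chosen lifts we have $n_v n_{v_1} = n_{vv_1}$, so $n_vw = n_{vv_1}\mu$. I claim $T_{n_{vv_1}}^* = T_{n_v}^*T_{n_{v_1}}^*$. Writing $n_v = s_1\cdots s_p u_v$ and $n_{v_1} = s_1'\cdots s_q' u_{v_1}$ as reduced expressions with $\ell(u_v)=\ell(u_{v_1})=0$, this reduces to the identity $T_{u_v}(T_{s_i'}-c_{s_i'}) = (T_{u_vs_i'u_v^{-1}} - c_{u_vs_i'u_v^{-1}})T_{u_v}$, which follows from the braid relations together with the conjugation rule $w\cdot c_s = c_{wsw^{-1}}$ on the parameters.

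Next, unwinding the definition
\[
E_-(n_{v'}\mu') = q_{n_{v'}}^{-1/2}q_{\mu'}^{-1/2}q_{n_{v'}\mu'}^{1/2}T_{n_{v'}}^* E_{o_-}(\mu')
\]
for both $n_vw = n_{vv_1}\mu$ and $w = n_{v_1}\mu$, the factorization of $T_{n_{vv_1}}^*$ yields
\[
E_-(n_v w) = \beta\, T_{n_v}^*\, E_-(w),\qquad \beta = q_{n_{vv_1}}^{-1/2}q_{n_{vv_1}\mu}^{1/2}q_{n_{v_1}}^{1/2}q_{n_{v_1}\mu}^{-1/2},
\]
whose precise value is immaterial for the argument.

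Finally, apply Lemma~\ref{lem:E_- at right of parabolic induction} with $X = T_{n_v}^*$ to obtain
\[
\varphi(T_{n_v}^* E_-(w)) = q(P,w)\,\varphi(T_{n_v}^*)\,\sigma(E_-^P(w)) = 0
\]
by hypothesis, hence $\varphi(E_-(n_vw)) = 0$. The only mildly delicate step is the braid identity $T_{n_{vv_1}}^* = T_{n_v}^*T_{n_{v_1}}^*$ when the $u$-parts intervene, but as indicated above this is a direct consequence of the conjugation rule on parameters; everything else is routine scalar bookkeeping.
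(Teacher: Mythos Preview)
Your argument has a genuine gap: the scalar $\beta$ need not lie in $C$. Unwinding your computation (using $q_{n_{vv_1}}=q_{n_v}q_{n_{v_1}}$) gives $\beta = q_{n_vw}^{1/2}q_{n_v}^{-1/2}q_w^{-1/2}$, which is the \emph{inverse} of $\gamma = q_{n_v}^{1/2}q_w^{1/2}q_{n_vw}^{-1/2}\in C[q_s]$. The relation that actually holds in $\mathcal{H}$ (without inverting anything) is $T_{n_v}^* E_-(w) = \gamma\, E_-(n_vw)$. Now $\gamma = 1$ exactly when $\ell(n_vw)=\ell(n_v)+\ell(w)$, which by Lemma~\ref{lem:length, positive/negative elements} is the case precisely when $w$ is $P$-negative. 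For a general $w\in W_P(1)$ this fails, and in the mod $p$ setting (all $q_s=0$) one then has $\gamma=0$: thus $T_{n_v}^*E_-(w)=0$ in $\mathcal{H}$, while $E_-(n_vw)$ is a nonzero basis element. So there is \emph{no} scalar $\beta\in C$ with $E_-(n_vw)=\beta\,T_{n_v}^*E_-(w)$, and knowing $\varphi(T_{n_v}^*E_-(w))=0$ tells you nothing about $\varphi(E_-(n_vw))$.

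The paper's proof handles exactly this point: it first replaces $w$ by $w\lambda_P^-$ (using that $E^P_{o_{-,P}}(\lambda_P^-)$ is invertible in $\mathcal{H}_P$, a genuinely different invertibility from that of $q_s$) to reduce to $w$ $P$-negative. In that case $\gamma=1$, so $E_-(n_vw)=T_{n_v}^*E_-(w)$ on the nose, and your final step goes through. Your factorization $T_{n_{vv_1}}^*=T_{n_v}^*T_{n_{v_1}}^*$ is fine (and follows directly from $\ell(vv_1)=\ell(v)+\ell(v_1)$ without the detour through the $u$-parts), but it is only useful once one has arranged $\gamma=1$.
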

\begin{proof}
Take $\lambda_P^-$ as in Proposition~\ref{prop:localization as Levi subalgebra} such that $w\lambda_P^-$ is $P$-negative.
Then we have 
\begin{align*}
\varphi(E_-(n_vw)) & = \varphi(E_-(n_vw)E_{o_-}(\lambda_P^-))\sigma(E^P_{o_{-,P}}(\lambda_P^-)^{-1})\\
& \in C[q_s]\varphi(E_-(n_vw\lambda_P^-))\sigma(E^P_{o_{-,P}}(\lambda_P^-)^{-1}).
\end{align*}
Hence it is sufficient to prove $\varphi(E_-(n_vw\lambda_P^-)) = 0$.
Namely we may assume $w$ is $P$-negative.

If $w$ is $P$-negative, by Lemma~\ref{lem:length, positive/negative elements}, we have $\ell(n_vw) = \ell(n_v) + \ell(w)$.
Hence by the definition of $E_-$, we have $E_-(n_vw) = T_{n_v}^*E_-(w)$.
Therefore we have $\varphi(E_-(n_vw)) = \varphi(T_{n_v}^*)\sigma(E_-^P(w)) = 0$.
\end{proof}

\begin{proof}[Proof of Proposition~\ref{prop:Bruhat filtration and action of A}]
Let $v\notin A'$.
By the Bernstein relations~\cite[Corollary~5.43]{MR3484112}, in $\mathcal{H}[q_s^{\pm 1}]$, we have
\[
E_{o_-}(\lambda)T_{n_v}\in T_{n_v}E_{o_-}(n_v^{-1}\cdot\lambda) + \sum_{v_1 < v,\mu \in \Lambda(1)}C[q_s^{\pm 1}]T_{n_{v_1}}E(\mu).
\]
Since $T_{n_{v_1}}\in \sum_{v_2 \le v_1}T_{n_{v_2}}^*C[Z_\kappa]$, we have
\begin{align*}
E_{o_-}(\lambda)T_{n_v} & \in T_{n_v}E_{o_-}(n_v^{-1}\cdot\lambda) + \sum_{v_2 < v,\mu \in \Lambda(1)}C[q_s^{\pm 1}]T_{n_{v_2}}^*E(\mu)\\
& = T_{n_v}E_{o_-}(n_v^{-1}\cdot\lambda) + \sum_{v_2<v,\mu\in\Lambda(1)}C[q_s^{\pm 1}]E_-(n_{v_2}\mu).
\end{align*}
For $v_2 < v$, take $v_3\in W^P_0$ and $v_3'\in W_{0,P}$ such that $v_2 = v_3 v_3'$.
Then $E_-(n_{v_2}\mu) = E_{-}(n_{v_3}n_{v_3'}\mu)$ and $n_{v_3'}\mu\in W_P(1)$.
We have $v_3\le v_2 < v$.
Hence
\begin{align*}
E_{o_-}(\lambda)T_{n_v}
& \in T_{n_v}E_{o_-}(n_v^{-1}\cdot\lambda) + \left(\sum_{v_3<v,v_3\in W^P_0,x\in W_P(1)}C[q_s^{\pm 1}]E_-(n_{v_3}x)\cap \mathcal{H}\right)\\
& = T_{n_v}E_{o_-}(n_v^{-1}\cdot\lambda) + \sum_{v_3<v,v_3\in W^P_0,x\in W_P(1)}C[q_s]E_-(n_{v_3}x).
\end{align*}

Let $\varphi \in I_P(\sigma)_A$ and we prove $\varphi(E_-(n_{v_3}x)) = 0$ for $v_3 < v,v_3\in W_0^P,x\in W_P(1)$ by applying the above lemma.
We check $\varphi(T_{n_{v_3}}^*) = 0$.
We have $T_{n_{v_3}}^* \in \sum_{v_4 \le v_3}T_{n_{v_4}}C[Z_\kappa]$.
Since $v_4\le v_3 < v$ and $v\notin A'$, we have $v_4\notin A$.
Hence $\varphi(T_{n_{v_4}}) = 0$.
Therefore we get $\varphi(T_{n_{v_3}}^*) = 0$.

Therefore we have $(\varphi E_{o_-}(\lambda))(T_{n_{v}}) = \varphi(T_{n_v}E_{o_-}(n_v^{-1}\cdot \lambda))$.
By Lemma~\ref{lem:E_- at right of parabolic induction}, we have $\varphi(T_{n_v}E_{o_-}(n_v^{-1}\cdot \lambda)) = q(P,n_v^{-1}\cdot \lambda)\varphi(T_{n_v})\sigma(E^P_{o_-}(n_v^{-1}\cdot \lambda))$.
This is zero if $v\ne w$.
Hence $\varphi E_{o_-}(\lambda) \in I_P(\sigma)_A$.
If $v = w$, we get $\varphi(E_{o_-}(\lambda)T_{n_{w}}) = q(P,n_w^{-1}\cdot \lambda)\varphi(T_{n_w})\sigma(E^P_{o_-}(n_w^{-1}\cdot \lambda))$.
This gives the lemma.
\end{proof}

Finally, we describe the filtration in terms of a tensor product.
Recall that we have an isomorphism (Proposition~\ref{prop:tensor description of I_P})
\[
I_P(\sigma)\simeq n_{w_Gw_P}\sigma\otimes_{(\mathcal{H}_{P'}^+,j_{P'}^+)}\mathcal{H}
\]
where $P' = n_{w_Gw_P}\opposite{P}n_{w_Gw_P}^{-1}$
Let $A\subset {}^{P'}W_0$ be a closed subset, namely a subset which satisfies that $v\in A,w\le v$ implies $w\in A$.
Set $A_0 = \{w^{-1}w_Gw_P\mid w\in A\}$.
Then $A_0\subset W_0^P$ is an open subset by Lemma~\ref{lem:description of isomo between two inductions}.
By Lemma~\ref{lem:description of isomo between two inductions}, $I_P(\sigma)_{A_0}$ corresponds to
\[
\sum_{v\in A}n_{w_Gw_P}\sigma\otimes T^*_{n_v}.
\]
Let $w\in A$ be a maximal element and put $A' = A\setminus\{w\}$.
\begin{lem}\label{lem:Bruhat filtration via tensor product}
The quotient
\[
\left(\sum_{v\in A}n_{w_Gw_P}\sigma\otimes T^*_{n_v}\right)/\left(\sum_{v\in A'}n_{w_Gw_P}\sigma\otimes T^*_{n_v}\right)
\]
is isomorphic to $\sigma$ as a vector space and the action of $E_{o_-}(\lambda)$ is given by
\[
q(P,n_{w^{-1}w_Gw_P}^{-1}\cdot \lambda)\sigma(E^P_{o_{-,P}}(n_{w^{-1}w_Gw_P}^{-1}\cdot \lambda)).
\]
\end{lem}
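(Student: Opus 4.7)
The proof is essentially a translation of Proposition~\ref{prop:Bruhat filtration and action of A} through the isomorphism of Proposition~\ref{prop:tensor description of I_P}, so my plan is to set up the dictionary carefully and then invoke the earlier result.

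First I would fix the bijection $W_0^P \simeq {}^{P'}W_0$, $v \mapsto w_Gw_Pv^{-1}$ from Lemma~\ref{lem:description of isomo between two inductions}(1), which reverses the Bruhat order. Writing $w_0 = w^{-1}w_Gw_P \in W_0^P$, the assumption that $A \subset {}^{P'}W_0$ is closed translates to $A_0 \subset W_0^P$ being open (as already noted in the text), the maximality of $w \in A$ translates to $w_0$ being a minimal element of $A_0$, and $A' = A \setminus \{w\}$ translates to $A_0 \setminus \{w_0\} = (A')_0$, where I use the same construction applied to $A'$.

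Next I would use Lemma~\ref{lem:description of isomo between two inductions}(2), whose formula $\varphi \mapsto \sum_{v \in W_0^P}\varphi(T_{n_v}) \otimes T_{n_{w_Gw_Pv^{-1}}}^*$ exhibits the isomorphism $I_P(\sigma) \simeq n_{w_Gw_P}\sigma \otimes_{(\mathcal{H}_{P'}^+, j_{P'}^+)}\mathcal{H}$ as sending $I_P(\sigma)_{A_0}$ precisely onto $\sum_{v \in A} n_{w_Gw_P}\sigma \otimes T_{n_v}^*$, and similarly $I_P(\sigma)_{(A')_0}$ onto $\sum_{v \in A'} n_{w_Gw_P}\sigma \otimes T_{n_v}^*$. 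Here I rely on the fact that the basis $\{T_{n_v}\}_{v \in W_0^P}$ and the basis $\{T_{n_v}^*\}_{v \in W_0^P}$ cut out the same filtration, which follows by the triangular relation $T_{n_v}^* \in T_{n_v} + \sum_{v' < v} T_{n_{v'}}C[Z_\kappa]$ used already in Proposition~\ref{prop:decomposition of I_P}(2) (so the vanishing condition defining $I_P(\sigma)_{A_0}$ is equivalent in either basis, since $A_0$ is open).

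Finally I would invoke Proposition~\ref{prop:Bruhat filtration and action of A} directly for $A_0$ and the minimal element $w_0$: the quotient $I_P(\sigma)_{A_0}/I_P(\sigma)_{(A')_0}$ is identified with $\sigma$ via $\varphi \mapsto \varphi(T_{n_{w_0}})$, and the action of $E_{o_-}(\lambda)$ is multiplication by $q(P, n_{w_0}^{-1}\cdot \lambda)\, E^P_{o_{-,P}}(n_{w_0}^{-1}\cdot \lambda)$. Substituting $w_0 = w^{-1}w_Gw_P$ yields exactly the claimed formula. I do not expect any genuine obstacle here; the only bit that requires a moment's care is checking that the tensor-product filtration matches the Hom-filtration under the non-obvious isomorphism of Lemma~\ref{lem:description of isomo between two inductions}(2), which works because maximal elements of $A \subset {}^{P'}W_0$ correspond to minimal elements of $A_0 \subset W_0^P$ under the order-reversing bijection.
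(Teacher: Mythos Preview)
Your proposal is correct and follows exactly the approach the paper intends: the paragraph preceding the lemma already establishes (via Lemma~\ref{lem:description of isomo between two inductions}) that $I_P(\sigma)_{A_0}$ corresponds to $\sum_{v\in A}n_{w_Gw_P}\sigma\otimes T^*_{n_v}$, and the lemma is then an immediate translation of Proposition~\ref{prop:Bruhat filtration and action of A} through this correspondence, which is why the paper states it without a separate proof. Your remark about the $T$ versus $T^*$ basis is not actually needed, since the isomorphism formula of Lemma~\ref{lem:description of isomo between two inductions}(2) already reads off $\varphi(T_{n_v})$ directly, but it does no harm.
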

\begin{rem}
Since $ \ell(w) + \ell(w^{-1}w_Gw_P) = \ell(w_Gw_P)$ (see the last part of the proof of Lemma~\ref{lem:description of isomo between two inductions}), we have $n_{w^{-1}w_Gw_P} = n_w^{-1}n_{w_Gw_P}$.
Hence we have
\begin{align*}
\sigma(E^P_{o_{-,P}}(n_{w^{-1}w_Gw_P}^{-1}\cdot \lambda)) & = \sigma(E^P_{o_{-,P}}(n_{w_Gw_P}^{-1}n_w\cdot \lambda))\\
& = (n_{w_Gw_P}\sigma)(E^{P'}_{o_{-,{P'}}}(n_w\cdot \lambda)).
\end{align*}

\begin{rem}\label{rem:q(P,*)=0 iff P'-positive}
For any $\mu$, $q(P,n_{w_Gw_P}^{-1}\cdot \mu) = 1$ if and only if $n_{w_Gw_P}^{-1}\cdot \mu$ is $P$-negative.
Set $P' = n_{w_Gw_P}\opposite{P}n_{w_Gw_P}^{-1}$.
Then we have $(w_Gw_P)(\Sigma^+\setminus\Sigma_P^+) = \Sigma^-\setminus\Sigma_{P'}^-$.
Hence $q(P,n_{w_Gw_P}^{-1}\cdot \mu) = 1$ if and only if $\mu$ is $P'$-positive.
Therefore $q(P,n_{w^{-1}w_Gw_P}^{-1}\cdot \lambda) = 1$ if and only if $n_w\cdot \lambda$ is $P'$-positive.
\end{rem}\end{rem}

\subsection{Sum and intersections}
In this subsection, assume that $P$ is a parabolic subgroup and $\sigma$ an $\mathcal{H}_P$-module which has the extension to $\mathcal{H}$.
Let $Q$ be a parabolic subgroup containing $P$.
Let $A\subset W_0^Q$ be an open subset.
Then we have $I_Q(\sigma)_A\subset I_Q(\sigma)$.
In this subsection we prove the following lemma using an argument in \cite{AHHV2}.
\begin{lem}\label{lem:intersection and sum, Bruhat cel}
Let $\mathcal{P}\subset \{Q_1\supset Q\}$ be a subset.
Then we have 
\[
I_Q(e_Q(\sigma))_A\cap \sum_{Q_1\in \mathcal{P}}(I_{Q_1}(e_{Q_1}(\sigma)))
=
\sum_{Q_1\in \mathcal{P}}(I_Q(e_Q(\sigma))_A\cap I_{Q_1}(e_{Q_1}(\sigma))).
\]
\end{lem}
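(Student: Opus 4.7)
The ``$\supseteq$'' inclusion is immediate; for ``$\subseteq$'' I would argue by induction on $|W_0^Q \setminus A|$, with the case $A = W_0^Q$ trivial.

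A preparatory identity I would establish first is
\[
I_{Q_1}(e_{Q_1}(\sigma)) \cap I_Q(e_Q(\sigma))_A = I_{Q_1}(e_{Q_1}(\sigma))_{A \cap W_0^{Q_1}}
\]
for each parabolic $Q_1 \supset Q$. The transitivity isomorphism (Proposition~\ref{prop:transitivity of inductions}) together with the unique factorization $w = v w_2$ with $v \in W_0^{Q_1}$ and $w_2 \in W_{0,Q_1} \cap W_0^Q$ (satisfying $\ell(w) = \ell(v)+\ell(w_2)$, hence $v \leq w$ in the Bruhat order) shows that for $\psi \in I_{Q_1}(e_{Q_1}(\sigma))$ one has $\psi(T_{n_w}) = \psi(T_{n_v}) \cdot T^{Q_1}_{n_{w_2}}$, the dot denoting the right $\mathcal{H}_{Q_1}$-action on $e_{Q_1}(\sigma)$. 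Openness of $A$ then forces $v \in W_0^{Q_1} \setminus A$ whenever $w \in W_0^Q \setminus A$, giving both inclusions of the auxiliary identity.

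For the inductive step, I pick a maximal element $w$ of $W_0^Q \setminus A$ and put $A' = A \cup \{w\}$, still open. Given $\varphi$ in the LHS, the inductive hypothesis applied to $A'$ writes $\varphi = \sum_{Q_1} \psi_{Q_1}$ with $\psi_{Q_1} \in I_{Q_1}(e_{Q_1}(\sigma))_{A' \cap W_0^{Q_1}}$. For $Q_1$ with $w \notin W_0^{Q_1}$ the set $A' \cap W_0^{Q_1}$ already equals $A \cap W_0^{Q_1}$, so $\psi_{Q_1}$ contributes directly to the RHS. For $Q_1$ with $w \in W_0^{Q_1}$, the quantities $a_{Q_1} := \psi_{Q_1}(T_{n_w}) \in e_{Q_1}(\sigma)$ satisfy $\sum_{Q_1} a_{Q_1} = \varphi(T_{n_w}) = 0$. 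Using Proposition~\ref{prop:decomposition of I_P} I introduce the correction $\xi_{Q_1} \in I_{Q_1}(e_{Q_1}(\sigma))$ characterized by $\xi_{Q_1}(T_{n_v}) = a_{Q_1}\delta_{v,w}$ for $v \in W_0^{Q_1}$. By the formula above, the image of $\xi_{Q_1}$ in $I_Q(e_Q(\sigma))$ is supported on $\{w w_2 : w_2 \in W_{0,Q_1} \cap W_0^Q\} \subseteq \{u \geq w\}$, and maximality of $w$ ensures every $u > w$ lies in $A$. Hence $\xi_{Q_1} \in I_{Q_1}(e_{Q_1}(\sigma))_{A' \cap W_0^{Q_1}}$, and $\psi'_{Q_1} := \psi_{Q_1} - \xi_{Q_1}$ lies in $I_{Q_1}(e_{Q_1}(\sigma)) \cap I_Q(e_Q(\sigma))_A$.

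The main obstacle is the residual $\eta := \sum_{Q_1} \xi_{Q_1} = \varphi - \sum_{Q_1} \psi'_{Q_1}$, which belongs to $I_Q(e_Q(\sigma))_A \cap \sum_{Q_1} I_{Q_1}(e_{Q_1}(\sigma))$ and, since its $T_{n_w}$-coordinate equals $\sum a_{Q_1} = 0$, has support inside $\{u > w\}$ — but is not manifestly in $\sum_{Q_1}(I_{Q_1}(e_{Q_1}(\sigma)) \cap I_Q(e_Q(\sigma))_A)$. My plan is to dispose of $\eta$ by iterating the correction procedure with $w$ replaced by a Bruhat-maximal element $u_0$ of $\operatorname{supp}(\eta)$; because each iteration strictly lowers the Bruhat-maximum of the residual's support and $W_0^Q$ is finite, the process terminates. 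Organizing this rigorously is the technical heart of the proof: the subtlety is that $u_0 \in A$ (not in $W_0^Q \setminus A$), so the correction elements used at the $u_0$-step must be chosen to land in $I_{Q_1}(e_{Q_1}(\sigma)) \cap I_Q(e_Q(\sigma))_A$ from the outset, which forces one to select, for each relevant $Q_1$, an element with the prescribed $T_{n_{u_0}}$-coordinate and whose image has support in $\{u \geq u_0\}$ only.
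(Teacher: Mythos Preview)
Your argument is sound up to the introduction of the residual $\eta$, but the iteration you propose to dispose of $\eta$ does not work. The difficulty is exactly the one you flag but do not resolve: a correction element $\zeta_{Q_1}\in I_{Q_1}(e_{Q_1}(\sigma))$ concentrated at $u_0$ in $W_0^{Q_1}$-coordinates has support, as an element of $I_Q(e_Q(\sigma))$, equal to the whole coset $u_0(W_{0,Q_1}\cap W_0^Q)$. This set contains elements strictly above $u_0$ in the Bruhat order whenever $Q_1\supsetneq Q$, so subtracting $\sum\zeta_{Q_1}$ typically creates new support above $u_0$, and the Bruhat-maximum of the residual need not drop. There is no evident way to choose corrections inside $I_{Q_1}$ whose $I_Q$-support is confined to $\{u_0\}$ alone (any such element would have to vanish at the $W_0^{Q_1}$-component of $u_0$, forcing it to be zero), so the iteration does not terminate as you claim.

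The paper closes this gap with a single idea that replaces your iteration entirely: instead of correcting inside each $I_{Q_1}$ separately, pass to the parabolic $Q_0$ corresponding to $\bigcup_{Q_1\in\mathcal{P}_1}\Delta_{Q_1}$, where $\mathcal{P}_1=\{Q_1\in\mathcal{P}\mid w\in W_0^{Q_1}\}$. One checks immediately that $w\in W_0^{Q_0}$, and then Lemma~\ref{lem:successive quotient of Bruhat filtration, small induction} gives $I_{Q_0,A'}/I_{Q_0,A}\simeq I_{Q_1,A'}/I_{Q_1,A}$ for each $Q_1\in\mathcal{P}_1$ (your notation). So each $\psi_{Q_1}$ may be corrected by an element $\varphi'_{Q_1}\in I_{Q_0,A'}$ rather than by your $\xi_{Q_1}\in I_{Q_1,A'}$. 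The residual $\sum_{Q_1\in\mathcal{P}_1}\varphi'_{Q_1}$ is then a \emph{single} element of $I_{Q_0}(e_{Q_0}(\sigma))$; since it lies in $I_{Q,A}$ it lies in $I_{Q_0,A}$, and because $I_{Q_0,A}\subset I_{Q_1,A}$ for every $Q_1\in\mathcal{P}_1$, it already sits in $\sum_{Q_1\in\mathcal{P}}I_{Q_1,A}$. No iteration is needed. Concretely, in your setup you should take $\xi_{Q_1}$ concentrated at $w$ in $W_0^{Q_0}$-coordinates (rather than $W_0^{Q_1}$-coordinates); then $\eta=\sum\xi_{Q_1}\in I_{Q_0}$ has vanishing $T_{n_w}$-coordinate in $W_0^{Q_0}$, hence $\eta=0$ outright.
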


\begin{rem}\label{rem:exactness on each Bruhat cell}
The above lemma is equivalent to the following.
Let $\mathcal{P}\subset \{Q_1\supset Q\}$ be a subset and set $\pi = I_Q(e_Q(\sigma))/\sum_{Q_1\in \mathcal{P}}I_{Q_1}(e_{Q_1}(\sigma))$.
Put $I_{Q_1,A} = I_{Q_1}(e_{Q_1}(\sigma))\cap I_Q(e_Q(\sigma))_A$ and let $\pi_A$ be the image of $I_Q(e_Q(\sigma))_A$.
Then the sequence
\[
\bigoplus_{Q_1\in \mathcal{P}}I_{Q_1,A}\to I_{Q,A}\to \pi_{A}\to 0
\]
is exact.
\end{rem}
Take a minimal element $w\in A$ and set $A' = A\setminus\{w\}$.
\begin{lem}\label{lem:successive quotient of Bruhat filtration, small induction}
Let $Q_1\supset Q$.
The injective map $I_{Q_1,A}/I_{Q_1,A'}\hookrightarrow I_{Q,A}/I_{Q,A'}$ is surjective if $w\in W_0^{Q_1}$ and $0$ otherwise.
\end{lem}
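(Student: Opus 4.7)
The plan is to exploit the length-additive decomposition $W_0 = W_0^{Q_1}\cdot W_{0,Q_1}$: for $v\in W_0^Q$, write $v = v_1 v_2$ with $v_1\in W_0^{Q_1}$ and $v_2\in W_{0,Q_1}$. A short argument using $v_1(\Sigma_{Q_1}^-)\subset \Sigma^-$ shows that $v_2\in W_0^Q$ as well. Since $n_{v_2}$ is both $Q_1$-positive and $Q_1$-negative, Corollary~\ref{cor:image by j, positie and negative case} gives $T_{n_{v_2}} = j_{Q_1}^{-*}(T_{n_{v_2}}^{Q_1})$; combined with $T_{n_v} = T_{n_{v_1}} T_{n_{v_2}}$ and the $\mathcal{H}_{Q_1}^-$-equivariance of any $\psi\in I_{Q_1}(e_{Q_1}(\sigma))$, this yields the key formula
\[
\psi(T_{n_v}) = \psi(T_{n_{v_1}})\cdot T_{n_{v_2}}^{Q_1},
\]
the action on the right being the $\mathcal{H}_{Q_1}$-action on $e_{Q_1}(\sigma)$, whose underlying space is identified with that of $e_Q(\sigma)$.

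For the case $w\notin W_0^{Q_1}$, the decomposition $w=w_1 w_2$ has $w_2\neq 1$, so $w_1 < w$ in the Bruhat order. Since $w$ is minimal in $A$ and $w_1\in W_0^{Q_1}\subset W_0^Q$, we have $w_1\notin A$. Thus for any $\psi\in I_{Q_1,A}\subset I_Q(e_Q(\sigma))_A$ we get $\psi(T_{n_{w_1}}) = 0$, and the key formula forces $\psi(T_{n_w}) = 0$, so the induced map on the quotient vanishes.

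For the case $w\in W_0^{Q_1}$, given $x\in e_Q(\sigma)$, Proposition~\ref{prop:decomposition of I_P} applied to $I_{Q_1}$ produces a unique $\psi\in I_{Q_1}(e_{Q_1}(\sigma))$ with $\psi(T_{n_w}) = x$ and $\psi(T_{n_{v_1}}) = 0$ for $v_1\in W_0^{Q_1}\setminus\{w\}$. For $v\in W_0^Q$ with decomposition $v=v_1v_2$, the key formula gives $\psi(T_{n_v})=0$ unless $v_1=w$; in the remaining case $v = wv_2$ with $\ell(v)=\ell(w)+\ell(v_2)$, so $w\le v$ in the Bruhat order, and openness of $A$ yields $v\in A$. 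Therefore $\psi\in I_{Q_1,A}$, and its image in $I_{Q,A}/I_{Q,A'}\simeq e_Q(\sigma)$ equals $x$, giving surjectivity.

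I expect the only subtle point to be the key formula itself: one must check that the $\mathcal{H}_{Q_1}^-$-equivariance of $\psi$ transfers cleanly between the two pictures $I_{Q_1}(e_{Q_1}(\sigma))$ and $I_Q(e_Q(\sigma))$. This amounts to the compatibility $j_{Q_1}^{-*}\circ j_Q^{Q_1-*} = j_Q^{-*}$ on $\mathcal{H}_Q^-$, which follows from $W_Q^-(1)\subset W_{Q_1}^-(1)$ and Lemma~\ref{lem:image of E by j}, together with the identification of the underlying spaces of $e_Q(\sigma)$ and $e_{Q_1}(\sigma)$. The rest is routine Bruhat combinatorics.
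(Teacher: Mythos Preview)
Your proposal is correct and follows essentially the same route as the paper: both arguments decompose $v = v_1 v_2$ with $v_1\in W_0^{Q_1}$, $v_2\in W_{0,Q_1}$, use Corollary~\ref{cor:image by j, positie and negative case} to get the key formula $\psi(T_{n_v}) = \psi(T_{n_{v_1}})\,e_{Q_1}(\sigma)(T^{Q_1}_{n_{v_2}})$, and then apply the same Bruhat-order reasoning in each case. Your auxiliary remark that $v_2\in W_0^Q$ and your closing discussion of the compatibility $j_{Q_1}^{-*}\circ j_Q^{Q_1-*} = j_Q^{-*}$ are additions not made explicit in the paper, but they are correct and harmless.
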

\begin{proof}
Recall that $\varphi\mapsto \varphi(T_{n_w})$ gives an isomorphism $I_{Q,A}/I_{Q,A'}\simeq \sigma$.
Assume that $w\in W_0^{Q_1}$ and let $\varphi \in I_{Q,A}$.
Set $x = \varphi(T_{n_w})$ and take $\psi\in I_{Q_1}(e_{Q_1}(\sigma))$ such that $\psi(T_{n_w}) = x$ and $\psi(T_{n_v}) = 0$ for $v\in W_0^{Q_1}\setminus\{w\}$.
Let $v\in W_0^Q$ and take $v_1\in W_0^{Q_1}$ and $v_2\in W_{Q_1,0}$ such that $v = v_1v_2$.
Then we have $\psi(T_{n_v}) = \psi(T_{n_{v_1}})e_{Q_1}(\sigma)(T^{Q_1}_{n_{v_2}})$ since $j_{Q_1}^{-*}(T_{n_{v_2}}^{Q_1}) = T_{n_{v_2}}$ by Lemma~\ref{cor:image by j, positie and negative case}.
Hence if $\psi(T_{n_v})\ne 0$, then $v_1 = w$.
Therefore $v\in wW_{Q_1,0}$.
Since $w\in W_0^{Q_1}$, any element in $wW_{Q_1,0}$ is greater than or equal to $w$.
Hence $v\in A$.
Namely $\psi\in I_{Q_1,A}$ and we proved the surjectivity of the map in the lemma.

Assume that $w\notin W_0^{Q_1}$ and take $w_1\in W_0^{Q_1}$ and $w_2\in W_{Q_1,0}$ such that $w = w_1w_2$.
Then $w_1 < w$.
Since $w$ is minimal in $A$, we have $w_1\notin A$.
Hence for $\psi\in I_{Q_1,A}$, we have $\psi(T_{n_{w_1}}) = 0$.
Therefore we have $\psi(T_{n_w}) = \psi(T_{n_{w_1}})e_{Q_1}(\sigma)(T^{Q_1}_{n_{w_2}}) = 0$.
Hence we have $\psi\in I_{Q_1,A'}$.
We get $I_{Q_1,A} = I_{Q_1,A'}$.
\end{proof}

\begin{proof}[Proof of Lemma~\ref{lem:intersection and sum, Bruhat cel}]
Obviously the left hand side contains the right hand side.
We prove that the right hand side contains the left hand side by backward induction on $\#A$.
We assume that the lemma is true for $A$ and prove the lemma for $A'$.
By inductive hypothesis, we  have
\begin{align*}
I_{Q,A'}\cap \sum_{Q_1\in \mathcal{P}}I_{Q_1}(e_{Q_1}(\sigma))
&=  I_{Q,A'}\cap I_{Q,A} \cap \sum_{Q_1\in \mathcal{P}}I_{Q_1}(e_{Q_1}(\sigma))\\
& =
I_{Q,A'}\cap \sum_{Q_1\in \mathcal{P}}I_{Q_1,A}.
\end{align*}

First assume that $w\notin W_0^{Q_1}$ for any $Q_1\in \mathcal{P}$.
Then by the above lemma, we have $I_{Q_1,A} = I_{Q_1,A'}$ for any $Q_1\in \mathcal{P}$.
Hence
\[
I_{Q,A'}\cap \sum_{Q_1\in \mathcal{P}}I_{Q_1,A}
= I_{Q,A'}\cap \sum_{Q_1\in \mathcal{P}}I_{Q_1,A'}
= \sum_{Q_1\in \mathcal{P}}I_{Q_1,A'}.
\]

Now assume that there exists $Q_1\in \mathcal{P}$ such that $w\in W_0^{Q_1}$.
Take $\varphi_{Q_1}\in I_{Q_1,A}$ such that $\sum_{Q_1\in \mathcal{P}}\varphi_{Q_1}\in I_{Q,A'}$.
Set $\mathcal{P}_1 = \{Q_1\in \mathcal{P}\mid w\in W_0^{Q_1}\}$.
Let $Q_0$ be a parabolic subgroup corresponding to $\bigcup_{Q_1\in \mathcal{P}_1}\Delta_{Q_1}$.
By the above lemma, for each $Q_1\in \mathcal{P}_1$, we have $I_{Q_0,A}/I_{Q_0,A'}\simeq I_{Q_1,A}/I_{Q_1,A'}$.
Therefore for each $Q_1\in \mathcal{P}_1$ there exists $\varphi'_{Q_1}\in I_{Q_0,A}$ such that $\varphi_{Q_1} - \varphi'_{Q_1}\in I_{Q_1,A'}$.
Then we have
\begin{align*}
\sum_{Q_1\in \mathcal{P}}\varphi_{Q_1} & = \sum_{Q_1\in \mathcal{P}\setminus\mathcal{P}_1}\varphi_{Q_1} + \sum_{Q_1\in \mathcal{P}_1}(\varphi_{Q_1} - \varphi'_{Q_1}) + \sum_{Q_1\in \mathcal{P}_1}\varphi'_{Q_1}\\
& \in \sum_{Q_1\in \mathcal{P}\setminus\mathcal{P}_1}I_{Q_1,A} + \sum_{Q_1\in \mathcal{P}}I_{Q_1,A'} + \sum_{Q_1\in \mathcal{P}_1}\varphi'_{Q_1}.
\end{align*}
By the above lemma, for $Q_1\in \mathcal{P}\setminus \mathcal{P}_1$, we have $I_{Q_1,A} = I_{Q_1,A'}$.
Hence
\[
\sum_{Q_1\in \mathcal{P}}\varphi_{Q_1} \in \sum_{Q_1\in \mathcal{P}\setminus\mathcal{P}_1}I_{Q_1,A'} + \sum_{Q_1\in \mathcal{P}_1}I_{Q_1,A'} + \sum_{Q_1\in \mathcal{P}_1}\varphi'_{Q_1}.
\]
In particular, $\sum_{Q_1\in \mathcal{P}_1}\varphi'_{Q_1}\in I_{Q,A'}\cap I_{Q_0}(e_{Q_0}(\sigma)) = I_{Q_0,A'}$ since $\sum_{Q_1\in \mathcal{P}}\varphi_{Q_1}\in I_{Q,A'}$.
For $Q_1\in \mathcal{P}_1$, we have $I_{Q_0,A'}\subset I_{Q_1,A'}$.
Hence $I_{Q_0,A'}\subset \sum_{Q_1\in \mathcal{P}_1}I_{Q_1,A'}$.
Therefore
\[
\sum_{Q_1\in \mathcal{P}}\varphi_{Q_1} \in \sum_{Q_1\in \mathcal{P}\setminus\mathcal{P}_1}I_{Q_1,A'} + \sum_{Q_1\in \mathcal{P}_1}I_{Q_1,A'} + \sum_{Q_1\in \mathcal{P}_1}I_{Q_1,A'} = \sum_{Q_1\in \mathcal{P}}I_{Q_1,A'}.
\]
We get the lemma.
\end{proof}

\subsection{A filtration on generalized Steinberg modules}
As in the previous section, let $P$ be a parabolic subgroup and $\sigma$ an $\mathcal{H}_P$-module which has the extension to $\mathcal{H}$.
Let $Q$ be a parabolic subgroup containing $P$.
As in Remark~\ref{rem:exactness on each Bruhat cell}, for each open subset $A\subset W^Q_0$, set $I_{Q_1,A} = I_{Q_1}(e_{Q_1}(\sigma))\cap I_Q(e_Q(\sigma))_A$ and let $\St_{Q,A}$ be the image of $I_Q(e_Q(\sigma))_A$.
Let $w\in A$ be a minimal element and put $A' = A\setminus\{w\}$.
Then we have a commutative diagram.
\[
\begin{tikzcd}
0\arrow{d} & 0\arrow{d} & 0\arrow{d} & \\
\bigoplus_{Q_1\supsetneq Q}I_{Q_1,A'}\arrow{r}\arrow{d} & I_{Q,A'}\arrow{r}\arrow{d} & \St_{Q,A'}\arrow{r}\arrow{d} & 0\\
\bigoplus_{Q_1\supsetneq Q}I_{Q_1,A }\arrow{r}\arrow{d} & I_{Q,A }\arrow{r}\arrow{d} & \St_{Q,A }\arrow{r}\arrow{d} & 0\\
\bigoplus_{Q_1\supsetneq Q}I_{Q_1,A}/I_{Q_1,A'}\arrow{r}\arrow{d} & I_{Q,A}/I_{Q,A'}\arrow{r}\arrow{d} & \St_{Q,A}/\St_{Q,A'}\arrow{r}\arrow{d} & 0\\
0 & 0 & 0.
\end{tikzcd}
\]
Since the first two rows are exact by Remark~\ref{rem:exactness on each Bruhat cell}, the third row is also exact.
If $w\notin W^{Q_1}_0$ for any $Q_1\supsetneq Q$, we have $I_{Q_1,A}/I_{Q_1,A'} = 0$ for any $Q_1\supsetneq Q$ by Lemma~\ref{lem:successive quotient of Bruhat filtration, small induction}.
Hence $I_{Q,A}/I_{Q,A'}\xrightarrow{\sim}\St_{Q,A}/\St_{Q,A'}$.
If $w\in W^{Q_1}_0$ for some $Q_1\supsetneq Q$, we have $I_{Q_1,A}/I_{Q_1,A'}\xrightarrow{\sim}I_{Q,A}/I_{Q,A'}$ by Lemma~\ref{lem:successive quotient of Bruhat filtration, small induction}.
Hence $\bigoplus_{Q_1\supsetneq Q}I_{Q_1,A}/I_{Q_1,A'}\to I_{Q,A}/I_{Q,A'}$ is surjective.
Therefore we have $\St_{Q,A}/\St_{Q,A'} = 0$.
Summarizing this argument, we get the following lemma.

\begin{lem}\label{lem:successive quotient of filtration on St}
If $w\in W^{Q_1}_0$ for some $Q_1\supsetneq Q$ then $\St_{Q,A}/\St_{Q,A'} = 0$.
If $w\notin W^{Q_1}_0$ for any $Q_1\supsetneq Q$ then $I_{Q,A}/I_{Q,A'}\xrightarrow{\sim}\St_{Q,A}/\St_{Q,A'}$.
\end{lem}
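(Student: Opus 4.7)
The plan is to read off the successive quotients of the Bruhat filtration on $\St_Q(e_Q(\sigma))$ directly from a $3 \times 3$ diagram of short exact sequences. Set $I_{Q_1,A} = I_{Q_1}(e_{Q_1}(\sigma))\cap I_Q(e_Q(\sigma))_A$ as in the preamble of the subsection, and similarly for $A'$. Since $\St_{Q,A}$ and $\St_{Q,A'}$ are by definition the images of $I_Q(e_Q(\sigma))_A$ and $I_Q(e_Q(\sigma))_{A'}$ in $\St_Q(\sigma) = I_Q(e_Q(\sigma)) / \sum_{Q_1\supsetneq Q} I_{Q_1}(e_{Q_1}(\sigma))$, I obtain, for each of $A$ and $A'$, a right exact sequence
\[
\bigoplus_{Q_1\supsetneq Q} I_{Q_1,\bullet} \to I_{Q,\bullet} \to \St_{Q,\bullet}\to 0
\]
by Remark~\ref{rem:exactness on each Bruhat cell}. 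The inclusion $A'\subset A$ induces a morphism of these two sequences, and the cokernels of the vertical inclusions form a third right exact sequence
\[
\bigoplus_{Q_1\supsetneq Q} I_{Q_1,A}/I_{Q_1,A'} \to I_{Q,A}/I_{Q,A'} \to \St_{Q,A}/\St_{Q,A'} \to 0.
\]

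Now I would plug in Lemma~\ref{lem:successive quotient of Bruhat filtration, small induction}, which computes each quotient $I_{Q_1,A}/I_{Q_1,A'}$: it is $\sigma$ (mapping isomorphically onto $I_{Q,A}/I_{Q,A'}$) when $w\in W_0^{Q_1}$ and zero otherwise. In the first case of the lemma, there exists some $Q_1\supsetneq Q$ with $w\in W_0^{Q_1}$, so the left arrow of the displayed sequence is surjective, forcing $\St_{Q,A}/\St_{Q,A'}=0$. In the second case, all of $I_{Q_1,A}/I_{Q_1,A'}$ vanish for $Q_1\supsetneq Q$, so the right arrow is an isomorphism.

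I do not anticipate a serious obstacle: the only subtle point is verifying that the two original right exact rows admit a morphism whose cokernel row is again right exact, but this is immediate from the snake lemma applied to the obvious $3 \times 3$ diagram with exact columns, exactly as laid out in the paragraph preceding the statement. The core content is purely Lemma~\ref{lem:successive quotient of Bruhat filtration, small induction} combined with the exactness input from Remark~\ref{rem:exactness on each Bruhat cell}.
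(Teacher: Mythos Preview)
Your proposal is correct and follows essentially the same approach as the paper: the paper writes out the $3\times 3$ commutative diagram, observes that exactness of the first two rows (from Remark~\ref{rem:exactness on each Bruhat cell}) forces exactness of the third, and then invokes Lemma~\ref{lem:successive quotient of Bruhat filtration, small induction} in each case exactly as you do. The argument is in fact given in the paragraph immediately preceding the lemma statement rather than after it.
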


Using this, we give a description of $\St_P(\sigma)$.
Let $P_2$ be a parabolic subgroup corresponding to $\Delta\setminus\Delta_P$.
Note that, since $\sigma$ is assumed to be have the extension to $\mathcal{H}$, $\Delta_P$ and $\Delta\setminus\Delta_{P} = \Delta_{P_2}$ are orthogonal to each others.
Hence $W_0^P = W_{0,P_2}$.
\begin{prop}\label{prop:characterization of Steinberg}
The representation $\pi = \St_P(\sigma)$ is isomorphic to $\sigma$ as an $(\mathcal{H}_P^+,j_P^+)$-module and for $w\in W_{\aff,P_2}(1)$, $\pi(T_{w}) = (-1)^{\ell(w)}$.
\end{prop}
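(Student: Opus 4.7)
The plan is to establish $\pi\simeq\sigma$ as vector spaces via the filtration $\{\St_{P,A}\}$ from the previous subsection, then pin down the $\mathcal{H}$-action separately on $j_P^+(\mathcal{H}_P^+)$ and on $\{T_w\mid w\in W_{\aff,P_2}(1)\}$. For the vector-space part, $P(\sigma)=G$ forces $\Delta_P$ and $\Delta_{P_2}=\Delta\setminus\Delta_P$ to be orthogonal, hence $W_0^P=W_{0,P_2}$. For $w\in W_{0,P_2}$ I verify that $w\in W_0^{Q_1}$ for some $Q_1\supsetneq P$ if and only if $w\ne w_{P_2}$: orthogonality makes $w$ act trivially on $\Delta_P$, reducing the condition to $w(\Delta_{Q_1}\setminus\Delta_P)\subset\Sigma^+$, and this fails for every choice of $Q_1$ exactly when $w=w_{P_2}$. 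Lemma~\ref{lem:successive quotient of filtration on St} then collapses all graded pieces except the one at $w_{P_2}$; since $\{w_{P_2}\}$ is open (as $w_{P_2}$ is Bruhat-maximal) and $I_{Q_1,\{w_{P_2}\}}=0$ for $Q_1\supsetneq P$, I obtain $\pi\simeq I_{P,\{w_{P_2}\}}\simeq \sigma$ via $\varphi\mapsto\varphi(T_{n_{w_{P_2}}})$.

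For the $(\mathcal{H}_P^+,j_P^+)$-module structure, the orthogonality forces $w_Gw_P=w_{P_2}$ and $P'=n_{w_{P_2}}\opposite{P}n_{w_{P_2}}^{-1}=P$, and Lemma~\ref{lem:twist by longest element stabilize sigma} gives $n_{w_{P_2}}\sigma=\sigma$. Proposition~\ref{prop:tensor description of I_P} then yields $I_P(\sigma)\simeq \sigma\otimes_{(\mathcal{H}_P^+,j_P^+)}\mathcal{H}$, and Lemma~\ref{lem:description of isomo between two inductions} identifies $I_{P,\{w_{P_2}\}}$ with $\sigma\otimes 1$. The tensor-product relation $(x\otimes 1)\cdot j_P^+(h^+)=(xh^+)\otimes 1$ then shows that the action of $j_P^+(\mathcal{H}_P^+)$ on $\pi$ is the natural $\mathcal{H}_P^+$-action on $\sigma$.

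For $w\in W_{\aff,P_2}(1)$, I argue by induction on $\ell(w)$. If $\ell(w)=0$ then $w\in Z_\kappa$, $T_w=j_P^+(T_w^P)$ (via Corollary~\ref{cor:image by j, positie and negative case}), and $\sigma(T_w^P)=1$ by Remark~\ref{rem:value at Lambda_aff,P_2}; combined with the previous paragraph this gives $\pi(T_w)=1=(-1)^0$. For $\ell(w)\ge 1$, factoring $w=w's$ with $s\in S_{\aff,P_2}(1)$ and $\ell(w)=\ell(w')+1$ reduces the claim to proving $\pi(T_s)=-1$. I compute $(\varphi_x T_s)(T_{n_v})$ on each cell $v\in W_0^P$ using the braid and quadratic relations together with the $(\mathcal{H}_P^-,j_P^{-*})$-covariance of $\varphi_x$; nonzero values occur only at $v=w_{P_2}$ and at $v=\bar s w_{P_2}$, where $\bar s\in W_{0,P_2}$ is the image of $s$ (in the case that $\bar s$ is finite simple). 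I then lift the $\bar s w_{P_2}$-cell value to an element $\psi\in I_{Q_1}(e_{Q_1}(\sigma))$ for the parabolic $Q_1\supsetneq P$ with $\Delta_{Q_1}=\Delta_P\cup\{-w_{P_2}(\alpha)\}$ (where $\bar s=s_\alpha$), using Lemma~\ref{lem:successive quotient of Bruhat filtration, small induction}, and evaluate $\varphi_xT_s-\psi$ at $T_{n_{w_{P_2}}}$; the extension identity $e_{Q_1}(\sigma)(T_\bullet^{Q_1*})=1$ combined with Proposition~\ref{prop:expansion of c} and Lemma~\ref{lem:on c, about aff,Levi} then produces exactly $-x$.

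The main obstacle will be the last step, where one must carefully track the $Z_\kappa$-conjugation actions arising from the identity $T_wT_t=T_{w\cdot t}T_w$ and correctly identify the lifting element $\psi$. The sign comes from the gap between the $T$- and $T^*$-bases: the quadratic relation $T_{n_s}^2=q_sT_{n_s^2}+c_{n_s}T_{n_s}$, together with the extension normalisation $e_{Q_1}(\sigma)(T_s^{Q_1*})=1$ and the identity $T_s^{Q_1*}=T_s^{Q_1}-c_s^{Q_1}$, are precisely what yield the factor $-1$. A small modification covers the truly affine reflections $s\in S_{\aff,P_2}\setminus S_{0,P_2}$, for which the relevant $Q_1$ must be chosen from an affine rather than a finite simple root in $\Sigma_{P_2}$, but the sign computation is identical.
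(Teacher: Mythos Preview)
The first two parts of your argument match the paper. The gap is in the last step. For a genuinely affine $s\in S_{\aff,P_2}\setminus S_{0,P_2}$ there is no parabolic subgroup $Q_1\supsetneq P$ ``chosen from an affine simple root'': standard parabolic subgroups are indexed by subsets of the \emph{finite} simple roots $\Delta$, and the generalized Steinberg module is built only from the inductions $I_{Q_1}(e_{Q_1}(\sigma))$ for such $Q_1$. Your lifting argument, which kills the extra cell by passing to $I_{Q_1}(e_{Q_1}(\sigma))$ and invoking $e_{Q_1}(\sigma)(T_{n_{s'}}^{Q_1*})=1$, therefore has no analogue for the affine $s$, and the sign $\pi(T_s)=-1$ remains unproved for those reflections. (Your induction also tacitly assumes $\ell(s)=1$ for $s\in S_{\aff,P_2}(1)$ and that a reduced factorisation in $(W_{\aff,P_2},S_{\aff,P_2})$ is reduced in $(W_\aff,S_\aff)$; neither is justified here.)

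The paper circumvents this by never treating affine $s$ directly. After establishing $\pi(T_{n_v})=(-1)^{\ell(v)}$ for $v\in W_{0,P_2}$ via the cell computation (essentially your argument), it proves a multiplicativity step $\pi(T_{n_{v_1}wn_{v_2}})=\pi(T_{n_{v_1}})\pi(T_w)\pi(T_{n_{v_2}})$ for all $v_1,v_2\in W_{0,P_2}$ and $w\in W(1)$, and then writes an arbitrary $w\in W_{\aff,P_2}(1)$ as $n_{w_1}\lambda n_{w_2}$ with $w_1,w_2\in W_{0,P_2}$ and $\lambda\in\Lambda(1)\cap W_{\aff,P_2}(1)$ anti-dominant for $\Sigma_{P_2}^+$. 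Such a $\lambda$ is $P$-positive, so $\pi(T_\lambda)=\sigma(T_\lambda^P)=1$ by the already established $(\mathcal{H}_P^+,j_P^+)$-structure together with Remark~\ref{rem:value at Lambda_aff,P_2}; since $\ell(\lambda)$ is even by Lemma~\ref{lem:length of lambda_aff is even}, this equals $(-1)^{\ell(\lambda)}$. Combining the three steps gives $\pi(T_w)=(-1)^{\ell(w)}$ without ever computing $\pi(T_s)$ for an affine $s$.
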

\begin{proof}
Let $w\in W_0^P$ and assume that $w\notin W_0^{Q_1}$ for any $Q_1\supsetneq P$.
Let $\alpha\in\Delta\setminus\Delta_P$ and consider the parabolic subgroup $Q_1$ corresponding to $\Delta_P\cup \{\alpha\}$.
Then by the assumption $w(\Delta_{Q_1})\not\subset \Sigma^+$.
Since $w(\Delta_P) \subset\Sigma^+$, we have $w(\alpha) < 0$.
Therefore $w\in W_0^P = W_{0,P_2}$ satisfies that $w(\alpha) < 0$ for any $\alpha\in\Delta_{P_2}$.
Hence $w = w_{P_2} = w_Gw_P$.
Combining the above lemma, we get the following.
Note that $\{w_Gw_P\}$ is open.
\begin{itemize}
\item $\St_{P}(\sigma)/\St_{P,\{w_Gw_P\}}$ has a filtration with zero successive quotients. Hence $\St_P(\sigma) = \St_{P,\{w_Gw_P\}}$.
\item $\St_{P,\{w_Gw_P\}}\simeq I_P(\sigma)_{\{w_Gw_P\}}$.
\end{itemize}
Set $\sigma' = I_P(\sigma)_{\{w_Gw_P\}} = \{\varphi\in I_P(\sigma)\mid \varphi(T_{n_v}) = 0\ (v\in W^{P}_0\setminus\{w_{G}w_{P}\})\}$.
Then $\sigma'\hookrightarrow I_P(\sigma)\to \St_P(\sigma)$ is an isomorphism.
By Proposition~\ref{prop:tensor description of I_P}, $\sigma'$ is $j_P^+(\mathcal{H}_P^+)$-stable and isomorphic to $n_{w_Gw_P}\sigma$ as $(\mathcal{H}_{P}^+,j_P^+)$-modules.
By Lemma~\ref{lem:twist by longest element stabilize sigma}, we get the first part of the proposition.

Next, we prove $\pi(T_{w}) = (-1)^{\ell(w)}$ for $w\in W_{\aff,P_2}(1)$ by the following three steps.
\begin{enumerate}
\item The claim is true for $w = n_v$ where $v\in W_{0,P_2}$.
\item For any $v_1,v_2\in W_{0,P_2}$ and $w\in W(1)$, we have $\pi(T_{n_{v_1}wn_{v_2}}) = \pi(T_{n_{v_1}})\pi(T_{w})\pi(T_{n_{v_2}})$.
\item We have $\pi(T_{w}) = (-1)^{\ell(w)}$ for any $w\in W_{\aff,P_2}(1)$.
\end{enumerate}

(1)
We may assume $v = s\in S_{0,P_2}$.
Let $\overline{\varphi}\in \St_P(\sigma)$ and $\varphi\in \sigma'$ its lift.
For $w\in W_{0}^P = W_{0,P_2}$, we have
\[
(\varphi T_{n_s})(T_{n_w}) =
\begin{cases}
\varphi(T_{n_{w_{P_2}}}) & w = sw_{P_2},\\
(q_s - 1)\varphi(T_{n_{w_{P_2}}}) & w = w_{P_2},\\
0 & \text{otherwise}.
\end{cases}
\]
Indeed, if $w < sw$, then $(\varphi T_{n_s})(T_{n_w}) = \varphi(T_{n_{sw}})$.
Hence if $sw\ne w_{P_2}$ then this is zero.
If $w > sw$, then $(\varphi T_{n_s})(T_{n_w}) = \varphi(T_{n_s}^2T_{n_{sw}}) = \varphi(c_{n_s}T_{n_w} + q_sT_{n_{sw}})$.
Since $s(sw) > sw$, $sw\ne w_{P_2}$.
Hence $\varphi(T_{n_{sw}}) = 0$.
Therefore $(\varphi T_{n_s})(T_{n_w}) = \varphi(c_{n_s}T_{n_w}) = \varphi(T_{n_w})e_Q(\sigma)(n_w^{-1}\cdot c_{n_s})$.
This is zero if $w\ne w_{P_2}$.
When $w = w_{P_2}$, take $c_t\in \Z$ such that $n_{w_{P_2}}^{-1}\cdot c_{n_s} = \sum_t c_tT_t$.
Since $n_{w_{P_2}}^{-1}\cdot c_{n_s}\in C[Z_\kappa\cap W_{\aff,P_2}(1)]$ (Lemma~\ref{lem:on c, about aff,Levi}), $c_t\ne 0$ implies $t\in Z_\kappa \cap W_{\aff,P_2}(1)$.
Hence $e_Q(\sigma)(T_t) = 1$ for such $t$.
Therefore $e_Q(\sigma)(n_{w_{P_2}}^{-1}\cdot c_{n_s}) = \sum_t c_t = q_s - 1$ by Proposition~\ref{prop:expansion of c}.
We get the above calculation.

Take $\alpha\in\Delta_{P_2}$ such that $s = s_\alpha$ and put $\alpha' = -w_{P_2}(\alpha)$ and $s' = s_{\alpha'}$.
Let $Q_1$ be a parabolic subgroup corresponding to $\Delta_P\cup\{\alpha'\}$.
Then $w_{Q_1} = w_Ps'$ and $w_Gw_{Q_1} = w_{P_2}s'$.
Define $\psi\in I_{Q_1}(e_{Q_1}(\sigma))$ by $\psi(T_{n_w}) = 0$ for $w\in W_0^{Q_1}\setminus \{w_{P_2}s'\}$ and $\psi(T_{n_{w_{P_2}s'}}) = \varphi(T_{n_{w_{P_2}}})$.
We prove
\[
\psi(T_{n_w}) = 
\begin{cases}
\varphi(T_{n_{w_{P_2}}}) & w = sw_{P_2},\\
q_s\varphi(T_{n_{w_{P_2}}}) & w = w_{P_2},\\
0 & \text{otherwise}.
\end{cases}
\]
Let $w \in W^P_0 = W_{0,P_2}$ and take $w_1\in W^{Q_1}_0$ and $w_2\in W_{0,Q_1\cap P_2}$ such that $w = w_1w_2$.
Then we have $\psi(T_{n_w}) = \psi(T_{n_{w_1}})e_{Q_1}(T_{n_{w_2}}^{Q_1})$.
Hence if $w_1\ne w_{P_2}s'$, namely $w\notin w_{P_2}s'W_{0,Q_1\cap P_2}$, we have $\psi(T_{n_w}) = 0$.
Note that $W_{0,Q_1\cap P_2} = \{1,s'\}$ and $w_{P_2}s' = sw_{P_2}$.
Therefore $\psi(T_{n_w}) = 0$ if $w\ne w_{P_2},sw_{P_2}$.
We have $\psi(T_{n_{w_{P_2}}}) = \psi(T_{n_{w_{P_2}s'}})e_{Q_1}(\sigma)(T^{Q_1}_{n_{s'}})$ and $e_{Q_1}(\sigma)(T^{Q_1}_{n_{s'}}) = e_{Q_1}(\sigma)(T_{n_{s'}}^{Q_1*} + c_{n_{s'}}) = 1 + q_{s'} - 1 = q_{s'}$.
Since $s$ and $s'$ is conjugate, $q_s = q_{s'}$.
Hence $\psi(T_{n_{w_{P_2}}}) = q_s\varphi(T_{n_{w_{P_2}}})$.
Finally we have $\psi(T_{n_{sw_{P_2}}}) = \varphi(T_{n_{w_{P_2}}})$ by the definition of $\psi$.
We get the above formula.

Therefore $(\varphi T_{n_s} - \psi)(T_{n_w}) = 0 = -\varphi(T_{n_w})$ if $w\ne w_{P_2}$ and equal to $-\varphi(T_{n_{w_{P_2}}})$ if $w = w_{P_2}$.
Since an element in $I_P(\sigma)$ is determined by the value at $W_0^P = W_{0,P_2}$, we have $\varphi T_{n_s} - \psi = -\varphi$.
Hence $\overline{\varphi} T_{n_s} = -\overline{\varphi}$ in $\St_P(\sigma)$.

(2)
Assume that $v_2 = 1$.
To prove (2), by induction on the length of $v_1$, we may assume $v_1 = s\in S_0$.
If $n_sw > w$, then it is obvious.
Assume that $n_sw < w$.
Then we have $\pi(T_{w}) = \pi(T_{n_s^{-1}}T_{n_sw}) = \pi(T_{n_s^{-1}})\pi(T_{n_sw}) = -\pi(T_{n_sw})$ by (1).
Hence $\pi(T_{n_sw}) = -\pi(T_{w}) = \pi(T_{n_s})\pi(T_{w})$.
The same argument implies (2) for any $v_2$.

(3)
Take $w\in W_{\aff,P_2}(1)$.
Then we can take $w_1,w_2\in W_{0,P_2}$ and $\lambda\in \Lambda(1)\cap W_{\aff,P_2}(1)$ such that $w = n_{w_1}\lambda n_{w_2}$ and $\lambda$ is anti-dominant with respect to $\Sigma_{P_2}^+$.
Since $\Delta_P$ is orthogonal to $\Delta_{P_2}$, $\Sigma_{P_2}^+ = \Sigma^+\setminus\Sigma_P^+$.
Hence $\lambda$ is $P$-positive.
As in Remark~\ref{rem:value at Lambda_aff,P_2}, $\sigma(T_\lambda^P) = 1$.

Since $\lambda\in \Lambda(1)\cap W_\aff(1)$, $\ell(\lambda)$ is even by Lemma~\ref{lem:length of lambda_aff is even}.
On the other hand, since $\lambda$ is $P$-positive, we have $\pi(T_\lambda) = \sigma(T^P_\lambda)$.
Therefore $\pi(T_\lambda)  = 1 = (-1)^{\ell(\lambda)}$.
Using (2), we have 
\[
\pi(T_{w}) = \pi(T_{n_{w_1}})\pi(T_\lambda)\pi(T_{n_{w_2}}) = (-1)^{\ell(w_1) + \ell(\lambda) + \ell(w_2)} = (-1)^{\ell(w)}.
\]
We get the proposition.
\end{proof}

\subsection{Tensor product decomposition}
For the content of this subsection, see also \cite{AHHV2}.
We start with the following lemma.
\begin{lem}\label{lem:tensor product module}
Let $P_1,P_2$ be parabolic subgroups such that $\Delta_{P_1}$ is orthogonal to $\Delta_{P_2}$ and $\Delta_{P_1}\cup\Delta_{P_2} = \Delta$.
Assume that an $\mathcal{H}_{P_i}$-module $\sigma_i$ has the extension to $\mathcal{H}$.
Then we have the unique action of $\mathcal{H}$ on $e_G(\sigma_1)\otimes e_G(\sigma_2)$ which satisfies $(x_1\otimes x_2)T_w^* = x_1T_w^*\otimes x_2T_w^*$ for $w\in W(1), x_1\in e_G(\sigma_1)$ and $x_2\in e_G(\sigma_2)$.
\end{lem}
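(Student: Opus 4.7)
The plan is as follows. Uniqueness is immediate because $\{T_w^*\}_{w \in W(1)}$ is a $C$-basis of $\mathcal{H}$: any candidate action is determined by its values on that basis. For existence I would define the action by the stated diagonal formula on the $T^*$-basis, extend $C$-linearly, and verify that it respects the defining algebra relations of $\mathcal{H}$. Working in the $T^*$-basis, these consist of the braid-type identity $T_v^* T_w^* = T_{vw}^*$ whenever $\ell(vw) = \ell(v) + \ell(w)$, together with a quadratic relation for each $s \in S_\aff(1)$. A short direct computation (using that $s \cdot c_s = c_s$ together with $T_t T_s^* = T_{ts}^*$ for $t \in Z_\kappa$, which follows from the relation $c_{ts} = t c_s$) rewrites this quadratic relation in the star basis as
\[
(T_s^*)^2 = q_s T_{s^2}^* - \sum_{t \in Z_\kappa} c_s(t)\, T_{ts}^*.
\]

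The braid-type identity is handled automatically by the diagonal formula, since both sides of $((x_1 \otimes x_2) T_v^*) T_w^* = (x_1 \otimes x_2)(T_v^* T_w^*)$ expand to $x_1 T_{vw}^* \otimes x_2 T_{vw}^*$. The substantive step is the quadratic relation, and this is where the orthogonality hypothesis enters decisively. Since $\Delta = \Delta_{P_1} \sqcup \Delta_{P_2}$ with the two parts orthogonal, the Dynkin diagram of $\Sigma$ (and hence of the affine root system defining $(W_\aff,S_\aff)$) breaks into connected components, each contained in the diagram of $\Sigma_{P_1}$ or of $\Sigma_{P_2}$. It follows that every $s \in S_\aff(1)$ projects into $W_{\aff, P_i}$ for exactly one $i \in \{1,2\}$. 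Say $s \in W_{\aff, P_1}(1)$: then also $s^2 \in Z_\kappa \subset W_{\aff, P_1}(1)$ and $ts \in W_{\aff, P_1}(1)$ for every $t \in Z_\kappa$, so by the characterisation of the extension (applied with $P_2$ there equal to our $P_2$), $e_G(\sigma_2)$ sends each of $T_s^*$, $T_{s^2}^*$, $T_{ts}^*$ to $1 \in C$.

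Consequently $T_s^*$ acts on $e_G(\sigma_2)$ as the identity, and the would-be quadratic relation on $x_1 \otimes x_2 \in e_G(\sigma_1) \otimes e_G(\sigma_2)$ collapses to
\[
x_1 (T_s^*)^2 \otimes x_2 = q_s\,(x_1 T_{s^2}^*) \otimes x_2 - \sum_{t \in Z_\kappa} c_s(t)\,(x_1 T_{ts}^*) \otimes x_2,
\]
which is simply the quadratic relation already holding inside $e_G(\sigma_1)$; the case $s \in W_{\aff, P_2}(1)$ is symmetric. The hard part is precisely this quadratic step: for a generic pair of $\mathcal{H}$-modules the diagonal formula $T_w^* \mapsto T_w^* \otimes T_w^*$ fails to be an algebra map, since expanding $(T_s^*)^2$ and then acting diagonally produces unwanted cross terms of shape $x_1 T_{s^2}^* \otimes x_2 T_{ts}^*$ (and similar). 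It is exactly the orthogonality hypothesis that, via the above case analysis, forces one of the two tensor factors to act by the scalar $1$ on every $T_u^*$ occurring in the expansion of $(T_s^*)^2$, so that these cross terms vanish; once this is in place, the verification that the formula defines a genuine right $\mathcal{H}$-action is routine.
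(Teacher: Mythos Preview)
Your approach is exactly the paper's: uniqueness from the $T^*$-basis, braid relations for free from the diagonal formula, and for each affine simple reflection reduce the quadratic relation to one tensor factor because the other factor acts trivially by the definition of the extension. The paper compresses the last step to one line (``$x_1 T_s^* \otimes x_2 T_s^* = x_1 T_s^* \otimes x_2$, hence it satisfies the quadratic relations''), while you spell out the star-basis quadratic relation $(T_s^*)^2 = q_s T_{s^2}^* - \sum_t c_s(t) T_{ts}^*$, which is a helpful clarification.

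There is, however, one incorrect assertion. You write ``$s^2 \in Z_\kappa \subset W_{\aff,P_1}(1)$ and $ts \in W_{\aff,P_1}(1)$ for every $t \in Z_\kappa$''. In general $Z_\kappa \not\subset W_{\aff,P_1}(1)$ (already $Z_\kappa \not\subset W_\aff(1)$: think of $G$ a torus, where $W_\aff(1)$ is trivial but $Z_\kappa$ need not be), so $ts \in W_{\aff,P_1}(1)$ is not automatic for arbitrary $t \in Z_\kappa$. What you actually need is only that $ts \in W_{\aff,P_1}(1)$ for those $t$ with $c_s(t) \neq 0$, and this follows from Lemma~\ref{lem:on c, about aff,Levi}(2): once the lift $s$ is chosen in $W_{\aff,P_1}(1)$ (possible by Lemma~\ref{lem:on c, about aff,Levi}(1)), one has $c_s \in C[Z_\kappa \cap W_{\aff,P_1}(1)]$, hence each relevant $t$ lies in $W_{\aff,P_1}(1)$ and so does $ts$. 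With this correction your argument goes through; note also that it suffices to check the quadratic relation for one lift of each simple reflection (chosen in $W_{\aff,P_i}(1)$), since any other lift differs by an element of $Z_\kappa$ and the braid relations already hold.
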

\begin{proof}
The action obviously satisfies the braid relations.
It is sufficient to prove that the action satisfies the quadratic relations.

Let $s\in W_\aff(1)$ be a lift of a simple reflection.
If $s\in W_{P_1}(1)$, then $s\in W_{\aff,P_1}(1)$.
Hence $e_G(\sigma_2)(T_s^*) = 1$.
Therefore $x_1T_s^*\otimes x_2T_s^* = x_1T_s^*\otimes x_2$.
Hence it satisfies the quadratic relations.
The quadratic relations hold too if $s\in W_{P_2}(1)$.
\end{proof}
\begin{rem}\label{rem:action of aff to tensor product module}
By the above proof, we have $(x_1\otimes x_2)T_w^* = x_1T_w^*\otimes x_2$ if $w\in W_{P_1,\aff}(1)$.
Hence $(x_1\otimes x_2)X = x_1X\otimes x_2$ for any $X\in \bigoplus_{w\in W_{P_1,\aff}(1)}CT_w^*$.
In particular, for $X = T_w$ or $X = E_o(w)$ for any $w\in W_{P_1,\aff}(1)$ and any orientation $o$.
We also have $(x_1\otimes x_2)X = x_1\otimes x_2X$ for any $X = T_w$ or $X = E_o(w)$ where $w\in W_{P_2,\aff}(1)$.
\end{rem}

In the rest of this subsection, let $P$ be a parabolic subgroup, $\sigma$ an $\mathcal{H}_P$-module which has the extension to $\mathcal{H}$.
Take a parabolic subgroup $Q$ containing $P$.
Let $P_2$ be the parabolic subgroup corresponding to $\Delta\setminus\Delta_P$.
Note that $\Delta_{P_2}$ is orthogonal to $\Delta_P$.

\begin{lem}\label{lem:parabolic induction of extension is extension}
We have $I_Q(\trivrep) \simeq e_G(I_{P\cap Q}^{P_2}(\trivrep))$.
More generally, for an $\mathcal{H}_{P_2\cap Q}$-module $\sigma$ which has an extension to $\mathcal{H}_Q$, we have $I_Q(e_Q(\sigma))\simeq e_G(I_{P_2\cap Q}^{P_2}(\sigma))$.
\end{lem}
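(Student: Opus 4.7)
The plan is to verify the characterization of the extension $e_G$. Since $e_G(\pi)$ is the unique $\mathcal{H}$-module structure on the space of $\pi$ satisfying $e_G(\pi)(T_w^*) = \pi(T_w^{P_2*})$ for $w\in W_{P_2}(1)$ and $e_G(\pi)(T_w^*) = 1$ for $w\in W_{\aff,P}(1)$, it suffices to construct a vector-space isomorphism $\Phi\colon I_Q(e_Q(\sigma))\xrightarrow{\sim} I_{P_2\cap Q}^{P_2}(\sigma)$ and verify both conditions on $I_Q(e_Q(\sigma))$ via $\Phi$.

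The orthogonality $\Delta_P\perp\Delta_{P_2}$ (required for $e_G$ to exist) gives $W_0 = W_{0,P}\times W_{0,P_2}$, and a direct computation shows $W_0^Q = W_{0,P_2}^{P_2\cap Q}$: writing $w = w_1 w_2$ with $w_1\in W_{0,P}$, $w_2\in W_{0,P_2}$, since $w_2$ fixes $\Delta_P$ pointwise, the condition $w(\Delta_P)\subset\Sigma^+$ forces $w_1 = 1$. Combined with the fact that $e_Q(\sigma) = \sigma$ as a vector space, Proposition~\ref{prop:decomposition of I_P}(2) yields the desired $\Phi$ defined by $\Phi(\varphi)(T_{n_w}^{P_2*}) = \varphi(T_{n_w}^*)$. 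For the first characterization condition, write $w = n_v\lambda\in W_{\aff,P}(1)$ with $v\in W_{0,P}$ and $\nu(\lambda)\in\R\coroot{\Sigma_P}$; by orthogonality, every $\alpha\in\Sigma^+\setminus\Sigma_Q^+$ lies in $\Sigma_{P_2}$, so $\langle\alpha,\nu(\lambda)\rangle = 0$, making $w\in W_Q(1)$ both $Q$-positive and $Q$-negative. Corollary~\ref{cor:image by j, positie and negative case} then gives $T_w^* = j_Q^{-*}(T_w^{Q*})$. A length-formula computation shows that $T_w$ commutes with $T_{n_{v'}}$ for $v'\in W_{0,P_2}$, hence $T_w^*$ commutes with $T_{n_{v'}}^*$; combining this commutativity with the $(\mathcal{H}_Q^-, j_Q^{-*})$-equivariance of $\varphi$ and the defining property $e_Q(\sigma)(T_w^{Q*}) = 1$, I obtain $\varphi\cdot T_w^* = \varphi$.

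The main obstacle is the second condition, since for general $w\in W_{P_2}(1)$ one has $w\notin W_Q(1)$, and the $j_Q^{-*}$-equivariance does not apply directly. I would handle this via the Bruhat filtration $I_Q(e_Q(\sigma))_A$ from Subsection~\ref{subsec:A filtration}, indexed by open $A\subset W_0^Q = W_{0,P_2}^{P_2\cap Q}$, which is transported by $\Phi$ to the analogous filtration on $I_{P_2\cap Q}^{P_2}(\sigma)$ computed inside $\mathcal{H}_{P_2}$. Using a central $\lambda_{P_2}^-\in\Lambda(1)$ from Proposition~\ref{prop:localization as Levi subalgebra} to reduce to $P_2$-negative $w$ (so that $T_w^* = j_{P_2}^{-*}(T_w^{P_2*})$), I would analyze the action of $T_w^*$ on successive quotients via Proposition~\ref{prop:Bruhat filtration and action of A} for the translation part (expressing the action of $E_{o_-}(\lambda)$ in terms of $E^Q_{o_{-,Q}}$ on $e_Q(\sigma)$) and via a direct product calculation in the $T^*$-basis for the finite Weyl part $T_{n_{w_0}}^*$ with $w_0\in W_{0,P_2}$. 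Lemma~\ref{lem:image of E by j}, connecting the $E$-bases across $\mathcal{H}_{P_2\cap Q}$, $\mathcal{H}_Q$, $\mathcal{H}_{P_2}$ and $\mathcal{H}$ through $j_{P_2\cap Q}^{P_2-*}$ and $j_Q^{-*}$, together with the defining property of $e_Q(\sigma)$ identifying its $T^{Q*}$-action in $W_{P_2\cap Q}(1)$ with the $T^{(P_2\cap Q)*}$-action on $\sigma$, yields the matching of actions on each successive quotient, and hence globally under $\Phi$.
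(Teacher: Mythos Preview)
Your treatment of the triviality on $W_{\aff,P}(1)$ is essentially the paper's argument, though your commutativity claim is imprecise: $T_w^*$ and $T_{n_{v'}}^*$ need not literally commute in $\mathcal{H}$, since $w$ and $n_{v'}$ may fail to commute in $W(1)$ by an element of $Z_\kappa$. What one actually has is $T_w^* T_{n_{v'}}^* = T^*_{wn_{v'}} = T_{n_{v'}}^* T_{n_{v'}^{-1}wn_{v'}}^*$ via length additivity in both orders, and then $e_Q(\sigma)(T^{Q*}_{n_{v'}^{-1}wn_{v'}}) = 1$ finishes. This is exactly the paper's computation.

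The real divergence is in your ``main obstacle''. You aim to verify the matching of $T_w^*$-actions for all $w\in W_{P_2}(1)$ via the Bruhat filtration. This is both unnecessary and incomplete. It is unnecessary because characterization~(1) of the extension only requires the match for $w\in W_{P_2}^-(1)$, not all of $W_{P_2}(1)$; the paper exploits this by defining the map as $\varphi\mapsto\varphi\circ j_{P_2}^{-*}$, whose $(\mathcal{H}_{P_2}^-,j_{P_2}^{-*})$-equivariance is then tautological. The only substance is identifying the target $\Hom_{(\mathcal{H}_{P_2\cap Q}^-,j_{P_2\cap Q}^{P_2-*})}(\mathcal{H}_{P_2}^-,\sigma)$ with $I_{P_2\cap Q}^{P_2}(\sigma)$, which the paper does via the localization Lemma~\ref{lem:localization, middle positive/negative subalgebra}; this replaces your whole filtration analysis with a one-line algebraic identification. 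Your approach is also incomplete as stated: agreement of two operators on all successive quotients of a filtration does not imply they agree globally (a unipotent operator and the identity agree on every graded piece of the standard flag), so the final ``and hence globally under $\Phi$'' is a genuine gap that would require further argument.
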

We use the following lemma.
\begin{lem}\label{lem:localization, middle positive/negative subalgebra}
Let $P,Q$ be parabolic subgroups.
\begin{enumerate}
\item Let $\lambda_P^-$ as in Proposition~\ref{prop:localization as Levi subalgebra}. Then $\mathcal{H}_{P\cap Q}^{P-} \simeq \mathcal{H}_{P\cap Q}^-(T^{P\cap Q}_{\lambda_P^-})^{-1}$.
\item Let $\lambda_P^+$ as in Proposition~\ref{prop:localization as Levi subalgebra}. Then $\mathcal{H}_{P\cap Q}^{P+} \simeq \mathcal{H}_{P\cap Q}^+(T^{P\cap Q}_{\lambda_P^+})^{-1}$.
\end{enumerate}
\end{lem}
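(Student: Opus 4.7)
The plan is to prove part~(1) by invoking the universal property of central localization; part~(2) follows by the parallel argument with $\lambda_P^+$ in place of $\lambda_P^-$.

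First I collect the required facts about $\lambda_P^-$ viewed inside $W_{P\cap Q}(1)\subset W_P(1)$. Since $\lambda_P^-$ lies in the centre of $W_P(1)$, the group $W_{0,P}$ fixes $\nu(\lambda_P^-)$, so $\langle\alpha,\nu(\lambda_P^-)\rangle = 0$ for every $\alpha\in\Sigma_P$, and in particular for every $\alpha\in\Sigma_{P\cap Q}$. Applying Lemma~\ref{lem:length zero} inside the Levi of $P\cap Q$ then gives $\ell_{P\cap Q}(\lambda_P^-) = 0$. Combined with the centrality of $\lambda_P^-$ in the subgroup $W_{P\cap Q}(1)$, the braid relations (trivially additive as soon as one factor has length zero) imply that $T^{P\cap Q}_{\lambda_P^-}$ is central in $\mathcal{H}_{P\cap Q}$. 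Splitting $\Sigma^+\setminus\Sigma_{P\cap Q}^+$ into $(\Sigma_P^+\setminus\Sigma_{P\cap Q}^+)\sqcup(\Sigma^+\setminus\Sigma_P^+)$, the pairing $\nu(\lambda_P^-)$ vanishes on the first piece and is strictly positive on the second, whence $\lambda_P^-\in W_{P\cap Q}^-(1)$ and $T^{P\cap Q}_{\lambda_P^-}\in \mathcal{H}_{P\cap Q}^-$. By the same dichotomy $(\lambda_P^-)^{-1}$ lies in $W_{P\cap Q}^{P-}(1)$, and since $\ell_{P\cap Q}((\lambda_P^-)^{\pm 1}) = 0$ we get $T^{P\cap Q}_{\lambda_P^-}\cdot T^{P\cap Q}_{(\lambda_P^-)^{-1}} = T^{P\cap Q}_1 = 1$, so $T^{P\cap Q}_{\lambda_P^-}$ is invertible inside $\mathcal{H}_{P\cap Q}^{P-}$.

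The universal property of central localization now promotes the inclusion $\mathcal{H}_{P\cap Q}^-\hookrightarrow \mathcal{H}_{P\cap Q}^{P-}$ to an algebra homomorphism
\[
\Phi\colon \mathcal{H}_{P\cap Q}^-\bigl[(T^{P\cap Q}_{\lambda_P^-})^{-1}\bigr]\longrightarrow \mathcal{H}_{P\cap Q}^{P-}.
\]
For surjectivity, take a basis element $T^{P\cap Q}_w$ with $w = n_v\mu\in W_{P\cap Q}^{P-}(1)$. For any $n\ge 0$ and any $\alpha\in\Sigma_P^+\setminus\Sigma_{P\cap Q}^+$ the pairing $\langle\alpha,\nu(\mu(\lambda_P^-)^n)\rangle = \langle\alpha,\nu(\mu)\rangle\ge 0$ is unchanged, while for $\alpha\in\Sigma^+\setminus\Sigma_P^+$ it equals $\langle\alpha,\nu(\mu)\rangle + n\langle\alpha,\nu(\lambda_P^-)\rangle$, which is non-negative for $n$ large enough. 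Thus $w(\lambda_P^-)^n\in W_{P\cap Q}^-(1)$ for $n\gg 0$, and because $\ell_{P\cap Q}(\lambda_P^-) = 0$ the braid relation yields $T^{P\cap Q}_w\cdot (T^{P\cap Q}_{\lambda_P^-})^n = T^{P\cap Q}_{w(\lambda_P^-)^n}\in \mathcal{H}_{P\cap Q}^-$, so $T^{P\cap Q}_w$ is the image under $\Phi$ of $T^{P\cap Q}_{w(\lambda_P^-)^n}\cdot (T^{P\cap Q}_{\lambda_P^-})^{-n}$. Injectivity is immediate: $T^{P\cap Q}_{\lambda_P^-}$ is already invertible in $\mathcal{H}_{P\cap Q}$, so it is a non-zero-divisor on the free $C$-submodule $\mathcal{H}_{P\cap Q}^-$, hence the canonical map $\mathcal{H}_{P\cap Q}^-\to \mathcal{H}_{P\cap Q}^-[(T^{P\cap Q}_{\lambda_P^-})^{-1}]$ is injective, and $\Phi$ restricts to the tautological inclusion on $\mathcal{H}_{P\cap Q}^-$.

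The main obstacle is the centrality of $T^{P\cap Q}_{\lambda_P^-}$ in $\mathcal{H}_{P\cap Q}$: Proposition~\ref{prop:localization as Levi subalgebra}, applied naively to the pair $(P,P\cap Q)$, would demand that $\nu(\lambda_P^-)$ be strictly positive on $\Sigma_P^+\setminus\Sigma_{P\cap Q}^+$, which fails because $\lambda_P^-$ is central in the larger group $W_P(1)$. The rescue is that this failure is exactly equivalent, by Lemma~\ref{lem:length zero}, to $\ell_{P\cap Q}(\lambda_P^-) = 0$, which makes the braid-relation argument for centrality trivial, and reduces everything else to routine bookkeeping with the definitions of $W_{P\cap Q}^-(1)$ and $W_{P\cap Q}^{P-}(1)$.
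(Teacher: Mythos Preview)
Your proof is correct and follows essentially the same route as the paper. The paper's argument is terser: it only records the surjectivity step, showing that each $E^{P\cap Q}_{o_{-,P\cap Q}}(w)$ with $w\in W_{P\cap Q}^{P-}(1)$ lies in $\mathcal{H}_{P\cap Q}^-(T^{P\cap Q}_{\lambda_P^-})^{-1}$ via exactly the computation you carry out (multiply by a large power of $T^{P\cap Q}_{\lambda_P^-}$, using $\langle\alpha,\nu(\lambda_P^-)\rangle=0$ on $\Sigma_P$ and $>0$ on $\Sigma^+\setminus\Sigma_P^+$), while leaving the centrality of $T^{P\cap Q}_{\lambda_P^-}$, its invertibility in $\mathcal{H}_{P\cap Q}^{P-}$, and the injectivity of $\Phi$ implicit; you spell these out carefully, which is a virtue. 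One cosmetic difference: the paper works with the $E$-basis and you with the $T$-basis, but since $\ell_{P\cap Q}(\lambda_P^-)=0$ the two multiplications agree, so the content is identical.
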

\begin{proof}
We only prove the first statement.
The proof of the second statement is the same.

Let $\lambda\in \Lambda(1)$ such that $\langle \alpha,\nu(\lambda)\rangle\ge 0$ for any $\alpha\in\Sigma_P^+\setminus\Sigma_{P\cap Q}^+$.
We can take $n\in\Z_{>0}$ such that $\langle \alpha,\nu(\lambda(\lambda_P^-)^n)\rangle \ge 0$ for any $\alpha\in\Sigma^+\setminus\Sigma_P^+$.
Since $\langle \alpha,\nu((\lambda_P^-)^n)\rangle = 0$ for any $\alpha\in\Sigma_P^+$, we have $\langle \alpha,\nu(\lambda(\lambda_P^-)^n)\rangle = \langle\alpha,\nu(\lambda)\rangle\ge 0$ for any $\alpha\in\Sigma_P^+\setminus\Sigma_{P\cap Q}^+$.
Hence $\langle \alpha,\nu(\lambda (\lambda_P^-)^n)\rangle\ge 0$ for any $\alpha\in\Sigma^+\setminus\Sigma_{P\cap Q}^+$.
Therefore for any $w\in W_{P\cap Q}(1)$ which is $(P\cap Q)$-negative in $P$, there exists $n\in\Z_{\ge 0}$ such that $w(\lambda_P^-)^n$ is $(P\cap Q)$-negative in $G$.
Then we have $E_{o_{-,P\cap Q}}^{P\cap Q}(w) = E_{o_{-,P\cap Q}}^{P\cap Q}(w(\lambda_P^-)^n)(T^{P\cap Q}_{\lambda_{P}^-})^{-n}\in \mathcal{H}_{P\cap Q}^-(T^{P\cap Q}_{{\lambda_{P}}^-})^{-1}$.
\end{proof}

\begin{proof}[Proof of Lemma~\ref{lem:parabolic induction of extension is extension}]
Consider the map
\[
I_Q(e_Q(\sigma))= \Hom_{(\mathcal{H}_Q^-,j_Q^{-*})}(\mathcal{H},e_Q(\sigma))
\to
\Hom_{(\mathcal{H}_{P_2\cap Q}^-,j_{P_2\cap Q}^{P_2-*})}(\mathcal{H}_{P_2}^-,\sigma)
\]
defined by $\varphi\mapsto \varphi\circ j_{P_2}^{-*}$.
Then obviously this is $(\mathcal{H}_{P_2}^-,j_{P_2}^{-*})$-equivariant.

Let $\lambda_{P_2}^-$ as in Proposition~\ref{prop:localization as Levi subalgebra}.
Then we have $\mathcal{H}_{P_2}^-(T_{\lambda_{P_2}^-}^{P_2})^{-1} = \mathcal{H}_{P_2}$.
By Lemma~\ref{lem:localization, middle positive/negative subalgebra}, we have $\mathcal{H}_{P_2\cap Q}^-(T_{\lambda_{P_2}^-}^{P_2\cap Q})^{-1} = \mathcal{H}_{P_2\cap Q}^{P_2-}$.
Therefore we have
\[
I_{P_2\cap Q}^{P_2}(\sigma)
= \Hom_{(\mathcal{H}_{P_2\cap Q}^{P_2-},j_{P_2\cap Q}^{P_2-*})}(\mathcal{H}_{P_2},\sigma)
=
\Hom_{(\mathcal{H}_{P_2\cap Q}^-,j_{P_2\cap Q}^{P_2-*})}(\mathcal{H}_{P_2}^-,\sigma).
\]
Hence we get an $(\mathcal{H}_{P_2}^-,j_{P_2}^{-*})$-homomorphism $I_Q(e_Q(\sigma))\to I_{P_2\cap Q}^{P_2}(\sigma)$.
For $v\in W_0^Q = W_{0,P_2}^{P_2\cap Q}$, we have $\varphi(T_{n_v}) = (\varphi\circ j_{P_2}^{-*})(T_{n_v}^{P_2})$ by Corollary~\ref{cor:image by j, positie and negative case}.
Hence this homomorphism is an isomorphism by Proposition~\ref{prop:decomposition of I_P}.

Let $w\in W_{P,\aff}(1)$, $v\in W_0^Q$ and $\varphi\in I_Q(e_Q(\sigma))$.
We have $W_0^Q\subset W_{0,P_2}$.
Since $n_v\in W_{P_2,\aff}(1)$, $\ell(w n_v) = \ell(w) + \ell(n_v)$.
The subgroup $W_{P,\aff}(1)$ is normal in $W(1)$~\cite[II.7 Remark~4]{arXiv:1412.0737_accepted}.
Hence $n_v^{-1}wn_v\in W_{P,\aff}(1)$ and $\ell(wn_v) = \ell(n_v) + \ell(n_v^{-1}wn_v)$.
Therefore $(\varphi T^*_{w})(T^*_{n_v}) = \varphi(T^*_{w}T^*_{n_v}) = \varphi(T^*_{wn_v}) = \varphi(T^*_{n_v}T^*_{n_v^{-1}wn_v}) = \varphi(T^*_{n_v})e_Q(\sigma)(T^{Q*}_{n_v^{-1}wn_v})$.
By the definition of the extension, $e_Q(\sigma)(T^{Q*}_{n_v^{-1}wn_v}) = 1$.
Hence $(\varphi T_w^*)(T_{n_v}^*) = \varphi(T_{n_v}^*)$.
Since an element in $I_Q(e_Q(\sigma))$ is determined by the values at $T^*_{n_v}$ for $v\in W^Q_0$ (Proposition~\ref{prop:decomposition of I_P}), $T_{w}^*$ acts trivially on $I_Q(e_Q(\sigma))$.
Hence $I_Q(e_Q(\sigma))\simeq e_G(I_{P_2\cap Q}^{P_2}(\sigma))$.
\end{proof}

By this lemma and Lemma~\ref{lem:tensor product module}, we have an $\mathcal{H}$-module structure on $I_Q(\trivrep)\otimes e_G(\sigma)$.
\begin{prop}\label{prop:decomposition into tensor product}
We have $I_Q(e_Q(\sigma))\simeq I_Q(\trivrep)\otimes e_G(\sigma)$.
\end{prop}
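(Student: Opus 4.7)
The plan is to construct an explicit vector-space isomorphism
\[
\Phi \colon I_Q(e_Q(\sigma)) \longrightarrow I_Q(\trivrep) \otimes e_G(\sigma)
\]
and verify $\mathcal{H}$-equivariance. By Lemma~\ref{lem:parabolic induction of extension is extension} one has $I_Q(\trivrep)\simeq e_G(I^{P_2}_{P_2\cap Q}(\trivrep))$ as $\mathcal{H}$-modules, and since $\sigma$ extends to $\mathcal{H}$ we have $\Delta=\Delta_P\sqcup\Delta_{P_2}$ with $\Delta_P\perp\Delta_{P_2}$; hence Lemma~\ref{lem:tensor product module} endows $I_Q(\trivrep)\otimes e_G(\sigma)$ with the $\mathcal{H}$-module structure characterized by $(x\otimes y)T_w^*=xT_w^*\otimes yT_w^*$.

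To define $\Phi$, I use Proposition~\ref{prop:decomposition of I_P}. Let $\tau_v\in I_Q(\trivrep)$ be dual to $T_{n_v}$, i.e.\ $\tau_v(T_{n_w})=\delta_{v,w}$ for $v,w\in W_0^Q$. Since $e_Q(\sigma)$ and $e_G(\sigma)$ have the same underlying vector space as $\sigma$, I set
\[
\Phi(\varphi) := \sum_{v\in W_0^Q} \tau_v\otimes \varphi(T_{n_v}),
\]
where $\varphi(T_{n_v})\in e_Q(\sigma)$ is identified with its image in $e_G(\sigma)$. By Proposition~\ref{prop:decomposition of I_P} applied to both sides, this is a vector-space isomorphism.

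The bulk of the proof is $\mathcal{H}$-equivariance, which I would check on the generators $T_s^*$ for $s\in S_\aff(1)$ and on the length-zero $T_u$. Since $\Sigma=\Sigma_P\sqcup\Sigma_{P_2}$, Lemma~\ref{lem:on c, about aff,Levi} allows every such $s$ to be lifted so that $s\in W_{\aff,P}(1)$ or $s\in W_{\aff,P_2}(1)$. In the first case, Remark~\ref{rem:action of aff to tensor product module} gives $(\tau_v\otimes x)T_s^*=\tau_v\otimes xT_s^*$, and the equivariance reduces to the identity $e_Q(\sigma)(T_s^{Q*})=\sigma(T_s^{P*})=e_G(\sigma)(T_s^*)$ together with the fact that $T_s^*$ commutes with $T_{n_v}$ modulo elements coming from $\mathcal{H}_Q^-$. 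In the second case, the tensor rule yields $(\tau_v\otimes x)T_s^*=\tau_vT_s^*\otimes x$, and the verification amounts to matching the $I_Q(\trivrep)$-action on $\tau_v$ with the action on $\varphi$, using that $e_Q(\sigma)$ and $\trivrep$ agree on $W_{\aff,P_2\cap Q}(1)$.

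The main obstacle is the delicate bookkeeping in the Bernstein-type commutations needed to rewrite $\varphi(T_{n_v}T_s^*)$ as a linear combination $\sum_u \alpha_{v,u,s}\,\varphi(T_{n_u})\cdot(\cdots)$ and to show that the scalar coefficients $\alpha_{v,u,s}$ coincide with those coming from $\tau_v T_s^*=\sum_u \alpha_{v,u,s}\tau_u$ in $I_Q(\trivrep)$. The most subtle case is $s\in W_{\aff,P_2}(1)\setminus W_Q(1)$, where $e_Q(\sigma)(T_s^{Q*})$ is not directly defined; here one argues by conjugating $s$ through $n_v$ into $W_Q(1)$ modulo $\Lambda(1)\cap W_{\aff,P_2}(1)$, using normality of $W_{\aff,P_2}(1)$ in $W(1)$ (as in the proof of Lemma~\ref{lem:parabolic induction of extension is extension}) and Remark~\ref{rem:value at Lambda_aff,P_2} to ensure that the extra factors act trivially.
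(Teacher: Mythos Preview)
Your approach differs substantially from the paper's, and the difference is instructive. The paper does not check $\mathcal{H}$-equivariance on generators at all. Instead, it constructs the map in the \emph{opposite} direction by exploiting the definition $I_Q(e_Q(\sigma)) = \Hom_{(\mathcal{H}_Q^-,j_Q^{-*})}(\mathcal{H},e_Q(\sigma))$ as a right adjoint: it writes down the elementary map $I_Q(\trivrep)\otimes e_G(\sigma)\to e_Q(\sigma)$, $\varphi\otimes x\mapsto \varphi(1)x$, checks in one line that this is $(\mathcal{H}_Q^-,j_Q^{-*})$-equivariant (using $\trivrep(T_w^{Q*})=1$ and the compatibility $e_G(\sigma)(T_w^*)=e_Q(\sigma)(T_w^{Q*})$ for $w\in W_Q^-(1)$), and then obtains an $\mathcal{H}$-map $I_Q(\trivrep)\otimes e_G(\sigma)\to I_Q(e_Q(\sigma))$ by the universal property. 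Bijectivity follows immediately from Proposition~\ref{prop:decomposition of I_P} applied to both sides. The equivariance check is thus reduced from all of $\mathcal{H}$ to the subalgebra $j_Q^{-*}(\mathcal{H}_Q^-)$, where it is essentially automatic.

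Your direct approach is in principle viable, but as you acknowledge, the bookkeeping is delicate, and your sketch leaves real gaps. First, the generators $T_s^*$ with $s\in S_{\aff}(1)$ together with length-zero $T_u$ do generate $\mathcal{H}$, but a general length-zero element $u$ need not lie in $W_{\aff,P}(1)\cup W_{\aff,P_2}(1)$, so Remark~\ref{rem:action of aff to tensor product module} does not directly apply and you have not explained how to handle that case. Second, in your ``subtle case'' $s\in W_{\aff,P_2}(1)$, what you need is that when $T_s^*T_{n_v}$ is expanded as $\sum_u T_{n_u}X_u$ with $X_u\in j_Q^{-*}(\mathcal{H}_Q^-)$, the elements $X_u$ act on $e_Q(\sigma)$ exactly as they act on $\trivrep_Q$; this is true when $X_u$ comes from $W_{\aff,P_2\cap Q}(1)$, but you have not argued that this is always the case. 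These are not insurmountable, but the adjunction trick in the paper sidesteps all of it.
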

\begin{proof}
Define the homomorphism $I_Q(\trivrep)\otimes e_G(\sigma)\to e_Q(\sigma)$ by $\varphi\otimes x\mapsto \varphi(1)x$.
For $w\in W_Q^{-}(1)$, we have 
\begin{align*}
(\varphi\otimes x)T_w^*  = \varphi T_w^*\otimes xe_G(\sigma)(T_w^*) & \mapsto (\varphi T_w^*)(1)xe_G(\sigma)(T_w^*)\\
& = \varphi(1)\trivrep(T^{Q*}_w)xe_G(\sigma)(T_w^*)\\
& = \varphi(1)xe_G(\sigma)(T_w^*).
\end{align*}
By \cite[Proposition~4.19]{arXiv:1406.1003_accepted}, we have $e_G(\sigma)(T_w^*) = e_Q(\sigma)(T_w^{Q*})$.
Hence the homomorphism is $(\mathcal{H}_Q^-,j_Q^{-*})$-equivariant.
Therefore we get a homomorphism $I_Q(\trivrep)\otimes e_G(\sigma)\to I_Q(e_Q(\sigma))$.
By Proposition~\ref{prop:decomposition of I_P}, we have the decompositions $I_Q(\trivrep)\otimes e_G(\sigma) = \bigoplus_{w\in W^Q_0}C\otimes e_G(\sigma)$ and $I_Q(e_Q(\sigma)) = \bigoplus_{w\in W^Q_0}e_Q(\sigma)$ which makes the following diagram commutative
\[
\begin{tikzcd}
I_Q(\trivrep)\otimes e_G(\sigma)\arrow{r}\arrow[-]{d}{\wr} & I_Q(e_Q(\sigma))\arrow[-]{d}{\wr}\\
\displaystyle\bigoplus_{w\in W^Q_0}e_G(\sigma)\arrow[double line,-]{r} & \displaystyle\bigoplus_{w\in W^Q_0}e_G(\sigma)\\
\end{tikzcd}
\]
Hence we get the proposition.
\end{proof}

\section{Inductions}\label{sec:Inductions}
Let $P$ be a parabolic subgroup.
Recall that our parabolic induction $I_P$ is defined by
\[
I_P(\sigma) = \Hom_{(\mathcal{H}_P^-,j_P^{-*})}(\mathcal{H},\sigma)
\]
for an $\mathcal{H}_P$-module $\sigma$.
Since we also have two subalgebras $\mathcal{H}_P^\pm$ and four homomorphisms $j_P^{\pm},j_P^{\pm*}$, we can define four ``inductions''.
In this section, we study the relations between such functors.

\subsection{Modules $\sigma_{\ell - \ell_P}$}
Before studying such functors, we first consider the representation $\sigma_{\ell  - \ell_P}$ attached to a representation $\sigma$ of $\mathcal{H}_P$ where $P$ is a parabolic subgroup.

Let $P$ be a parabolic subgroup and $\sigma$ an $\mathcal{H}_P$-module.
We define a linear map $\sigma_{\ell - \ell_P}$ by 
\[
\sigma_{\ell - \ell_P}(T_w^P) = (-1)^{\ell(w) - \ell_P(w)}\sigma(T_w^P)
\]
for $w\in W_P(1)$.
From the following lemma, this is again an $\mathcal{H}_P$-module.

\begin{lem}\label{lem:tensor of length?}
\begin{enumerate}
\item The linear map $\mathcal{H}_P\to C$ defined by $T^P_w\mapsto (-1)^{\ell(w)}$ is a character of $\mathcal{H}_P$ and it sends $T_s$ to $-1$ for any $s\in S_{\aff,P}(1)$.\label{enum:length gives a character}
\item Let $\pi$ be an $\mathcal{H}$-module, $\chi$ a character of $\mathcal{H}$ such that $\chi(T_t) = 1$ for any $t\in Z_\kappa$ and $\chi(T_s) = -1$ for any $s\in S_\aff(1)$.
Then the linear map defined by $T_w\mapsto (-1)^{\ell(w)}\chi(T_w)\pi(T_w)$ is an $\mathcal{H}$-module.\label{enum:tensoring character}
\end{enumerate}
\end{lem}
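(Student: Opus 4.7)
For part (1), the plan is to verify that $\phi\colon T^P_w\mapsto (-1)^{\ell(w)}$ respects the defining relations of $\mathcal{H}_P$. The key preliminary point is that $\varepsilon\colon W(1)\to\{\pm 1\}$, $w\mapsto (-1)^{\ell(w)}$, is a group homomorphism. For $w=a\omega$ with $a\in W_\aff$ and $\omega\in\Omega$ one has $\ell(w)=\ell(a)$, and $\Omega$ acts on $W_\aff$ by length-preserving automorphisms (since it preserves the fundamental alcove), so the multiplicativity of $\varepsilon$ on $W$ reduces to the classical sign character of the Coxeter group $W_\aff$; this inflates to $W(1)$ because elements of $Z_\kappa$ have length zero. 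Restricting $\varepsilon$ to $W_P(1)$ then yields the braid relations for $\phi$ for free.

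For the quadratic relation and the second assertion of (1), I would next show that $\ell(s)$ is odd for every $s\in S_{\aff,P}(1)$. Indeed $\bar s$ is a simple reflection of $W_{\aff,P}$, hence a reflection of the Coxeter group $W_\aff$ (any reflection of $W_{\aff,P}$ is the reflection in an affine hyperplane coming from a root of $M_P$, which is also a reflecting hyperplane of $W_\aff$), and reflections of Coxeter groups have odd length. Hence $\phi(T_s)=-1$. The quadratic relation $(T^P_s)^2=q_{P,s}T^P_{s^2}+c_sT^P_s$ is then preserved because $\phi(T^P_s)^2=1$, $\phi(T^P_{s^2})=1$ (since $s^2\in Z_\kappa$), and by the $\mathcal{H}_P$-version of Proposition~\ref{prop:expansion of c} combined with $\varepsilon|_{Z_\kappa}=1$ one gets $\phi(c_s)=q_{P,s}-1$, so the right-hand side evaluates to $q_{P,s}+(q_{P,s}-1)(-1)=1$, matching the left-hand side.

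For part (2), the plan is shorter: once one observes that the hypotheses on $\chi$ combined with the parity of $\ell$ on simple reflections of $W_\aff$ give $(-1)^{\ell(s)}\chi(T_s)=(-1)(-1)=1$ for $s\in S_\aff(1)$, and $(-1)^{\ell(t)}\chi(T_t)=1$ for $t\in Z_\kappa$, the twisted action $\pi'$ satisfies $\pi'(T_s)=\pi(T_s)$, $\pi'(T_{s^2})=\pi(T_{s^2})$, and $\pi'(c_s)=\pi(c_s)$. Hence the quadratic relation for $\pi'$ is nothing but the original quadratic relation for $\pi$, and the braid relations for $\pi'$ follow from the multiplicativity of $\varepsilon$ (established in (1)) together with the fact that $\chi$ is a character of $\mathcal{H}$.

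The only subtle step in either part is the parity claim for $\ell(s)$ on $s\in S_{\aff,P}(1)$, which I would handle by identifying simple reflections of $W_{\aff,P}$ as reflections of the ambient Coxeter group $W_\aff$; everything else is routine bookkeeping on generators and relations.
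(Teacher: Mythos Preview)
Your proposal is correct and follows essentially the same approach as the paper: both arguments reduce to checking the braid and quadratic relations on the generators, using that $w\mapsto(-1)^{\ell(w)}$ is multiplicative on $W(1)$, that lifts of affine simple reflections have odd $\ell$-length, and that Proposition~\ref{prop:expansion of c} evaluates $c_s$ to $q_s-1$ under any map trivial on $Z_\kappa$. Your treatment of part~(2) is marginally slicker than the paper's direct computation, since you observe at the outset that $(-1)^{\ell(s)}\chi(T_s)=1$ on $S_\aff(1)$ and on $Z_\kappa$, so that $\pi'$ agrees with $\pi$ on all the ingredients of the quadratic relation; the paper instead expands both sides explicitly before arriving at the same identity.
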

\begin{proof}
(\ref{enum:length gives a character})
First we prove the last claim.
Let $s\in S_{\aff,P}(1)$.
Then $s$ is a reflection in $W$.
Hence $\ell(s)$ is odd.
Therefore we have $(-1)^{\ell(s)} = -1$.

We check the quadratic relations.
Let $s\in S_{\aff,P}(1)$.
The quadratic relation is $(T^P_{s})^2 = q_{s,P}T^P_{s^2} + c_{s}T^P_{s}$ and the left hand side goes to $1$ by the map in the lemma.
Since $s^2\in Z_\kappa$, $T^P_{s^2}$ goes to $1$.
Hence $q_{s,P}T^P_{s^2}$ goes to $q_{s,P}$.
The image of $c_{s}$ under the map is $q_{s,P} - 1$ by Proposition~\ref{prop:expansion of c}.
We have already proved that $T^P_s$ goes to $-1$.
Hence the right hand side goes to $q_{s,P} + (q_{s,P} - 1)(-1) = 1$.
Therefore the map preserves the quadratic relations.

We check the braid relations.
Let $w_1,w_2\in W_P(1)$.
Then $T^P_{w_1w_2}$ goes to $(-1)^{\ell(w_1w_2)} = (-1)^{\ell(w_1) + \ell(w_2)}$ which is the product of the images of $T^P_{w_1}$ and $T^P_{w_2}$.
Hence the map preserves the braid relations.
We get (\ref{enum:length gives a character}).

(\ref{enum:tensoring character})
Let $\pi'$ be the map given in (\ref{enum:tensoring character}) and first we check that $\pi'$ preserves the quadratic relations.
Let $s\in S_\aff(1)$.
We prove $\pi'(T_{s})^2 = \pi'(q_sT_{s^2}) + \pi'(c_sT_s)$.
We calculate the right hand side.
Since $s^2\in Z_\kappa$, we have $\chi(T_{s^2}) = 1$.
We also have $\ell(s^2) = 0$.
Hence $\pi'(T_{s^2}) = \pi(T_{s^2})$.
Take $c_s(t)\in \Z$ such that $c_s = \sum_{t\in Z_\kappa}c_s(t)T_t$.
Then we have $\pi'(c_sT_s) = \sum_{t\in Z_\kappa}c_s(t)\pi'(T_tT_s) = \sum_{t\in Z_\kappa}c_s(t)(-1)^{\ell(ts)}\chi(T_t)\chi(T_s)\pi(T_t)\pi(T_s)$.
Since $t\in Z_\kappa$, we have $\ell(ts) = \ell(s) = 1$.
We also have $\chi(T_t) = 1$ by the assumption.
Hence $\pi'(c_sT_s) = -\chi(T_s)\sum_{t\in Z_\kappa}c_s(t)\pi(T_t)\pi(T_s) = -\chi(T_s)\pi(c_s)\pi(T_s)$.
Therefore $\pi'(q_sT_{s^2}) + \pi'(c_sT_s) = q_s\pi(T_{s^2}) - \chi(T_s)\pi(c_s)\pi(T_s)$.
Since we assume $\chi(T_s) = -1$, we get $q_s\pi(T_{s^2}) - \chi(T_s)\pi(c_s)\pi(T_s) = q_s\pi(T_{s^2}) + \pi(c_s)\pi(T_s) = \pi(q_sT_{s^2} + c_sT_s) = \pi(T_s^2) = \pi(T_s)^2$.
We have $\pi'(T_s)^2 = \chi(T_s)^2\pi(T_s)^2$.
By the assumption, $\chi(T_s) = -1$.
Hence $\pi'(T_s)^2 = \pi(T_s)^2$.
We get the quadratic relations.

Let $w_1,w_2\in W_P(1)$ such that $\ell(w_1w_2) = \ell(w_1) + \ell(w_2)$.
Then 
\begin{align*}
\pi'(T_{w_1}T_{w_2}) & = (-1)^{\ell(w_1w_2)}\chi(T_{w_1w_2})\pi(T_{w_1w_2})\\
& = (-1)^{\ell(w_1) + \ell(w_2)}\chi(T_{w_1}T_{w_2})\pi(T_{w_1}T_{w_2})\\
& = (-1)^{\ell(w_1)}(-1)^{\ell(w_2)}\chi(T_{w_1})\chi(T_{w_2})\pi(T_{w_1})\pi(T_{w_2}).
\end{align*}
Since $\chi(T_{w_2})$ is a scalar, we have
\begin{align*}
& (-1)^{\ell(w_1)}(-1)^{\ell(w_2)}\chi(T_{w_1})\chi(T_{w_2})\pi(T_{w_1})\pi(T_{w_2})\\
& = (-1)^{\ell(w_1)}\chi(T_{w_1})\pi(T_{w_1})(-1)^{\ell(w_2)}\chi(T_{w_2})\pi(T_{w_2})\\
& = \pi'(T_{w_1})\pi'(T_{w_2}).
\end{align*}
Hence $\pi'$ preserves the braid relations.
\end{proof}
Let $\iota = \iota_G\colon \mathcal{H}\to \mathcal{H}$ be a linear map defined by $T_w\mapsto (-1)^{\ell(w)}T^*_w$ for $w\in W(1)$.
Then this is an involution \cite[Proposition~4.23]{MR3484112}.
For any $\mathcal{H}$-module $\pi$, set $\pi^\iota = \pi\circ\iota$.

\begin{lem}\label{lem:order: ell - ell and twist}
We have $(\sigma^{\iota_P})_{\ell - \ell_P} = (\sigma_{\ell - \ell_P})^{\iota_P}$.
\end{lem}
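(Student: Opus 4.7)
The plan is to check the identity on the basis $\{T_w^P\}_{w\in W_P(1)}$. Using that $\iota_P$ is an involution with $\iota_P(T_w^P) = (-1)^{\ell_P(w)}T_w^{P*}$, and hence $\iota_P(T_w^{P*}) = (-1)^{\ell_P(w)}T_w^P$, a direct unwinding of the definitions gives
\[
(\sigma^{\iota_P})_{\ell - \ell_P}(T_w^P) = (-1)^{\ell(w)}\sigma(T_w^{P*}), \qquad (\sigma_{\ell-\ell_P})^{\iota_P}(T_w^P) = (-1)^{\ell_P(w)}\sigma_{\ell - \ell_P}(T_w^{P*}).
\]
Thus the lemma reduces to establishing the ``starred'' analogue of the defining formula of $\sigma_{\ell - \ell_P}$, namely
\[
\sigma_{\ell-\ell_P}(T_w^{P*}) = (-1)^{\ell(w) - \ell_P(w)}\sigma(T_w^{P*}).
\]

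To prove this, I fix a decomposition $w = s_1\cdots s_l u$ with $s_i \in S_{\aff,P}(1)$, $\ell_P(u) = 0$ and $l = \ell_P(w)$, and use $T_w^{P*} = T_{s_1}^{P*}\cdots T_{s_l}^{P*}T_u^P$ (valid since $T_u^{P*} = T_u^P$ when $\ell_P(u) = 0$). Since each $s_i$ is a reflection in $W_\aff$, $\ell(s_i)$ is odd while $\ell_P(s_i) = 1$, so $\ell(s_i) - \ell_P(s_i)$ is even; combined with the observation that $c_{s_i}\in C[Z_\kappa]$ is fixed by $\sigma_{\ell - \ell_P}$ (because $\ell(t) = \ell_P(t) = 0$ for $t\in Z_\kappa$), this gives $\sigma_{\ell - \ell_P}(T_{s_i}^{P*}) = \sigma(T_{s_i}^{P*})$. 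For the remaining factor $T_u^P$, the defining formula of $\sigma_{\ell-\ell_P}$ yields $\sigma_{\ell - \ell_P}(T_u^P) = (-1)^{\ell(u)}\sigma(T_u^P) = (-1)^{\ell(u)}\sigma(T_u^{P*})$. Multiplying the factors produces $\sigma_{\ell - \ell_P}(T_w^{P*}) = (-1)^{\ell(u)}\sigma(T_w^{P*})$.

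It remains to match signs: one must check $(-1)^{\ell(u)} = (-1)^{\ell(w) - \ell_P(w)}$. Since the length function on $W$ is additive modulo $2$ and each $\ell(s_i)$ is odd, $\ell(w) \equiv \ell(u) + l = \ell(u) + \ell_P(w) \pmod 2$, giving the desired equality. The main, and fairly minor, obstacle throughout is the bookkeeping distinction between $\ell$ and $\ell_P$: an element of $W_P(1)$ with $\ell_P(u) = 0$ typically does \emph{not} satisfy $\ell(u) = 0$, so one cannot simplify $(-1)^{\ell(u)}$ to $1$. Beyond tracking this carefully, the verification is routine.
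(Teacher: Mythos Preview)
Your proof is correct and rests on the same two ingredients as the paper's: the fact that $\ell(s)$ is odd for $s\in S_{\aff,P}(1)$ (so $(-1)^{\ell(s)-\ell_P(s)}=1$), and the invariance of $c_s\in C[Z_\kappa]$ under the twist. The organization differs slightly. The paper reduces to the algebra generators, checking the identity directly for $T_s^P$ with $s\in S_{\aff,P}(1)$ and for $T_w^P$ with $\ell_P(w)=0$. You instead reduce to the ``starred'' identity $\sigma_{\ell-\ell_P}(T_w^{P*}) = (-1)^{\ell(w)-\ell_P(w)}\sigma(T_w^{P*})$ and prove it for general $w$ via the factorization $T_w^{P*}=T_{s_1}^{P*}\cdots T_{s_l}^{P*}T_u^P$. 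In the paper this starred identity is recorded later as Lemma~\ref{lem:twist by length, for T^*,E}(\ref{enum:twist by length for T^*}) and is \emph{deduced from} the present lemma; you have in effect reversed that order, proving Lemma~\ref{lem:twist by length, for T^*,E}(\ref{enum:twist by length for T^*}) first and reading off Lemma~\ref{lem:order: ell - ell and twist} as a consequence. Either direction works and neither is shorter.
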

\begin{proof}
We prove $(\sigma^{\iota_P})_{\ell - \ell_P}(T^P_w) = (\sigma_{\ell - \ell_P})^{\iota_P}(T^P_w)$ for any $w\in W_P(1)$.
We may assume that $w\in S_{\aff,P}(1)$ or $\ell_P(w) = 0$.

First assume that $w\in S_{\aff,P}(1)$ and denote $w$ by $s$.
By Lemma~\ref{lem:tensor of length?} (\ref{enum:length gives a character}), we have $(-1)^{\ell(s)} = -1$.
Hence $(\sigma^{\iota_P})_{\ell - \ell_P}(T^P_s) = (-1)^{\ell(s) - \ell_P(s)}\sigma(\iota_P(T^P_s)) = \sigma(\iota_P(T^P_s)) = -\sigma(T_s^{P*})$.
On the other hand, we have $(\sigma_{\ell - \ell_P})^{\iota_P}(T^P_s) = -(\sigma_{\ell  -\ell_P})(T_s^{P*}) = -(\sigma_{\ell  -\ell_P})(T^{P}_s) + (\sigma_{\ell  -\ell_P})(c_s)$.
We have $(\sigma_{\ell  -\ell_P})(T^P_s) = (-1)^{\ell(s) - \ell_P(s)}\sigma(T^P_s) = \sigma(T^P_s)$ since $(-1)^{\ell(s)} = -1$ and $\ell_P(s) = 1$.
We have $(\sigma_{\ell  -\ell_P})(c_s) = \sigma(c_s)$.
Hence $-(\sigma_{\ell  -\ell_P})(T^P_s) + (\sigma_{\ell  -\ell_P})(c_s) = -\sigma(T^P_s - c_s) = -\sigma(T_s^{P*})$.

Next assume that $\ell_P(w) = 0$.
Then $\iota_P(T^P_w) = (-1)^{\ell_P(w)}T_w^{P*} = T^P_w$.
Hence we have $(\sigma_{\ell - \ell_P})^{\iota_P} = \sigma_{\ell - \ell_P}(T^P_w) = (-1)^{\ell(w) - \ell_P(w)}\sigma(T^P_w)$.
We also have $(\sigma^{\iota_P})_{\ell - \ell_P}(T^P_w) = (-1)^{\ell(w) - \ell_P(w)}\sigma(\iota_P(T^P_w)) = (-1)^{\ell(w) - \ell_P(w)}\sigma(T^P_w)$.
We get the lemma.
\end{proof}

By Lemma~\ref{lem:order: ell - ell and twist}, we have $(\sigma^{\iota_P})_{\ell - \ell_P} = (\sigma_{\ell - \ell_P})^{\iota_P}$.
We denote it by $\sigma^{\iota_P}_{\ell - \ell_P}$.

\begin{lem}
We have $(\sigma_{\ell -\ell_P})_{\ell  -\ell_P} = \sigma$ and $(\sigma^{\iota_P})^{\iota_P} = \sigma$.
\end{lem}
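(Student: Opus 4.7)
The plan is to verify both equalities by direct computation on the basis $\{T_w^P\}_{w \in W_P(1)}$, using only the definitions given immediately before the statement. Neither equality requires any structural input beyond the fact that $\iota_P$ is an involution, which is recorded in the paper as a consequence of \cite[Proposition~4.23]{MR3484112}. I expect the proof to be a few lines with no genuine obstacle; the only thing to be a little careful about is that $\sigma_{\ell - \ell_P}$ and $\sigma^{\iota_P}$ are defined by pull-back through different kinds of maps (a sign character versus an involution of $\mathcal{H}_P$), so one has to keep the bookkeeping clean.

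For the first equality, I would simply compute, for each $w \in W_P(1)$,
\[
(\sigma_{\ell - \ell_P})_{\ell - \ell_P}(T_w^P) = (-1)^{\ell(w) - \ell_P(w)} \sigma_{\ell - \ell_P}(T_w^P) = (-1)^{2(\ell(w) - \ell_P(w))} \sigma(T_w^P) = \sigma(T_w^P),
\]
which gives $(\sigma_{\ell - \ell_P})_{\ell - \ell_P} = \sigma$. This uses nothing beyond the definition of the twist and the fact that it is a well-defined $\mathcal{H}_P$-module structure (Lemma~\ref{lem:tensor of length?}(\ref{enum:length gives a character}) combined with the argument already used to introduce $\sigma_{\ell - \ell_P}$).

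For the second equality, I would use that $\iota_P \colon \mathcal{H}_P \to \mathcal{H}_P$ is an involution by \cite[Proposition~4.23]{MR3484112} applied to $\mathcal{H}_P$. Since $\sigma^{\iota_P} = \sigma \circ \iota_P$ by definition, we have
\[
(\sigma^{\iota_P})^{\iota_P} = (\sigma \circ \iota_P) \circ \iota_P = \sigma \circ (\iota_P \circ \iota_P) = \sigma \circ \mathrm{id} = \sigma,
\]
which is the claim. Combining the two identities (and using Lemma~\ref{lem:order: ell - ell and twist} to justify writing $\sigma^{\iota_P}_{\ell - \ell_P}$ unambiguously) completes the proof.
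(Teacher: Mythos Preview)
Your proof is correct and follows exactly the same approach as the paper, which simply records that both identities are ``obvious from the definition and $\iota_P^2 = \id$.'' You have merely spelled out the one-line computations that this sentence summarizes.
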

\begin{proof}
Obvious from the definition and $\iota_P^2 = \id$.
\end{proof}
\begin{lem}\label{lem:twist by length and iota, twice}
We have $(\sigma_{\ell -\ell_P}^{\iota_P})_{\ell  -\ell_P}^{\iota_P} = \sigma$.
\end{lem}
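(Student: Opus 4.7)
The plan is to reduce the identity to the two involutivity statements of the previous lemma by repeatedly applying the commutativity relation established in Lemma~\ref{lem:order: ell - ell and twist}. Since Lemma~\ref{lem:order: ell - ell and twist} asserts $(\tau^{\iota_P})_{\ell-\ell_P} = (\tau_{\ell-\ell_P})^{\iota_P}$ for any $\mathcal{H}_P$-module $\tau$, the two operations $\tau\mapsto \tau^{\iota_P}$ and $\tau\mapsto \tau_{\ell-\ell_P}$ commute on the category of $\mathcal{H}_P$-modules.

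Concretely, I would expand the left-hand side as
\[
(\sigma^{\iota_P}_{\ell -\ell_P})^{\iota_P}_{\ell -\ell_P}
= \bigl(\bigl((\sigma^{\iota_P})_{\ell-\ell_P}\bigr)^{\iota_P}\bigr)_{\ell-\ell_P}.
\]
Applying Lemma~\ref{lem:order: ell - ell and twist} to the inner pair $\tau = \sigma^{\iota_P}$ (to move $\iota_P$ past $\ell-\ell_P$), I get
\[
\bigl((\sigma^{\iota_P})_{\ell-\ell_P}\bigr)^{\iota_P}
= \bigl((\sigma^{\iota_P})^{\iota_P}\bigr)_{\ell-\ell_P}
= \sigma_{\ell-\ell_P},
\]
where the second equality uses $(\sigma^{\iota_P})^{\iota_P} = \sigma$ from the previous lemma. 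Substituting back yields
\[
(\sigma^{\iota_P}_{\ell -\ell_P})^{\iota_P}_{\ell -\ell_P}
= (\sigma_{\ell-\ell_P})_{\ell-\ell_P} = \sigma,
\]
again by the previous lemma.

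There is no real obstacle here; the statement is purely formal once commutativity and the two involutivity identities are in hand. The only point worth checking is that the notation $\sigma^{\iota_P}_{\ell-\ell_P}$, which the paper introduces as the common value of $(\sigma^{\iota_P})_{\ell-\ell_P}$ and $(\sigma_{\ell-\ell_P})^{\iota_P}$, is used unambiguously so that the parenthesization in the nested expression is immaterial; this is guaranteed by Lemma~\ref{lem:order: ell - ell and twist}.
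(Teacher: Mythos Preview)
Your proof is correct and follows exactly the same approach as the paper: the paper's proof consists of a single sentence invoking the previous lemma (the two involutivity statements) together with Lemma~\ref{lem:order: ell - ell and twist}, and you have simply spelled out that formal computation.
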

\begin{proof}
The lemma follows from the above lemma and Lemma~\ref{lem:order: ell - ell and twist}.
\end{proof}

\begin{lem}\label{lem:twist by length, for T^*,E}
Let $\sigma$ be an $\mathcal{H}_P$-module and $w\in W_P(1)$.
\begin{enumerate}
\item We have $\sigma_{\ell - \ell_P}(T_w^{P*}) = (-1)^{\ell(w) - \ell_P(w)}\sigma(T^{P*}_w)$.\label{enum:twist by length for T^*}
\item For any orientation $o$, $\sigma_{\ell - \ell_P}(E^P_o(w)) = (-1)^{\ell(w) - \ell_P(w)}\sigma(E^P_o(w))$.
\end{enumerate}
\end{lem}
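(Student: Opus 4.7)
The plan is to exhibit an algebra automorphism $\phi_P$ of $\mathcal{H}_P$ which realises the twist $\sigma\mapsto\sigma_{\ell-\ell_P}$, and then compute its action on the bases $\{T_w^{P*}\}$ and $\{E_o^P(w)\}$. Define $\phi_P$ by $\phi_P(T_w^P)=(-1)^{\ell(w)-\ell_P(w)}T_w^P$ and extend linearly. The verification that $\phi_P$ respects the quadratic and braid relations is identical to the proof of Lemma~\ref{lem:tensor of length?}(2) applied to $\mathcal{H}_P$, where the character $T_w^P\mapsto(-1)^{\ell(w)}$ of Lemma~\ref{lem:tensor of length?}(1) plays the role of $\chi$ and the sign character $T_w^P\mapsto(-1)^{\ell_P(w)}$ of $\mathcal{H}_P$ plays the role of $(-1)^{\ell(w)}$; so $\phi_P$ is an algebra automorphism, and by construction $\sigma_{\ell-\ell_P}=\sigma\circ\phi_P$.

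For (1), note that for $s\in S_{\aff,P}(1)$ the image $\bar s\in W$ is a reflection, so $\ell(\bar s)$ is odd while $\ell_P(\bar s)=1$; hence $\phi_P(T_s^P)=T_s^P$. Moreover $\phi_P(c_s)=c_s$ because $c_s\in C[Z_\kappa]$ and every $t\in Z_\kappa$ has $\ell(t)=\ell_P(t)=0$. Fix a reduced expression $w=s_1\dotsm s_l u$ with $s_i\in S_{\aff,P}(1)$ and $\ell_P(u)=0$; applying $\phi_P$ term by term to
\[
T_w^{P*}=(T_{s_1}^P-c_{s_1})\dotsm(T_{s_l}^P-c_{s_l})\,T_u^P
\]
fixes every factor except the last, giving $\phi_P(T_w^{P*})=(-1)^{\ell(u)}T_w^{P*}$. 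Applying the character of Lemma~\ref{lem:tensor of length?}(1) to $T_w^P=T_{s_1}^P\dotsm T_{s_l}^P T_u^P$ yields $(-1)^{\ell(w)}=(-1)^{l+\ell(u)}$ (each $\ell(s_i)$ being odd), hence $\ell(u)\equiv\ell(w)-\ell_P(w)\pmod 2$, which is the content of (1).

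Part (2) is proved by induction on $\ell_P(w)$. For $\ell_P(w)=0$, no element $v\in W_P(1)$ satisfies $v<w$ in the Bruhat order on $W_P(1)$ (the image of $w$ lies in $\Omega_P$, which is minimal in its $W_{\aff,P}$-coset), so the inclusion $E_o^P(w)\in T_w^P+\sum_{v<w}CT_v^P$ forces $E_o^P(w)=T_w^P$, and the claim follows from the definition of $\phi_P$. For $\ell_P(w)>0$, choose $s\in S_{\aff,P}(1)$ and $w'\in W_P(1)$ with $w=sw'$ and $\ell_P(w')=\ell_P(w)-1$; reducedness forces the scalar in \eqref{eq:product formula} to equal $1$, so $E_o^P(w)=E_o^P(s)\,E_{o\cdot s}^P(w')$. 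By the construction of the $E_o$-basis, $E_o^P(s)\in\{T_s^P,T_s^{P*}\}$, which is fixed by $\phi_P$ by part (1). Combining with the inductive hypothesis $\phi_P(E_{o\cdot s}^P(w'))=(-1)^{\ell(w')-\ell_P(w')}E_{o\cdot s}^P(w')$, and noting that $\ell(s)$ being odd together with $\ell_P(s)=1$ gives $\ell(w)-\ell_P(w)\equiv\ell(w')-\ell_P(w')\pmod 2$, the induction closes.

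The only mildly delicate point is confirming that $\phi_P$ is genuinely multiplicative, which is the same quadratic/braid-relation calculation as in Lemma~\ref{lem:tensor of length?}(2); once that is in place, everything is parity bookkeeping with the two length functions $\ell$ and $\ell_P$.
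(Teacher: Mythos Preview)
Your proof is correct. Part (2) follows the same induction on $\ell_P(w)$ as the paper, with the same splitting $E_o^P(w)=E_o^P(s)E_{o\cdot s}^P(s^{-1}w)$.

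For part (1) you take a different route. The paper derives (1) from Lemma~\ref{lem:order: ell - ell and twist} (the commutation of the twist with $\iota_P$): one writes
\[
\sigma_{\ell-\ell_P}(T_w^{P*})=(-1)^{\ell_P(w)}(\sigma_{\ell-\ell_P})^{\iota_P}(T_w^P)=(-1)^{\ell_P(w)}(\sigma^{\iota_P})_{\ell-\ell_P}(T_w^P)
\]
and then unwinds definitions. You instead introduce the algebra automorphism $\phi_P$ at the outset (the paper only records it afterwards in Remark~\ref{rem:ell - ell_P is algebra hom}) and compute $\phi_P$ directly on the reduced-expression formula for $T_w^{P*}$, using the parity identity $(-1)^{\ell(u)}=(-1)^{\ell(w)-\ell_P(w)}$ obtained from the character of Lemma~\ref{lem:tensor of length?}(1). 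Your approach is more self-contained in that it does not rely on Lemma~\ref{lem:order: ell - ell and twist}; the paper's approach gets (1) almost for free once that earlier lemma is available.
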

\begin{proof}
By Lemma~\ref{lem:order: ell - ell and twist}, we have 
\begin{align*}
\sigma_{\ell - \ell_P}(T_w^{P*}) & = (-1)^{\ell_P(w)}(\sigma_{\ell - \ell_P})^{\iota_P}(T_w^P)\\
& =  (-1)^{\ell_P(w)}(\sigma^{\iota_P})_{\ell - \ell_P}(T_w^P)\\
& =  (-1)^{\ell(w) - \ell_P(w)}(-1)^{\ell_P(w)}\sigma^{\iota_P}(T_w^P)\\
& =  (-1)^{\ell(w) - \ell_P(w)}\sigma(T_w^{P*}).
\end{align*}
Now we prove (2) by induction on the length of $w$.
If $\ell_P(w) = 0$, then $E^P_o(w) = T^P_w$.
Hence the lemma follows from the definition of $\sigma_{\ell - \ell_P}$.

Assume that $\ell_P(w) > 0$ and take $s\in S_{\aff,P}(1)$ such that $\ell_P(s^{-1}w) < \ell_P(w)$.
We have $E^P_o(w) = E^P_o(s)E^P_{o\cdot s}(s^{-1}w)$.
Since $E^P_o(s)$ is $T_s^P$ or $T_s^{P*}$, we have $\sigma_{\ell - \ell_P}(E^P_o(s)) = (-1)^{\ell(s) - \ell_P(s)}\sigma(E^P_o(s))$ as we have already proved.
By inductive hypothesis, 
\begin{align*}
\sigma_{\ell - \ell_P}(E_o^P(w)) & = \sigma_{\ell - \ell_P}(E^P_o(s))\sigma_{\ell - \ell_P}(E^P_{o\cdot s}(s^{-1}w)\\
& = (-1)^{\ell(s) - \ell_P(s)}\sigma(E^P_o(s))(-1)^{\ell(s^{-1}w) - \ell_P(s^{-1}w)}\sigma(E^P_{o\cdot s}(s^{-1}w)).
\end{align*}
Since $(-1)^{\ell(s) - \ell_P(s)}(-1)^{\ell(s^{-1}w) - \ell_P(s^{-1}w)} = (-1)^{\ell(w) - \ell_P(w)}$, we have 
\begin{align*}
\sigma_{\ell - \ell_P}(E_o^P(w)) & = (-1)^{\ell(w) - \ell_P(w)}\sigma(E^P_o(s))\sigma(E^P_{o\cdot s}(s^{-1}w))\\
& = (-1)^{\ell(w) - \ell_P(w)}\sigma(E^P_o(s)E^P_{o\cdot s}(s^{-1}w))\\
& = (-1)^{\ell(w) - \ell_P(w)}\sigma(E^P_o(w)).
\end{align*}
We get the lemma.
\end{proof}

\begin{rem}\label{rem:ell - ell_P is algebra hom}
Applying Lemma~\ref{lem:tensor of length?} to the right regular representation of $\mathcal{H}_P$ (namely, $\pi\colon \mathcal{H}_P\to \End(\mathcal{H}_P)^\opp$ defined by $Y\pi(X) = YX$), we get the following: the linear map $\mathcal{H}_P\to \mathcal{H}_P$ defined by $T^P_w\mapsto (-1)^{\ell(w) - \ell_P(w)}T^P_w$ is an algebra homomorphism.
By Lemma~\ref{lem:twist by length, for T^*,E}, this map sends $T_w^{P*}$ and $E^P_o(w)$ to $(-1)^{\ell(w) - \ell_P(w)}T_w^{P*}$ and $(-1)^{\ell(w) - \ell_P(w)}E^P_o(w)$, respectively where $o$ is any orientation.
\end{rem}

\begin{lem}\label{lem:twist by length is trivial on H_aff}
The algebra homomorphism defined in the remark is identity on $\mathcal{H}_{\aff,P}$.
Hence $\sigma_{\ell - \ell_P}|_{\mathcal{H}_{\aff,P}} = \sigma|_{\mathcal{H}_{\aff,P}}$ for any $\mathcal{H}_P$-module $\sigma$.
\end{lem}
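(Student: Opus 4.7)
My plan is to reduce both assertions of the lemma to the single parity equality $\ell(w)\equiv\ell_P(w)\pmod 2$ for every $w\in W_{\aff,P}(1)$. The first assertion then follows because the algebra homomorphism of Remark~\ref{rem:ell - ell_P is algebra hom} sends each basis vector $T_w^P$ of $\mathcal{H}_{\aff,P}$ to $(-1)^{\ell(w)-\ell_P(w)}T_w^P$, and the second assertion follows at once by applying $\sigma$.

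To prove the parity equality I would write $w=n_v\lambda$ with $v\in W_{0,P}$ and $\lambda\in\Lambda(1)$, arranging (using the paragraph after Lemma~\ref{lem:on c, about aff,Levi}) that the representative $n_v$ already lies in $W_{\aff,P}(1)$, so that $\lambda=n_v^{-1}w\in\Lambda(1)\cap W_{\aff,P}(1)$ and hence $\nu(\lambda)\in\Z\coroot{\Sigma'_P}$. The crucial combinatorial input is that $v\in W_{0,P}$ stabilises $\Sigma^+\setminus\Sigma_P^+$ setwise, which upgrades to $v^{-1}(\Sigma'^+\setminus\Sigma'^+_P)=\Sigma'^+\setminus\Sigma'^+_P$ and gives the identities
\[
\Sigma'^+\cap v^{-1}(\Sigma'^-) = \Sigma'^+_P\cap v^{-1}(\Sigma'^-_P),\qquad
\Sigma'^+\cap v^{-1}(\Sigma'^+) = (\Sigma'^+\setminus\Sigma'^+_P)\sqcup(\Sigma'^+_P\cap v^{-1}(\Sigma'^+_P)).
\]
Plugging these into the length formulas for $\ell(n_v\lambda)$ and $\ell_P(n_v\lambda)$ recalled at the start of the length subsection, all contributions indexed by $\Sigma'^+_P$ cancel together with the ``$+1$'' terms as a block, leaving the clean identity
\[
\ell(w)-\ell_P(w) = \sum_{\alpha\in\Sigma'^+\setminus\Sigma'^+_P}\lvert\langle\alpha,\nu(\lambda)\rangle\rvert.
\]

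Reducing modulo $2$ removes the absolute values and turns the right-hand side into $\langle 2\rho-2\rho_P,\nu(\lambda)\rangle$ with $2\rho=\sum_{\alpha\in\Sigma'^+}\alpha$ and $2\rho_P=\sum_{\alpha\in\Sigma'^+_P}\alpha$. Because $\Sigma'_P$ is the standard parabolic subsystem attached to $\Delta_P$, its simple roots form a subset of the simple roots of $\Sigma'$, so $\langle\rho,\coroot{\beta}\rangle = \langle\rho_P,\coroot{\beta}\rangle = 1$ for every such simple $\beta$; consequently $\rho-\rho_P$ annihilates $\coroot{\Sigma'_P}$, hence also $\nu(\lambda)$, so the pairing vanishes and is in particular even. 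This is the same ``$\rho$-shift'' trick used in the proof of Lemma~\ref{lem:length of lambda_aff is even}. The main obstacle is the set-theoretic bookkeeping that produces the displayed identity above; once that is in hand the mod-$2$ argument is routine.
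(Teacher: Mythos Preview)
Your proof is correct, but it takes a rather different and more laborious route than the paper's. The paper exploits the fact, already recorded in Remark~\ref{rem:ell - ell_P is algebra hom}, that $T_w^P\mapsto(-1)^{\ell(w)-\ell_P(w)}T_w^P$ is an \emph{algebra} homomorphism. Since $\mathcal{H}_{\aff,P}$ is generated by the $T_s^P$ with $s\in S_{\aff,P}(1)$, it then suffices to check the sign on these generators: $\ell_P(s)=1$ because $s$ maps to an affine simple reflection in $W_P$, and $(-1)^{\ell(s)}=-1$ by Lemma~\ref{lem:tensor of length?}\,(\ref{enum:length gives a character}) since the image of $s$ in $W$ is a reflection. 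That is the whole argument.

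Your approach instead establishes the parity identity $\ell(w)\equiv\ell_P(w)\pmod 2$ for every $w\in W_{\aff,P}(1)$ directly, via the length formula and a $\rho$-shift. This is sound, and in fact yields the sharper exact formula $\ell(w)-\ell_P(w)=\sum_{\alpha\in\Sigma'^+\setminus\Sigma'^+_P}\lvert\langle\alpha,\nu(\lambda)\rangle\rvert$, which the paper neither states nor needs. The trade-off is that you are redoing by hand a computation that the algebra-homomorphism observation renders unnecessary: once you know the map is multiplicative, checking it on generators is a two-line affair. Your argument would be the natural one if Remark~\ref{rem:ell - ell_P is algebra hom} were not available, or if one wanted the explicit difference formula for other purposes.
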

\begin{proof}
The algebra $\mathcal{H}_{\aff,P}$ is generated by $T_s$ where $s\in S_{\aff,P}(1)$.
By Lemma~\ref{lem:tensor of length?}, $(-1)^{\ell(s)} = -1$.
Since the image of $s$ in $W_P$ is an affine simple reflection, we have $\ell_P(s) = 1$.
Hence $(-1)^{\ell_P(s)} = -1$.
\end{proof}

\begin{lem}
We have $\Delta(\sigma_{\ell - \ell_P}^{\iota_P}) = \Delta(\sigma)$.
\end{lem}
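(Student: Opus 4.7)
The plan is to show that the defining conditions for $\Delta(\sigma)$ and $\Delta(\sigma_{\ell-\ell_P}^{\iota_P})$ coincide on each $\alpha\in\Delta\setminus\Delta_P$ with $\langle\Delta_P,\coroot\alpha\rangle=0$ (the inclusion $\Delta_P\subseteq\Delta(\cdot)$ is automatic on both sides). So fix such an $\alpha$ and an element $\lambda\in W_{\aff,P_\alpha}(1)\cap\Lambda(1)$; my aim is to check that $\sigma(T_\lambda^P)=1$ if and only if $\sigma_{\ell-\ell_P}^{\iota_P}(T_\lambda^P)=1$.

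The first step is a direct computation: unwinding the definitions and using Lemma~\ref{lem:twist by length, for T^*,E}\,(\ref{enum:twist by length for T^*}), I get
\[
\sigma_{\ell-\ell_P}^{\iota_P}(T_\lambda^P)
= (-1)^{\ell_P(\lambda)}\sigma_{\ell-\ell_P}(T_\lambda^{P*})
= (-1)^{\ell(\lambda)}\sigma(T_\lambda^{P*}).
\]
So the statement reduces to showing $(-1)^{\ell(\lambda)}\sigma(T_\lambda^{P*})=\sigma(T_\lambda^P)$ for every such $\lambda$.

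The second step is to kill both discrepancies using the orthogonality hypothesis. Since $\alpha$ is orthogonal to $\Delta_P$, the translation part $\nu(\lambda)$ lies in the line $\R\coroot{\alpha'}$, which is perpendicular to every $\beta\in\Sigma_P$; the length formula for $\mathcal{H}_P$ then gives $\ell_P(\lambda)=\sum_{\beta\in\Sigma_P'^+}|\langle\beta,\nu(\lambda)\rangle|=0$. Therefore $T_\lambda^{P*}=T_\lambda^P$, because the expansion $T_w^*\in T_w+\sum_{v<w}CT_v$ has no lower-order terms when $\ell_P(\lambda)=0$. On the other hand, $\lambda\in W_{\aff,P_\alpha}(1)\cap\Lambda(1)\subseteq W_\aff(1)\cap\Lambda(1)$, so by Lemma~\ref{lem:length of lambda_aff is even} the length $\ell(\lambda)$ is even and $(-1)^{\ell(\lambda)}=1$.

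Combining these two facts, the equality $\sigma_{\ell-\ell_P}^{\iota_P}(T_\lambda^P)=\sigma(T_\lambda^P)$ holds for every $\lambda\in W_{\aff,P_\alpha}(1)\cap\Lambda(1)$, so the membership conditions defining $\Delta(\sigma)$ and $\Delta(\sigma_{\ell-\ell_P}^{\iota_P})$ coincide, proving the lemma. The main (very mild) point to verify carefully is the containment $W_{\aff,P_\alpha}(1)\subseteq W_\aff(1)$, needed to invoke Lemma~\ref{lem:length of lambda_aff is even}; this is standard since the group $G'_{P_\alpha}$ generated by unipotent radicals in the Levi of $P_\alpha$ sits inside the corresponding group $G'$ for $G$.
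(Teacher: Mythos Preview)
Your proof is correct and follows essentially the same computation as the paper: both use that $\ell_P(\lambda)=0$ (so $T_\lambda^{P*}=T_\lambda^P$) and that $\ell(\lambda)$ is even by Lemma~\ref{lem:length of lambda_aff is even}. The only structural difference is that you prove the equality of defining conditions directly for every orthogonal $\alpha$, whereas the paper first shows the inclusion $\Delta(\sigma)\subset\Delta(\sigma_{\ell-\ell_P}^{\iota_P})$ and then obtains the reverse inclusion by applying the same argument to $\sigma_{\ell-\ell_P}^{\iota_P}$ together with Lemma~\ref{lem:twist by length and iota, twice}; your formulation is marginally more efficient.
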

\begin{proof}
Let $\alpha\in\Delta(\sigma)\setminus\Delta_P$, $P_\alpha$ a parabolic subgroup corresponding to $\{\alpha\}$ and $\lambda\in \Lambda(1)\cap W_{\aff,P_\alpha}(1)$ and we prove $\sigma_{\ell - \ell_P}^{\iota_P}(T_\lambda^P) = 1$.
Since $\ell_P(\lambda) = 0$ by Lemma~\ref{lem:length zero}, we have $\iota_P(T_\lambda^P) = (-1)^{\ell_P(\lambda)}T_\lambda^{P*} = T_\lambda^P$.
Hence $\sigma_{\ell - \ell_P}^{\iota_P}(T_\lambda^P) = (-1)^{\ell(\lambda) - \ell_P(\lambda)}\sigma(T_\lambda^P)$.
We have $\ell_P(\lambda) = 0$.
Since $\lambda\in\Lambda(1)\cap W_\aff(1)$, $\ell(\lambda)$ is even by Lemma~\ref{lem:length of lambda_aff is even}.
Hence $\sigma_{\ell - \ell_P}^{\iota_P}(T_\lambda^P) = \sigma(T_\lambda^P) = 1$.
Therefore $\Delta(\sigma)\subset \Delta(\sigma_{\ell - \ell_P}^{\iota_P})$.
Applying this to $\sigma_{\ell - \ell_P}^{\iota_P}$, by Lemma~\ref{lem:twist by length and iota, twice}, we have $\Delta(\sigma_{\ell - \ell_P}^{\iota_P})\subset \Delta(\sigma)$.
\end{proof}

\begin{lem}\label{lem:twist by length and I_P,St}
Let $P_1\supset P$ be parabolic subgroups of $G$ and $\sigma$ an $\mathcal{H}_P$-module
\begin{enumerate}
\item We have $I^{P_1}_P(\sigma_{\ell_{P_1} - \ell_P})_{\ell - \ell_{P_1}} \simeq I_P^{P_1}(\sigma_{\ell - \ell_P})$.
\item For parabolic subgroups $Q\subset Q_1$ between $P$ and $P(\sigma)$, we have $\St^{Q_1}_Q(\sigma_{\ell_{Q_1} - \ell_P})_{\ell - \ell_{Q_1}} \simeq \St^{Q_1}_Q(\sigma_{\ell - \ell_P})$.
\end{enumerate}
\end{lem}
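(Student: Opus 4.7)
Part (1) will be proved by an explicit twist construction. By Remark~\ref{rem:ell - ell_P is algebra hom}, the maps $\epsilon_Q\colon T_w^Q\mapsto (-1)^{\ell(w)-\ell_Q(w)}T_w^Q$ are algebra automorphisms of $\mathcal{H}_Q$ for $Q\in\{P,P_1\}$. I also need the relative variant $\epsilon_P^{P_1}\colon T_w^P\mapsto (-1)^{\ell_{P_1}(w)-\ell_P(w)}T_w^P$ to be an algebra automorphism of $\mathcal{H}_P$, which follows by repeating the proof of Remark~\ref{rem:ell - ell_P is algebra hom} once one knows that $T_w^P\mapsto (-1)^{\ell_{P_1}(w)}$ is a character of $\mathcal{H}_P$. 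This in turn reduces to the quadratic relations (using the odd parity of $\ell_{P_1}(s)$ for $s\in S_{\aff,P}(1)$, as $s$ is still a reflection in $W_{P_1}$) together with the fact that $(-1)^{\ell_{P_1}}$ is a genuine character of the extended affine Weyl group $W_{P_1}$ and so of its subgroup $W_P$.

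With these automorphisms in hand, define $\Phi\colon I_P^{P_1}(\sigma_{\ell-\ell_P})\to I_P^{P_1}(\sigma_{\ell_{P_1}-\ell_P})$ by $\Phi(\varphi)=\varphi\circ\epsilon_{P_1}$. For $w\in W_P^{P_1-}(1)$ and $Y\in\mathcal{H}_{P_1}$, the identity $\epsilon_{P_1}(T_w^{P_1*})=(-1)^{\ell(w)-\ell_{P_1}(w)}T_w^{P_1*}$ (Lemma~\ref{lem:twist by length, for T^*,E}) combined with the $(\mathcal{H}_P^{P_1-},j_P^{P_1-*})$-equivariance of $\varphi$ for $\sigma_{\ell-\ell_P}$ yields, after the sign cancellation $(-1)^{\ell(w)-\ell_{P_1}(w)}(-1)^{\ell(w)-\ell_P(w)}=(-1)^{\ell_{P_1}(w)-\ell_P(w)}$,
\[
\Phi(\varphi)(Y\cdot T_w^{P_1*})=\Phi(\varphi)(Y)\cdot\sigma_{\ell_{P_1}-\ell_P}(T_w^{P*}).
\]
Hence $\Phi(\varphi)\in I_P^{P_1}(\sigma_{\ell_{P_1}-\ell_P})$, and the immediate identity $\Phi(\varphi\cdot X)=\Phi(\varphi)\cdot\epsilon_{P_1}(X)$ shows that $\Phi$ realises an $\mathcal{H}_{P_1}$-linear isomorphism onto the twist $I_P^{P_1}(\sigma_{\ell_{P_1}-\ell_P})_{\ell-\ell_{P_1}}$. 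Bijectivity follows from $\epsilon_{P_1}^2=\id$.

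For part (2), I would deduce it from (1) using exactness of the twist. Since $\pi\mapsto\pi_{\ell-\ell_{Q_1}}$ is exact (it is precomposition with the algebra automorphism $\epsilon_{Q_1}$), the cokernel description of $\St_Q^{Q_1}$ reduces the problem to producing, naturally in $Q'$ with $P\subseteq Q'\subseteq Q_1$ and $Q'\subseteq P(\sigma)$, an isomorphism
\[
I_{Q'}^{Q_1}(e_{Q'}(\sigma_{\ell_{Q_1}-\ell_P}))_{\ell-\ell_{Q_1}}\simeq I_{Q'}^{Q_1}(e_{Q'}(\sigma_{\ell-\ell_P})).
\]
Applying (1) with $(P,P_1)$ replaced by $(Q',Q_1)$, this further reduces to the identity of $\mathcal{H}_{Q'}$-modules
\[
e_{Q'}(\sigma_{\ell_{Q_1}-\ell_P})\simeq e_{Q'}(\sigma_{\ell-\ell_P})_{\ell-\ell_{Q_1}},
\]
where the right-hand twist is by the character $T_w^{Q'}\mapsto(-1)^{\ell(w)-\ell_{Q_1}(w)}$ of $\mathcal{H}_{Q'}$, well-defined as the composition of $\epsilon_{Q'}$ with $\epsilon_{Q'}^{Q_1}$. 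I verify this identity via the characterization of the extension: on $T_w^{Q'*}$ with $w\in W_P^{Q'-}(1)$ both sides evaluate to $(-1)^{\ell_{Q_1}(w)-\ell_P(w)}\sigma(T_w^{P*})$ by Lemma~\ref{lem:twist by length, for T^*,E}(1) and the same sign cancellation as before, and on $T_w^{Q'*}$ with $w\in W_{\aff,P_2'}(1)$ (where $\Delta_{P_2'}=\Delta_{Q'}\setminus\Delta_P$) both sides give $1$, because $P_2'\subseteq Q_1$ forces $(-1)^{\ell(w)-\ell_{Q_1}(w)}=1$ on $W_{\aff,P_2'}(1)$ by the analogue of Lemma~\ref{lem:twist by length is trivial on H_aff}.

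The main obstacle is justifying the relative twist $\epsilon_P^{P_1}$ and tracking the various signs consistently across the identifications; once that bookkeeping is in place, both parts reduce to direct computations using Lemma~\ref{lem:twist by length, for T^*,E} and the universal characterization of the extension.
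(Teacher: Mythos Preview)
Your proof is correct and follows essentially the same route as the paper. For part (1) the paper likewise defines the algebra automorphism $f=\epsilon_{P_1}$ and shows that $\varphi\mapsto\varphi\circ f$ realises the isomorphism; your computation is the same up to direction. For part (2) the paper also reduces, via exactness of the twist, to an identity on the extensions and verifies it through the characterization of $e_{Q'}$ on $W_P(1)$ and on $W_{\aff,P_2'}(1)$; the only organisational difference is that the paper first treats the special case $Q'=Q_1$ (obtaining $e_Q(\sigma_{\ell_Q-\ell_P})_{\ell-\ell_Q}\simeq e_Q(\sigma_{\ell-\ell_P})$) and then relativizes, whereas you prove the identity $e_{Q'}(\sigma_{\ell_{Q_1}-\ell_P})\simeq e_{Q'}(\sigma_{\ell-\ell_P})_{\ell-\ell_{Q_1}}$ directly for all $Q'$. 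Your explicit justification of the relative automorphism $\epsilon_P^{P_1}$ is a point the paper leaves implicit when it writes twists such as $(\cdot)_{\ell_{Q_1}-\ell_{Q_2}}$.
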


\begin{proof}
(1)
Define a linear map $f\colon \mathcal{H}_{P_1}\to \mathcal{H}_{P_1}$ by $T_w^{P_1}\mapsto (-1)^{\ell(w) - \ell_{P_1}(w)}T_w^{P_1}$.
Then by Remark~\ref{rem:ell - ell_P is algebra hom}, $f$ is an algebra homomorphism.
Put $\varphi^f = \varphi\circ f$ for $\varphi\in I_P^{P_1}(\sigma_{\ell_{P_1} - \ell_P})_{\ell - \ell_{P_1}}$.
Then for $X\in \mathcal{H}_{P_1}$ and $w\in W_{P}^{P_1-}(1)$, we have 
\begin{align*}
\varphi^f(Xj_P^{P_1-*}(T_w^{P*})) & = \varphi^f(XT_w^{P_1*})\\
& = \varphi(f(X)f(T_w^{P_1*}))\\
& = (-1)^{\ell(w) - \ell_{P_1}(w)}\varphi(f(X)T_w^{P_1*})\\
& = (-1)^{\ell(w) - \ell_{P_1}(w)}\varphi(f(X))\sigma_{\ell_{P_1} - \ell_P}(T_w^{P*})\\
& = (-1)^{\ell(w) - \ell_{P}(w)}\varphi(f(X))\sigma(T_w^{P*})\\
& = \varphi(f(X))\sigma_{\ell - \ell_P}(T_w^{P*})\\
& = \varphi^f(X)\sigma_{\ell - \ell_P}(T_w^{P*}).
\end{align*}
Hence $\varphi^f\in I^{P_1}_P(\sigma_{\ell - \ell_P})$.
For $w\in W_{P_1}(1)$ and $X\in \mathcal{H}_{P_1}$, we have 
\begin{align*}
(\varphi^f T^{P_1}_w)(X) & = \varphi^f(T^{P_1}_wX)\\
& = \varphi(f(T^{P_1}_w)f(X))\\
& = (-1)^{\ell(w) - \ell_{P_1}(w)}\varphi (T_w^{P_1}f(X))\\
& = (\varphi T_w^{P_1})(f(X)) = (\varphi T_w^{P_1})^f(X).
\end{align*}
Hence $\varphi\mapsto \varphi^f$ is an $\mathcal{H}$-module homomorphism, therefore an isomorphism $I_P^{P_1}(\sigma_{\ell_{P_1} - \ell_{P}})_{\ell -\ell_{P_1}}\to I^{P_1}_P(\sigma_{\ell -\ell_P})$.

(2)
First we prove the lemma for $Q = Q_1$, namely $\St_{Q}^{Q_1} = e_Q$.
Let $w\in W_P(1)$.
Then we have 
\begin{align*}
e_Q(\sigma_{\ell_Q - \ell_P})_{\ell - \ell_Q}(T_w^{Q*}) & = (-1)^{\ell(w) - \ell_Q(w)}e_Q(\sigma_{\ell_Q - \ell_P})(T_w^{Q*})\\
& = (-1)^{\ell(w) - \ell_Q(w)}\sigma_{\ell_Q - \ell_P}(T_w^{P*})\\
& = (-1)^{\ell(w) -\ell_Q(w)}(-1)^{\ell_Q(w) - \ell_P(w)}\sigma(T_w^{P*})\\
& = (-1)^{\ell(w) - \ell_P(w)}\sigma(T_w^{P*}) = \sigma_{\ell - \ell_P}(T_w^{P*}).
\end{align*}
If $w\in W_{Q\cap P_2,\aff}(1)$ then $T_w^{Q*}\in \mathcal{H}_{\aff,Q}$.
Hence by Lemma~\ref{lem:twist by length is trivial on H_aff}, we have $e_Q(\sigma_{\ell_Q - \ell_P})_{\ell - \ell_Q}(T_w^{Q*}) = e_Q(\sigma_{\ell_Q - \ell_P})(T_w^{Q*})$.
The definition of the extension says that it is $1$.
Therefore by a characterization of $e_Q(\sigma_{\ell - \ell_P})$, we have (2) in this case.

In general, consider the exact sequence
\[
\bigoplus_{Q_1\supset Q_2\supsetneq Q}I^{Q_1}_{Q_2}(e_{Q_2}(\sigma_{\ell_{Q_1} - \ell_P}))
\to I_Q^{Q_1}(e_Q(\sigma_{\ell_{Q_1} - \ell_P}))
\to \St_Q^{Q_1}(\sigma_{\ell_{Q_1} - \ell_P})\to 0.
\]
Hence 
\begin{multline*}
\bigoplus_{Q_1\supset Q_2\supsetneq Q}I^{Q_1}_{Q_2}(e_{Q_2}(\sigma_{\ell_{Q_1} - \ell_P}))_{\ell - \ell_{Q_1}}
\to I_Q^{Q_1}(e_Q(\sigma_{\ell_{Q_1} - \ell_P}))_{\ell - \ell_{Q_1}}\\
\to \St_Q^{Q_1}(\sigma_{\ell_{Q_1} - \ell_P})_{\ell - \ell_{Q_1}}\to 0.
\end{multline*}
Using (1) and (2) for $Q = Q_1$, for $Q_1\supset Q_2\supset Q$, we have 
\begin{align*}
I_{Q_2}^{Q_1}(e_{Q_2}(\sigma_{\ell_{Q_1} - \ell_P}))_{\ell - \ell_{Q_1}}
& = I_{Q_2}^{Q_1}(e_{Q_2}(\sigma_{\ell_{Q_2} - \ell_P})_{\ell_{Q_1} - \ell_{Q_2}})_{\ell - \ell_{Q_1}}\\
& = I_{Q_2}^{Q_1}(e_{Q_2}(\sigma_{\ell_{Q_2} - \ell_P})_{\ell - \ell_{Q_2}})\\
& = I_{Q_2}^{Q_1}(e_{Q_2}(\sigma_{\ell - \ell_P})).
\end{align*}
Therefore
\[
\bigoplus_{Q_1\supset Q_2\supsetneq Q}I^{Q_1}_{Q_2}(e_{Q_2}(\sigma_{\ell - \ell_P}))
\to I_Q^{Q_1}(e_Q(\sigma_{\ell - \ell_P}))
\to \St_Q^{Q_1}(\sigma_{\ell_{Q_1} - \ell_P})_{\ell - \ell_{Q_1}}\to 0.
\]
Hence we get the lemma.
\end{proof}

\subsection{The functor $I'_P$}
We define the functor $I_P'$ as follows.
\begin{defn}
For an $\mathcal{H}_P$-module $\sigma$, put
\[
I_P'(\sigma) = \Hom_{(\mathcal{H}_P^-,j_P^-)}(\mathcal{H},\sigma).
\]
\end{defn}
We remark the following proposition.
\begin{prop}
Let $P$ be a parabolic subgroup and $\sigma$ an $\mathcal{H}_P$-module.
Then the map $\varphi\mapsto \varphi\circ\iota$ induces an isomorphism $I_P(\sigma)^\iota \simeq I'_P(\sigma_{\ell - \ell_P}^{\iota_P})$.
\end{prop}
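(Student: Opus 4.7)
The plan is to split the claim into three checks: that $\widetilde\varphi := \varphi\circ\iota$ really lies in $I'_P(\sigma_{\ell-\ell_P}^{\iota_P})$, that the assignment $\varphi\mapsto\widetilde\varphi$ intertwines the $\iota$-twisted $\mathcal{H}$-action on $I_P(\sigma)$ with the natural $\mathcal{H}$-action on $I'_P(\sigma_{\ell-\ell_P}^{\iota_P})$, and that it is bijective with an explicit inverse.

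The crux is one compatibility identity between $\iota$ and the embeddings $j_P^{\pm},j_P^{\pm\ast}$. For $w\in W_P^-(1)$, a direct computation gives
\[
\iota(j_P^-(T_w^P))=\iota(T_w)=(-1)^{\ell(w)}T_w^\ast,\qquad j_P^{-\ast}(\iota_P(T_w^P))=(-1)^{\ell_P(w)}T_w^\ast,
\]
so that
\[
\iota\circ j_P^-=j_P^{-\ast}\circ \iota_P\circ g\quad\text{on }\mathcal{H}_P^-,
\]
where $g\colon\mathcal{H}_P\to\mathcal{H}_P$ is the algebra involution $T_w^P\mapsto(-1)^{\ell(w)-\ell_P(w)}T_w^P$ from Remark~\ref{rem:ell - ell_P is algebra hom}. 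By symmetry (and using $\iota^2=\id$, $\iota_P^2=\id$, $g^2=\id$ with $\iota_P g=g\iota_P$) one also has $\iota\circ j_P^{-\ast}=j_P^-\circ\iota_P\circ g$.

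Using this identity and that $\iota$ is an algebra involution, I would verify well-definedness as follows: for $\varphi\in I_P(\sigma)$, $T\in\mathcal{H}_P^-$ and $X\in\mathcal{H}$,
\[
\widetilde\varphi(X\,j_P^-(T))=\varphi(\iota(X)\,j_P^{-\ast}(\iota_P(g(T))))=\varphi(\iota(X))\cdot\iota_P(g(T))=\widetilde\varphi(X)\cdot\iota_P(g(T)),
\]
which is exactly $\widetilde\varphi(X)\cdot_{\sigma_{\ell-\ell_P}^{\iota_P}}T$ since by definition $\sigma_{\ell-\ell_P}^{\iota_P}(T)=\sigma(\iota_P(g(T)))$. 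For $\mathcal{H}$-equivariance, the $\iota$-twisted action on $I_P(\sigma)$ is $(\varphi\cdot^\iota Y)(X)=\varphi(\iota(Y)X)$, so both $(\widetilde{\varphi\cdot^\iota Y})(X)$ and $(\widetilde\varphi\cdot Y)(X)$ evaluate to $\varphi(\iota(Y)\iota(X))$ since $\iota(YX)=\iota(Y)\iota(X)$.

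For bijectivity, the inverse is again $\psi\mapsto\psi\circ\iota$: this is well-defined by the same argument run with the roles of $(I_P,j_P^{-\ast})$ and $(I'_P,j_P^-)$ swapped, using the companion identity $\iota\circ j_P^{-\ast}=j_P^-\circ\iota_P\circ g$, together with $(\iota_P g)^2=\id$ (so that $\sigma_{\ell-\ell_P}^{\iota_P}$ twisted again by $\iota_P$ and $g$ returns to $\sigma$, by Lemma~\ref{lem:twist by length and iota, twice}); and $\iota^2=\id$ makes the composition of both assignments the identity on functions $\mathcal{H}\to\sigma$. The only real obstacle is keeping the sign bookkeeping between $\iota$, $\iota_P$, and the twist $g$ consistent; everything else is formal manipulation built on the single identity $\iota\circ j_P^-=j_P^{-\ast}\circ\iota_P\circ g$.
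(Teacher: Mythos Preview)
Your proof is correct and follows essentially the same approach as the paper's. The paper carries out the computation directly on basis elements $T_w^P$ for $w\in W_P^-(1)$, while you package the identical calculation as the operator identity $\iota\circ j_P^- = j_P^{-\ast}\circ\iota_P\circ g$ on $\mathcal{H}_P^-$; your additional explicit verification of $\mathcal{H}$-equivariance is left implicit in the paper.
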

\begin{proof}
For $w\in W_P^-(1)$, we have 
\begin{align*}
(\varphi\circ\iota)(Xj_P^-(T_w^P)) & = (\varphi\circ\iota)(XT_w)\\
& = \varphi(\iota(X)\iota(T_w))\\
& = (-1)^{\ell(w)}\varphi(\iota(X)T_w^*)\\
& = (-1)^{\ell(w)}\varphi(\iota(X))\sigma(T^{P*}_w)\\
& = (-1)^{\ell(w) - \ell_P(w)}\varphi^\iota(X)\sigma^{\iota_P}(T^{P}_w).
\end{align*}
Hence $\varphi\circ\iota\in\Hom_{(\mathcal{H}_P^-,j_P^-)}(\mathcal{H},\sigma^{\iota_P}_{\ell - \ell_P})$.
By the same argument implies that $\psi\mapsto \psi\circ\iota$ gives a homomorphism $I'_P(\sigma_{\ell - \ell_P}^{\iota_P})\to I_P(\sigma)^\iota$ which is the inverse of the above homomorphism.
\end{proof}
From the properties of $I_P$, we get the properties of $I'_P$.
\begin{prop}\label{prop:fundamental properties of I'}
We have the following.
\begin{enumerate}
\item The functor $I'_P$ is exact.
\item The map $\varphi\mapsto (\varphi(T_{n_w}^*))$ gives an isomorphism $I'_P(\sigma)\simeq \bigoplus_{w\in W^P_0}\sigma$.
\item Let $Q$ be a parabolic subgroup containing $P$.
Then $I'_Q\circ I_P^{Q\prime}\simeq I_P'$ by the homomorphism $\varphi\mapsto (X\mapsto \varphi(X)(1))$.
\end{enumerate}
\end{prop}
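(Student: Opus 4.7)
My plan is to deduce all three properties from the corresponding properties of $I_P$ by exploiting the isomorphism $I_P(\sigma)^\iota \simeq I'_P(\sigma_{\ell-\ell_P}^{\iota_P})$ established in the preceding proposition. Writing this in the inverse form $I'_P(\tau) \simeq I_P(\tau_{\ell-\ell_P}^{\iota_P})^\iota$ (via $\psi\mapsto \psi\circ\iota$), the functor $I'_P$ is identified with the composition of the auto-equivalence $(\cdot)_{\ell-\ell_P}^{\iota_P}$ on $\mathcal{H}_P$-modules (which squares to the identity by Lemma~\ref{lem:twist by length and iota, twice}), the functor $I_P$, and the auto-equivalence $(\cdot)^\iota$ on $\mathcal{H}$-modules.

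Part~(1) is then immediate: each of the three functors in this composition is exact. The two outer ones are equivalences of categories, and $I_P$ is exact because Proposition~\ref{prop:decomposition of I_P} exhibits $I_P(\sigma)$ as $\bigoplus_{w\in W_0^P}\sigma$ naturally in $\sigma$. For part~(2), given $\psi\in I'_P(\sigma)$, I set $\varphi = \psi\circ\iota \in I_P(\sigma_{\ell-\ell_P}^{\iota_P})$. Since $\iota^2=\id$ and $\iota(T_w)=(-1)^{\ell(w)}T_w^*$, we have $\iota(T_{n_w}^*)=(-1)^{\ell(n_w)}T_{n_w}$, hence $\psi(T_{n_w}^*) = (-1)^{\ell(n_w)}\varphi(T_{n_w})$. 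Proposition~\ref{prop:decomposition of I_P}(1) says $\varphi\mapsto (\varphi(T_{n_w}))_{w\in W_0^P}$ is a bijection onto $\bigoplus_{w\in W_0^P}\sigma_{\ell-\ell_P}^{\iota_P}$, and since this underlying $C$-module is just $\sigma$ and component-wise sign changes are bijections, the map $\psi\mapsto (\psi(T_{n_w}^*))_{w\in W_0^P}$ is a bijection onto $\bigoplus_{w\in W_0^P}\sigma$.

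For part~(3), I would chain the following isomorphisms:
\begin{align*}
I'_Q(I_P^{Q\prime}(\sigma))
&\simeq I_Q\bigl(I_P^{Q\prime}(\sigma)_{\ell-\ell_Q}^{\iota_Q}\bigr)^\iota
\simeq I_Q\bigl(I_P^Q(\sigma_{\ell_Q-\ell_P}^{\iota_P})_{\ell-\ell_Q}\bigr)^\iota \\
&\simeq I_Q\bigl(I_P^Q(\sigma_{\ell-\ell_P}^{\iota_P})\bigr)^\iota
\simeq I_P(\sigma_{\ell-\ell_P}^{\iota_P})^\iota
\simeq I'_P(\sigma),
\end{align*}
using in turn the preceding proposition inverted, its relative version for $P\subset Q$ (obtained by the same proof with $G$ replaced by $Q$, yielding $I_P^{Q\prime}(\tau)\simeq I_P^Q(\tau_{\ell_Q-\ell_P}^{\iota_P})^{\iota_Q}$), Lemma~\ref{lem:twist by length and I_P,St}(1) applied to the $\mathcal{H}_P$-module $\sigma^{\iota_P}$ (together with the obvious commutation $(\cdot)^{\iota_P}_{\ell_Q-\ell_P}=(\cdot)_{\ell_Q-\ell_P}^{\iota_P}$ analogous to Lemma~\ref{lem:order: ell - ell and twist}), Proposition~\ref{prop:transitivity of inductions} twisted by $\iota$, and finally the preceding proposition once more.

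The main obstacle will be matching this abstract composite with the concrete formula $\varphi\mapsto(X\mapsto\varphi(X)(1))$ asserted in the statement. I would handle this by mimicking the direct argument of Proposition~\ref{prop:transitivity of inductions}: define $\tilde\varphi(X)=\varphi(X)(1)$ and verify $(\mathcal{H}_P^-,j_P^-)$-equivariance from the combined $(\mathcal{H}_Q^-,j_Q^-)$- and $(\mathcal{H}_P^{Q-},j_P^{Q-})$-equivariances of $\varphi$, using the inclusion $W_P^-(1)\subset W_P^{Q-}(1)$ together with a factorization of $T_w$ inside $\mathcal{H}$ for $w\in W_P^-(1)$ parallel to the $T_w^*$-factorization behind the transitivity of $I_P$. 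Bijectivity of $\varphi\mapsto\tilde\varphi$ would then follow from part~(2) applied to both $I'_Q$ and $I_P^{Q\prime}$, matching the direct sum decompositions.
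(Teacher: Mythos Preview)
Your proposal is correct and follows the same strategy as the paper: all three parts are transferred from the corresponding properties of $I_P$ (exactness, Proposition~\ref{prop:decomposition of I_P}, and Proposition~\ref{prop:transitivity of inductions}) via the isomorphism $I'_P(\tau)\simeq I_P(\tau^{\iota_P}_{\ell-\ell_P})^\iota$ of the preceding proposition. The paper's proof states exactly this in three lines without further detail; your version spells out the computations, and your extra care in part~(3) about matching the abstract composite with the concrete formula $\varphi\mapsto(X\mapsto\varphi(X)(1))$ goes beyond what the paper provides.
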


\begin{proof}
(1) follows from the exactness of $I_P$.
Proposition~\ref{prop:decomposition of I_P} implies (2) and Proposition~\ref{prop:transitivity of inductions} implies (3).
\end{proof}

\subsection{Other inductions}
The reason why we introduce only one induction $I'_P$ is that the other two inductions are not new by the following proposition.
\begin{prop}\label{prop:comparison with two more inductions}
Put $P' = n_{w_Gw_P}\opposite{P}n_{w_Gw_P}^{-1}$.
\begin{enumerate}
\item The map $\varphi\mapsto (X\mapsto \varphi(XT_{n_{w_Gw_P}}))$ gives an isomorphism
\[
I_P'(\sigma) = \Hom_{(\mathcal{H}_P^-,j_P^-)}(\mathcal{H},\sigma)\xrightarrow{\sim}\Hom_{(\mathcal{H}_{P'}^+,j_{P'}^+)}(\mathcal{H},n_{w_Gw_P}\sigma).
\]
\item The map $\varphi\mapsto (X\mapsto \varphi(XT_{n_{w_Gw_P}}^*))$ gives an isomorphism
\[
I_P(\sigma) = \Hom_{(\mathcal{H}_P^-,j_P^{-*})}(\mathcal{H},\sigma)\xrightarrow{\sim}\Hom_{(\mathcal{H}_{P'}^+,j_{P'}^{+*})}(\mathcal{H},n_{w_Gw_P}\sigma).
\]
\end{enumerate}
\end{prop}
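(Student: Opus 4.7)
The strategy is to prove both parts of the proposition in parallel, using the same commutation identity; the only difference is the use of $T_n^*$ in place of $T_n$. Set $n = n_{w_Gw_P}$, and for (1) put $\Phi_1(\varphi)(X) = \varphi(XT_n)$, and for (2) put $\Phi_2(\varphi)(X) = \varphi(XT_n^*)$. The pivotal step is the commutation identity: for any $P'$-positive $y \in W_{P'}(1)$, setting $x := n^{-1}yn \in W_P(1)$, one has
\[
T_y T_n = T_n T_x \quad\text{and}\quad T_y^* T_n^* = T_n^* T_x^*
\]
in $\mathcal{H}$. The element $x$ is $P$-negative, since conjugation by $n$ identifies $\opposite{P}$ with $P'$ while $\opposite{P}$-positivity in $W_P(1) = W_{\opposite{P}}(1)$ coincides with $P$-negativity (both express that $\langle \alpha,\nu(\cdot)\rangle \ge 0$ for $\alpha\in \Sigma^+\setminus \Sigma_P^+$). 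Both identities reduce via braid relations (respectively, the analogous formula $T_w^*T_{w'}^* = T_{ww'}^*$ under additive lengths) together with $yn = nx$ to two length-additivities: $\ell(yn) = \ell(y) + \ell(n)$ follows from Lemma~\ref{lem:length, positive/negative elements}(2) applied in the context of $P'$, with $w = y$, $\lambda_0 = 1$, and $v = w_Gw_P \in {}^{P'}W_0$ (one checks $(w_Gw_P)^{-1}(\Delta_{P'}) = \Delta_P \subset \Sigma^+$); while $\ell(nx) = \ell(n) + \ell(x)$ follows from Lemma~\ref{lem:length, positive/negative elements}(1), with $w = x$, $\lambda_0 = 1$, and $v = w_Gw_P \in W_0^P$ (one checks $w_Gw_P(\Delta_P) = -w_G(\Delta_P) \subset \Sigma^+$).

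With this in hand, well-definedness is straightforward. For (1), let $Y = T_y^{P'} \in \mathcal{H}_{P'}^+$ with $y$ $P'$-positive; its image under the algebra isomorphism $\mathcal{H}_{P'} \xrightarrow{\sim} \mathcal{H}_P$ (which defines the twisted module $n\sigma$) is $Y' := T_x^P \in \mathcal{H}_P^-$. By the definitions of $j_{P'}^+$ and $j_P^-$, $j_{P'}^+(Y) = T_y$ and $j_P^-(Y') = T_x$, so the commutation identity gives
\[
\Phi_1(\varphi)(X\cdot j_{P'}^+(Y)) = \varphi(X T_y T_n) = \varphi(X T_n T_x) = \varphi(XT_n)\sigma(Y') = \Phi_1(\varphi)(X)\cdot (n\sigma)(Y),
\]
confirming $\Phi_1(\varphi)\in \Hom_{(\mathcal{H}_{P'}^+, j_{P'}^+)}(\mathcal{H}, n\sigma)$. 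The argument for (2) is the same with $T^*$ throughout and $j_{P'}^{+*}, j_P^{-*}$ in place of $j_{P'}^+, j_P^-$. The right $\mathcal{H}$-linearity of $\Phi_i$ is immediate from the formula.

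For bijectivity, both sides are $C$-free modules of matching rank. By Proposition~\ref{prop:fundamental properties of I'}(2), the LHS of (1) decomposes as $I'_P(\sigma) \simeq \bigoplus_{w\in W_0^P}\sigma$ via $\varphi \mapsto (\varphi(T_{n_w}^*))$, and Proposition~\ref{prop:decomposition of I_P}(2) gives the analogous decomposition of the LHS of (2). The parallel ``$+$''-versions---obtained by the same triangular argument in the Bruhat order applied to $\mathcal{H}$ viewed as a right $\mathcal{H}_{P'}^+$-module---identify the target with $\bigoplus_{v\in {}^{P'}W_0} n\sigma$, indexed by evaluation on $T_{n_v}$ or $T_{n_v}^*$. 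Under the bijection $w \leftrightarrow v := w_Gw_Pw^{-1}$ of Lemma~\ref{lem:description of isomo between two inductions}(1)---for which $vw = w_Gw_P$ with $\ell(v)+\ell(w) = \ell(n)$, so that $T_{n_v}T_{n_w} = T_n$---the two indexing sets match, and direct computation of $\Phi_i(\varphi)$ on the basis element $T_{n_v}^{(*)}$ (mirroring the analysis in the proof of Lemma~\ref{lem:description of isomo between two inductions}(2)) exhibits $\Phi_i$ as upper-triangular with invertible diagonal in the Bruhat order, hence as an isomorphism. The main obstacle is this last step: while the commutation identity and well-definedness are routine once the length formulas are in place, the bijectivity requires both invoking the ``$+$''-analog of Proposition~\ref{prop:decomposition of I_P} and carrying out the matrix triangularity computation on bases.
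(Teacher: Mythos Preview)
Your approach is correct and differs from the paper's in two ways.

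For well-definedness, your argument is more direct: you use the commutation $T_yT_n=T_nT_x$ (and its $T^*$-analogue) obtained from the length additivities $\ell(yn)=\ell(y)+\ell(n)$ and $\ell(nx)=\ell(n)+\ell(x)$, which follow immediately from Lemma~\ref{lem:length, positive/negative elements} applied with $\lambda_0=1$. The paper instead works in the $E$-basis, using Lemma~\ref{lem:image of E by j} and the auxiliary Lemma~\ref{lem:E_ov = E_o} to derive the identity $j_{P'}^+(E^{P'}_{o_{+,P'}}(w))T_n=T_nj_P^-(E^P_{o_{+,P}}(n^{-1}wn))$. Your $T$-basis argument avoids that preparatory lemma.

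For bijectivity, the paper does \emph{not} argue by basis triangularity. Instead it constructs an explicit map in the opposite direction, $\Psi(\psi)(X)=\psi(XE_{o_+}(\lambda n^{-1}))$ with $\lambda=\lambda_P^-$, proves $\Psi$ is an $\mathcal H$-module homomorphism by a computation analogous to the one above, and then checks that $\Phi\circ\Psi$ and $\Psi\circ\Phi$ are both multiplication by the central invertible element $\sigma(E^P_{o_{+,P}}(\lambda))$. This avoids needing the ``$+$''-analog of Proposition~\ref{prop:decomposition of I_P}, which you invoke but which is not stated in the paper and requires its own proof (a decomposition of $\mathcal H$ as a right $j_{P'}^+(\mathcal H_{P'}^+)$-module). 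Your triangularity computation, while plausible and indeed parallel to the proof of Lemma~\ref{lem:description of isomo between two inductions}(2), is only sketched; the paper's explicit-inverse method sidesteps both of these obligations at the cost of introducing the element $E_{o_+}(\lambda n^{-1})$. Note also that the paper reduces (2) to (1) via $\iota$ rather than treating them in parallel.
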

\begin{rem}
By $\varphi\mapsto \varphi\circ\iota$, we have
\[
\Hom_{(\mathcal{H}_P^+,j_P^{+})}(\mathcal{H},\sigma)^\iota
\simeq
\Hom_{(\mathcal{H}_P^+,j_P^{+*})}(\mathcal{H},\sigma^{\iota_P}_{\ell - \ell_P}).
\]
Hence the statement (2) follows from (1).
\end{rem}

First we check that the map is a homomorphism.
For the calculation, we need the following lemma.
Recall the notation ${}^PW_0$ from subsection \ref{subsec:Preliminaries, Parabolic induction}.
\begin{lem}\label{lem:E_ov = E_o}
Let $w\in {}^PW_0, v\in W_{0,P}$ and $\lambda\in Z(W_P(1))Z_\kappa$.
For $o = o_+$ or $o_-$, we have $E_{o\cdot v}(\lambda n_w) = E_o(\lambda n_w)$.
\end{lem}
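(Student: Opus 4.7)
The plan is to apply the product formula \eqref{eq:product formula} with $w_1 = \lambda$ and $w_2 = n_w$ in order to decouple the statement. Since $\lambda \in \Lambda(1)$, we have $o' \cdot \lambda = o'$ for every spherical orientation $o'$, so that formula reduces the claim to the two independent equalities
\[
E_{o \cdot v}(n_w) = E_o(n_w) \qquad \text{and} \qquad E_{o \cdot v}(\lambda) = E_o(\lambda),
\]
the lemma then following by cancellation of the common factor $q_{\lambda n_w}^{-1/2} q_\lambda^{1/2} q_{n_w}^{1/2}$ in $\mathcal{H}[q_s^{\pm 1/2}]$.

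For the first equality I would fix a reduced expression $w = s_1 \cdots s_r$ with $s_i \in S_0$. Because $v \in W_{0,P}$ and $w \in {}^P W_0$, the standard parabolic decomposition gives $\ell(vw) = \ell(v) + \ell(w)$, hence $(vs_1 \cdots s_{i-1})s_i > vs_1 \cdots s_{i-1}$ at every step. By \eqref{eq:E_o for W_0},
\[
E_{o_+ \cdot vs_1 \cdots s_{i-1}}(n_{s_i}) = T^*_{n_{s_i}}, \qquad E_{o_- \cdot vs_1 \cdots s_{i-1}}(n_{s_i}) = T_{n_{s_i}},
\]
exactly matching the $v = 1$ computation. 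Iterating \eqref{eq:product formula} then yields $E_{o \cdot v}(n_w) = E_o(n_w)$ for $o = o_\pm$.

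For the second equality I would write $\lambda = \mu' \mu^{-1}$ where $\mu = (\lambda_P^-)^n$ and $\mu' = \lambda\mu$ for $n$ large. Both $\mu, \mu' \in Z(W_P(1))Z_\kappa$, and for $n$ sufficiently large $\nu(\mu), \nu(\mu')$ lie in the closed dominant Weyl chamber (since $\nu(\lambda_P^-)$ is strictly positive on $\Sigma^+ \setminus \Sigma_P^+$, $\nu(\lambda)$ is bounded, and both vectors are orthogonal to $\Sigma_P$ because $\lambda, \lambda_P^- \in Z(W_P(1))Z_\kappa$). The group $W_{0,P}$ preserves $\Sigma^+ \setminus \Sigma_P^+$, so the closed Weyl chamber attached to $o_\pm \cdot v$ differs from the one attached to $o_\pm$ only by walls lying in $\Sigma_P$, on which $\nu(\mu)$ and $\nu(\mu')$ already sit. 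Hence \eqref{eq:E_o for lambda} applies uniformly on both sides and yields $E_{o \cdot v}(\mu) = E_o(\mu)$ and $E_{o \cdot v}(\mu') = E_o(\mu')$ for $o = o_\pm$. Applying \eqref{eq:product formula} to $\mu' = \lambda \cdot \mu$ and again using $o' \cdot \lambda = o'$, we then obtain in $\mathcal{H}[q_s^{\pm 1/2}]$
\[
E_{o \cdot v}(\lambda) = q_{\mu'}^{-1/2}\, q_\lambda^{1/2}\, q_\mu^{1/2}\, E_{o \cdot v}(\mu')\, E_{o \cdot v}(\mu)^{-1},
\]
and the right-hand side does not depend on $v$.

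The main obstacle is the geometric identification of Weyl chambers in the $\lambda$-part; once one knows that $W_{0,P}$ preserves $\Sigma^+ \setminus \Sigma_P^+$, the closed chambers associated to $o_\pm$ and to $o_\pm \cdot v$ differ only by walls lying inside $\Sigma_P$, and the orthogonality of $\nu(\mu), \nu(\mu')$ to $\Sigma_P$ is exactly what is needed to make \eqref{eq:E_o for lambda} uniformly applicable.
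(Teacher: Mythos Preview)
Your proof is correct and differs from the paper's mainly in organization. The paper keeps $\lambda$ and $n_w$ together and inducts on $\ell(w)$: the base case $w=1$ handles the $\lambda$-part (writing $\lambda$ as a quotient of anti-dominant central elements, essentially your second step with the opposite sign convention), while the inductive step peels off a simple reflection $s$ from the \emph{right} of $w$, requiring a short root-system argument to verify $vws < vw$. It then passes from $o_-$ to $o_+$ by applying the involution $\iota$. You instead decouple $\lambda$ and $n_w$ at the outset via the product formula, treat $o_+$ and $o_-$ simultaneously, and handle the $n_w$-part by iterating along a reduced expression from the \emph{left}; here the single length identity $\ell(vw)=\ell(v)+\ell(w)$ replaces the paper's root computation. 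Both arguments rest on the same three ingredients --- the product formula \eqref{eq:product formula}, \eqref{eq:E_o for W_0}, and \eqref{eq:E_o for lambda} --- so the difference is structural rather than conceptual; your version is slightly more economical on the Weyl-group side. One small point worth making explicit: your chamber argument uses that the closed chamber attached to $o_\pm\cdot v$ is $v^{-1}$ applied to the (anti-)dominant chamber, so that $\nu(\mu)$, being $W_{0,P}$-fixed, lies in both closed chambers simultaneously. This is exactly how \eqref{eq:E_o for lambda} is applied elsewhere in the paper (for instance in the proof of Lemma~\ref{lem:image of E by j}), so it is consistent with the ambient conventions.
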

\begin{proof}
We may assume $\lambda\in Z(W_P(1))$.
We prove the lemma in $\mathcal{H}[q_s^{\pm 1}]$.
First we assume $o = o_-$ and prove the lemma by induction on $\ell(w)$.
Assume that $w = 1$.
Take anti-dominant $\lambda_1,\lambda_2\in Z(W_P(1))$ such that $\lambda = \lambda_1\lambda_2^{-1}$.
Then by \eqref{eq:product formula}, we have $E_{o_-}(\lambda)E_{o_-}(\lambda_2) = q_{\lambda}^{1/2}q_{\lambda_2}^{1/2}q_{\lambda_1}^{-1/2}E_{o_-}(\lambda_1)$.
Since $\lambda_1,\lambda_2$ are anti-dominant, we have $E_{o_-}(\lambda_1) = T_{\lambda_1}$ and $E_{o_-}(\lambda_2) = T_{\lambda_2}$ by \eqref{eq:E_o for lambda}.
Hence $E_{o_-}(\lambda) = q_{\lambda}^{1/2}q_{\lambda_1}^{-1/2}q_{\lambda_2}^{1/2}T_{\lambda_1}T_{\lambda_2}^{-1}$.
Since $n_v^{-1}\cdot \lambda_1 = \lambda_1$ and $n_v^{-1}\cdot \lambda_2 = \lambda_2$  are both anti-dominant, we have $E_{o_-\cdot v}(\lambda) = q_{\lambda}^{1/2}q_{\lambda_1}^{-1/2}q_{\lambda_2}^{1/2}T_{\lambda_1}T_{\lambda_2}^{-1}$ by the same argument.
Hence we get the lemma in this case.

Assume that $\ell(w) > 0$ and take $s\in S_0$ such that $ws < w$.
Then by \cite[Lemma~3.1]{MR0435249}, we have $ws\in {}^PW_0$.
By the product formula \eqref{eq:product formula}, we have
\begin{align*}
E_{o_-}(\lambda n_w) & = q_{\lambda n_w}^{1/2}q_{\lambda n_{ws}}^{-1/2}q_{n_s}^{-1/2}E_{o_-}(\lambda n_{ws})E_{o_-\cdot ws}(n_s),\\
E_{o_-\cdot v}(\lambda n_w) & = q_{\lambda n_w}^{1/2}q_{\lambda n_{ws}}^{-1/2}q_{n_s}^{-1/2}E_{o_-\cdot v}(\lambda n_{ws})E_{o_-\cdot vws}(n_s).
\end{align*}
By inductive hypothesis, we have $E_{o_-}(\lambda n_{ws}) = E_{o_-\cdot v}(\lambda n_{ws})$.
Hence it is sufficient to prove that $E_{o_-\cdot ws}(n_s) = E_{o_-\cdot vws}(n_s)$.
Since $ws < w$, we have $E_{o_-\cdot ws}(n_s) = T_{n_s}$ by \eqref{eq:E_o for W_0}.
Take $\alpha\in\Delta$ such that $s = s_\alpha$.
Then $ws(\alpha) = -w(\alpha) > 0$.
If $vws(\alpha) < 0$, then $ws(\alpha)\in \Sigma_P^+$ since $v\in W_{0,P}$.
Since $w\in {}^PW_0$, we have $-\alpha = w^{-1}(ws(\alpha))\in w^{-1}(\Sigma_P^+)\subset \Sigma^+$.
This is a contradiction.
Hence $vws(\alpha) > 0$.
Therefore $vws < vw$.
By \eqref{eq:E_o for W_0}, we have $E_{o_-\cdot vws}(n_s) = T_{n_s}$.
We get $E_{o_-\cdot ws}(n_s) = E_{o_-\cdot vws}(n_s)$ and finish the inductive step.

Now we get $E_{o_-\cdot v}(\lambda n_w) = E_{o_-}(\lambda n_w)$.
Applying $\iota$ to both sides with \cite[Lemma~5.31]{MR3484112}, we get $(-1)^{\ell(\lambda n_w)}E_{o_+\cdot v}(\lambda n_w) = (-1)^{\ell(\lambda n_w)}E_{o_+}(\lambda n_w)$.
Hence $E_{o_+\cdot v}(\lambda n_w) = E_{o_+}(\lambda n_w)$.
\end{proof}

We start to prove Proposition~\ref{prop:comparison with two more inductions}.
\begin{lem}\label{lem:comparison with two more inductions, it is hom}
The map given in Proposition~\ref{prop:comparison with two more inductions} is an $\mathcal{H}$-module homomorphism.
\end{lem}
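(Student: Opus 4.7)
Write $n = n_{w_Gw_P}$ and $\psi(X) := \varphi(XT_n)$. Two things must be checked: (i) $\psi$ lies in $\Hom_{(\mathcal{H}_{P'}^+, j_{P'}^+)}(\mathcal{H}, n_{w_Gw_P}\sigma)$, and (ii) the assignment $\varphi \mapsto \psi$ is $\mathcal{H}$-equivariant. Property (ii) is immediate: for $Y \in \mathcal{H}$,
\[
(\psi Y)(X) = \psi(YX) = \varphi(YXT_n) = (\varphi Y)(XT_n),
\]
so $\varphi Y$ is sent to $\psi Y$.

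For (i), fix $Y \in W_{P'}^+(1)$. Unwinding the definition of $n_{w_Gw_P}\sigma$, the required equivariance amounts to
\[
\varphi(XT_YT_n) = \varphi(XT_n)\,\sigma(T^P_{n^{-1}Yn}).
\]
The strategy is to reduce this to the $(\mathcal{H}_P^-,j_P^-)$-equivariance of $\varphi$ by proving:
\begin{enumerate}
\item If $Y \in W_{P'}^+(1)$ then $n^{-1}Yn \in W_P^-(1)$.
\item $T_YT_n = T_nT_{n^{-1}Yn}$ in $\mathcal{H}$; concretely, $\ell(Yn) = \ell(Y) + \ell(n) = \ell(n) + \ell(n^{-1}Yn)$, so both products equal $T_{Yn}$.
\end{enumerate}
Once (1) and (2) are established, $\varphi(XT_YT_n) = \varphi\bigl(XT_n \cdot j_P^-(T^P_{n^{-1}Yn})\bigr) = \varphi(XT_n)\sigma(T^P_{n^{-1}Yn})$.

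Writing $Y = n_v\mu$ with $v \in W_{0,P'}$ and $\mu \in \Lambda(1)$, conjugation by $n$ sends the finite part into $W_{0,P}$ (since $n^{-1}W_{0,P'}n = W_{0,P}$) and replaces $\nu(\mu)$ by $(w_Gw_P)^{-1}\nu(\mu)$. Thus (1) amounts to the inequality $\langle w_Gw_P(\beta),\nu(\mu)\rangle \ge 0$ for every $\beta \in \Sigma^+\setminus\Sigma_P^+$. Using the standard facts that $w_P$ permutes $\Sigma^+\setminus\Sigma_P^+$ and that $w_G$ sends $\Sigma^+$ to $\Sigma^-$, together with $\Sigma_{P'} = -w_G(\Sigma_P)$, one verifies $-w_Gw_P(\beta) \in \Sigma^+ \setminus \Sigma_{P'}^+$, and the inequality then follows from the $P'$-positivity of $Y$. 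The same root-system manipulations show $w_Gw_P \in W_0^P \cap {}^{P'}W_0$.

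With these memberships in hand, (2) is reduced to two direct applications of Lemma~\ref{lem:length, positive/negative elements}: apply part~(2) with parabolic $P'$, $v = w_Gw_P$, $\lambda_0 = 1$, $w = Y$ to get $\ell(Yn) = \ell(Y) + \ell(n)$; apply part~(1) with parabolic $P$, $v = w_Gw_P$, $\lambda_0 = 1$, $w = n^{-1}Yn$ (which is $P$-negative by~(1)) to get $\ell(n\cdot n^{-1}Yn) = \ell(n) + \ell(n^{-1}Yn)$. The main obstacle is purely bookkeeping—tracking the correspondence $\opposite{P}\leftrightarrow P'$ under conjugation by $n$; once that is set up, everything follows from the length lemmas already collected.
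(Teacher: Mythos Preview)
Your argument is correct. Both your proof and the paper's rest on the same length additivity $\ell(Yn)=\ell(Y)+\ell(n)=\ell(n)+\ell(n^{-1}Yn)$ from Lemma~\ref{lem:length, positive/negative elements}, and both reduce the equivariance of $\psi$ to the equivariance of $\varphi$.

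The routes differ in the choice of basis. You work directly in the $T$-basis: since $j_{P'}^+(T^{P'}_Y)=T_Y$ and $j_P^-(T^P_{n^{-1}Yn})=T_{n^{-1}Yn}$, the length identities immediately give $T_YT_n=T_{Yn}=T_nT_{n^{-1}Yn}$, and nothing further is needed. The paper instead passes through the $E_o$-basis, identifying $j_{P'}^+(E^{P'}_{o_{+,P'}}(w))=E_{o_-\cdot w_{P'}}(w)$ via Lemma~\ref{lem:image of E by j} and $T_n=E_{o_-\cdot n_{w_{P'}}w}(n)$ via Lemma~\ref{lem:E_ov = E_o}, in order to record the identity
\[
j_{P'}^+(E^{P'}_{o_{+,P'}}(w))\,T_n=T_n\,j_P^-(E^P_{o_{+,P}}(n^{-1}wn)).
\]
Your approach is cleaner for the lemma at hand and avoids Lemma~\ref{lem:E_ov = E_o} entirely. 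The paper's detour through the $E_o$-basis pays off later: the displayed identity \eqref{eq:ET_n=T_nE} is reused verbatim when constructing the inverse map (Lemma~\ref{lem:comparison of two more inductions, hom opposite direction}) and when checking the composites in the proof of Proposition~\ref{prop:comparison with two more inductions}, where the analogue for $E_{o_+}(\lambda n^{-1})$ rather than $T_n$ is needed and a $T$-basis argument would not apply directly.
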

\begin{proof}
Put $n = n_{w_Gw_P}$.
The lemma is equivalent to that the map $\varphi\mapsto \varphi(T_n)$ from $I_P'(\sigma)$ to $n\sigma$ gives an $(\mathcal{H}_{P'}^+,j_{P'}^+)$-module homomorphism.
Let $w\in W_{P'}(1)$ be a $P'$-positive element.
We have
\[
(\varphi j_{P'}^+(E_{o_{+,P'}}^{P'}(w)))(T_n) 
= \varphi (j_{P'}^+(E_{o_{+,P'}}^{P'}(w))T_n).
\]
and, by Lemma~\ref{lem:image of E by j}, we have
\[
j_{P'}^+(E_{o_{+,P'}}^{P'}(w))
=
j_{P'}^+(E_{o_{-,P'}\cdot w_{P'}}^{P'}(w))
=
E_{o_-\cdot w_{P'}}(w).
\]
We have $w_Gw_P = w_{P'}w_G\in {}^{P'}W_0$.
Hence by Lemma~\ref{lem:E_ov = E_o}, we have $T_n = E_{o_-}(n) = E_{o_-\cdot n_{w_{P'}}w}(n)$.
By Lemma~\ref{lem:length, positive/negative elements}, we have $ \ell(w) + \ell(n) = \ell(wn) = \ell(nn^{-1}wn) = \ell(n) + \ell(n^{-1}wn)$.
Here we use that $n\in W^P_0$ and $n^{-1}wn\in W_P(1)$ is $P$-negative.
Therefore
\begin{align*}
E_{o_-\cdot w_{P'}}(w)T_n & = E_{o_-\cdot w_{P'}}(w)E_{o_-\cdot n_{w_{P'}}w}(n)\\
& = E_{o_-\cdot w_{P'}}(wn)\\
& = E_{o_-\cdot w_{P'}}(n)E_{o_-\cdot n_{w_{P'}}n}(n^{-1}wn)\\
& = T_nE_{o_-\cdot n_{w_{P'}}n}(n^{-1}wn).
\end{align*}
Since $n_{w_{P'}}n = n_{w_{P'}}n_{w_Gw_P} = n_{w_G}$, we have $o_-\cdot n_{w_P}n = o_-\cdot w_G = o_+$.
Hence $E_{o_-\cdot n_{w_{P'}}n}(n^{-1}wn) = E_{o_+}(n^{-1}wn) = j_P^-(E^P_{o_{+,P}}(n^{-1}wn))$ by Lemma~\ref{lem:image of E by j}.
Therefore we have
\begin{equation}\label{eq:ET_n=T_nE}
j_{P'}^+(E_{o_{+,P'}}^{P'}(w))T_n  = T_nj_P^-(E^P_{o_{+,P}}(n^{-1}wn)).
\end{equation}
Hence we get
\begin{align*}
\varphi (j_{P'}^+(E_{o_{+,P'}}^{P'}(w))T_n) & = \varphi(T_nj_P^-(E^P_{o_{+,P}}(n^{-1}wn)))\\
& = \varphi(T_n)\sigma(E^P_{o_{+,P}}(n^{-1}wn))\\
& = \varphi(T_n)(n\sigma)(E^{P'}_{o_{+,P'}}(w)).
\end{align*}
We get the lemma.
\end{proof}

We construct the homomorphism in the opposite direction.
\begin{lem}\label{lem:comparison of two more inductions, hom opposite direction}
Let $\lambda = \lambda_P^-\in Z(W_P(1))$ as in Proposition~\ref{prop:localization as Levi subalgebra}.
Put $n = n_{w_Gw_P}$ and $P' = n\opposite{P}n^{-1}$.
Then $\varphi\mapsto (X\mapsto \varphi(XE_{o_+}(\lambda n^{-1})))$ gives a homomorphism 
\[
\Hom_{(\mathcal{H}_{P'}^+,j_{P'}^+)}(\mathcal{H},n\sigma) \to \Hom_{(\mathcal{H}_P^-,j_P^-)}(\mathcal{H},\sigma) = I'_P(\sigma).
\]
\end{lem}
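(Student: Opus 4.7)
I need to verify: (i) for $\varphi \in \Hom_{(\mathcal{H}_{P'}^+, j_{P'}^+)}(\mathcal{H}, n\sigma)$, the map $\psi\colon X \mapsto \varphi(XE_{o_+}(\lambda n^{-1}))$ is $(\mathcal{H}_P^-, j_P^-)$-equivariant (so lies in $I'_P(\sigma)$); and (ii) the assignment $\varphi \mapsto \psi$ is $\mathcal{H}$-linear. Claim (ii) is formal: on both Hom spaces the $\mathcal{H}$-action is $(\varphi\cdot X)(Y) = \varphi(XY)$, and the assignment commutes with it by associativity.

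For claim (i), my strategy is to prove in $\mathcal{H}$ the intertwining identity
\[
j_P^-(h)\,E_{o_+}(\lambda n^{-1}) = E_{o_+}(\lambda n^{-1})\,j_{P'}^+(c_n(h))
\qquad (h \in \mathcal{H}_P^-),
\]
where $c_n\colon \mathcal{H}_P \xrightarrow{\sim} \mathcal{H}_{P'}$ is the conjugation isomorphism $T_w^P \mapsto T_{nwn^{-1}}^{P'}$. Once this is established, the $(\mathcal{H}_{P'}^+, j_{P'}^+)$-equivariance of $\varphi$ combined with the relation $(n\sigma)(c_n(h)) = \sigma(h)$ immediately yields the required equivariance of $\psi$. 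By linearity it suffices to check the identity for $h = E^P_{o_{+,P}}(w)$ with $w \in W_P^-(1)$, a basis of $\mathcal{H}_P^-$; Lemma~\ref{lem:image of E by j} gives $j_P^-(h) = E_{o_+}(w)$ and the formula for $c_n$ on $E$-bases recorded before Proposition~\ref{prop:tensor description of I_P} gives $c_n(h) = E^{P'}_{o_{+,P'}}(nwn^{-1})$.

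Working in $\mathcal{H}[q_s^{\pm 1}]$, I factor $E_{o_+}(\lambda n^{-1}) = T_\lambda^*\,T_n^{-1}$. This follows from the length identity $\ell(\lambda) = \ell(\lambda n^{-1}) + \ell(n)$ (from Lemma~\ref{lem:ell(v lambda) as ell(v)+ell(lambda)}, using that $(w_Gw_P)^{-1}$ sends $\Sigma^+ \setminus \Sigma_P^+$ into $\Sigma^-$ while $\nu(\lambda_P^-)$ is nonnegative on $\Sigma^+$), the product formula~\eqref{eq:product formula}, the evaluations $E_{o_+}(\lambda) = T_\lambda^*$ and $E_{o_+}(n^{-1}) = T_{n^{-1}}^* = q_n T_n^{-1}$, and the identity $T_w^* = q_w T_{w^{-1}}^{-1}$. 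Substituting this factorization and using the rearranged form
\[
E_{o_+}(w)\,T_n^{-1} = T_n^{-1}\,j_{P'}^+\bigl(E^{P'}_{o_{+,P'}}(nwn^{-1})\bigr)
\]
of equation~\eqref{eq:ET_n=T_nE} (from the proof of Lemma~\ref{lem:comparison with two more inductions, it is hom}), the intertwining identity reduces to the single commutation
\[
E_{o_+}(w)\,T_\lambda^* = T_\lambda^*\,E_{o_+}(w), \qquad w \in W_P^-(1), \quad \text{in } \mathcal{H}.
\]

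This last commutation is the principal obstacle: although $T^{P*}_\lambda = T^P_\lambda$ is central in $\mathcal{H}_P$ (Proposition~\ref{prop:localization as Levi subalgebra}), $T_\lambda^* = j_P^{-*}(T^{P*}_\lambda)$ arises from the embedding $j_P^{-*}$ while $E_{o_+}(w) = j_P^-(E^P_{o_{+,P}}(w))$ arises from the distinct embedding $j_P^-$, so the centrality does not transfer directly. My plan is to pass to $\mathcal{H}[q_s^{\pm 1}]$ and write $T_\lambda^* = q_\lambda T_{\lambda^{-1}}^{-1}$ with $\lambda^{-1} = \lambda_P^+$ dominant, so that $T_{\lambda^{-1}} = E_{o_+}(\lambda^{-1})$ lies in the commutative subalgebra $\mathcal{A}_{o_+}$. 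Writing $w = n_v\mu$ with $v \in W_{0,P}$ and $\mu \in \Lambda(1)$, the identity $\lambda^{-1} n_v = n_v \lambda^{-1}$ in $W(1)$ (by centrality of $\lambda^{-1}$ in $W_P(1)$) combined with the length additivity $\ell(\lambda^{-1} n_v) = \ell(\lambda^{-1}) + \ell(v) = \ell(n_v \lambda^{-1})$ (from Lemma~\ref{lem:ell(v lambda) as ell(v)+ell(lambda)}, using that $v \in W_{0,P}$ preserves $\Sigma^+ \setminus \Sigma_P^+$ and that $\nu(\lambda^{-1})$ vanishes on $\Sigma_P$) gives $T_{\lambda^{-1}} T_{n_v} = T_{n_v} T_{\lambda^{-1}}$ in $\mathcal{H}$; together with the commutativity of $\mathcal{A}_{o_+}$, which handles the translation factor, a careful expansion of $E_{o_+}(w)$ via the product formula yields the commutation. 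Specializing back to $\mathcal{H}$ completes the proof.
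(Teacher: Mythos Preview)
Your overall plan is sound, but the execution of the key commutation step contains orientation errors that break the argument. Since $\lambda=\lambda_P^-$ is \emph{dominant} and $o_+$ corresponds to the dominant chamber, formula~\eqref{eq:E_o for lambda} gives $E_{o_+}(\lambda)=T_\lambda$, not $T_\lambda^*$. Hence your factorization should read $E_{o_+}(\lambda n^{-1})=T_\lambda\,T_n^{-1}$, and the commutation you actually need is $E_{o_+}(w)\,T_\lambda=T_\lambda\,E_{o_+}(w)$. The detour through $T_{\lambda^{-1}}$ is then both unnecessary and incorrect: $\lambda^{-1}$ is anti-dominant (not dominant), so $E_{o_+}(\lambda^{-1})=T_{\lambda^{-1}}^*$ rather than $T_{\lambda^{-1}}$, and in particular $T_{\lambda^{-1}}\notin\mathcal{A}_{o_+}$; the appeal to commutativity of $\mathcal{A}_{o_+}$ for the translation factor does not apply.

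Once corrected, the commutation $E_{o_+}(w)E_{o_+}(\lambda)=E_{o_+}(\lambda)E_{o_+}(w)$ does hold, but the clean way to see it is exactly the tool the paper uses: Lemma~\ref{lem:E_ov = E_o} (with the trivial element of ${}^PW_0$) gives $E_{o_+}(\lambda)=E_{o_+\cdot w}(\lambda)$ for any $w$ with image in $W_{0,P}$, and then the product formula yields both $E_{o_+}(w)E_{o_+}(\lambda)$ and $E_{o_+}(\lambda)E_{o_+}(w)$ as the same scalar times $E_{o_+}(w\lambda)=E_{o_+}(\lambda w)$. The paper's proof goes one step further and avoids your split altogether: it applies Lemma~\ref{lem:E_ov = E_o} directly to $\lambda n^{-1}$ (using $(w_Gw_P)^{-1}\in{}^PW_0$) to get $E_{o_+}(\lambda n^{-1})=E_{o_+\cdot w}(\lambda n^{-1})$, combines via Lemma~\ref{lem:length, positive/negative elements, strictly} to $E_{o_+}(w\lambda n^{-1})$, rewrites $w\lambda n^{-1}=\lambda n^{-1}(nwn^{-1})$, and re-splits to obtain the intertwining identity~\eqref{eq:EE(lambda n)=E(lambda n)E} in one pass, without invoking \eqref{eq:ET_n=T_nE} or any separate commutation lemma.
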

\begin{proof}
We prove that $\varphi\mapsto \varphi(E_{o_+}(\lambda n^{-1}))$ is an $(\mathcal{H}_P^-,j_P^-)$-homomorphism $\Hom_{(\mathcal{H}_{P'}^+,j_P^+)}(\mathcal{H},n\sigma) \to \sigma$.
Let $w\in W_P(1)$ be a $P$-negative element.
Then 
\[
(\varphi j_P^-(E_{o_{+,P}}^P(w)))(E_{o_+}(\lambda n^{-1})) = \varphi(j_P^-(E_{o_{+,P}}^P(w))E_{o_+}(\lambda n^{-1}))
\]
and $j_P^-(E_{o_{+,P}}^P(w)) = E_{o_+}(w)$ by Lemma~\ref{lem:image of E by j}.
Since $w\in W_P(1)$, $\lambda\in Z(W_P(1))$ and $(w_Gw_P)^{-1}\in {}^PW_0$, we have $E_{o_+}(\lambda n^{-1}) = E_{o_+\cdot w}(\lambda n^{-1})$ by Lemma~\ref{lem:E_ov = E_o}.
We also have, by Lemma~\ref{lem:length, positive/negative elements, strictly}, $\ell(w) + \ell(\lambda n^{-1}) = \ell(w\lambda n^{-1})$.
Hence 
\[
E_{o_+}(w)E_{o_+}(\lambda n^{-1})
=
E_{o_+}(w)E_{o_+\cdot w}(\lambda n^{-1})
=
E_{o_+}(w\lambda n^{-1}).
\]
Since $\lambda\in Z(W_P(1))$, we have $w\lambda n^{-1} = \lambda wn^{-1} = \lambda n^{-1}(nwn^{-1})$.
The element $nwn^{-1}$ is $P'$-positive.
By Lemma~\ref{lem:length, positive/negative elements, strictly}, $\ell(\lambda n^{-1}nwn^{-1}) = \ell(\lambda n^{-1}) + \ell(nwn^{-1})$. (We have $\lambda n^{-1} = n^{-1}(n\lambda n^{-1})$ and since $n(\Sigma^+\setminus\Sigma_P^+) = \Sigma^-\setminus\Sigma_{P'}^-$, $n\lambda n^{-1}$ satisfies the condition of $\lambda_{P'}^+$ in Proposition~\ref{prop:localization as Levi subalgebra}. We also have $(w_Gw_P)^{-1} = w_Pw_G = w_Gw_{P'}\in W_0^{P'}$.)
Hence we have $E_{o_+}(w\lambda n^{-1}) = E_{o_+}(\lambda n^{-1})E_{o_+\cdot n^{-1}}(nwn^{-1})$.
We have $o_+\cdot n^{-1} = o_+\cdot w_Pw_G = o_+\cdot w_Gw_{P'} = o_-\cdot w_{P'}$.
Hence we have 
\begin{align*}
E_{o_+\cdot n^{-1}}(nwn^{-1} ) &= E_{o_-\cdot w_{P'}}(nwn^{-1})\\
& = j_{P'}^+(E^{P'}_{o_{-,P'}\cdot w_{P'}}(nwn^{-1}))\\
& = j_{P'}^+(E^{P'}_{o_{+,P'}}(nwn^{-1}))
\end{align*}
by Lemma~\ref{lem:image of E by j}.
Therefore, we have
\begin{equation}\label{eq:EE(lambda n)=E(lambda n)E}
j_P^-(E_{o_{+,P}}^P(w))E_{o_+}(\lambda n^{-1}) = E_{o_+}(\lambda n^{-1})j_{P'}^+(E^{P'}_{o_{+,P'}}(nwn^{-1}))
\end{equation}
Therefore
\begin{align*}
(\varphi j_P^-(E_{o_{+,P}}^P(w)))(E_{o_+}(\lambda n^{-1}))
& = \varphi(j_P^-(E_{o_{+,P}}^P(w))(E_{o_+}(\lambda n^{-1})))\\
& = \varphi(E_{o_+}(\lambda n^{-1})j_{P'}^+(E^{P'}_{o_{+,P'}}(nwn^{-1})))\\
& = \varphi(E_{o_+}(\lambda n^{-1}))(n\sigma)(E^{P'}_{o_{+,P'}}(nwn^{-1}))\\
& = \varphi(E_{o_+}(\lambda n^{-1}))\sigma(E^{P}_{o_{+,P}}(w)).
\end{align*}
We get the lemma.
\end{proof}

\begin{proof}[Proof of Proposition~\ref{prop:comparison with two more inductions}]
We prove that the compositions of the homomorphisms in Proposition~\ref{prop:comparison with two more inductions} and Lemma~\ref{lem:comparison of two more inductions, hom opposite direction} are isomorphisms.
Let $\Phi$ be the homomorphism in Proposition~\ref{prop:comparison with two more inductions} and $\Psi$ that in Lemma~\ref{lem:comparison of two more inductions, hom opposite direction}.

Put $n = n_{w_Gw_P}$ and $P' = n\opposite{P}n^{-1}$.
For $\varphi\in \Hom_{(\mathcal{H}_{P'}^+,j_{P'}^+)}(\mathcal{H},n\sigma)$, $\Phi(\Psi(\varphi))$ is given by
\[
\Phi(\Psi(\varphi))(X) = \Psi(\varphi)(XT_n) = \varphi(XT_nE_{o_+}(\lambda n^{-1}))
\]
where $\lambda = \lambda_P^-$ as in Proposition~\ref{prop:localization as Levi subalgebra}.
We have $T_n  = E_{o_-}(n)$ by \eqref{eq:E_o for W_0}.
Since $w_Gw_P = w_{P'}w_G\in {}^{P'}W_0$, we have $E_{o_-}(n) = E_{o_-\cdot w_{P'}}(n)$ by Lemma~\ref{lem:E_ov = E_o}.
We have $o_-\cdot w_{P'}n = o_-\cdot w_G = o_+$.
Since $(w_Gw_P)^{-1} = w_Pw_G\in {}^PW_0$, by Lemma~\ref{lem:length, positive/negative elements, strictly} and \ref{lem:conjugate and length}, we have $\ell(\lambda n^{-1}) = \ell(\lambda) - \ell(n) = \ell(n\cdot \lambda) - \ell(n)$.
Hence, by Lemma~\ref{lem:image of E by j}, we have
\begin{equation}\label{eq:TnE(lambda n)}
\begin{split}
T_nE_{o_+}(\lambda n^{-1})
&=
E_{o_-\cdot w_{P'}}(n)E_{o_-\cdot w_{P'}n}(\lambda n^{-1})\\
&=
E_{o_-\cdot w_{P'}}(n\lambda n^{-1})\\
&=
j_{P'}^+(E^{P'}_{o_{-,P'}\cdot w_{P'}}(n\lambda n^{-1}))\\
&=
j_{P'}^+(E^{P'}_{o_{+,P'}}(n\lambda n^{-1})).
\end{split}\end{equation}
Therefore, we have 
\begin{align*}
\Phi(\Psi(\varphi))(X)
&=
\varphi(Xj^+_{P'}(E^{P'}_{o_{+,P'}}(n\lambda n^{-1})))\\
&=
\varphi(X)(n\sigma)(E^{P'}_{o_{+,P'}}(n\lambda n^{-1}))\\
&=
\varphi(X)\sigma(E^{P}_{o_{+,P}}(\lambda)).
\end{align*}
Since $\lambda$ is in $Z(W_P(1))$, $\sigma(E^{P}_{o_{+,P}}(\lambda))$ is invertible.
Hence $\Phi\circ\Psi$ is invertible.

Next, for $\psi\in I'_P(\sigma) = \Hom_{(\mathcal{H}_P^-,j_P^-)}(\mathcal{H},\sigma)$, we have
\[
\Psi(\Phi(\psi))(X) = \psi(XE_{o_+}(\lambda n^{-1})T_n).
\]
As in the above argument, we have $T_n = E_{o_-\cdot w_{P'}}(n) = E_{o_+\cdot n^{-1}}(n)$.
We also have $\ell(\lambda) = \ell(\lambda n^{-1}) + \ell(n)$ as in the above.
We have
\begin{equation}\label{eq:E(lambda )Tn}
E_{o_+}(\lambda n^{-1})T_n
=
E_{o_+}(\lambda n^{-1})E_{o_+\cdot n^{-1}}(n)
=
E_{o_+}(\lambda)
=
j_P^-(E_{o_{+,P}}^P(\lambda)).
\end{equation}
Therefore we have 
\[
\Psi(\Phi(\psi))(X) = \psi(Xj_P^-(E_{o_{+,P}}^P(\lambda))) = \psi(X)\sigma(E^P_{o_{+,P}}(\lambda)).
\]
Since $\lambda$ is in $Z(W_P(1))$, $\sigma(E^{P}_{o_{+,P}}(\lambda))$ is invertible.
Hence $\Psi\circ\Phi$ is invertible.
\end{proof}

\subsection{Tensor products}
Recall that we have
\[
I_P(\sigma)\simeq n_{w_Gw_P}\sigma\otimes_{(\mathcal{H}_{P'}^+,j_{P'}^+)}\mathcal{H}.
\]
where $P' = n_{w_Gw_P}\opposite{P}n_{w_Gw_P}^{-1}$ by Proposition~\ref{prop:tensor description of I_P}.
Again, we can consider the four inductions defined via the tensor product.
By $x\otimes X\mapsto x\otimes \iota(X)$, we have
\[
(\sigma\otimes_{(\mathcal{H}_P^+,j_P^+)}\mathcal{H})^\iota
\simeq
\sigma_{\ell - \ell_P}^{\iota_P}\otimes_{(\mathcal{H}_P^+,j_P^{+*})}\mathcal{H}
\]
and
\[
(\sigma\otimes_{(\mathcal{H}_P^-,j_P^-)}\mathcal{H})^\iota
\simeq
\sigma_{\ell - \ell_P}^{\iota_P}\otimes_{(\mathcal{H}_P^-,j_P^{-*})}\mathcal{H}.
\]
\begin{prop}\label{prop:comparison of tensor inductions}
Let $P$ be a parabolic subgroup and $\sigma$ an $\mathcal{H}_P$-module.
Put $P' = n_{w_Gw_P}\opposite{P}n_{w_Gw_P}^{-1}$.
\begin{enumerate}
\item The map $x\otimes X\mapsto x\otimes T_{n_{w_Gw_P}}X$ gives an isomorphism
\[
\sigma\otimes_{(\mathcal{H}^-_P,j_P^-)}\mathcal{H}\to n_{w_Gw_P}\sigma\otimes_{(\mathcal{H}^+_{P'},j_{P'}^+)}\mathcal{H}.
\]
\item The map $x\otimes X\mapsto x\otimes T_{n_{w_Gw_P}}^*X$ gives an isomorphism
\[
\sigma\otimes_{(\mathcal{H}^-_P,j_P^{-*})}\mathcal{H}\to n_{w_Gw_P}\sigma\otimes_{(\mathcal{H}^+_{P'},j_{P'}^{+*})}\mathcal{H}.
\]
\end{enumerate}
\end{prop}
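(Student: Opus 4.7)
Set $n=n_{w_Gw_P}$. The strategy is to mimic the proof of Proposition~\ref{prop:comparison with two more inductions}: the four key identities \eqref{eq:ET_n=T_nE}, \eqref{eq:EE(lambda n)=E(lambda n)E}, \eqref{eq:TnE(lambda n)} and \eqref{eq:E(lambda )Tn} established there are exactly what is needed, except now they must be read as balancing/unit relations for tensor products rather than as Hom-adjunctions.

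For (1), I first verify that $\Phi\colon x\otimes X\mapsto x\otimes T_nX$ is well-defined. Conjugation by $n$ is an algebra isomorphism $\mathcal{H}_P\xrightarrow{\sim}\mathcal{H}_{P'}$ restricting to an isomorphism $\mathcal{H}_P^-\xrightarrow{\sim}\mathcal{H}_{P'}^+$, and by definition of $n_{w_Gw_P}\sigma$ it identifies the right $\mathcal{H}_P^-$-action on $\sigma$ with the right $\mathcal{H}_{P'}^+$-action on $n_{w_Gw_P}\sigma$ on the same underlying space. Hence compatibility with the two balancings reduces to showing $j_{P'}^+(\widetilde Y)T_n=T_nj_P^-(Y)$ in $\mathcal{H}$ whenever $Y\in\mathcal{H}_P^-$ corresponds to $\widetilde Y\in\mathcal{H}_{P'}^+$ under this conjugation. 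For $Y=E^P_{o_{+,P}}(w)$ with $w\in W_P^-(1)$ this is precisely \eqref{eq:ET_n=T_nE}, and such $Y$ form a $C$-basis of $\mathcal{H}_P^-$ (they differ from $\{T_w^{P*}\}_{w\in W_P^-(1)}$ by a unit upper-triangular change), so the identity extends by $C$-linearity.

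Second, I construct an inverse. Let $\lambda=\lambda_P^-$ as in Proposition~\ref{prop:localization as Levi subalgebra}. Using $n(\Sigma^+\setminus\Sigma_P^+)=\Sigma^-\setminus\Sigma_{P'}^-$, one checks that $n\lambda n^{-1}\in Z(W_{P'}(1))$ satisfies the defining condition of $\lambda_{P'}^+$ in Proposition~\ref{prop:localization as Levi subalgebra}, and that its image under $\nu$ is orthogonal to $\Sigma_{P'}$, so $\mu:=E^{P'}_{o_{+,P'}}(n\lambda n^{-1})=T^{P'}_{n\lambda n^{-1}}$ is central and invertible in $\mathcal{H}_{P'}$. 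Define
\[
\Psi\colon y\otimes X\longmapsto y\mu^{-1}\otimes E_{o_+}(\lambda n^{-1})X,
\]
whose well-definedness follows from \eqref{eq:EE(lambda n)=E(lambda n)E} applied to $w\in W_P^-(1)$ combined with the centrality of $\mu$ (which lets one commute $\mu^{-1}$ past any element of $\mathcal{H}_{P'}^+$ acting on $n_{w_Gw_P}\sigma$). Then \eqref{eq:TnE(lambda n)} yields $\Phi\circ\Psi=\id$, and \eqref{eq:E(lambda )Tn} together with $(n_{w_Gw_P}\sigma)(\mu)=\sigma(E^P_{o_{+,P}}(\lambda))$ yields $\Psi\circ\Phi=\id$.

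For (2), I deduce the statement from (1) by applying the involution $\iota$. From the remark preceding the proposition, applied for both $P$ and $P'$, and from Lemma~\ref{lem:twist by length and iota, twice}, the functor $(-)^\iota$ interchanges the tensor products in (1) and (2) up to replacing $\sigma$ by $\sigma_{\ell-\ell_P}^{\iota_P}$. Feeding $\sigma_{\ell-\ell_P}^{\iota_P}$ into (1) and twisting by $\iota$ then gives (2), provided one knows the compatibility $n(\sigma_{\ell-\ell_P}^{\iota_P})\simeq (n_{w_Gw_P}\sigma)_{\ell-\ell_{P'}}^{\iota_{P'}}$. This follows because conjugation by $n$ preserves $\ell_P$ exactly (it is an isomorphism of the Coxeter-type data of the Levi) and preserves $\ell$ on $W(1)$ modulo $2$ (since $w\mapsto(-1)^{\ell(w)}$ is a group homomorphism on $W(1)$, as it factors through $W=W_\aff\rtimes\Omega$ where the claim is the sign character of the affine Coxeter group extended trivially on $\Omega$), which is all that the sign $(-1)^{\ell-\ell_P}$ sees.

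The main obstacle is the well-definedness of $\Psi$: equation \eqref{eq:EE(lambda n)=E(lambda n)E} commutes $j_P^-(E^P_{o_{+,P}}(w))$ past $E_{o_+}(\lambda n^{-1})$ only for elements coming from $w\in W_P^-(1)$, while the target tensor product must be balanced over all of $\mathcal{H}_{P'}^+$; closing this gap is exactly why one must insert the central factor $\mu^{-1}$ and invoke Proposition~\ref{prop:localization as Levi subalgebra} on the $P'$ side.
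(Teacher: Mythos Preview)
Your approach is the same as the paper's: use \eqref{eq:ET_n=T_nE} to show $\Phi$ is well-defined, use \eqref{eq:EE(lambda n)=E(lambda n)E} to build a map $\Psi$ in the other direction, and use \eqref{eq:TnE(lambda n)}, \eqref{eq:E(lambda )Tn} to see that both composites are invertible. Your deduction of (2) from (1) via $\iota$ is also what the paper intends.

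There is one point of confusion worth flagging. Your ``main obstacle'' paragraph identifies a gap that does not exist. The balancing you must check for $\Psi$ is over $\mathcal{H}_{P'}^+$ (the source tensor product), and conjugation by $n$ is a bijection $W_P^-(1)\to W_{P'}^+(1)$; hence as $w$ ranges over $W_P^-(1)$, the elements $E^{P'}_{o_{+,P'}}(nwn^{-1})$ form a $C$-basis of $\mathcal{H}_{P'}^+$, and \eqref{eq:EE(lambda n)=E(lambda n)E} already provides the full set of relations needed. So the paper defines $\Psi$ simply as $y\otimes X\mapsto y\otimes E_{o_+}(\lambda n^{-1})X$, without your factor $\mu^{-1}$. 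The composites $\Phi\circ\Psi$ and $\Psi\circ\Phi$ are then not the identity but rather multiplication by the central invertible element $\sigma(E^P_{o_{+,P}}(\lambda))$, which suffices. Your insertion of $\mu^{-1}$ is harmless and just normalizes these composites to the identity, but it is not needed to ``close a gap''; Proposition~\ref{prop:localization as Levi subalgebra} on the $P'$ side is only used to know that $\mu$ is invertible, not to extend the range of validity of \eqref{eq:EE(lambda n)=E(lambda n)E}.
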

\begin{proof}
(2) follows from (1).
We prove (1).

Put $n = n_{w_Gw_P}$.
Let $\Phi$ be a homomorphism given in the proposition and first we prove that this is a well-defined $\mathcal{H}$-homomorphism.
We prove that the linear map $x\mapsto x\otimes T_n$ is an $(\mathcal{H}_P^-,j_P^-)$-homomorphism $\sigma\to n\sigma\otimes_{(\mathcal{H}^+_{P'},j_{P'}^+)}\mathcal{H}$.
Let $x\in \sigma$ and $w\in W_P^-(1)$.
Then $nwn^{-1}\in W_{P'}^+(1)$.
Hence by \eqref{eq:ET_n=T_nE}, we have
\[
j_{P'}^+(E_{o_{+,P'}}^{P'}(nwn^{-1}))T_n  = T_nj_P^-(E^P_{o_{+,P}}(w))
\]
Hence, in $n\sigma\otimes_{(\mathcal{H}^+_{P'},j_{P'}^+)}\mathcal{H}$, we have
\begin{align*}
x\otimes T_nj_P^-(E^P_{o_{+,P}}(w))
& =
x\otimes j_{P'}^+(E_{o_{+,P'}}^{P'}(nwn^{-1}))T_n\\
& =
x(n\sigma)(E_{o_{+,P'}}^{P'}(nwn^{-1}))\otimes T_n\\
& =
x\sigma(E_{o_{+,P}}^{P}(w))\otimes T_n.
\end{align*}
Therefore $\Phi$ is an $\mathcal{H}$-module homomorphism.

Next let $\lambda = \lambda_P^-$ and consider the linear map $\Psi\colon x\otimes X\mapsto x\otimes E_{o_+}(\lambda n^{-1})X$.
We prove that $\Psi$ is also a well-defined $\mathcal{H}$-homomorphism $n\sigma\otimes_{(\mathcal{H}_{P'}^+,j_{P'}^+)}\mathcal{H}\to \sigma\otimes_{(\mathcal{H}_P^-,j_P^-)}\mathcal{H}$.
Let $w\in W_{P'}^+(1)$ and $x\in\sigma$.
Then $n^{-1}wn\in W_P^-(1)$.
By \eqref{eq:EE(lambda n)=E(lambda n)E}, we have
\[
j_P^-(E_{o_{+,P}}^P(n^{-1}wn))E_{o_+}(\lambda n^{-1}) = E_{o_+}(\lambda n^{-1})j_{P'}^+(E^{P'}_{o_{+,P'}}(w)).
\]
Hence, in $\sigma\otimes_{(\mathcal{H}^-_P,j_P^-)}\mathcal{H}$, we have
\begin{align*}
x\otimes E_{o_+}(\lambda n^{-1})j_{P'}^+(E^{P'}_{o_{+,P'}}(w))
& = 
x\otimes j_P^-(E_{o_{+,P}}^P(n^{-1}wn))E_{o_+}(\lambda n^{-1})\\
& = 
x\sigma(E_{o_{+,P}}^P(n^{-1}wn))\otimes E_{o_+}(\lambda n^{-1})\\
& = 
x(n\sigma)(E_{o_{+,P'}}^{P'}(w))\otimes E_{o_+}(\lambda n^{-1}).
\end{align*}
Therefore $\Psi$ is an $\mathcal{H}$-homomorphism.

Let $x\in \sigma$ and $X\in \mathcal{H}$.
By \eqref{eq:TnE(lambda n)}, we have
\begin{align*}
\Phi(\Psi(x\otimes X)) & = x\otimes T_nE_{o_+}(\lambda n^{-1})X\\
& = x\otimes j_{P'}^+(E_{o_{+,P'}}^{P'}(n\lambda n^{-1}))X\\
& = x(n\sigma)(E_{o_{+,P'}}^{P'}(n\lambda n^{-1}))\otimes X\\
& = x\sigma(E^P_{o_{+,P}}(\lambda))\otimes X.
\end{align*}
Since $\lambda\in Z(W_P(1))$, $\sigma(E^P_{o_{+,P}}(\lambda))$ is invertible.
Hence $\Phi\circ\Psi$ is invertible.
By \eqref{eq:E(lambda )Tn}, we also have
\begin{align*}
\Psi(\Phi(x\otimes X)) & = x\otimes E_{o_+}(\lambda n^{-1})T_nX\\
& = x\otimes j_P^-(E^P_{o_{+,P}}(\lambda))X\\
& = x\sigma(E_{o_{+,P}}^P(\lambda))\otimes X.
\end{align*}
This is again invertible.
\end{proof}

\begin{cor}\label{cor:parabolic induction via tensor product 2}
Let $P$ be a parabolic subgroup and $\sigma$ an $\mathcal{H}_P$-module.
Then we have
\begin{align*}
I_P(\sigma) & \simeq \sigma\otimes_{(\mathcal{H}_P^-,j_P^-)}\mathcal{H},\\
I'_P(\sigma) & \simeq \sigma\otimes_{(\mathcal{H}_P^-,j_P^{-*})}\mathcal{H}.
\end{align*}
\end{cor}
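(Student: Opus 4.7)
The plan is to read off both isomorphisms by chaining together previously established results, handling the second via twisting by $\iota$ rather than by redoing the tensor/Hom comparison from scratch.

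For the first isomorphism, I would combine Proposition~\ref{prop:tensor description of I_P} with Proposition~\ref{prop:comparison of tensor inductions}(1). Setting $P' = n_{w_Gw_P}\opposite{P}n_{w_Gw_P}^{-1}$, the former produces an isomorphism
\[
n_{w_Gw_P}\sigma \otimes_{(\mathcal{H}_{P'}^+, j_{P'}^+)}\mathcal{H} \xrightarrow{\sim} I_P(\sigma),
\]
and the latter produces
\[
\sigma\otimes_{(\mathcal{H}_P^-,j_P^-)}\mathcal{H} \xrightarrow{\sim} n_{w_Gw_P}\sigma \otimes_{(\mathcal{H}_{P'}^+, j_{P'}^+)}\mathcal{H}.
\]
Composing them gives the first isomorphism. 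Concretely, the composite sends $x\otimes X$ to $\varphi_x T_{n_{w_Gw_P}}X$, where $\varphi_x\in I_P(\sigma)$ is the element of Proposition~\ref{prop:tensor description of I_P} characterized by $\varphi_x(T_{n_{w_Gw_P}})=x$ and $\varphi_x(T_{n_w})=0$ for $w\in W_0^P\setminus\{w_Gw_P\}$.

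For the second isomorphism, I would twist the first part by the involution $\iota$. The proposition stated just before Proposition~\ref{prop:fundamental properties of I'} gives
\[
I_P(\sigma)^{\iota}\simeq I'_P\bigl(\sigma^{\iota_P}_{\ell-\ell_P}\bigr),
\]
while the identity $x\otimes X\mapsto x\otimes \iota(X)$ appearing at the beginning of the Tensor Products subsection gives
\[
\bigl(\sigma\otimes_{(\mathcal{H}_P^-,j_P^-)}\mathcal{H}\bigr)^{\iota}\simeq \sigma^{\iota_P}_{\ell-\ell_P}\otimes_{(\mathcal{H}_P^-,j_P^{-*})}\mathcal{H}.
\]
Applying $(-)^{\iota}$ to the first part of the corollary and combining these two identifications yields
\[
I'_P\bigl(\sigma^{\iota_P}_{\ell-\ell_P}\bigr)\simeq \sigma^{\iota_P}_{\ell-\ell_P}\otimes_{(\mathcal{H}_P^-,j_P^{-*})}\mathcal{H}.
\]
Since $\sigma\mapsto \sigma^{\iota_P}_{\ell-\ell_P}$ is an involution on isomorphism classes of $\mathcal{H}_P$-modules by Lemma~\ref{lem:twist by length and iota, twice}, replacing $\sigma^{\iota_P}_{\ell-\ell_P}$ by an arbitrary $\sigma$ gives the second isomorphism.

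There is no real obstacle here beyond bookkeeping: all of the work has already been absorbed into Propositions~\ref{prop:tensor description of I_P} and \ref{prop:comparison of tensor inductions}, together with the $\iota$-compatibility statements recorded earlier. The main thing to be careful with is matching the twists $(-)^{\iota_P}$ and $(-)_{\ell-\ell_P}$ correctly on the Hom and tensor sides, so that the cancellation produced by Lemma~\ref{lem:twist by length and iota, twice} actually occurs.
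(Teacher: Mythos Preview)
Your proposal is correct and matches the paper's proof essentially line for line: the first isomorphism is obtained exactly by composing Proposition~\ref{prop:tensor description of I_P} with Proposition~\ref{prop:comparison of tensor inductions}(1), and the second by applying the involution $\iota$ to the first and unwinding via the $\iota$-compatibility statements you cite together with Lemma~\ref{lem:twist by length and iota, twice}. The paper's proof is terser (it just says ``the second one follows from the twist of the first one''), but your expansion is precisely what that phrase means.
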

\begin{proof}
The first one follows from the Propositions~\ref{prop:tensor description of I_P} and \ref{prop:comparison of tensor inductions} and the second one follows from the twist of the first one.
\end{proof}

\begin{prop}
Let $P$ be a parabolic subgroup and $\sigma$ an $\mathcal{H}_P$-module.
\begin{enumerate}
\item The isomorphism $\sigma\otimes_{(\mathcal{H}_P^-,j_P^-)}\mathcal{H}\to I_P(\sigma)$ is given by the following.
For $x\in \sigma$, let $\varphi_x\in I_P(\sigma)$ be an element such that $\varphi_x(1) = x$ and $\varphi_x(T_{n_{w}}^*) = 0$ for any $w\in W_0^P\setminus\{1\}$.
Then the isomorphism is given by $x\otimes X\mapsto \varphi_x X$.
\item The isomorphism $\sigma\otimes_{(\mathcal{H}_P^-,j_P^{-*})}\mathcal{H}\to I'_P(\sigma)$ is given by the following.
For $x\in \sigma$, let $\varphi_x\in I_P(\sigma)$ be an element such that $\varphi_x(1) = x$ and $\varphi_x(T_{n_{w}}) = 0$ for any $w\in W_0^P\setminus\{1\}$.
Then the isomorphism is given by $x\otimes X\mapsto \varphi_x X$.
\end{enumerate}
In particular, these isomorphisms do not depend on a choice of a lift $n_{w_Gw_P}$.
\end{prop}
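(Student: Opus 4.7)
The plan is to trace the isomorphism from Corollary~\ref{cor:parabolic induction via tensor product 2} through its two-step construction and match it with the claimed explicit formula. By construction, the isomorphism $\sigma \otimes_{(\mathcal{H}_P^-, j_P^-)}\mathcal{H} \to I_P(\sigma)$ is the composition of $\Phi$ from Proposition~\ref{prop:comparison of tensor inductions}(1), sending $x \otimes X$ to $x \otimes T_n X$ with $n = n_{w_Gw_P}$, followed by the isomorphism from Proposition~\ref{prop:tensor description of I_P}, which sends $y \otimes Y$ to $\psi_y Y$, where $\psi_y \in I_P(\sigma)$ is characterized by $\psi_y(T_{n_{w_Gw_P}}) = y$ and $\psi_y(T_{n_v}) = 0$ for $v \in W_0^P \setminus \{w_Gw_P\}$. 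Chasing, $x \otimes X$ is mapped to $\psi_x T_n X$, so it suffices to verify $\psi_x T_n = \varphi_x$.

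Since elements of $I_P(\sigma)$ are determined by their values on $\{T_{n_w}^*\}_{w \in W_0^P}$ by Proposition~\ref{prop:decomposition of I_P}(2), I would check the two conditions $(\psi_x T_n)(1) = x$, which is immediate from $\psi_x(T_n) = x$, and $(\psi_x T_n)(T_{n_w}^*) = 0$ for $w \in W_0^P \setminus \{1\}$. The second is the key computation: expand $T_n T_{n_w}^*$ in the $T$-basis and apply the $(\mathcal{H}_P^-, j_P^{-*})$-equivariance of $\psi_x$ through the decomposition $W(1) = \bigsqcup_{v \in W_0^P} n_v W_P(1)$. Terms whose $W_0^P$-component differs from $w_Gw_P$ vanish by construction of $\psi_x$, and the surviving contributions must cancel. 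This cancellation hinges on the identity $s \cdot c_s = c_s$ for $s \in \Refs{W_\aff(1)}$, obtained from the relation $c_{wsw^{-1}} = w \cdot c_s$ with $w = s$; symmetric expressions of the form $u \cdot c_s - v \cdot c_s$ with $u, v$ related by a reflection then collapse.

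Part (2) reduces to Part (1) via the involution $\iota$: using the isomorphism $I'_P(\sigma) \cong I_P(\sigma^{\iota_P}_{\ell - \ell_P})^\iota$ from the proposition preceding Proposition~\ref{prop:fundamental properties of I'}, together with $\iota(T_{n_w}) = (-1)^{\ell(w)} T_{n_w}^*$ and Lemma~\ref{lem:order: ell - ell and twist}, the characterization by values at $T_{n_w}^*$ on the $I_P$-side translates to the characterization by values at $T_{n_w}$ on the $I'_P$-side. The final sentence asserting independence from the choice of lift $n_{w_Gw_P}$ is then automatic, since the characterizing conditions on $\varphi_x$ make no reference to $n_{w_Gw_P}$.

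The main obstacle is the Hecke-algebra computation in Part (1), step (b). Although the underlying cancellation is clean once one recognizes the key identity $s \cdot c_s = c_s$, explicitly tracking the expansion of $T_n T_{n_w}^*$ along the Bruhat order and isolating the surviving terms is combinatorially delicate; a cleaner route may be available using the $E$-basis together with the product formula~\eqref{eq:product formula} and the argument strategy of Lemma~\ref{lem:description of isomo between two inductions}, where similar products $T_{n_v}^* T_{n_w}$ were already analyzed.
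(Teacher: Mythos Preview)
Your reduction to proving $\psi_x T_n = \varphi_x$ (with $n = n_{w_Gw_P}$) and checking values at $1$ and at $T_{n_w}^*$ for $w \in W_0^P \setminus \{1\}$ is exactly the paper's strategy, and the paper likewise derives (2) from (1).

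For the key step $(\psi_x T_n)(T_{n_w}^*) = 0$, the paper takes precisely the $E$-basis route you mention as your alternative, not a direct expansion with cancellations. Using \eqref{eq:E_o for W_0} and the product formula \eqref{eq:product formula},
\[
T_n T_{n_w}^* = E_{o_+\cdot (w_Gw_P)^{-1}}(n)\,E_{o_+}(n_w)
\in C\,E_{o_+\cdot (w_Gw_P)^{-1}}(n_{w_Gw_P}n_w)
\subset \sum_{v \le w_Gw_Pw} C[Z_\kappa]\,T_{n_v}.
\]
The vanishing is then purely a Bruhat-order support argument: since $w \in W_0^P \setminus \{1\}$ one has $w \notin W_{0,P}$, so $w_Gw_Pw \notin w_Gw_PW_{0,P}$; by \cite[Lemma~4.13(3)]{arXiv:1406.1003_accepted} every $v \le w_Gw_Pw$ then also lies outside $w_Gw_PW_{0,P}$. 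Writing $v = v_1v_2$ with $v_1 \in W_0^P$, $v_2 \in W_{0,P}$ gives $v_1 \ne w_Gw_P$, hence $\psi_x(T_{n_v}) = \psi_x(T_{n_{v_1}})\sigma(T^P_{n_{v_2}}) = 0$.

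So your expectation that ``surviving contributions must cancel'' via $s\cdot c_s = c_s$ is a slight misdiagnosis: there are no surviving terms at all, and no cancellation is needed. Your alternative suggestion (the $E$-basis, in the spirit of Lemma~\ref{lem:description of isomo between two inductions}) is exactly what the paper does, and is the clean way through.
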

\begin{proof}
The second statement follows from the first one.
From the construction, the image of $x\in \sigma$ under the isomorphism in Corollary~\ref{cor:parabolic induction via tensor product 2} is given by $\psi_xT_{n_{w_Gw_P}}$ where $\psi_x$ is characterized by $\psi_x(T_{n_{w_Gw_P}}) = x$ and $\psi_x(T_{n_w}) = 0$ for any $w\in W^P_0\setminus\{w_Gw_P\}$.
We prove $\psi_xT_{n_{w_Gw_P}} = \varphi_x$.

Set $\psi = \psi_xT_{n_{w_Gw_P}}$.
We have $\psi(1) = \psi_x(T_{n_{w_Gw_P}}) = x$.
If $w\in W_0^P\setminus\{1\}$, then 
\begin{align*}
T_{n_{w_Gw_P}}T_{n_w}^* & = E_{o_+\cdot (w_Gw_P)^{-1}}(n_{w_Gw_P})E_{o_+}(n_w)\\
& = q_{w_Gw_P}^{1/2}q_{w}^{1/2}q_{w_Gw_Pw}^{-1/2}E_{o_+\cdot (w_Gw_P)^{-1}}(n_{w_Gw_P}n_w)\\
& \in \sum_{v\in W_0,v\le w_Gw_Pw}C[Z_\kappa]T_{n_v}.
\end{align*}
Hence it is sufficient to prove that if $v\le w_Gw_Pw$ then $\psi_x(T_{n_v}) = 0$.
Since $w\notin W_{P,0}$, we have $w_Gw_Pw\notin w_Gw_PW_{P,0}$.
Hence by \cite[Lemma~4.13 (3)]{arXiv:1406.1003_accepted}, we have $v\notin w_Gw_PW_{P,0}$.
Take $v_1\in W_0^P$ and $v_2\in W_{P,0}$ such that $v = v_1v_2$.
Then we have $\psi_x(T_{n_v}) = \psi_x(T_{n_{v_1}}T_{n_{v_2}}) = \psi_x(T_{n_{v_1}})\sigma(T^P_{n_{v_2}})$.
Since $v_1\ne w_Gw_P$, this is zero.
\end{proof}

\section{Adjoint functors}\label{sec:Adjoint functors}
\subsection{Adjoint functors $L_P$ and $R_P$}
Let $P$ be a parabolic subgroup.
By the definition of the parabolic induction $I_P$, it has the left adjoint functor $L_P$.
The functor $L_P$ is defined by
\[
L_P(\pi) = \pi\otimes_{(\mathcal{H}_P^-,j_P^{-*})}\mathcal{H}_P = \pi E_{o_-}(\lambda_P^-)^{-1}
\]
where $\lambda_P^-$ is as in Proposition~\ref{prop:localization as Levi subalgebra}.
Since this is a localization, this functor is exact.

By Proposition~\ref{prop:tensor description of I_P}, the functor $I_P$ also has the right adjoint functor $R_P$.
It is defined as follows.
Set $P' = n_{w_Gw_P}\opposite{P}n_{w_Gw_P}^{-1}$.
Then we have
\[
R_P(\pi) = n_{w_Gw_P}^{-1}\Hom_{(\mathcal{H}_{P'}^+,j_{P'}^+)}(\mathcal{H}_{P'},\pi).
\]
This is left exact.
Let $\lambda_{P'}^+$ be as in Proposition~\ref{prop:localization as Levi subalgebra}.
By Proposition~\ref{prop:localization as Levi subalgebra}, we have $\mathcal{H}_{P'} = \mathcal{H}_{P'}^+(T^{P'}_{\lambda_{P'}^+})^{-1}$.
Hence $\varphi\mapsto (\varphi((T^{P'}_{\lambda_{P'}^+})^{-n}))$ gives an isomorphism
\[
R_P(\pi) \simeq\{(x_n)_{n\in\Z_{\ge 1}}\mid x_n\in \pi,\ x_{n + 1}T_{\lambda_{P'}^+} = x_n\}.
\]

Let $P_1$ be a parabolic subgroup containing $P$.
Then the left adjoint functor (resp.\ the right adjoint functor) of $I_P^{P_1}$ is denoted by $L_P^{P_1}$ (resp.\ $R_P^{P_1}$).
\subsection{Parabolic inductions and adjoint functors}
In this subsection, we prove the following proposition.
The condition on $\sigma$ is found in the study in \cite{AHHV2}.
\begin{prop}\label{prop:parabolic induction and its adjoint}
Let $P,Q$ be parabolic subgroups and $\sigma$ an $\mathcal{H}_Q$-module.
Assume that $\bigcap_{n\in\Z_{\ge 0}}p^n\sigma = 0$.
Then we have $R_P\circ I_Q(\sigma)\simeq I_{P\cap Q}^P\circ R_{P\cap Q}^Q(\sigma)$.
\end{prop}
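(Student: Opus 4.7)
The plan is to combine the Bruhat filtration on $I_Q(\sigma)$ from Section~\ref{sec:A ftilration on parabolic inductions} with the explicit inverse-limit description of $R_P$. Set $P' = n_{w_Gw_P}\opposite{P}n_{w_Gw_P}^{-1}$, so that
\[
R_P(\pi)\simeq\{(x_n)_{n\ge 1}\mid x_n\in\pi,\ x_{n+1}T^{P'}_{\lambda_{P'}^+}=x_n\}.
\]
By Lemma~\ref{lem:image of E by j}, $j_{P'}^+(T^{P'}_{\lambda_{P'}^+}) = E_{o_-}(\lambda_{P'}^+)$, so computing $R_P(I_Q(\sigma))$ amounts to forming the inverse limit of $I_Q(\sigma)$ for the right action of $E_{o_-}(\lambda_{P'}^+)$.

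By Proposition~\ref{prop:Bruhat filtration and action of A} applied with $Q$ in place of $P$, the element $E_{o_-}(\lambda_{P'}^+)$ preserves each $I_Q(\sigma)_A$ and acts on the graded quotient attached to $w\in W_0^Q$ as the scalar $q(Q,n_w^{-1}\cdot\lambda_{P'}^+)$ times $\sigma(E^Q_{o_{-,Q}}(n_w^{-1}\cdot\lambda_{P'}^+))$. Since each $q_s$ is a power of $p$, so is this scalar, and by Lemma~\ref{lem:condition for length is additive, P-negative} applied inside $\mathcal{H}_Q$ it equals $1$ iff $n_w^{-1}\cdot\lambda_{P'}^+$ is $Q$-negative. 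Let $A_0\subset W_0^Q$ denote the set of such $w$. For $w\notin A_0$ the scalar is a strictly positive power of $p$, so iterating $E_{o_-}(\lambda_{P'}^+)^n$ sends the $w$-graded piece into $p^n\sigma$; under the hypothesis $\bigcap_n p^n\sigma=0$, such pieces contribute $0$ to the inverse limit.

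Consequently $R_P(I_Q(\sigma))$ is identified, as a $C$-module, with a direct sum indexed by $w\in A_0$ of inverse limits in which the transition map is multiplication by $\sigma(E^Q_{o_{-,Q}}(n_w^{-1}\cdot\lambda_{P'}^+))$. Once one verifies that, after suitable $W_P(1)$-conjugation, $n_w^{-1}\cdot\lambda_{P'}^+$ can play the role of the central element $\lambda_{(P\cap Q)'}^{Q+}$ defining $R^Q_{P\cap Q}$, each of these inverse limits is identified with $R^Q_{P\cap Q}(\sigma)$. On the other hand, Proposition~\ref{prop:decomposition of I_P} applied inside $\mathcal{H}_P$ gives $I^P_{P\cap Q}(R^Q_{P\cap Q}(\sigma))\simeq\bigoplus_{v\in W_{0,P}^{P\cap Q}}R^Q_{P\cap Q}(\sigma)$ as $C$-modules; the required bijection $A_0\leftrightarrow W_{0,P}^{P\cap Q}$ is then a root-system computation using $\Sigma_{P'}=-w_G(\Sigma_P)$ and the standard parameterisation of $W_{0,P}\backslash W_0/W_{0,Q}$.

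The hardest part will be promoting this vector-space identification to an $\mathcal{H}_P$-module isomorphism. The $\mathcal{H}_P$-structure on $R_P(I_Q(\sigma))$ is inherited from the right $\mathcal{H}$-action on $I_Q(\sigma)$ restricted via $j_P^{\pm}$ and $j_P^{\pm*}$, and must be matched piece by piece with the action on $I^P_{P\cap Q}(R^Q_{P\cap Q}(\sigma))$ described by Proposition~\ref{prop:Bruhat filtration and action of A}. This will require a careful analysis of the conjugation action of $W_P(1)$ on $\lambda_{P'}^+$, together with the compatibilities among the various $j$ homomorphisms proved in Section~\ref{sec:Inductions}.
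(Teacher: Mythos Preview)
Your outline is in the right spirit---filter $I_Q(\sigma)$, identify the graded pieces where the transition map is not a positive $p$-power, and kill the rest using $\bigcap_n p^n\sigma=0$---and this is exactly what the paper does. But the paper first performs a reduction you skip: it replaces $(R_P,I_Q)$ by the ``untwisted'' pair $(\widetilde{R}_P,\widetilde{I}_Q)$ defined via $j^+$ on both sides (Lemma~\ref{lem:adjoint and induction, tilde version}), absorbing all the $n_{w_Gw_P}$, $n_{w_Gw_Q}$ twists into a short preliminary computation. In that setup the relevant central element is $\lambda_P^+$ rather than $\lambda_{P'}^+$, and the surviving index set is simply ${}^{P\cap Q}W_{0,P}\subset{}^QW_0$ (Lemma~\ref{lem:condition on q != 1, positive version}), with the clean consequence that on \emph{every} surviving summand $\sigma\otimes T_{n_v}$ the transition operator is the \emph{same} element $\sigma(E^Q_{o_{-,Q}}(\lambda_P^+))$, because $\lambda_P^+\in Z(W_P(1))$ commutes with $n_v$. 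This is what makes the paper's Lemma~\ref{lem:two lambda^+'s} work and what collapses the identification of each inverse limit with $R^Q_{P\cap Q}(\sigma)$ into a single statement.

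In your direct approach the transition operator on the $w$-piece is $\sigma(E^Q_{o_{-,Q}}(n_w^{-1}\cdot\lambda_{P'}^+))$, which genuinely depends on $w$; your phrase ``after suitable $W_P(1)$-conjugation, $n_w^{-1}\cdot\lambda_{P'}^+$ can play the role of $\lambda_{(P\cap Q)'}^{Q+}$'' papers over the fact that you would have to exhibit, for each $w$, an intertwining isomorphism between the inverse system for $n_w^{-1}\cdot\lambda_{P'}^+$ and the canonical one, and then check these identifications are compatible with the $\mathcal{H}_P$-action mixing different $w$. That is doable but is exactly the twist bookkeeping the paper's tilde reduction was designed to avoid. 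The second point where the paper is sharper than your sketch is the module-structure step you flag as ``the hardest part'': rather than promote a vector-space bijection after the fact, the paper \emph{first} writes down an explicit $(\mathcal{H}_{P\cap Q}^{P+},j_{P\cap Q}^{P+})$-homomorphism $\widetilde{R}^Q_{P\cap Q}(\sigma)\to\widetilde{R}_P(I)$ (via restriction from $\mathcal{H}_{P\cap Q}$ to $\mathcal{H}_{P\cap Q}^{P+}$ composed with the inclusion $\sigma\hookrightarrow I$), inducing an honest $\mathcal{H}_P$-map from the tensor product, and only \emph{then} checks bijectivity on the graded pieces. Constructing the map first is what sidesteps the coherence problem entirely.
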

Before proving the proposition, we reformulate the proposition in terms of
\begin{align*}
\widetilde{I}_Q(\sigma) & = \sigma\otimes_{(\mathcal{H}_Q^+,j_Q^+)}\mathcal{H},\\
\widetilde{R}_P(\pi) & = \Hom_{(\mathcal{H}_P^+,j_P^+)}(\mathcal{H}_P,\pi).
\end{align*}
By Proposition~\ref{prop:tensor description of I_P}, we have
\[
I_Q(\sigma) = \widetilde{I}_{Q'}(n_{w_Gw_Q}\sigma),\quad R_P(\pi) = n_{w_Gw_P}^{-1}\widetilde{R}_{P'}(\pi)
\]
where $P' = n_{w_Gw_P}\opposite{P} n_{w_Gw_P}^{-1}$ and $Q' = n_{w_Gw_Q}\opposite{Q} n_{w_Gw_Q}^{-1}$.

\begin{lem}\label{lem:adjoint and induction, tilde version}
Let $P,Q$ be a parabolic subgroup and $\sigma$ an $\mathcal{H}_Q$-module.
Assume that $\bigcap_{n\in\Z_{\ge 0}}p^n\sigma = 0$.
Then we have $\widetilde{R}_P\circ \widetilde{I}_Q(\sigma)\simeq \widetilde{I}_{P\cap Q}^P\circ \widetilde{R}_{P\cap Q}^Q(\sigma)$.
\end{lem}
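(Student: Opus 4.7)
The plan is to realize $\widetilde{R}_P$ as an inverse limit via the localization in Proposition~\ref{prop:localization as Levi subalgebra}, and then to commute this inverse limit with the tensor product defining $\widetilde{I}_Q$, using the $p$-adic separatedness hypothesis on $\sigma$ only at the final step.

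First I would set up the inverse-limit picture. By Proposition~\ref{prop:localization as Levi subalgebra}, $\mathcal{H}_P = \mathcal{H}_P^+ E_{o_{-,P}}^P(\lambda_P^+)^{-1}$, so for any $\mathcal{H}$-module $\pi$ one has
\[
\widetilde{R}_P(\pi) \;\simeq\; \varprojlim_n \pi,
\]
where the transition maps are right multiplication by $E := j_P^+(E_{o_{-,P}}^P(\lambda_P^+))$, which equals $E_{o_-}(\lambda_P^+)$ by Lemma~\ref{lem:image of E by j}. Analogously, $\widetilde{R}_{P\cap Q}^Q(\sigma)$ becomes an inverse limit of copies of $\sigma$ along right multiplication by $E' := j_{P\cap Q}^{Q+}(E_{o_{-,P\cap Q}}^{P\cap Q}(\lambda_{P\cap Q}^{Q+}))$. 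Lemma~\ref{lem:localization, middle positive/negative subalgebra} shows that $E'$ plays the role of $E$ one level down, and that the inclusion $j_{P\cap Q}^{P+}$ is compatible with these Bernstein-type elements.

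Next I would apply this to $\pi = \widetilde{I}_Q(\sigma) = \sigma \otimes_{(\mathcal{H}_Q^+, j_Q^+)} \mathcal{H}$. Writing $W_0 = {}^QW_0 \cdot W_{0,Q}$ and using the analog of Proposition~\ref{prop:decomposition of I_P} for $\widetilde{I}_Q$, one obtains an abelian-group decomposition of $\widetilde{I}_Q(\sigma)$ indexed by ${}^QW_0$. On this decomposition, right multiplication by $E$ can be made explicit via the product formula \eqref{eq:product formula} and the Bernstein-type relations, and the length computations of Lemma~\ref{lem:length, positive/negative elements} and Lemma~\ref{lem:length, positive/negative elements, strictly} show that only the components indexed by ${}^QW_0 \cap W_{0,P}$-relevant elements survive asymptotically. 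I would then construct the natural map
\[
\widetilde{I}_{P\cap Q}^P \circ \widetilde{R}_{P\cap Q}^Q(\sigma) \;\longrightarrow\; \widetilde{R}_P \circ \widetilde{I}_Q(\sigma)
\]
by sending $\varphi \otimes X$ to the compatible system whose $n$-th term lifts $\varphi((E')^{n}\cdot)\otimes X\cdot E^{-n}$ through the Bruhat-type decomposition; well-definedness follows from the compatibilities just described.

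The main obstacle will be to prove that this natural map is an isomorphism. Injectivity will follow rather directly from the explicit component description. For surjectivity, given a compatible system $(x_n)$ in $\varprojlim_n \widetilde{I}_Q(\sigma)$, one expands each $x_n$ along the ${}^QW_0$-decomposition and must show that the components indexed by elements outside the expected range produce coefficients in $\bigcap_n p^n\sigma$, hence vanish by hypothesis; the remaining components then assemble into an element of $\widetilde{I}_{P\cap Q}^P \widetilde{R}_{P\cap Q}^Q(\sigma)$. The delicate bookkeeping is the double-coset analysis of $\mathcal{H}$ relative to the four subalgebras $\mathcal{H}_P^+$, $\mathcal{H}_Q^+$, $\mathcal{H}_{P\cap Q}^{P+}$, $\mathcal{H}_{P\cap Q}^{Q+}$, combined with a careful use of the Bernstein relations to see which coefficients become $p$-adically small in the limit; this is where the hypothesis $\bigcap_n p^n\sigma = 0$ is used essentially.
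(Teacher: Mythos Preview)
Your overall strategy matches the paper's: realize $\widetilde{R}_P$ as an inverse limit along $E=E_{o_-}(\lambda_P^+)$, use the ${}^QW_0$-indexed Bruhat filtration of $\widetilde{I}_Q(\sigma)$, and invoke $\bigcap_n p^n\sigma=0$ to kill the components where $E$ acts by a positive power of $p$. The key combinatorial fact you anticipate---that the surviving components are exactly those indexed by ${}^QW_0\cap W_{0,P}={}^{P\cap Q}W_{0,P}$---is precisely the content of the paper's Lemma~\ref{lem:condition on q != 1, positive version}, and your observation that $\lambda_P^+$ can serve as $\lambda_{P\cap Q}^{Q+}$ is the paper's Lemma~\ref{lem:two lambda^+'s}.

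Where the paper is more structured than your sketch: rather than constructing a comparison map directly and arguing injectivity/surjectivity, the paper first isolates the explicit submodule
\[
I \;=\; \sum_{v\in{}^{P\cap Q}W_{0,P}}\sigma\otimes T_{n_v}^* \;=\; \sum_{v\in W_{0,P}}\sigma\otimes T_{n_v}
\]
of $\widetilde{I}_Q(\sigma)$, checks it is $j_P^+(\mathcal{H}_P^+)$-stable, and then uses the short exact sequence $0\to I\to\widetilde{I}_Q(\sigma)\to\widetilde{I}_Q(\sigma)/I\to 0$. The vanishing of $\widetilde{R}_P$ on the quotient is handled via the \emph{filtration} (not a direct-sum decomposition) on $\widetilde{I}_Q(\sigma)/I$; this is worth noting since your phrase ``expand each $x_n$ along the ${}^QW_0$-decomposition'' suggests a direct-sum splitting that is not $E$-stable in general. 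Once reduced to $I$, the paper observes that each summand $\sigma\otimes T_{n_v}$ for $v\in{}^{P\cap Q}W_{0,P}$ is individually $E$-stable (here $E$ commutes with $T_{n_v}$ by the Bernstein relations since $\lambda_P^+\in Z(W_P(1))$), and the identification with $\widetilde{I}_{P\cap Q}^P\circ\widetilde{R}_{P\cap Q}^Q(\sigma)$ is then a direct calculation matching the decompositions on both sides. This bypasses your somewhat imprecise description of the natural map (your expression ``$X\cdot E^{-n}$'' is not literally meaningful in $\mathcal{H}$), replacing it with a restriction-then-inclusion construction $\widetilde{R}_{P\cap Q}^Q(\sigma)\to\Hom_{(\mathcal{H}_{P\cap Q}^+,j_{P\cap Q}^+)}(\mathcal{H}_{P\cap Q}^{P+},\sigma)\hookrightarrow\Hom_{(\mathcal{H}_P^+,j_P^+)}(\mathcal{H}_P,I)$ via Lemma~\ref{lem:localization, middle positive/negative subalgebra}.
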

We prove that Lemma~\ref{lem:adjoint and induction, tilde version} implies Proposition~\ref{prop:parabolic induction and its adjoint}.
We have
\[
R_P\circ I_Q = n_{w_Gw_P}^{-1}\widetilde{R}_{P'}\circ\widetilde{I}_{Q'}n_{w_Gw_Q}.
\]
By Lemma~\ref{lem:adjoint and induction, tilde version}, we have $\widetilde{R}_{P'}\circ\widetilde{I}_{Q'} = \widetilde{I}_{P'\cap Q'}^{P'}\circ \widetilde{R}_{P'\cap Q'}^{Q'}$.
Let $P_1$ be a parabolic subgroup such that $\Delta_{P_1} = w_P(-\Delta_{P\cap Q}) = w_Pw_{P\cap Q}(\Delta_{P\cap Q})$.
Then we have $w_Gw_P(\Delta_{P_1}) = w_G(-\Delta_{P\cap Q}) = w_G(-\Delta_P)\cap w_G(-\Delta_Q) = \Delta_{P'}\cap \Delta_{Q'} = \Delta_{P'\cap Q'}$.
The adjoint action of $n_{w_Gw_P}$ induces an isomorphism $\mathcal{H}_{P}\simeq \mathcal{H}_{P'}$ and it induces an isomorphism $\mathcal{H}_{P_1}^{P+}$ to $\mathcal{H}_{P'\cap Q'}^{P'+}$.
Hence we get $n_{w_Gw_P}^{-1}\widetilde{R}_{P'\cap Q'}^{P'} = \widetilde{R}_{P_1}^Pn_{w_Gw_P}^{-1}$.
Similarly we have $\widetilde{I}_{P'\cap Q'}^{Q'}n_{w_Gw_Q} = n_{w_Gw_Q}\widetilde{I}_{P_2}^{Q}$ where $P_2$ is a parabolic subgroup corresponding to $w_Q(-\Delta_{P\cap Q})$.
Therefore we have
\[
R_P\circ I_Q = \widetilde{I}_{P_1}^Pn_{w_Gw_P}^{-1}n_{w_Gw_Q}\widetilde{R}_{P_2}^Q.
\]
Since we have $n_{w_Gw_P}n_{w_Pw_{P\cap Q}} = n_{w_Gw_{P\cap Q}} = n_{w_Gw_Q}n_{w_Qw_{P\cap Q}}$, we have $n_{w_Gw_P}^{-1}n_{w_Gw_Q} = n_{w_Pw_{P\cap Q}}n_{w_Qw_{P\cap Q}}^{-1}$.
Since $w_Pw_{P\cap Q}(\Delta_{P\cap Q}) = \Delta_{P_1}$, we have $\widetilde{I}_{P_1}^Pn_{w_Pw_{P\cap Q}} = I_{P\cap Q}^P$.
Similarly we have $n_{w_Qq_{P\cap Q}}^{-1}\widetilde{R}_{P_2}^Q = R_{P\cap Q}^Q$.
Hence we get Proposition~\ref{prop:parabolic induction and its adjoint}.

In the rest of this section, we prove Lemma~\ref{lem:adjoint and induction, tilde version}.
Recall a filtration introduced in subsection~\ref{subsec:A filtration}.
Let $A\subset {}^QW_0$ be a closed subset and fix a maximal element $w\in A$.
Set $A' = A\setminus\{w\}$.
Then the quotient
\[
\left(\sum_{v\in A}\sigma\otimes T_{n_v}^*\right)
/
\left(\sum_{v\in A'}\sigma\otimes T_{n_v}^*\right)
\]
is isomorphic to $\sigma$ as vector spaces and the action of $E_{o_-}(\lambda)$ is given by $\widetilde{q}(Q,n_w\cdot \lambda)\sigma(E^Q_{o_{-,Q}}(n_w\cdot \lambda))$ by Lemma~\ref{lem:Bruhat filtration via tensor product} where $\widetilde{q}(Q,\mu) = q(Q',n_{w_Gw_Q}\cdot \mu)$.
By Remark~\ref{rem:q(P,*)=0 iff P'-positive}, $\widetilde{q}(Q,\mu)\ne 1$ if and only if $\mu$ is $Q$-positive.
\begin{lem}\label{lem:condition on q != 1, positive version}
Let $\lambda_P^+$ be as in Proposition~\ref{prop:localization as Levi subalgebra}.
Then $n_w\cdot \lambda_P^+$ is $Q$-positive if and only if $w\in {}^QW_0\cap W_{0,P} = {}^{P\cap Q}W_{0,P}$.
\end{lem}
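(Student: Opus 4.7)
The plan is to reduce the condition to one about the minimal-length coset representative of $w$ in $W_0/W_{0,P}$. Since $\lambda_P^+\in Z(W_P(1))$, for any $v\in W_{0,P}$ with lift $\tilde v\in W_P(1)$ we have $\tilde v\lambda_P^+\tilde v^{-1}=\lambda_P^+$, whence $v(\nu(\lambda_P^+))=\nu(\lambda_P^+)$. Writing $w=vu$ with $v\in W_0^P$ and $u\in W_{0,P}$, this yields $\nu(n_w\cdot\lambda_P^+)=w(\nu(\lambda_P^+))=v(\nu(\lambda_P^+))$, so $Q$-positivity of $n_w\cdot\lambda_P^+$ depends only on $v$.

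For the forward implication, assume $w\in W_{0,P}$, so $v=1$ and $\nu(n_w\cdot\lambda_P^+)=\nu(\lambda_P^+)$. For any $\alpha\in\Sigma^+\setminus\Sigma_Q^+$, either $\alpha\in\Sigma_P^+$, in which case $\langle\alpha,\nu(\lambda_P^+)\rangle=0$, or $\alpha\in\Sigma^+\setminus\Sigma_P^+$, in which case $\langle\alpha,\nu(\lambda_P^+)\rangle<0$ by the defining property of $\lambda_P^+$. In either case the pairing is $\le 0$, proving $Q$-positivity.

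For the converse, assume $w\in{}^QW_0$ and that $n_w\cdot\lambda_P^+$ is $Q$-positive; I will show $v=1$. Since $v\in W_0^P$ satisfies $v(\Sigma_P^+)\subset\Sigma^+$, any $\alpha\in\Sigma^+$ with $v(\alpha)<0$ must lie in $\Sigma^+\setminus\Sigma_P^+$. If $v\ne 1$, fix such an $\alpha$; then $\beta:=-v(\alpha)\in\Sigma^+$ satisfies $\langle\beta,v\nu(\lambda_P^+)\rangle=-\langle\alpha,\nu(\lambda_P^+)\rangle>0$, so the $Q$-positivity assumption forces $\beta\in\Sigma_Q^+$, i.e.\ $v(\alpha)\in\Sigma_Q^-$. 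Setting $\alpha':=u^{-1}(\alpha)$, the fact that $W_{0,P}$ preserves $\Sigma^+\setminus\Sigma_P^+$ gives $\alpha'\in\Sigma^+$, while $w(\alpha')=v(\alpha)\in\Sigma_Q^-$. This contradicts $w^{-1}(\Sigma_Q^-)\subset\Sigma^-$, which follows from $w\in{}^QW_0$. Hence $v=1$ and $w=u\in W_{0,P}$, so $w\in{}^QW_0\cap W_{0,P}$. The equality ${}^QW_0\cap W_{0,P}={}^{P\cap Q}W_{0,P}$ is an easy verification: for $w\in W_{0,P}$ the roots in $\Delta_Q\setminus\Delta_{P\cap Q}=\Delta_Q\setminus\Delta_P$ lie in $\Sigma^+\setminus\Sigma_P^+$ and are automatically sent to $\Sigma^+$ by $w^{-1}$, so the condition $w^{-1}(\Delta_Q)\subset\Sigma^+$ reduces to $w^{-1}(\Delta_{P\cap Q})\subset\Sigma^+$.

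The delicate step is the converse: one must locate a descent of $v$ whose image, after conjugating back by $u$, produces a root that violates $w\in{}^QW_0$. The main bookkeeping challenge is tracking which subsystem ($\Sigma_P$, $\Sigma_Q$, or $\Sigma^+$) each root belongs to, but once the decomposition $w=vu$ is in place the whole argument collapses into a single sign-chasing contradiction.
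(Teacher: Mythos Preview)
Your proof is correct. Your route differs from the paper's in organization rather than in any essential idea, but the difference is worth noting. The paper argues globally: it rewrites $Q$-positivity of $n_w\cdot\lambda_P^+$ as the set containment $w^{-1}(\Sigma^+\setminus\Sigma_Q^+)\subset\Sigma^+\cup\Sigma_P$, then uses $w\in{}^QW_0$ to upgrade this to $w^{-1}(\Sigma^+)\subset\Sigma^+\cup\Sigma_P$, takes complements, and reads off $w(\Sigma^-\setminus\Sigma_P^-)\subset\Sigma^-$, which is exactly the condition $w\in W_{0,P}$. You instead decompose $w=vu$ with $v\in W_0^P$, $u\in W_{0,P}$ at the outset, use centrality of $\lambda_P^+$ in $W_P(1)$ to reduce to showing $v=1$, and then run a contradiction on a single descent root of $v$. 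Your argument makes the role of the coset decomposition $W_0=W_0^P\cdot W_{0,P}$ explicit and is perhaps more transparent as to \emph{why} the minimal representative must be trivial; the paper's argument is shorter and avoids the contradiction by phrasing everything as a chain of equivalent set inclusions. Both the final equality ${}^QW_0\cap W_{0,P}={}^{P\cap Q}W_{0,P}$ and the forward implication are handled identically in the two proofs.
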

\begin{proof}
The element $n_w\cdot \lambda_P^+$ is $Q$-positive if and only if $\langle \alpha,w\nu(\lambda_P^+)\rangle \le 0$ for any $\alpha\in\Sigma^+\setminus\Sigma_Q^+$.
Since $\langle \beta,\nu(\lambda_P^+)\rangle \le 0$ if and only if $\beta\in\Sigma^+\cup \Sigma_P$, the element $n_w\cdot \lambda_P^+$ is $Q$-positive if and only if $w^{-1}(\Sigma^+\setminus\Sigma_Q^+)\subset \Sigma^+\cup \Sigma_P$.
Since $w\in {}^QW_0$, we have $w^{-1}(\Sigma_Q^+)\subset \Sigma^+$.
Hence this is equivalent to $w^{-1}(\Sigma^+)\subset \Sigma^+\cup \Sigma_P$.
Therefore $w^{-1}(\Sigma^-)\supset \Sigma^-\setminus\Sigma^-_P$ by taking the complement of the both sides.
Hence $w(\Sigma^-\setminus\Sigma^-_P)\subset \Sigma^-$.
Therefore we have $w\in W_{0,P}$.

For the last part, ${}^QW_0\cap W_{0,P} \subset {}^{P\cap Q}W_{0,P}$ is obvious.
If $w\in W_{0,P}$, then $w^{-1}(\Delta\setminus\Delta_P) \subset\Sigma^+$.
Hence if $w\in {}^{P\cap Q}W_{0,P}$, we have $w^{-1}((\Delta\setminus\Delta_P)\cup \Delta_{P\cap Q}) \subset\Sigma^+$.
We have $\Delta_Q\subset (\Delta\setminus\Delta_P)\cup \Delta_{P\cap Q}$.
Hence $w\in {}^QW_0$.
\end{proof}

Put $I = \sum_{v\in {}^{P\cap Q}W_{0,P}}\sigma\otimes T_{n_v}^*\subset \widetilde{I}_Q(\sigma)$.
\begin{lem}
$I = \sum_{v\in W_{0,P}}\sigma \otimes T_{n_v}^* = \sum_{v\in W_{0,P}}\sigma \otimes T_{n_v}$.
\end{lem}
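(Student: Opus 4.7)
The plan is to prove the two claimed equalities in turn. The first, $I = \sum_{v \in W_{0,P}} \sigma \otimes T_{n_v}^*$, will rely on the decomposition $W_{0,P} = W_{0,P\cap Q} \cdot {}^{P\cap Q}W_{0,P}$ (the analog of $W_0 = W_{0,Q} \cdot {}^QW_0$ applied inside $W_{0,P}$ with parabolic $P \cap Q$), which is length-additive. The second, $\sum_{v \in W_{0,P}} \sigma \otimes T_{n_v}^* = \sum_{v \in W_{0,P}} \sigma \otimes T_{n_v}$, will come from the triangular relationship between the $T_w$-basis and the $T_w^*$-basis.

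For the first equality, given $v \in W_{0,P}$ I would write $v = v_2 v_1$ with $v_2 \in W_{0,P\cap Q}$ and $v_1 \in {}^{P\cap Q}W_{0,P}$. By length additivity, $T_{n_v}^* = T_{n_{v_2}}^* T_{n_{v_1}}^*$. Since the image of $n_{v_2}$ in $W_Q$ is $v_2 \in W_{0,Q}$, Corollary~\ref{cor:image by j, positie and negative case} gives $T_{n_{v_2}}^* = j_Q^+(T_{n_{v_2}}^{Q*})$ with $T_{n_{v_2}}^{Q*} \in \mathcal{H}_Q^+$. Pushing this factor across the tensor product over $(\mathcal{H}_Q^+, j_Q^+)$ yields
\[
\sigma \otimes T_{n_v}^* = \sigma T_{n_{v_2}}^{Q*} \otimes T_{n_{v_1}}^* \subseteq \sigma \otimes T_{n_{v_1}}^* \subseteq I,
\]
and the reverse inclusion is trivial since ${}^{P\cap Q}W_{0,P} \subseteq W_{0,P}$.

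For the second equality, I would proceed by induction on $\ell(v)$. Fixing a reduced expression $v = s_1' \cdots s_l'$ in $W_{0,P}$ (with $s_i' \in S_{0,P}$), expand
\[
T_{n_v}^* = (T_{n_{s_1'}} - c_{n_{s_1'}}) \cdots (T_{n_{s_l'}} - c_{n_{s_l'}}).
\]
The ``all-$T$'' term equals $T_{n_v}$, while each other term contains at least one factor $c_{n_{s_i'}} \in C[Z_\kappa \cap W_{\aff,P}(1)]$ by Lemma~\ref{lem:on c, about aff,Levi}(2). By repeatedly moving every $T_t$ factor to the leftmost position using $T_w T_t = T_{w\cdot t} T_w$ and by simplifying any non-reduced subword product via the quadratic relations, these remainder terms become a finite sum of the form $T_{t'} T_{n_{v'}}$ with $t' \in Z_\kappa \cap W_{\aff,P}(1)$ and $v' \in W_{0,P}$ satisfying $v' < v$. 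Since $T_{t'} = j_Q^+(T_{t'}^Q) \in j_Q^+(\mathcal{H}_Q^+)$, each such factor is absorbed across the tensor product, giving $\sigma \otimes T_{t'} T_{n_{v'}} \subseteq \sigma \otimes T_{n_{v'}}$. Hence
\[
\sigma \otimes T_{n_v}^* \subseteq \sigma \otimes T_{n_v} + \sum_{v' \in W_{0,P},\, v' < v} \sigma \otimes T_{n_{v'}},
\]
and inclusions in both directions then follow by induction on $\ell(v)$.

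The main obstacle will be the bookkeeping in the last step: one must verify that applying the quadratic relation $T_{n_s}^2 = q_s T_{n_s^2} + c_{n_s} T_{n_s}$ to clean up non-reduced subword products keeps every term inside $\bigoplus_{v' \in W_{0,P}} C[Z_\kappa \cap W_{\aff,P}(1)]\, T_{n_{v'}}$. This is precisely where the choice $n_s \in W_{\aff,P}(1)$ for $s \in S_{0,P}$ (ensuring $n_s^2 \in Z_\kappa \cap W_{\aff,P}(1)$) together with the normality of $W_{\aff,P}(1)$ in $W_P(1)$ (so that conjugation by the $n_{s_i'}$ preserves $Z_\kappa \cap W_{\aff,P}(1)$) becomes essential; once these are in place, the expansion is entirely internal to $W_{0,P}$ and the $Z_\kappa$-twists are harmlessly absorbed via $j_Q^+$.
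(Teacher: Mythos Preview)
Your proof is correct and follows essentially the same approach as the paper. For the first equality you use exactly the same decomposition $v = v_2 v_1$ with $v_2 \in W_{0,P\cap Q}$, $v_1 \in {}^{P\cap Q}W_{0,P}$ and absorb $T_{n_{v_2}}^*$ across the tensor via Corollary~\ref{cor:image by j, positie and negative case}; for the second equality the paper simply invokes ``a usual triangular argument,'' and what you have written is precisely a careful unpacking of that argument. Your extra care in tracking that the $Z_\kappa$-twists stay in $Z_\kappa \cap W_{\aff,P}(1)$ is not strictly needed here (any $t \in Z_\kappa$ has $T_t = j_Q^+(T_t^Q)$ and can be absorbed), but it does no harm.
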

\begin{proof}
For $v\in W_{0,P}$, Take $v_1\in W_{0,P\cap Q},v_2\in {}^{P\cap Q}W_{0,P}$ such that $v = v_1v_2$.
Then for any $x\in \sigma$ we have $x\otimes T_{n_v}^* = xT^{Q*}_{n_{v_1}}\otimes T_{n_{v_2}}^*$ since $j_Q^+(T_{n_{v_2}}^{Q*}) = T_{n_{v_2}}^{*}$ by Corollary~\ref{cor:image by j, positie and negative case}.
This gives the first equality.
The second equality follows from a usual triangular argument.
\end{proof}

\begin{lem}
The subspace $I$ is stable under the action of $E_{o_-}(w)$ where $w\in W_P(1)$.
In particular, $I$ is stable under the action of $j_P^+(\mathcal{H}_P^+)$.
\end{lem}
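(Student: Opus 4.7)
The strategy is to reduce the claim to two sub-statements: stability under $E_{o_-}(\lambda)$ for $\lambda\in\Lambda(1)$, and stability under $T_{n_s} = E_{o_-}(n_s)$ for $s\in S_{0,P}$. Granting these, for any $w = n_v\mu\in W_P(1)$ with $v\in W_{0,P}$ and $\mu\in\Lambda(1)$, the product formula \eqref{eq:product formula} expresses $E_{o_-}(w)$ as a scalar multiple of $T_{n_v}\cdot E_{o_-\cdot n_v}(\mu)$; iteratively applying \eqref{eq:product formula} together with the triangular relation between the $E$- and $T$-bases then writes every $E_{o_-}(w)$ as an element of the subalgebra of $\mathcal{H}$ generated by $\{T_{n_s} : s\in S_{0,P}\}$ and $\mathcal{A}_{o_-}$. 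The ``in particular'' clause follows immediately from Lemma~\ref{lem:image of E by j}, since $j_P^+(\mathcal{H}_P^+)$ is the $C$-span of $\{E_{o_-}(w) : w\in W_P^+(1)\}\subseteq\{E_{o_-}(w) : w\in W_P(1)\}$. The main obstacle lies precisely in this reduction: the orientation in $E_{o_-\cdot n_v}(\mu)$ has been twisted by $n_v$, and one has to verify that the product-formula manipulations close up within the two-generator subalgebra rather than producing elements genuinely outside it.

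For $\mathcal{A}_{o_-}$-stability, I observe that $I$ coincides with the filtration piece indexed by $A = {}^{P\cap Q}W_{0,P}\subseteq{}^QW_0$, and that $A$ is closed in ${}^QW_0$ under the Bruhat order. Indeed, by the last sentence of Lemma~\ref{lem:condition on q != 1, positive version}, $A = W_{0,P}\cap{}^QW_0$; and if $v\in A$, $w\in{}^QW_0$, $w\le v$, then $w\in W_{0,P}$ by the standard Bruhat-closedness of parabolic subgroups of $W_0$, whence $w\in A$. Hence $I$ is one of the filtration pieces whose successive quotients are described by Lemma~\ref{lem:Bruhat filtration via tensor product}, and such pieces are $\mathcal{A}_{o_-}$-stable (this being the counterpart on the tensor-product side of Proposition~\ref{prop:Bruhat filtration and action of A}).

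For $T_{n_s}$-stability with $s\in S_{0,P}$, I use the alternative description $I = \sum_{v\in W_{0,P}}\sigma\otimes T_{n_v}$ supplied by the preceding lemma, and compute $(x\otimes T_{n_v})T_{n_s} = x\otimes T_{n_v}T_{n_s}$ by cases. If $\ell(vs) > \ell(v)$, the braid relation gives $T_{n_v}T_{n_s} = T_{n_{vs}}$ with $vs\in W_{0,P}$, so the result stays in $I$. If $\ell(vs) < \ell(v)$, then writing $n_v = n_{vs}n_s$ and applying the quadratic relation $T_{n_s}^2 = q_sT_{n_s^2} + c_sT_{n_s}$ expresses $T_{n_v}T_{n_s}$ as a linear combination of terms $T_{n_{v'}}T_t$ with $v'\in\{v,vs\}$ and $t\in Z_\kappa$. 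Via the braid identity $T_{n_{v'}}T_t = T_{n_{v'}tn_{v'}^{-1}}T_{n_{v'}}$ (valid since $\ell(t)=0$), these become $T_{t'}T_{n_{v'}}$ with $t' = n_{v'}tn_{v'}^{-1}\in Z_\kappa\subseteq W_Q^+(1)$. Since $T_{t'} = j_Q^+(T^Q_{t'})$, the tensor relation $x\otimes j_Q^+(Y)Z = x\sigma(Y)\otimes Z$ absorbs it into $\sigma$, leaving an element of $\sigma\otimes T_{n_{v'}}\subseteq I$.
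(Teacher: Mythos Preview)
Your two sub-proofs are correct: the filtration argument for $\mathcal{A}_{o_-}$-stability is a clean observation (the identification $I=\sum_{v\in A}\sigma\otimes T_{n_v}^*$ with $A={}^{P\cap Q}W_{0,P}$ closed in ${}^QW_0$ is exactly right), and the $T_{n_s}$-computation is fine. But the reduction in your first paragraph is not carried out---you explicitly flag it as ``the main obstacle'' and then leave it unresolved. The problem is real. The product formula only gives
\[
E_{o_-}(n_v)E_{o_-\cdot v}(\mu)=q_{n_v}^{1/2}q_\mu^{1/2}q_{n_v\mu}^{-1/2}\,E_{o_-}(n_v\mu),
\]
so expressing $E_{o_-}(n_v\mu)$ as $T_{n_v}$ times an element of $\mathcal{A}_{o_-}$ requires (i) inverting the $q$-factor, which may vanish in $C$, and (ii) replacing $E_{o_-\cdot v}(\mu)$ by something in $\mathcal{A}_{o_-}$, which is precisely the orientation-twist issue. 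Neither step is addressed. Working in $\mathcal{H}[q_s^{\pm1}]$ and then intersecting with $\mathcal{H}$ is the natural fix, but making that precise is nontrivial and essentially reconstructs the paper's argument; as written, your proof does not establish that $E_{o_-}(w)$ for $w\in W_P(1)$ lies in the subalgebra generated by $\mathcal{A}_{o_-}$ and $\{T_{n_s}:s\in S_{0,P}\}$.

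The paper sidesteps the reduction entirely. It writes $w=\lambda n_a$ with $a\in W_{0,P}$, uses the Bernstein relations in $\mathcal{H}[q_s^{\pm1}]$ to show $T_{n_v}E_{o_-}(w)\in\sum_{c\in W_{0,P},\,\mu}C[q_s]\,E_{o_-}(\mu n_c)$ after intersecting with $\mathcal{H}$, and then proves $\sigma\otimes E_{o_-}(\mu n_c)\in I$ directly: multiply by $E_{o_-}(\lambda_Q^+)$ to force $\mu$ to be $Q$-positive, decompose $c=c_1c_2$ with $c_1\in W_{0,P\cap Q}$ and $c_2\in{}^{P\cap Q}W_{0,P}$, invoke Lemma~\ref{lem:E_ov = E_o} to split $E_{o_-}(\mu n_c)=E_{o_-}(\mu n_{c_1})T_{n_{c_2}}$, and absorb the $Q$-positive factor $E_{o_-}(\mu n_{c_1})$ into $\sigma$. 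This absorption step has no counterpart in your argument and is where the actual work happens.
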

\begin{proof}
We prove $(\sigma\otimes T_{n_v})E_{o_-}(w)\subset I$.
Take $a\in W_{0,P}$ and $\lambda\in \Lambda(1)$ such that $w = \lambda n_a$.
Then by the Bernstein relations~\cite[Corollary~5.43]{MR3484112}, in $\mathcal{H}[q_s^{\pm 1}]$, we have 
\begin{align*}
T_{n_v}E_{o_-}(w) & \in (C[q_s^{\pm 1}] T_{n_v}E_{o_-}(\lambda)T_{n_a})\cap \mathcal{H}\\
& \subset \left(\sum_{b\le v,\mu\in \Lambda(1)}C[q_s^{\pm 1}]E_{o_-}(\mu)T_{n_b}T_{n_a}\right)\cap \mathcal{H}.
\end{align*}
Since $v\in W_{0,P}$, $b\le v$ implies $b\in W_{0,P}$.
We have $a\in W_{0,P}$.
Hence $T_{n_b}T_{n_a}\in \sum_{c\in W_{0,P}}C[Z_\kappa]T_{n_c}$.
Therefore
\begin{align*}
T_{n_v}E_{o_-}(w) & \in\left(\sum_{c\in W_{0,P},\mu\in \Lambda(1)}C[q_s^{\pm 1}]E_{o_-}(\mu)T_{n_c}\right)\cap \mathcal{H}\\
& = \left(\sum_{c\in W_{0,P},\mu\in \Lambda(1)}C[q_s^{\pm 1}]E_{o_-}(\mu n_c)\right)\cap \mathcal{H}\\
& = \sum_{c\in W_{0,P},\mu\in \Lambda(1)}C[q_s]E_{o_-}(\mu n_c).
\end{align*}
Hence it is sufficient to prove that $\sigma\otimes E_{o_-}(\mu n_c)\subset I$, namely we may assume $v = 1$.
Take $\lambda_Q^+$ as in Proposition~\ref{prop:localization as Levi subalgebra} such that $\lambda_Q^+\lambda$ is $Q$-positive.
Then for $x\in \sigma$, we have
\begin{align*}
x\otimes E_{o_-}(\lambda n_a)
& =
x\sigma(E_{o_{-,Q}}^Q(\lambda_Q^+))^{-1}\otimes E_{o_-}(\lambda_Q^+)E_{o_-}(\lambda n_a)\\
& \in 
Cx\sigma(E_{o_{-,Q}}^Q(\lambda_Q^+))^{-1}\otimes E_{o_-}(\lambda_Q^+\lambda n_a).
\end{align*}
Here we use the product formula \eqref{eq:product formula}.
Therefore we may assume that $\lambda$ is $Q$-positive.
Take $a_1\in W_{0,P\cap Q}$ and $a_2\in {}^{P\cap Q}W_{0,P}$ such that $a = a_1a_2$.
Then $\lambda n_{a_1}$ is $Q$-positive.
By Lemma~\ref{lem:condition for length is additive, P-negative}, we have $\ell(\lambda n_a) = \ell(\lambda n_{a_1}) + \ell(n_{a_2})$.
Hence by \eqref{eq:product formula}, we have $E_{o_-}(\lambda n_a) = E_{o_-}(\lambda n_{a_1})E_{o_-\cdot a_1}(n_{a_2})$.
By Lemma~\ref{lem:E_ov = E_o}, we have $E_{o_-\cdot a_1}(n_{a_2}) = E_{o_-}(n_{a_2})$.
Hence
\begin{align*}
x\otimes E_{o_-}(\lambda n_a) & = x\otimes E_{o_-}(\lambda n_{a_1}) E_{o_-}(n_{a_2})\\
& = x\sigma(E^Q_{o_{-,Q}}(\lambda n_{a_1}))\otimes  T_{n_{a_2}}.
\end{align*}
We get the lemma.
\end{proof}
Consider the exact sequence
\[
0\to I\to I_Q(\sigma)\to I_Q(\sigma)/I\to 0
\]
as $(\mathcal{H}_{P}^+,j_P^+)$-modules.
We have the exact sequence
\begin{multline*}
0\to \Hom_{(\mathcal{H}_P^+,j_P^+)}(\mathcal{H}_P,I)\to \Hom_{(\mathcal{H}_P^+,j_P^+)}(\mathcal{H}_P,I_Q(\sigma))\\\to \Hom_{(\mathcal{H}_P^+,j_P^+)}(\mathcal{H}_P,I_Q(\sigma)/I).
\end{multline*}

\begin{lem}
Assume that $\bigcap_n p^n\sigma = 0$.
Then $\Hom_{(\mathcal{H}_P^+,j_P^+)}(\mathcal{H}_P,I_Q(\sigma)/I) = 0$.
\end{lem}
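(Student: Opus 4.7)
The plan is to use the localization $\mathcal{H}_P=\mathcal{H}_P^+(T^P_{\lambda_P^+})^{-1}$ from Proposition~\ref{prop:localization as Levi subalgebra} together with the Bruhat filtration on $\widetilde{I}_Q(\sigma)$ to show that $T_{\lambda_P^+}$ acts on $I_Q(\sigma)/I$ in a way that eventually forces $p$-divisibility. Any $\varphi\in\Hom_{(\mathcal{H}_P^+,j_P^+)}(\mathcal{H}_P,I_Q(\sigma)/I)$ is determined by the sequence $x_n=\varphi((T^P_{\lambda_P^+})^{-n})$ subject to $x_{n+1}T_{\lambda_P^+}=x_n$, where we use $j_P^+(T^P_{\lambda_P^+})=j_P^+(E^P_{o_{-,P}}(\lambda_P^+))=E_{o_-}(\lambda_P^+)=T_{\lambda_P^+}$ by Lemma~\ref{lem:image of E by j} combined with \eqref{eq:E_o for lambda} (applied to the anti-dominant $\nu(\lambda_P^+)$). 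So it suffices to prove $x_m\in\bigcap_{j\ge 0}(I_Q(\sigma)/I)\cdot T_{\lambda_P^+}^{\,j}=0$.

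As a $C$-module, $I_Q(\sigma)/I\simeq\bigoplus_{v\in {}^QW_0\setminus {}^{P\cap Q}W_{0,P}}\sigma$ from restricting the direct-sum decomposition of $\widetilde{I}_Q(\sigma)$. Enumerate ${}^QW_0\setminus {}^{P\cap Q}W_{0,P}=\{v_1,\dots,v_N\}$ in an order compatible with the Bruhat order and let $\mathcal{F}_k\subset I_Q(\sigma)/I$ be the image of $\sum_{i\le k}\sigma\otimes T_{n_{v_i}}^*$, giving $0=\mathcal{F}_0\subset\cdots\subset\mathcal{F}_N=I_Q(\sigma)/I$. By Lemma~\ref{lem:Bruhat filtration via tensor product}, $T_{\lambda_P^+}=E_{o_-}(\lambda_P^+)$ acts on the successive quotient $\mathcal{F}_k/\mathcal{F}_{k-1}\simeq\sigma$ by $y\mapsto\widetilde{q}(Q,n_{v_k}\cdot\lambda_P^+)\,y\cdot E^Q_{o_{-,Q}}(n_{v_k}\cdot\lambda_P^+)$. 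Since $v_k\notin{}^{P\cap Q}W_{0,P}$, Lemma~\ref{lem:condition on q != 1, positive version} says that $n_{v_k}\cdot\lambda_P^+$ is not $Q$-positive, so by Remark~\ref{rem:q(P,*)=0 iff P'-positive} the scalar $\widetilde{q}(Q,n_{v_k}\cdot\lambda_P^+)\ne 1$. Because this scalar comes from an expression of the form $\mathbf{q}_{w_1w_2}^{-1/2}\mathbf{q}_{w_1}^{1/2}\mathbf{q}_{w_2}^{1/2}\in C[\mathbf{q}_s]$ (the remark after \eqref{eq:product formula}), it is a nontrivial monomial in the $q_s$; each $q_s$ is a positive power of $p$, so the scalar lies in $pC$. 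Hence $T_{\lambda_P^+}(\mathcal{F}_k)\subset\mathcal{F}_{k-1}+p\mathcal{F}_k$ for every $k$.

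A direct induction on $j$ then yields $T_{\lambda_P^+}^{\,N+m}(I_Q(\sigma)/I)\subset p^m(I_Q(\sigma)/I)$ for all $m\ge 0$. Consequently $x_m=x_{m+N+j}\,T_{\lambda_P^+}^{\,N+j}\in p^{j}(I_Q(\sigma)/I)$ for every $j\ge 0$, and therefore $x_m\in\bigcap_{j}p^{j}(I_Q(\sigma)/I)\simeq\bigoplus_v\bigcap_jp^j\sigma$, which is zero by the hypothesis $\bigcap_np^n\sigma=0$. Thus every $x_m=0$ and $\varphi=0$.

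The main obstacle is verifying the key divisibility claim $\widetilde{q}(Q,n_{v_k}\cdot\lambda_P^+)\in pC$: one must identify this quantity not merely as a nonzero element of $C$ but as an honest (nonconstant) monomial in the $q_s$, which is where the integrality statement $\mathbf{q}_{w_1w_2}^{-1/2}\mathbf{q}_{w_1}^{1/2}\mathbf{q}_{w_2}^{1/2}\in C[\mathbf{q}_s]$ and the fact that $q_s\in pC$ (since $q_s$ is the index of an open subgroup of the pro-$p$ group $I(1)$) are both essential. Once this is in hand, the filtration argument and the separatedness hypothesis on $\sigma$ conclude the proof.
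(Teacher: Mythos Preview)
Your argument is correct and matches the paper's: both reduce via Proposition~\ref{prop:localization as Levi subalgebra} and Lemma~\ref{lem:image of E by j} to the action of $E_{o_-}(\lambda_P^+)=T_{\lambda_P^+}$, use the Bruhat filtration (Lemma~\ref{lem:Bruhat filtration via tensor product}) together with Lemma~\ref{lem:condition on q != 1, positive version} to see that on each subquotient this operator acts through a positive power of $p$, and conclude from $\bigcap_n p^n\sigma=0$. The only difference is organizational: the paper shows $\Hom_{C[X]}(C[X^{\pm 1}],\sigma_w)=0$ on each subquotient $\sigma_w$ and then uses left-exactness of $\Hom$, whereas you bound $T_{\lambda_P^+}^{\,N+m}$ on the whole quotient by a direct triangular-matrix induction.
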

\begin{proof}
Let $\lambda_P^+$ as in Proposition~\ref{prop:localization as Levi subalgebra}.
By Lemma~\ref{lem:image of E by j}, $j_P^+(E_{o_{-,P}}^P(\lambda_P^+)) = E_{o_-}(\lambda_P^+)$.
Hence by Proposition~\ref{prop:localization as Levi subalgebra}, we have
\[
\Hom_{(\mathcal{H}_P^+,j_P^+)}(\mathcal{H}_P,I_Q(\sigma)/I)
=
\Hom_{C[E_{o_-}(\lambda_P^+)]}(C[E_{o_-}(\lambda_P^+)^{\pm 1}],I_Q(\sigma)/I).
\]
Define a $C[E_{o_-}(\lambda_P^+)]$-module $\sigma_w$ by 
\[
\sigma_w(E_{o_-}(\lambda_P^+)) = \widetilde{q}(P,n_w\cdot \lambda)e_Q(\sigma)(E^Q_{o_{-,Q}}(n_w\cdot \lambda_P^+))
\]
on the same space as $\sigma$.
Then $I_Q(\sigma)/I$ has a $C[E_{o_-}(\lambda_P^+)]$-stable filtration whose subquotient is given by $\sigma_w$ where $w\in {}^QW_0\setminus {}^{P\cap Q}W_{0,P}$.
By Lemma~\ref{lem:condition on q != 1, positive version}, $\widetilde{q}(P,n_w\cdot \lambda)\ne 1$.
Hence it is a positive power of $p$.
Therefore the image of $\psi\in \Hom_{C[E_{o_-}(\lambda_P^+)]}(C[E_{o_-}(\lambda_P^+)^{\pm 1}],\sigma_w)$ is contained in $\bigcap_{n\ge 0}p^n\sigma_w$.
This is zero by the assumption.
\end{proof}

Hence to prove Lemma~\ref{lem:adjoint and induction, tilde version}, it is sufficient to prove 
\begin{equation}\label{eq:adjoint and induction, isom at cell}
\Hom_{(\mathcal{H}_P^+,j_P^+)}(\mathcal{H}_P,I)\simeq \Hom_{(\mathcal{H}_{P\cap Q}^{Q+},j_{P\cap Q}^{Q+})}(\mathcal{H}_{P\cap Q},\sigma)\otimes_{(\mathcal{H}_{P\cap Q}^{P+},j_{P\cap Q}^{P+})}\mathcal{H}_P.
\end{equation}
To construct a homomorphism from the right hand side of \eqref{eq:adjoint and induction, isom at cell} to the left hand side, it is sufficient to construct an $(\mathcal{H}_{P\cap Q}^{P+},j_{P\cap Q}^{P+})$-homomorphism $\Hom_{(\mathcal{H}_{P\cap Q}^{Q+},j_{P\cap Q}^{Q+})}(\mathcal{H}_{P\cap Q},\sigma)\to \Hom_{(\mathcal{H}_P^+,j_P^+)}(\mathcal{H}_P,I)$.
Let $\lambda_P^+$ as in Proposition~\ref{prop:localization as Levi subalgebra}.
By Lemma~\ref{lem:localization, middle positive/negative subalgebra}, we have
\[
\Hom_{(\mathcal{H}_P^+,j_P^+)}(\mathcal{H}_P,I)|_{(\mathcal{H}_{P\cap Q}^{P+},j_{P\cap Q}^{P+})}
\simeq
\Hom_{(\mathcal{H}_{P\cap Q}^{+},j_{P\cap Q}^{+})}(\mathcal{H}_{P\cap Q}^{P+},I).
\]
(Both sides are isomorphic to $\Hom_{C[E_{o_-}(\lambda_P^+)]}(C[E_{o_-}(\lambda_P^+)^{\pm 1}],I)$.)
Now the restriction to from $\mathcal{H}_{P\cap Q}$ to $\mathcal{H}_{P\cap Q}^{P+}$ gives a map $\Hom_{(\mathcal{H}_{P\cap Q}^{Q+},j_{P\cap Q}^{Q+})}(\mathcal{H}_{P\cap Q},\sigma)\to \Hom_{(\mathcal{H}_{P\cap Q}^{+},j_{P\cap Q}^{Q+})}(\mathcal{H}_{P\cap Q}^{P+},\sigma)$.
Combining the inclusion $\sigma = \sigma\otimes 1\hookrightarrow I$ which is $(\mathcal{H}^+_Q,j_Q^+)$-equivariant, we get 
\begin{multline*}
\Hom_{(\mathcal{H}_{P\cap Q}^{Q+},j_{P\cap Q}^{Q+})}(\mathcal{H}_{P\cap Q},\sigma)\to \Hom_{(\mathcal{H}_{P\cap Q}^{+},j_{P\cap Q}^{+})}(\mathcal{H}_{P\cap Q}^{P+},I)\\\simeq \Hom_{(\mathcal{H}_P^+,j_P^+)}(\mathcal{H}_P,I)
\end{multline*}
and it gives a homomorphism between \eqref{eq:adjoint and induction, isom at cell}.

We prove that this gives an isomorphism.
We have the decomposition
\[
I = \bigoplus_{v\in {}^{P\cap Q}W_{0,P}}\sigma\otimes T_{n_v}.
\]
We have $\langle \alpha,\nu(\lambda_P^+)\rangle = 0$ for any $\alpha\in\Delta_P$.
Hence the Bernstein relations~\cite[Corollary~5.43]{MR3484112} tells that $T_{n_v}E_{o_-}(\lambda_P^+) = E_{o_-}(n_v\cdot \lambda_P^+)T_{n_v} = E_{o_-}(\lambda_P^+)T_{n_v}$ for any $v\in W_{0,P}$.
(For the last part, recall that $\lambda_P^+$ is in the center of $W_P(1)$.)
Therefore each summand $\sigma\otimes T_{n_v}$ is stable under the action of $E_{o_-}(\lambda_P^+)$ and we have
\[
\Hom_{(\mathcal{H}_{P}^{+},j_{P}^{+})}(\mathcal{H}_{P},I)
\simeq
\bigoplus_{v\in {}^{P\cap Q}W_{0,P}}\Hom_{C[E_{o_-}(\lambda_P^+)]}(C[E_{o_-}(\lambda_P^+)^{\pm 1}],\sigma\otimes T_{n_v}).
\]
For $x\in \sigma$, from the above calculation we have $x\otimes T_{n_v}E_{o_-}(\lambda_P^+) = x\otimes E_{o_-}(\lambda_P^+)T_{n_v} = xE^Q_{o_{-,Q}}(\lambda_P^+)\otimes T_{n_v}$.
Hence each summand is isomorphic to $\Hom_{C[E^Q_{o_{-,Q}}(\lambda_P^+)]}(C[E^Q_{o_{-,Q}}(\lambda_P^+)^{\pm 1}],\sigma)$.
Therefore we have
\begin{equation}\label{eq:adjoint and parabolic, final form of RHS}
\Hom_{(\mathcal{H}_{P}^{+},j_{P}^{+})}(\mathcal{H}_{P},I)
\simeq
\bigoplus_{v\in {}^{P\cap Q}W_{0,P}}\Hom_{C[E^{Q}_{o_{-,Q}}(\lambda_P^+)]}(C[E^Q_{o_{-,Q}}(\lambda_P^+)^{\pm 1}],\sigma).
\end{equation}
On the other hand, the right hand side of \eqref{eq:adjoint and induction, isom at cell} is 
\begin{equation}\label{eq:adjoint and parabolic, final form of LHS}
\bigoplus_{v\in {}^{P\cap Q}W_{0,P\cap Q}}\Hom_{(\mathcal{H}_{P\cap Q}^{Q+},j_{P\cap Q}^{Q+})}(\mathcal{H}_{P\cap Q},\sigma)\otimes T_{n_v}.
\end{equation}
\begin{lem}\label{lem:two lambda^+'s}
The element $\lambda_P^+$ satisfies the condition of $\lambda_{P\cap Q}^{Q+}$ in Proposition~\ref{prop:localization as Levi subalgebra}.
\end{lem}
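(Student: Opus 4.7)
The plan is to verify the two conditions that define $\lambda^{Q+}_{P\cap Q}$ in Proposition~\ref{prop:localization as Levi subalgebra}, applied now with $P\cap Q$ playing the role of the parabolic and $Q$ playing the role of the ambient group. Concretely, we must check that $\lambda_P^+$ lies in $Z(W_{P\cap Q}(1))$ and that $\langle \alpha,\nu(\lambda_P^+)\rangle<0$ for every $\alpha\in\Sigma_Q^+\setminus\Sigma_{P\cap Q}^+$.

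First I would handle centrality. By construction $\lambda_P^+\in \Lambda(1)\subset W_{P\cap Q}(1)\subset W_P(1)$, and $\lambda_P^+$ lies in $Z(W_P(1))$, hence it commutes with every element of the subgroup $W_{P\cap Q}(1)$ and therefore belongs to $Z(W_{P\cap Q}(1))$. This step is immediate from the inclusion of the Levi Weyl groups and uses no new input.

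Next I would verify the inequality. Since the standard parabolic subgroups correspond to subsets of $\Delta$, we have $\Delta_{P\cap Q}=\Delta_P\cap\Delta_Q$, and any positive root in $\Sigma_P\cap\Sigma_Q$ is a non-negative integer combination of elements of $\Delta_P\cap\Delta_Q$ (the simple roots form a basis of the ambient root lattice), so $\Sigma_{P\cap Q}^+=\Sigma_P^+\cap\Sigma_Q^+$. Consequently $\Sigma_Q^+\setminus\Sigma_{P\cap Q}^+\subset\Sigma^+\setminus\Sigma_P^+$, and the defining inequality $\langle\alpha,\nu(\lambda_P^+)\rangle<0$ for $\alpha\in\Sigma^+\setminus\Sigma_P^+$ specialises to the required inequality on the smaller set.

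There is no real obstacle: both conditions are consequences of the defining properties of $\lambda_P^+$ and the elementary identity $\Sigma_{P\cap Q}^+=\Sigma_P^+\cap\Sigma_Q^+$. The lemma is included precisely because it allows one to interchange $\lambda_P^+$ and $\lambda_{P\cap Q}^{Q+}$ in the preceding computation \eqref{eq:adjoint and parabolic, final form of RHS}, identifying the localisation $\mathcal{H}_{P\cap Q}^{Q+}[E^Q_{o_{-,Q}}(\lambda_P^+)^{-1}]$ with $\mathcal{H}_{P\cap Q}$ via Proposition~\ref{prop:localization as Levi subalgebra}.
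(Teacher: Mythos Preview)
Your proof is correct and follows the same approach as the paper: both arguments verify centrality via $W_{P\cap Q}(1)\subset W_P(1)$ and deduce the strict inequality from the inclusion $\Sigma_Q^+\setminus\Sigma_{P\cap Q}^+\subset\Sigma^+\setminus\Sigma_P^+$. Your version adds a justification of $\Sigma_{P\cap Q}^+=\Sigma_P^+\cap\Sigma_Q^+$ that the paper leaves implicit, but otherwise the reasoning is identical.
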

\begin{proof}
The element $\lambda_P^+\in Z(W_P(1))$ satisfies $\langle \alpha,\nu(\lambda_P^+)\rangle < 0$ for any $\alpha\in\Sigma^+\setminus\Sigma_P^+$.
Hence $\langle \alpha,\nu(\lambda_P^+)\rangle < 0$ for any $\alpha\in\Sigma_Q^+\setminus\Sigma_{P\cap Q}^+$ and $\lambda_P^+\in Z(W_{P\cap Q}(1))$.
\end{proof}
Hence we have $\mathcal{H}_{P\cap Q} = \mathcal{H}_{P\cap Q}^{Q+}E_{o_{-,P\cap Q}}^{P\cap Q}(\lambda_P^+)^{-1}$.
Note that, by Proposition~\ref{lem:image of E by j}, we have $j_{P\cap Q}^{Q+}(E_{o_{-,P\cap Q}}^{P\cap Q}(\lambda_P^+)) = E_{o_{-,Q}}^{Q}(\lambda_P^+)$.
Therefore \eqref{eq:adjoint and parabolic, final form of LHS} is equal to 
\[
\bigoplus_{v\in {}^{P\cap Q}W_{0,P\cap Q}}\Hom_{C[E_{o_{-,Q}}^Q(\lambda_P^+)]}(C[E_{o_{-,Q}}^Q(\lambda_P^+)^{\pm 1}],\sigma)\otimes T_{n_v}.
\]
This is isomorphic to \eqref{eq:adjoint and parabolic, final form of RHS}.
This ends the proof of Lemma~\ref{lem:adjoint and induction, tilde version}.

\begin{cor}\label{cor:left adjoint and parabolic induction}
Assume that $p$ is nilpotent in $C$.
Then we have $L_P\circ I_Q\simeq I_P^{P\cap Q}\circ L_{P\cap Q}^P$.
\end{cor}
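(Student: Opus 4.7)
The plan is to deduce this from Proposition~\ref{prop:parabolic induction and its adjoint} by a Yoneda/adjunction argument, using the two adjunctions $L_P\dashv I_P$ and $I_P\dashv R_P$ together with the symmetric statement of the proposition for $R_Q\circ I_P$. Note that since $L_{P\cap Q}^Q$ sends $\mathcal{H}_Q$-modules to $\mathcal{H}_{P\cap Q}$-modules and $I_{P\cap Q}^P$ sends $\mathcal{H}_{P\cap Q}$-modules to $\mathcal{H}_P$-modules, the composition $I_{P\cap Q}^P\circ L_{P\cap Q}^Q$ is the functor on the right-hand side (the superscripts in the statement should be read with this functoriality in mind).

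First I would observe that if $p$ is nilpotent in $C$, then every $\mathcal{H}$-module $\rho$ satisfies $\bigcap_{n\ge 0}p^n\rho=0$, so Proposition~\ref{prop:parabolic induction and its adjoint} applies to every $\rho$ (with the roles of $P$ and $Q$ swapped, using the symmetry of $P\cap Q$):
\[
R_Q\circ I_P(\rho)\simeq I_{P\cap Q}^Q\circ R_{P\cap Q}^P(\rho).
\]
Given an $\mathcal{H}_Q$-module $\sigma$ and an $\mathcal{H}_P$-module $\rho$, I would then chain together the four natural bijections
\begin{align*}
\Hom_{\mathcal{H}_P}(L_P\circ I_Q(\sigma),\rho)
& \simeq \Hom_{\mathcal{H}}(I_Q(\sigma),I_P(\rho))\\
& \simeq \Hom_{\mathcal{H}_Q}(\sigma,R_Q\circ I_P(\rho))\\
& \simeq \Hom_{\mathcal{H}_Q}(\sigma,I_{P\cap Q}^Q\circ R_{P\cap Q}^P(\rho))\\
& \simeq \Hom_{\mathcal{H}_{P\cap Q}}(L_{P\cap Q}^Q(\sigma),R_{P\cap Q}^P(\rho))\\
& \simeq \Hom_{\mathcal{H}_P}(I_{P\cap Q}^P\circ L_{P\cap Q}^Q(\sigma),\rho),
\end{align*}
where the first, second, fourth, and fifth lines are the adjunctions $L_P\dashv I_P$, $I_Q\dashv R_Q$, $L_{P\cap Q}^Q\dashv I_{P\cap Q}^Q$, $I_{P\cap Q}^P\dashv R_{P\cap Q}^P$ respectively, and the third line is Proposition~\ref{prop:parabolic induction and its adjoint}. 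Since this composite bijection is natural in $\rho$, the Yoneda lemma yields the claimed isomorphism $L_P\circ I_Q(\sigma)\simeq I_{P\cap Q}^P\circ L_{P\cap Q}^Q(\sigma)$, and naturality in $\sigma$ follows automatically.

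There is no real obstacle beyond bookkeeping: the only nontrivial input is Proposition~\ref{prop:parabolic induction and its adjoint}, and the hypothesis on $p$ is exactly what is needed so that the proposition applies to an arbitrary test object $\rho$ (making Yoneda available). One should verify that the isomorphism provided by Proposition~\ref{prop:parabolic induction and its adjoint} is natural, but this is transparent from the proof (in Section~4.2) since it is constructed by applying the Hom functor to a canonical short exact sequence and invoking the vanishing of $\Hom_{(\mathcal{H}_P^+,j_P^+)}(\mathcal{H}_P,I_Q(\sigma)/I)$ on each cell.
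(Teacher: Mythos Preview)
Your proof is correct and is essentially the same as the paper's: the paper applies Proposition~\ref{prop:parabolic induction and its adjoint} with the roles of $P$ and $Q$ swapped (using that $p$ nilpotent makes the hypothesis hold for every module) and then simply says ``taking the left adjoint functors of both sides, we get the corollary.'' Your chain of adjunction bijections and appeal to Yoneda is exactly the standard way to make that one-liner precise.
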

\begin{proof}
If $p$ is nilpotent, then any module satisfies $\bigcap_n p^n\sigma = 0$.
Hence we have $R_P\circ I_Q\simeq I_{P\cap Q}^P\circ R_{P\cap Q}^P$.
Taking the left adjoint functors of the both sides, we get the corollary.
\end{proof}

\subsection{$L_P$ and Steinberg modules}
In this section, we fix a parabolic subgroup $P\subset G$ such that $\Delta_P$ and $\Delta\setminus \Delta_P$ are orthogonal to each other.
Let $\sigma$ be an $\mathcal{H}_P$-module which has the extension $e(\sigma)$ to $\mathcal{H}$.
Then we have an $\mathcal{H}$-module $\St_Q(\sigma)$ for $Q\supset P$.
Let $R$ be another parabolic subgroup.

\begin{lem}\label{lem:left adjoint of extension}
We have $L_R(e_G(\sigma)) \simeq e_R(L^P_{P\cap R}(\sigma))$.
\end{lem}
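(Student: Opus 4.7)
Let $P_2$ be the parabolic corresponding to $\Delta\setminus\Delta_P$, and let $P_3\subset P_2$ be defined by $\Delta_{P_3}=\Delta_R\setminus\Delta_P=\Delta_R\cap\Delta_{P_2}$. Orthogonality of $\Delta_P$ and $\Delta_{P_2}$ then gives $\Delta_R=\Delta_{P\cap R}\sqcup\Delta_{P_3}$ (orthogonal pieces), $W_{0,R}=W_{0,P\cap R}\times W_{0,P_3}$, and
\[
\Sigma^+\setminus\Sigma_R^+=(\Sigma_P^+\setminus\Sigma_{P\cap R}^+)\sqcup(\Sigma_{P_2}^+\setminus\Sigma_{P_3}^+).
\]
A direct check using Remark~\ref{rem:value at Lambda_aff,P_2} together with the hypothesis $\Delta(\sigma)=\Delta$ shows that $\Delta_R\subset\Delta(L_{P\cap R}^P(\sigma))$, so $e_R(L_{P\cap R}^P(\sigma))$ is defined. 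The strategy is to realize $L_R(e_G(\sigma))=e_G(\sigma)E_{o_-}(\lambda_R^-)^{-1}$ via a cleverly factored choice of $\lambda_R^-$ and match the result against $e_R(L_{P\cap R}^P(\sigma))$ through its characterization.

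First I would choose $\lambda_{P\cap R}^{P-}\in Z(W_{P\cap R}(1))$ strictly positive on $\Sigma_P^+\setminus\Sigma_{P\cap R}^+$ (Proposition~\ref{prop:localization as Levi subalgebra} applied to $P\cap R\subset P$), together with $\mu\in Z(W_{P_3}(1))\cap W_{\aff,P_2}(1)$ strictly positive on $\Sigma_{P_2}^+\setminus\Sigma_{P_3}^+$. The third condition is arranged by raising the central element for $P_3\subset P_2$ from Proposition~\ref{prop:localization as Levi subalgebra} to a sufficient power so that its image under $\nu$ lands in the coroot lattice of $\Sigma_{P_2}'$. Orthogonality forces $\lambda_R^-:=\lambda_{P\cap R}^{P-}\mu$ to lie in $Z(W_R(1))$ with the strict positivity required. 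Since $\nu(\lambda_{P\cap R}^{P-})$ and $\nu(\mu)$ share a common closed (dominant) Weyl chamber, Lemma~\ref{lem:length, on lambda} gives $\ell(\lambda_R^-)=\ell(\lambda_{P\cap R}^{P-})+\ell(\mu)$, and \eqref{eq:product formula} yields
\[
E_{o_-}(\lambda_R^-)=E_{o_-}(\lambda_{P\cap R}^{P-})\,E_{o_-}(\mu).
\]
By \eqref{eq:E_o for lambda} and dominance, both factors are of the form $T_{\cdot}^*$: the first acts on $e_G(\sigma)$ by $\sigma(T_{\lambda_{P\cap R}^{P-}}^{P*})=\sigma(E_{o_{-,P}}^P(\lambda_{P\cap R}^{P-}))$, while the second acts as $1$ because $\mu\in W_{\aff,P_2}(1)$. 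Hence on the underlying $C$-module, $L_R(e_G(\sigma))$ is the same localization as $L_{P\cap R}^P(\sigma)=\sigma E_{o_{-,P}}^P(\lambda_{P\cap R}^{P-})^{-1}$.

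Finally I would verify the $\mathcal{H}_R$-action matches that of $e_R(L_{P\cap R}^P(\sigma))$ via its characterization. For $w\in W_{P\cap R}^{R-}(1)$, Corollary~\ref{cor:image by j, positie and negative case} gives $j_R^{-*}(T_w^{R*})=T_w^*$, and $e_G(\sigma)(T_w^*)=\sigma(T_w^{P*})$, which transports under the localization to $L_{P\cap R}^P(\sigma)(T_w^{(P\cap R)*})$; for $w\in W_{\aff,P_3}(1)\subset W_{\aff,P_2}(1)$, both sides assign $1$ to $T_w^{R*}$. These are exactly the defining conditions of $e_R$, so $L_R(e_G(\sigma))\simeq e_R(L_{P\cap R}^P(\sigma))$. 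The main obstacle I anticipate is the opening factorization step, in particular producing $\mu$ that is simultaneously central in $W_{P_3}(1)$, strictly positive on $\Sigma_{P_2}^+\setminus\Sigma_{P_3}^+$, and in $W_{\aff,P_2}(1)$; once this is secured, the remaining identification is essentially bookkeeping with the characterizing properties of the extension.
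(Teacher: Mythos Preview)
Your factorization strategy has a genuine gap at the step ``Orthogonality forces $\lambda_R^-:=\lambda_{P\cap R}^{P-}\mu$ to lie in $Z(W_R(1))$ with the strict positivity required.'' Orthogonality of $\Delta_P$ and $\Delta_{P_2}$ operates at the level of $W_0$ and $\nu$, not at the level of $W(1)$: for $w\in W_{0,P_3}$ you have $w(\nu(\lambda_{P\cap R}^{P-}))=\nu(\lambda_{P\cap R}^{P-})$ only if $\nu(\lambda_{P\cap R}^{P-})$ happens to be orthogonal to $\Sigma_{P_2}$, and even then $n_w\cdot\lambda_{P\cap R}^{P-}=\lambda_{P\cap R}^{P-}$ is not automatic. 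Nothing in the construction of $\lambda_{P\cap R}^{P-}$ (Proposition~\ref{prop:localization as Levi subalgebra} applied inside $P$) constrains its pairing with $\Sigma_{P_2}$, so strict positivity on $\Sigma_{P_2}^+\setminus\Sigma_{P_3}^+$ can fail, the ``same closed Weyl chamber'' claim used to factor $E_{o_-}(\lambda_R^-)$ need not hold, and $\lambda_{P\cap R}^{P-}$ need not be $P$-negative, which you implicitly use to evaluate $e_G(\sigma)(E_{o_-}(\lambda_{P\cap R}^{P-}))$ via $\sigma$. A second gap is in the verification: for $w\in W_{P\cap R}^{R-}(1)$ you invoke $j_R^{-*}(T_w^{R*})=T_w^*$, but $w$ need not be $R$-negative in $G$, so $j_R^{-*}$ is not defined on $T_w^{R*}$ directly; one must first render $w$ $R$-negative by multiplying by a power of $\lambda_R^-$.

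The paper avoids all of this by going in the opposite direction: rather than building $\lambda_R^-$ from $\lambda_{P\cap R}^{P-}$ and an auxiliary $\mu$, it takes any $\lambda_R^-$ as in Proposition~\ref{prop:localization as Levi subalgebra} and observes (in the spirit of Lemma~\ref{lem:two lambda^+'s}) that it already satisfies the conditions for $\lambda_{P\cap R}^{P-}$, since $\Sigma_P^+\setminus\Sigma_{P\cap R}^+\subset\Sigma^+\setminus\Sigma_R^+$ and $Z(W_R(1))\subset Z(W_{P\cap R}(1))$. Because $\lambda_R^-$ is dominant it is $P$-negative, so $e_G(\sigma)(E_{o_-}(\lambda_R^-))=\sigma(E^P_{o_{-,P}}(\lambda_R^-))$ directly, giving the vector-space identification in one step. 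The equivariance check and the computation of $T_w^{R*}$ for $w\in W_{\aff,P_2\cap R}(1)$ then proceed by the same ``multiply by a power of $\lambda_R^-$ to land in the negative cone'' trick. Your approach could likely be repaired by passing to sufficiently high powers and choosing $\lambda_{P\cap R}^{P-}$ with additional constraints on $\Sigma_{P_2}$, but the paper's route is both shorter and avoids these delicate choices.
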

\begin{proof}
Let $\lambda_R^-$ as in Proposition~\ref{prop:localization as Levi subalgebra}.
Then by Lemma~\ref{lem:two lambda^+'s}, this satisfies the condition of $\lambda_{R\cap P}^{P-}$.
Hence we have $L_{P\cap R}^P(\sigma) = \sigma E^P_{o_{-,P}}(\lambda_R^-)^{-1}$.
By the definition, $\lambda_R^-$ is dominant, hence in particular, it is $P$-negative.
Hence $e_G(\sigma)(E_{o_-}(\lambda_R^-)) = e_G(\sigma)(j_P^{-*}(E_{o_{-,P}}^P(\lambda_R^-))) = \sigma(E_{o_{-,P}}^P(\lambda_R^-))$ by Lemma~\ref{lem:image of E by j}.
Therefore, as vector spaces, we have
\[
L_R(e_G(\sigma)) = e_G(\sigma)E_{o_-}(\lambda_R^-)^{-1} \simeq \sigma E_{o_{-,P}}^P(\lambda_R^-)^{-1} = L_{P\cap R}^P(\sigma).
\]
Namely, the linear map defined by $e_G(\sigma)\otimes_{(\mathcal{H}_R^-,j_R^{-*})}\mathcal{H}_R\ni x\otimes E^R_{o_{-,R}}(\lambda_R^-)^{-n}\mapsto x\otimes E^{P\cap R}_{o_{-,P\cap R}}(\lambda_R^-)^{-n}\in \sigma\otimes_{(\mathcal{H}_{P\cap R}^{P-},j_{P\cap R}^{P-*})}\mathcal{H}_{P\cap R}$ is an isomorphism.

We prove that this map is $(\mathcal{H}_{P\cap R}^{R-},j_{P\cap R}^{R-*})$-equivariant.
Let $w\in W_{P\cap R}^{R}(1)$ and take $k\in\Z_{\ge 0}$ such that $w(\lambda_R^-)^k$ is $R$-negative.
Since the length of $\lambda_R^-$ as an element of $W_R(1)$ is zero, we have $T_w^{R*}E^R_{o_{-,R}}(\lambda_R^-)^k = T_w^{R*}T_{(\lambda_R^-)^k}^{R*} = T_{w(\lambda_R^-)^k}^{R*}$.
We also have that $E^R_{o_{-,R}}(\lambda_R^-)$ is in the center of $\mathcal{H}_R^-$.
Replacing $R$ with $P\cap R$, we also have $T_w^{(P\cap R)*}E^{P\cap R}_{o_{-,P\cap R}}(\lambda_R^-)^k = T^{(P\cap R)*}_{w(\lambda_R^-)^k}$.
Hence for $x\in \sigma$, we have
\begin{align*}
e_G(\sigma)\otimes_{(\mathcal{H}_R^-,j_R^{-*})}\mathcal{H}_R& \ni
x\otimes E^R_{o_{-,R}}(\lambda_R^-)^{-n}T_{w}^{R*}\\
& = x\otimes T_{w}^{R*}E^R_{o_{-,R}}(\lambda_R^-)^{-n}\\
& = x\otimes T_{w(\lambda_R^-)^k}^{R*}E^R_{o_{-,R}}(\lambda_R^-)^{-(n + k)}\\
& = xe_G(\sigma)(T_{w(\lambda_R^-)^k}^{*})\otimes E^R_{o_{-,R}}(\lambda_R^-)^{-(n + k)}\\
& = x\sigma(T_{w(\lambda_R^-)^k}^{P*})\otimes E^R_{o_{-,R}}(\lambda_R^-)^{-(n + k)}\\
& \mapsto x\sigma(T_{w(\lambda_R^-)^k}^{P*})\otimes E^{P\cap R}_{o_{-,P\cap R}}(\lambda_R^-)^{-(n + k)}\\
& = x\otimes T_{w(\lambda_R^-)^k}^{(P\cap R)*}E^{P\cap R}_{o_{-,P\cap R}}(\lambda_R^-)^{-(n + k)}\\
& = x\otimes E^{P\cap R}_{o_{-,P\cap R}}(\lambda_R^-)^{-n}T_{w}^{(P\cap R)*}\\
& \in \sigma\otimes_{(\mathcal{H}_{P\cap R}^{P-},j_{P\cap R}^{P-*})}\mathcal{H}_{P\cap R}.
\end{align*}
Therefore the above homomorphism is $(\mathcal{H}_{P\cap R}^{R-},j_{P\cap R}^{R-*})$-equivariant.

Let $P_2$ be a parabolic subgroup corresponding to $\Delta\setminus\Delta_P$.
Fix $w\in W_{P_2\cap R,\aff}(1)$ and take $v\in W_0$ and $\lambda\in \Lambda(1)$ such that $w = n_v\lambda$.
Then we have $\nu(\lambda) \in \R(\Delta_{P_2}^\vee\cap \Delta_R^\vee)$.
In particular, $\langle \alpha,\nu(\lambda)\rangle = 0$ for any $\alpha\in\Delta_P$.
Take a dominant $\mu \in Z(W_R(1))\cap W_{P_2,\aff}(1)$ such that $\langle \alpha,\nu(\mu)\rangle = 0$ for any $\alpha\in \Delta_R\cup \Delta_P$ and $\langle \alpha,\nu(\mu)\rangle$ is sufficiently large for $\alpha\in\Delta\setminus(\Delta_R\cup \Delta_P) = \Delta_{P_2}\setminus\Delta_R$.
Then $\langle \alpha,\nu(\lambda\mu)\rangle = 0$ for any $\alpha\in\Delta_P$.
For $\alpha\in\Sigma_{P_2}^+\setminus\Sigma_R^+$, we can take $\mu$ such that $\langle \alpha,\nu(\lambda\mu)\rangle \ge 0$.
For such $\mu$, we have $\langle \alpha,\nu(\lambda\mu)\rangle\ge 0$ for any $\alpha\in\Sigma^+_P\cup (\Sigma^+_{P_2}\setminus\Sigma_R^+) = \Sigma^+\setminus\Sigma_R^+$.
Namely we can take $\mu$ such that $\lambda \mu$ is $R$-negative.
Since $w\mu$ is $R$-negative, for $x\in\sigma$, we have
\[
x\otimes E^R_{o_{-,R}}(\lambda_R^-)^{-n}T_{w\mu}^{R*} = xe_G(\sigma)(T_{w\mu}^{*})\otimes E_{o_{-,R}}^R(\lambda_R^-)^{-n}
\in e_G(\sigma)\otimes_{(\mathcal{H}_R^-,j_{R}^{-*})}\mathcal{H}_R.
\]
Since $w\mu \in W_{P_2,\aff}(1)$, $e_G(\sigma)(T_{w\mu}^{*}) = 1$.
Hence $L_R(e_G(\sigma))(T_{w\mu}^{R*}) = 1$.
In particular, by taking $w = 1$, we have $L_R(e_G(\sigma))(T_{\mu}^{R*}) = 1$.
Since $\mu \in Z(W_R(1))$, we have $T_{w\mu}^{R*} = T_{w}^{R*}T_{\mu}^{R*}$.
Therefore we get $L_R(e_G(\sigma))(T_w^{R*}) = 1$.
By the characterization of $e_R(L_{P\cap R}^P(\sigma))$, we get the lemma.
\end{proof}

\begin{prop}\label{prop:left adj of St}
Assume that $p$ is nilpotent in $C$.
We have
\[
	L_R(\St_Q(\sigma)) = 
	\begin{cases}
	\St^R_{Q\cap R}(L_{P\cap R}^P(\sigma)) & (\Delta_Q\cup \Delta_R = \Delta),\\
	0 & (\text{otherwise}).
	\end{cases}
\]
\end{prop}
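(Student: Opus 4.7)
The plan is to apply the exact functor $L_R$ (a localization) to the defining right exact sequence
\[
\bigoplus_{Q_1\supsetneq Q} I_{Q_1}(e_{Q_1}(\sigma))\longrightarrow I_Q(e_Q(\sigma))\longrightarrow \St_Q(\sigma)\longrightarrow 0
\]
and identify each term over $\mathcal{H}_R$. Writing $\tau = L_{P\cap R}^P(\sigma)$, I would first invoke Corollary~\ref{cor:left adjoint and parabolic induction} (for which $p$ nilpotent is essential) to get $L_R\circ I_{Q_1}\simeq I_{R\cap Q_1}^R\circ L_{R\cap Q_1}^{Q_1}$, and then Lemma~\ref{lem:left adjoint of extension} applied inside $\mathcal{H}_{Q_1}$ (with the roles of $G$ and $R$ played by $Q_1$ and $R\cap Q_1$, and using $P\cap R\cap Q_1 = P\cap R$ since $P\subset Q\subset Q_1$) to obtain $L_{R\cap Q_1}^{Q_1}(e_{Q_1}(\sigma))\simeq e_{R\cap Q_1}(\tau)$. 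Altogether, the sequence becomes
\[
\bigoplus_{Q_1\supsetneq Q} I_{R\cap Q_1}^R(e_{R\cap Q_1}(\tau))\longrightarrow I_{R\cap Q}^R(e_{R\cap Q}(\tau))\longrightarrow L_R(\St_Q(\sigma))\longrightarrow 0,
\]
and by naturality the left map is a direct sum of the standard Steinberg-type inclusions. Note that $\tau$ extends to $\mathcal{H}_R$ by Lemma~\ref{lem:left adjoint of extension} applied to $R$ itself, so $R\subset P(\tau)$ and $\St_{R\cap Q}^R(\tau)$ is well-defined.

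Next I perform the case split. If $\Delta_Q\cup\Delta_R\ne\Delta$, pick $\alpha\in\Delta\setminus(\Delta_Q\cup\Delta_R)$ and let $Q_1$ correspond to $\Delta_Q\cup\{\alpha\}$. Since $\alpha\notin\Delta_R$, $\Delta_{R\cap Q_1} = \Delta_R\cap\Delta_{Q_1} = \Delta_R\cap\Delta_Q = \Delta_{R\cap Q}$, so $R\cap Q_1 = R\cap Q$. Therefore the $Q_1$-summand on the left is already $I_{R\cap Q}^R(e_{R\cap Q}(\tau))$, and by naturality the restriction of the left map to this summand is the identity. The cokernel is thus zero.

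If $\Delta_Q\cup\Delta_R = \Delta$, I would show the image of the left map coincides with the image $\sum_{R\supseteq Q_2\supsetneq R\cap Q} I_{Q_2}^R(e_{Q_2}(\tau))$ used to define $\St_{R\cap Q}^R(\tau)$. For any $Q_1\supsetneq Q$, the corresponding image is $I_{R\cap Q_1}^R(e_{R\cap Q_1}(\tau))$ with $R\cap Q_1\supsetneq R\cap Q$ (since $\Delta_{Q_1}\setminus\Delta_Q\subset \Delta\setminus\Delta_Q\subset\Delta_R$). Conversely, for any $Q_2$ with $R\cap Q\subsetneq Q_2\subset R$, taking $Q_1$ to correspond to $\Delta_Q\cup(\Delta_{Q_2}\setminus\Delta_{R\cap Q})$ gives $R\cap Q_1 = Q_2$ and $Q_1\supsetneq Q$, realizing $I_{Q_2}^R(e_{Q_2}(\tau))$. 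Hence the two images coincide, yielding $L_R(\St_Q(\sigma))\simeq \St_{R\cap Q}^R(\tau)$. The main obstacle will be verifying the naturality claim: the identifications $L_R(I_{Q_1}(e_{Q_1}(\sigma)))\simeq I_{R\cap Q_1}^R(e_{R\cap Q_1}(\tau))$ must intertwine the Steinberg inclusion $I_{Q_1}(e_{Q_1}(\sigma))\hookrightarrow I_Q(e_Q(\sigma))$ with the corresponding inclusion $I_{R\cap Q_1}^R(e_{R\cap Q_1}(\tau))\hookrightarrow I_{R\cap Q}^R(e_{R\cap Q}(\tau))$ over $\mathcal{H}_R$. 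This should reduce to tracing the explicit formulas in Corollary~\ref{cor:left adjoint and parabolic induction} and Lemma~\ref{lem:left adjoint of extension} together with the characterization of $e_{R\cap Q_1}(\tau)$ and the inclusion $e_{Q_1}(\sigma)\hookrightarrow I_Q^{Q_1}(e_Q(\sigma))$ induced by transitivity of parabolic induction.
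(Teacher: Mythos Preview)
Your proposal is correct and follows essentially the same route as the paper: apply the exact functor $L_R$ to the defining presentation of $\St_Q(\sigma)$, use Corollary~\ref{cor:left adjoint and parabolic induction} and Lemma~\ref{lem:left adjoint of extension} to rewrite each term as $I_{R\cap Q_1}^R(e_{R\cap Q_1}(L_{P\cap R}^P(\sigma)))$, and then perform the same combinatorial case split on whether $\Delta_Q\cup\Delta_R=\Delta$. The paper's proof is in fact terser than yours---it simply asserts the bijection $\{Q'\cap R\mid Q'\supsetneq Q\}=\{Q''\mid R\supset Q''\supsetneq Q\cap R\}$ and does not comment on the naturality issue you flag; so your explicit verification of that bijection and your awareness of the compatibility of the identifications with the Steinberg inclusions are, if anything, more careful than what the paper records.
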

\begin{proof}
We have an exact sequence
\[
	\bigoplus_{Q'\supsetneq Q}I_{Q'}(e_{Q'}(\sigma))\to I_Q(e_Q(\sigma))\to \St_Q(\sigma)\to 0.
\]
Since $L_R$ is exact, we have
\[
	\bigoplus_{Q'\supsetneq Q}L_R(I_{Q'}(e_{Q'}(\sigma)))\to L_R(I_Q(e_Q(\sigma)))\to L_R(\St_Q(\sigma))\to 0.
\]
By Corollary~\ref{cor:left adjoint and parabolic induction}, we have
\[
	\bigoplus_{Q'\supsetneq Q}I_{Q'\cap R}^R(L_{Q'\cap R}^{Q'}(e_{Q'}(\sigma)))\to I_{Q\cap R}^R(L_{Q\cap R}^Q(e_Q(\sigma)))\to L_R(\St_Q(\sigma))\to 0.
\]
By Lemma~\ref{lem:left adjoint of extension}, we have
\[
	\bigoplus_{Q'\supsetneq Q}I_{Q'\cap R}^R(e_{Q'\cap R}(L_{P\cap R}^P(\sigma))) \to I_{Q\cap R}^R(e_{Q\cap R}(L_{P\cap R}^P(\sigma)))\to L_R(\St_Q(\sigma))\to 0.
\]
Hence if there exists $Q'\supsetneq Q$ such that $Q'\cap R = Q\cap R$, then $L_R(\St_Q(\sigma)) = 0$.
Such $Q'$ exists if and only if $\Delta_Q\cup \Delta_R \ne \Delta$.
Therefore, if $\Delta_Q\cup \Delta_R \ne \Delta$, then $L_R(\St_Q(\sigma)) = 0$.
If $\Delta_Q\cup \Delta_R = \Delta$ then $\{Q'\cap R\mid Q'\supsetneq Q\} = \{Q''\mid R\supset Q''\supsetneq Q\cap R\}$.
Therefore we get $L_R(\St_Q(\sigma))\simeq \St^R_{Q\cap R}(L_{P\cap R}^P(\sigma))$.
\end{proof}

\subsection{$R_P$ and an exactness}
In the next subsection, we will prove the following proposition.
\begin{prop}\label{prop:right adj of St}
Let $P$ be a parabolic subgroup, $\sigma$ an $\mathcal{H}_P$-module which has the extension to $\mathcal{H}$ and $Q$ a parabolic subgroup containing $P$.
Assume that $\bigcap_{n\in\Z_{\ge 0}}p^n\sigma = 0$.
Then for any parabolic subgroup $R$, we have
\[
R_R(\St_Q(\sigma)) =
\begin{cases}
 \St^R_{Q\cap R}(R_{P\cap R}^P(\sigma)) & (\Delta_Q = \Delta_{Q\cap R}\cup \Delta_P),\\
 0 & (\text{otherwise}).
\end{cases}
\]
\end{prop}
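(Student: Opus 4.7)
The proof strategy parallels that of Proposition~\ref{prop:left adj of St}, but requires modifications since $R_R$ is only left exact (not exact like $L_R$) and the direct analog of Lemma~\ref{lem:left adjoint of extension} fails in characteristic $p$ in general. The central new input is a case-by-case computation of $R_{Q_1\cap R}^{Q_1}(e_{Q_1}(\sigma))$. In the zero case $\Delta_Q\ne\Delta_{Q\cap R}\cup\Delta_P$, pick $\alpha\in\Delta_Q\setminus(\Delta_R\cup\Delta_P)$; the condition $\alpha\notin\Delta_P$ places $s_\alpha$ in $S_{\aff,P_2}(1)$, so $e_Q(\sigma)(T^Q_{s_\alpha})=q_{s_\alpha}$ acts as scalar multiplication by a positive power of $p$. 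Simultaneously $\alpha\notin\Delta_R$ puts $\alpha$ in the localization direction of $R^Q_{Q\cap R}$, so the central localizing element $\lambda_{(Q\cap R)'}^+$ has $\nu$-image with nontrivial $\alpha$-component, and any reduced expression in $W_Q(1)$ must contain a simple reflection from $S_{\aff,P_2}(1)$. Writing $T^Q_{\lambda_{(Q\cap R)'}^+}$ as the corresponding product shows its action on $e_Q(\sigma)$ factors through a scalar in $pC$, so the inverse-system description of $R^Q_{Q\cap R}$ combined with $\bigcap_n p^n\sigma=0$ yields $R^Q_{Q\cap R}(e_Q(\sigma))=0$. By Proposition~\ref{prop:parabolic induction and its adjoint}, $R_R(I_Q(e_Q(\sigma)))=I^R_{Q\cap R}(R^Q_{Q\cap R}(e_Q(\sigma)))=0$, and left exactness of $R_R$ then forces $R_R(\St_Q(\sigma))\hookrightarrow R_R(I_Q(e_Q(\sigma)))=0$.

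In the nonzero case $\Delta_Q=\Delta_{Q\cap R}\cup\Delta_P$, equivalently $\Delta_{P_2}\subseteq\Delta_R$, the localization direction $\Delta_Q\setminus\Delta_{Q\cap R}=\Delta_P\setminus\Delta_R$ of $R^Q_{Q\cap R}$ coincides with that of $R^P_{P\cap R}$. Under this compatibility I would prove
\[
R^{Q_1}_{Q_1\cap R}(e_{Q_1}(\sigma))\simeq e_{Q_1\cap R}(R^P_{P\cap R}(\sigma))
\]
for every $P\subseteq Q_1\subseteq G$ with $\Delta_{Q_1}\setminus\Delta_P\subseteq\Delta_R$, dualizing Lemma~\ref{lem:left adjoint of extension} and using $\bigcap_n p^n\sigma=0$ to match underlying vector spaces through the inverse-system description of $R$. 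Combined with Proposition~\ref{prop:parabolic induction and its adjoint}, this yields $R_R(I_{Q_1}(e_{Q_1}(\sigma)))\simeq I^R_{Q_1\cap R}(e_{Q_1\cap R}(R^P_{P\cap R}(\sigma)))$ for such $Q_1$; for the remaining $Q_1$'s (those with $\Delta_{Q_1}\setminus\Delta_P\not\subseteq\Delta_R$) the zero-case argument applied to $Q_1$ gives $R_R(I_{Q_1}(e_{Q_1}(\sigma)))=0$. One then checks that the assignment $Q_1\mapsto Q_1\cap R$, restricted to non-vanishing $Q_1$'s, is a bijection onto $\{Q_2:Q\cap R\subsetneq Q_2\subseteq R\}$, so the resulting cokernel presentation matches the defining one for $\St^R_{Q\cap R}(R^P_{P\cap R}(\sigma))$.

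The main obstacle is that $R_R$ is only left exact, so applying it to the defining presentation
\[
\bigoplus_{Q_1\supsetneq Q}I_{Q_1}(e_{Q_1}(\sigma))\to I_Q(e_Q(\sigma))\to\St_Q(\sigma)\to 0
\]
a priori yields only a left exact sequence. In the zero case this causes no trouble since $R_R(I_Q(e_Q(\sigma)))=0$ already, but in the nonzero case one must upgrade to an honest right exact presentation. I would do so using the Bruhat-cell filtration from Section~\ref{sec:A ftilration on parabolic inductions} (specifically Lemma~\ref{lem:successive quotient of filtration on St} together with Remark~\ref{rem:exactness on each Bruhat cell}), showing that $R_R$ acts exactly on each successive subquotient of the filtration (each of which reduces to a localization question), so right exactness of the full sequence propagates.
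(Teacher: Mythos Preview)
Your overall strategy matches the paper's: compute $R^{Q_1}_{Q_1\cap R}(e_{Q_1}(\sigma))$ case by case (this is the paper's Lemma~\ref{lem:extension and right adjoint}), combine with Proposition~\ref{prop:parabolic induction and its adjoint}, and identify the resulting cokernel presentation. The bijection $Q_1\mapsto Q_1\cap R$ you describe is correct, and the plan to use the Bruhat-cell filtration to establish right exactness is exactly what the paper does in Lemma~\ref{lem:R preserves the exactness, definition of Steinberg}.

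There is, however, a genuine gap in your treatment of the zero case. You write that ``left exactness of $R_R$ then forces $R_R(\St_Q(\sigma))\hookrightarrow R_R(I_Q(e_Q(\sigma)))=0$.'' This is false: left exactness means $R_R$ preserves injections, but $\St_Q(\sigma)$ is a \emph{quotient} of $I_Q(e_Q(\sigma))$, not a subobject. A left exact functor applied to a surjection $M\twoheadrightarrow N$ gives only a map $R_R(M)\to R_R(N)$ with no exactness on the right; there is no reason for $R_R(N)$ to embed in $R_R(M)$, and indeed simple examples over $C[X]$ show that $R_R(M)=0$ need not imply $R_R(N)=0$ for a quotient $N$ of $M$. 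So knowing $R_R(I_Q(e_Q(\sigma)))=0$ does \emph{not} by itself give $R_R(\St_Q(\sigma))=0$.

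The fix is that the exactness statement you sketch for the nonzero case (via the Bruhat-cell filtration) is needed in \emph{both} cases: once you know that
\[
\bigoplus_{Q_1\supsetneq Q}R_R(I_{Q_1}(e_{Q_1}(\sigma)))\to R_R(I_Q(e_Q(\sigma)))\to R_R(\St_Q(\sigma))\to 0
\]
is exact, the vanishing of the middle term immediately gives $R_R(\St_Q(\sigma))=0$. This is exactly how the paper proceeds: it proves Lemma~\ref{lem:R preserves the exactness, definition of Steinberg} uniformly (and in fact without the hypothesis $\bigcap_n p^n\sigma=0$), and then both cases of Proposition~\ref{prop:right adj of St} follow at once. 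Your claim that ``in the zero case this causes no trouble'' should therefore be dropped, and the filtration argument extended to cover it.
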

The proof of this proposition is similar to that of Proposition~\ref{prop:left adj of St}.
However, to use that argument, we need the following lemma.
Note that this is not obvious since $R_R$ is not right exact.
\begin{lem}\label{lem:R preserves the exactness, definition of Steinberg}
Let $P$ be a parabolic subgroup, $\sigma$ an $\mathcal{H}_P$-module which has the extension to $\mathcal{H}$ and $Q$ a parabolic subgroup containing $P$.
Then for any parabolic subgroup $R$, the sequence
\[
\bigoplus_{Q_1\supsetneq Q}R_R(I_{Q_1}(e_{Q_1}(\sigma)))\to R_R(I_Q(e_Q(\sigma)))\to R_R(\St_Q(\sigma))\to 0
\]
is exact.
\end{lem}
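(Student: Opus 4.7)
The plan is to filter everything by the Bruhat-cell filtration introduced in subsection~\ref{subsec:A filtration} and induct on the filtration length. For each open subset $A\subset W_0^Q$, set $I_{Q,A}=I_Q(e_Q(\sigma))_A$, $I_{Q_1,A}=I_{Q_1}(e_{Q_1}(\sigma))\cap I_{Q,A}$ for $Q_1\supsetneq Q$, $K_A=\sum_{Q_1\supsetneq Q}I_{Q_1,A}$, and let $\St_{Q,A}$ be the image of $I_{Q,A}$ in $\St_Q(\sigma)$. By Remark~\ref{rem:exactness on each Bruhat cell} the sequence
\[
0\to K_A\to I_{Q,A}\to\St_{Q,A}\to 0
\]
is exact for every open $A$, and these sequences are compatible under the inclusions as $A$ grows. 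We will prove by induction on $|A|$ that
\[
\bigoplus_{Q_1\supsetneq Q}R_R(I_{Q_1,A})\to R_R(I_{Q,A})\to R_R(\St_{Q,A})\to 0
\]
is exact; the desired lemma is the case $A=W_0^Q$.

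The inductive step will compare the statement for $A$ with that for $A':=A\setminus\{w\}$, where $w$ is a minimal element of $A$. Combining Lemma~\ref{lem:successive quotient of filtration on St} with Lemma~\ref{lem:successive quotient of Bruhat filtration, small induction}, exactly one of the following holds: (i) $w\in W_0^{Q_1}$ for some $Q_1\supsetneq Q$, in which case the single summand $I_{Q_1,A}/I_{Q_1,A'}$ already maps isomorphically onto $K_A/K_{A'}=I_{Q,A}/I_{Q,A'}$ and $\St_{Q,A}=\St_{Q,A'}$; or (ii) no such $Q_1$ exists, in which case $K_A=K_{A'}$ and $I_{Q,A}/I_{Q,A'}\xrightarrow{\sim}\St_{Q,A}/\St_{Q,A'}$. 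In either case, the graded short exact sequence $0\to K_A/K_{A'}\to I_{Q,A}/I_{Q,A'}\to\St_{Q,A}/\St_{Q,A'}\to 0$ stays right-exact after applying $R_R$, because in case (i) the map out of $K_A/K_{A'}$ is an isomorphism and in case (ii) the right-hand term is zero. A standard snake-lemma chase on the three-column diagram relating the $A$-version, $A'$-version, and graded piece — combined with the inductive hypothesis on $A'$ — then yields the right-exactness for $A$.

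The main obstacle is to verify that the snake-lemma argument does not pick up obstructions from $R^1R_R$ when passing from $A'$ to $A$. Writing $R_R(\pi)\simeq\varprojlim_n\pi$ under multiplication by the central element $T_{n_{w_Gw_R}\lambda_{R'}^+n_{w_Gw_R}^{-1}}$ coming from Proposition~\ref{prop:localization as Levi subalgebra}, what is needed is the vanishing of the corresponding $\varprojlim^1$ on each graded piece $I_{Q,A}/I_{Q,A'}$. On such a graded piece, the action of this central element is given by the explicit Bernstein-type formula of Proposition~\ref{prop:Bruhat filtration and action of A}, so the vanishing reduces to a direct computation: the action is either invertible (when the element $q(P,\cdot)$ is a unit) or nilpotent in a controlled way that still forces the $\varprojlim^1$ to be zero. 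Since the filtration $\{A\}$ is finite, iterating this check finitely many times closes the induction and proves the lemma.
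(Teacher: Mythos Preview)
Your outline (Bruhat filtration, induction on $|A|$, snake lemma on the three-column diagram) is close in spirit to what the paper does, but the last paragraph, where all the content is, has genuine gaps.

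The dichotomy ``invertible or nilpotent'' for the action of the central element on a graded piece is not justified and is generally false. Proposition~\ref{prop:Bruhat filtration and action of A} gives the action as $q(Q,n_w^{-1}\cdot\lambda)\cdot e_Q(\sigma)\bigl(E^Q_{o_{-,Q}}(n_w^{-1}\cdot\lambda)\bigr)$; the first factor is a power of $p$, but the second is an endomorphism of $\sigma$ with no reason to be invertible. The paper handles this by first reducing to $\sigma=\trivrep$ (and $R\supset P$) via the tensor decomposition $I_Q(e_Q(\sigma))\simeq I_Q(\trivrep)\otimes e_G(\sigma)$ of Proposition~\ref{prop:decomposition into tensor product}; only after this does the action on each graded piece become a pure scalar $p^{n_w}$. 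Even then, $p^{n_w}$ with $n_w>0$ is neither invertible nor nilpotent when $p$ is not nilpotent in $C$, and the lemma is stated with no hypothesis on $p$. What actually makes the induction close is the structural fact that the set of cells with $n_w=0$ is \emph{open} in the Bruhat order; the paper packages this into Lemma~\ref{lem:general lemma, for preserving exactness}, which directly proves the surjectivity of $R_R(M_A)\to R_R(M_A/M_{A'})$. Note also that the snake lemma needs exactly this surjectivity, i.e.\ vanishing of the connecting map into $R^1R_R(M_{A'})$; vanishing of $\varprojlim^1$ on the \emph{quotient} $M_A/M_{A'}$, which is what you propose to check, is a different condition and does not yield that surjectivity. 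Finally, to apply the same lifting argument to $\St_Q$ one needs a compatible direct-sum decomposition of $\St_Q(\trivrep)$ (the paper supplies this separately), which your sketch does not address.
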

We remark that we do not assume that $\bigcap_n p^n\sigma = 0$.
The aim of this subsection is to prove this lemma.

\begin{lem}
To prove Lemma~\ref{lem:R preserves the exactness, definition of Steinberg}, we may assume $\sigma = \trivrep$ and $R$ contains $P$.
\end{lem}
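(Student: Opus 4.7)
Plan. The reduction splits into two independent steps, which I treat in turn.

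First, for the reduction to $\sigma = \trivrep_P$: since $\sigma$ has an extension to $\mathcal{H}$, Proposition~\ref{prop:decomposition into tensor product} furnishes natural isomorphisms $I_Q(e_Q(\sigma)) \simeq I_Q(\trivrep) \otimes_C e_G(\sigma)$ and $I_{Q_1}(e_{Q_1}(\sigma)) \simeq I_{Q_1}(\trivrep) \otimes_C e_G(\sigma)$ of $\mathcal{H}$-modules, compatible with the canonical inclusions $I_{Q_1}(e_{Q_1}(\sigma)) \to I_Q(e_Q(\sigma))$. Since $-\otimes_C e_G(\sigma)$ preserves cokernels, this yields $\St_Q(\sigma) \simeq \St_Q(\trivrep) \otimes_C e_G(\sigma)$, so the whole defining sequence of $\St_Q(\sigma)$ is that of $\St_Q(\trivrep)$ tensored with $e_G(\sigma)$. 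The key point is then to show $R_R(X \otimes_C e_G(\sigma)) \simeq R_R(X) \otimes_C e_G(\sigma)$ naturally, for $X = I_{Q_1}(\trivrep),\ I_Q(\trivrep),\ \St_Q(\trivrep)$; granted this, the exactness for general $\sigma$ follows from the $\trivrep$ case by tensoring, since tensor product is right exact.

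Second, for the reduction to $R \supset P$: let $R_1$ be the standard parabolic with $\Delta_{R_1} = \Delta_R \cup \Delta_P$, so that $R \subseteq R_1$ and $P \subseteq R_1$. Transitivity of parabolic induction gives $I_R = I_{R_1} \circ I_R^{R_1}$, hence $R_R = R_R^{R_1} \circ R_{R_1}$ for the right adjoints. If the lemma is known for $R_1$ (which does contain $P$), applying $R_{R_1}$ to the defining sequence yields an exact sequence, and the remaining task is to verify that $R_R^{R_1}$ preserves exactness at the last term. Although $R_R^{R_1}$ is only left exact in general, after the first reduction the modules produced by $R_{R_1}$ admit the tensor-product description from Proposition~\ref{prop:decomposition into tensor product} (now relative to $\mathcal{H}_{R_1}$), and the same commutation argument applies inside $\mathcal{H}_{R_1}$, producing the required surjectivity.

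The main obstacle is the commutation $R_R(X \otimes_C e_G(\sigma)) \simeq R_R(X) \otimes_C e_G(\sigma)$. By Proposition~\ref{prop:localization as Levi subalgebra}, $R_R(X)$ is naturally realised as the inverse limit of $X$ under the action of the central element $E_{o_-}(\lambda_{R'}^+)$; the commutation thus reduces to the statement that this element acts on $X \otimes_C e_G(\sigma)$ compatibly with the tensor-product decomposition of Lemma~\ref{lem:tensor product module}. This is where Remark~\ref{rem:action of aff to tensor product module} becomes decisive: the hypothesis $P(\sigma)=G$ forces $\Delta_P$ and $\Delta \setminus \Delta_P$ to be orthogonal, and the factor $X$ in our sequence is already an extension from the parabolic $P_2$ corresponding to $\Delta \setminus \Delta_P$ (for instance $I_Q(\trivrep) \simeq e_G(I_{P\cap Q}^{P_2}(\trivrep))$ by Lemma~\ref{lem:parabolic induction of extension is extension}). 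Modifying $\lambda_{R'}^+$ by a length-zero element so that it lies in $W_{P_2,\aff}(1)$, its action on the tensor product affects only the first factor, and the limit interchanges with $-\otimes_C e_G(\sigma)$.
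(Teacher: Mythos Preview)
Your reduction has a genuine gap in Step~1. You assert that $\lambda_{R'}^+$ can be modified by a length-zero element so as to lie in $W_{P_2,\aff}(1)$. But any $\lambda\in\Lambda(1)\cap W_{P_2,\aff}(1)$ has $\nu(\lambda)\in\R\coroot{\Sigma_{P_2}}$, and since $\Delta_P$ is orthogonal to $\Delta_{P_2}$ this forces $\langle\alpha,\nu(\lambda)\rangle=0$ for every $\alpha\in\Sigma_P$. On the other hand the defining condition for $\lambda_{R'}^+$ demands $\langle\alpha,\nu(\lambda_{R'}^+)\rangle<0$ for every $\alpha\in\Sigma^+\setminus\Sigma_{R'}^+$; as soon as $\Delta_P\not\subset\Delta_{R'}$ (equivalently $P\not\subset R$, because here $-w_G(\Delta_P)=\Delta_P$) there is such an $\alpha\in\Delta_P$, a contradiction. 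Multiplying by a length-zero element does not change $\langle\alpha,\nu(\cdot)\rangle$ (Lemma~\ref{lem:length zero}), so it cannot help. Consequently your commutation $R_R(X\otimes e_G(\sigma))\simeq R_R(X)\otimes e_G(\sigma)$ is unavailable precisely in the case you still need it, namely $P\not\subset R$.

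The paper's proof handles this by \emph{not} trying to put all of $\lambda_{R'}^+$ into $W_{P_2,\aff}(1)$. Instead it writes $\lambda_{R'}^+=\lambda_1\lambda_2$ with $\lambda_1=\lambda_{R_1'}^+\in W_{P_2,\aff}(1)$ and $\lambda_2=\lambda_{R_2'}^+\in W_{P,\aff}(1)$, where $\Delta_{R_1'}=\Delta_{R'}\cup\Delta_P$ and $\Delta_{R_2'}=\Delta_{R'}\cup\Delta_{P_2}$. Then $T_{\lambda_1}$ acts only on the $I_Q(\trivrep)$ factor and $T_{\lambda_2}$ only on $e_G(\sigma)$ (Remark~\ref{rem:action of aff to tensor product module}), so $R_R$ applied to each term factors as a tensor product of two $\Hom$'s, one on each side. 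The $\Hom$ involving $T_{\lambda_1}$ is identified, as a vector-space-level functor, with $R_{R_1}$ for $\Delta_{R_1}=\Delta_R\cup\Delta_P$. Thus both reductions---to $\sigma=\trivrep$ and to a parabolic containing $P$---happen in a single stroke; they are not independent as your two-step plan suggests. Your Step~2 is then also unsupported: once $\sigma=\trivrep$ the tensor factor $e_G(\trivrep)$ is one-dimensional, the ``same commutation argument'' becomes vacuous, and you have given no reason why $R_R^{R_1}$ should preserve exactness of the sequence produced by $R_{R_1}$.
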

\begin{proof}
Put $R' = n_{w_Gw_R}\opposite{R}n_{w_Gw_R}^{-1}$.
Let $R_1'$ (resp.\ $R_2'$) be a parabolic subgroup corresponding to $\Delta_{R'}\cup \Delta_P$ (resp.\ $\Delta_{R'}\cup \Delta_{P_2}$).
Let $\lambda_1 = \lambda_{R_1'}^+$ and $\lambda_2 = \lambda_{R_2'}^+$ be as in Proposition~\ref{prop:localization as Levi subalgebra}.
Moreover we take $\lambda_1$ from $W_{P_2,\aff}(1)$ and $\lambda_2$ from $W_{P,\aff}(1)$.
Then $\lambda_1\lambda_2$ satisfies the condition of $\lambda_{R'}^+$.
We have
\[
R_R(\pi) = n_{w_Gw_R}^{-1}\Hom_{(\mathcal{H}_{R'}^+,j_{R'}^+)}(\mathcal{H}_{R'},\pi).
\]
Since $\mathcal{H}_{R'} = \mathcal{H}_{R'}(T_{\lambda_{R'}^+}^{R'})^{-1}$, we have
\[
R_R(\pi) \simeq \Hom_{C[T_{\lambda_{R'}^+}]}(C[(T_{\lambda_{R'}^+})^{\pm 1}],\pi)
\]
as vector spaces.
Therefore we have
\[
R_R(\pi) \simeq \Hom_{C[T_{\lambda_1}]}(C[(T_{\lambda_1})^{\pm 1}],\Hom_{C[T_{\lambda_2}]}(C[(T_{\lambda_2})^{\pm 1}],\pi)).
\]
Take $\pi = I_Q(e_Q(\sigma))$.
Then $I_Q(e_Q(\sigma))\simeq I_Q(\trivrep)\otimes e_G(\sigma)$ by Lemma~\ref{prop:decomposition into tensor product} and $(x_1\otimes x_2)T_{\lambda_1} = x_1T_{\lambda_1}\otimes x_2$ and $(x_1\otimes x_2)T_{\lambda_2} = x_1\otimes x_2T_{\lambda_2}$ for $x_1\in I_Q(\trivrep)$ and $x_2\in e_G(\sigma)$ by Remark~\ref{rem:action of aff to tensor product module}.
Hence we get
\begin{align*}
&R_R(I_Q(e_Q(\sigma)))\\
& \simeq \Hom_{C[T_{\lambda_1}]}(C[(T_{\lambda_1})^{\pm 1}],I_Q(\trivrep))\otimes \Hom_{C[T_{\lambda_2}]}(C[(T_{\lambda_2})^{\pm 1}],e_G(\sigma)).
\end{align*}
Since we also have the same formula for $I_{Q_1}(e_{Q_1}(\sigma))$ where $Q_1\supsetneq Q$ and $\St_Q\sigma$, the sequence in Lemma~\ref{lem:R preserves the exactness, definition of Steinberg} is equal to the sequence
\[
\begin{tikzcd}
\displaystyle\bigoplus_{Q_1\supsetneq Q}\Hom_{C[T_{\lambda_1}]}(C[(T_{\lambda_1})^{\pm 1}],I_{Q_1}(\trivrep))\otimes \Hom_{C[T_{\lambda_2}]}(C[(T_{\lambda_2})^{\pm 1}],e_G(\sigma))\arrow{d}\\
\Hom_{C[T_{\lambda_1}]}(C[(T_{\lambda_1})^{\pm 1}],I_Q(\trivrep))\otimes \Hom_{C[T_{\lambda_2}]}(C[(T_{\lambda_2})^{\pm 1}],e_G(\sigma))\arrow{d}\\
\Hom_{C[T_{\lambda_1}]}(C[(T_{\lambda_1})^{\pm 1}],\St_Q(\trivrep))\otimes \Hom_{C[T_{\lambda_2}]}(C[(T_{\lambda_2})^{\pm 1}],e_G(\sigma))\arrow{d}\\
0
\end{tikzcd}
\]
Hence it is sufficient to prove that the exact sequence $\bigoplus_{Q_1\supsetneq Q}I_{Q_1}(\trivrep)\to I_Q(\trivrep)\to \St_Q(\trivrep)\to 0$ is still exact after applying $\Hom_{C[T_{\lambda_1}]}(C[(T_{\lambda_1})^{\pm 1}],\cdot)$.
Let $R_1$ be a parabolic subgroup corresponding to $\Delta_R\cup \Delta_P$.
Then we have $R_1' = n_{w_Gw_{R_1}}\opposite{R_1}n_{w_Gw_{R_1}}^{-1}$ and $\Hom_{C[T_{\lambda_1}]}(C[(T_{\lambda_1})^{\pm 1}],\cdot) \simeq R_{R_1}$ as vector spaces.
Hence we may assume $\sigma = \trivrep$ and $R = R_1$, namely $R$ contains $P$.
\end{proof}

We prove Lemma~\ref{lem:R preserves the exactness, definition of Steinberg} assuming $\sigma = \trivrep$ and $R$ contains $P$.
Let $A\subset W_0^Q$ be an open subset and $w\in A$ a minimal element and $I_Q(\trivrep)_A$ a filtration defined in subsection~\ref{subsec:A filtration}.
Set $A' = A\setminus\{w\}$.
As in Remark~\ref{rem:exactness on each Bruhat cell}, put $I_{Q_1,A} = I_{Q_1}(\trivrep)\cap I_Q(\trivrep)_A$ and let $\St_{Q,A}$ be the image of $I_Q(\trivrep)_A$ in $\St_Q(\trivrep)$.
Then we have a commutative diagram with exact rows and columns (Remark~\ref{rem:exactness on each Bruhat cell}):
\[
\begin{tikzcd}
0\arrow{d} & 0\arrow{d} & 0\arrow{d} & \\
\bigoplus_{Q_1\supsetneq Q}I_{Q_1,A'}\arrow{r}\arrow{d} & I_{Q,A'}\arrow{r}\arrow{d} & \St_{Q,A'}\arrow{r}\arrow{d} & 0\\
\bigoplus_{Q_1\supsetneq Q}I_{Q_1,A }\arrow{r}\arrow{d} & I_{Q,A }\arrow{r}\arrow{d} & \St_{Q,A }\arrow{r}\arrow{d} & 0\\
\bigoplus_{Q_1\supsetneq Q}I_{Q_1,A}/I_{Q_1,A'}\arrow{r}\arrow{d} & I_{Q,A}/I_{Q,A'}\arrow{r}\arrow{d} & \St_{Q,A}/\St_{Q,A'}\arrow{r}\arrow{d} & 0\\
0 & 0 & 0.
\end{tikzcd}
\]
It is sufficient to prove that this diagram remains exact after applying $R_R$.
By induction on $\#A$, it is sufficient to prove that $R_R$ preserves of the exactness of the following three sequences.
\begin{gather}
\bigoplus_{Q_1\supsetneq Q}I_{Q_1,A}/I_{Q_1,A'}\to I_{Q,A}/I_{Q,A'}\to \St_{Q,A}/\St_{Q,A'}\to 0,\label{eq:first exact seq, for R_R}\\
0\to I_{Q_1,A'}\to I_{Q_1,A}\to I_{Q_1,A}/I_{Q_1,A'}\to 0\quad (Q_1\supset Q),\label{eq:second exact seq, for R_R}\\
0\to \St_{Q,A'}\to \St_{Q,A}\to \St_{Q,A}/\St_{Q,A'}\to 0.\label{eq:third exact seq, for R_R}
\end{gather}
First we prove that $R_R$ preserves the exactness of \eqref{eq:first exact seq, for R_R}.
Assume that $w\notin W^{Q_1}_0$ for any $Q_1\supsetneq Q$.
Then $\bigoplus_{Q_1\supsetneq Q}I_{Q_1,A}/I_{Q_1,A'} = 0$ by Lemma~\ref{lem:successive quotient of Bruhat filtration, small induction}.
Hence $I_{Q,A}/I_{Q,A'}\xrightarrow{\sim}\St_{Q,A}/\St_{Q,A'}$.
Therefore $R_R$ preserves the exactness of \eqref{eq:first exact seq, for R_R}.
If $w\in W^{Q_1}_0$ for some $Q_1\supsetneq Q$, then $I_{Q_1,A}/I_{Q_1,A'}\simeq I_{Q,A}/I_{Q,A'}$ for such $Q_1$ by Lemma~\ref{lem:successive quotient of Bruhat filtration, small induction}.
Hence $\St_{Q,A}/\St_{Q,A'} = 0$.
Moreover, fix $Q_0\supsetneq Q$ such that $w\in W^{Q_0}_0$.
Then $I_{Q,A}/I_{Q,A'}\simeq I_{Q_0,A}/I_{Q_0,A'}\hookrightarrow \bigoplus_{Q_1\supsetneq Q}I_{Q_1,A}/I_{Q_1,A'}$ is a splitting of $\bigoplus_{Q_1\supsetneq Q}I_{Q_1,A}/I_{Q_1,A'}\to I_{Q,A}/I_{Q,A'}$.
Hence \eqref{eq:first exact seq, for R_R} is exact after applying $R_R$.

Set $R' = n_{w_Gw_R}\opposite{R}n_{w_Gw_R}^{-1}$ and let $\lambda_{R'}^+$ be as in Proposition~\ref{prop:localization as Levi subalgebra}.
To prove that $R_R$ preserves the exactness of \eqref{eq:second exact seq, for R_R} and \eqref{eq:third exact seq, for R_R}, we analyze the action of $X = T_{\lambda_{R'}^+}$ on $I_{Q_1,A}/I_{Q_1,A'}$ for $Q_1\supset Q$.

\begin{lem}
The action of $X = T_{\lambda_{R'}^+}$ on $I_{Q_1,A}/I_{Q_1,A'}$ is a power of $p$ and it is $1$ if and only if $w_G(\Sigma_Q)\subset \Sigma_{R'}$ and $w\in W_{0,R'}w_Gw_Q$.
\end{lem}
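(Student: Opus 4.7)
If $w\notin W_0^{Q_1}$, then Lemma~\ref{lem:successive quotient of Bruhat filtration, small induction} gives $I_{Q_1,A}/I_{Q_1,A'}=0$ and there is nothing to prove, so I assume $w\in W_0^{Q_1}$; the same lemma identifies the quotient with $I_{Q,A}/I_{Q,A'}$ as an $\mathcal{A}_{o_-}$-module. Since $\nu(\lambda_{R'}^+)$ is zero on $\Sigma_{R'}$ and strictly negative on $\Sigma^+\setminus\Sigma_{R'}^+$, it lies in the closed anti-dominant chamber, so \eqref{eq:E_o for lambda} gives $T_{\lambda_{R'}^+}=E_{o_-}(\lambda_{R'}^+)$. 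Set $\mu:=n_w^{-1}\cdot\lambda_{R'}^+$ and $v:=\nu(\mu)=w^{-1}\nu(\lambda_{R'}^+)$. Proposition~\ref{prop:Bruhat filtration and action of A} applied to $I_Q(\trivrep_Q)$ then says that $X$ acts on the successive quotient by the scalar
\[
c := q(Q,\mu)\cdot \trivrep_Q\bigl(E_{o_{-,Q}}^{Q}(\mu)\bigr).
\]

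The first step is to display $c$ as a non-negative integral power of $p$ with an exponent detecting exactly the non-dominance of $v$. Using the length formula from the Length subsection together with $\langle\alpha,\nu(\lambda_Q^-)\rangle=0$ for $\alpha\in\Sigma_Q^+$ (centrality of $\lambda_Q^-$), a direct cancellation in the definition $q(Q,\mu)=q_\mu^{1/2}q_{\lambda_Q^-}^{1/2}q_{\mu\lambda_Q^-}^{-1/2}$ gives
\[
q(Q,\mu) = \prod_{\substack{\alpha\in\Sigma'^+\setminus\Sigma_Q^{\prime+}\\ \langle\alpha,v\rangle<0}} q_\alpha^{-\langle\alpha,v\rangle},
\]
interpreted via the specialization convention of the remark after \eqref{eq:product formula}. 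A parallel computation, after writing $E_{o_{-,Q}}^{Q}(\mu) = q_{\mu,Q}^{1/2}q_{\lambda_0',Q}^{-1/2}q_{\mu\lambda_0',Q}^{1/2}T^Q_{\mu\lambda_0'}(T^Q_{\lambda_0'})^{-1}$ for a sufficiently $Q$-anti-dominant $\lambda_0'$ (deduced from \eqref{eq:product formula}) and applying $\trivrep_Q$, yields
\[
\trivrep_Q\bigl(E_{o_{-,Q}}^{Q}(\mu)\bigr) = \prod_{\substack{\alpha\in\Sigma_Q^{\prime+}\\ \langle\alpha,v\rangle<0}} q_\alpha^{-\langle\alpha,v\rangle}.
\]
Multiplying, $c=\prod_{\alpha\in\Sigma'^+,\ \langle\alpha,v\rangle<0}q_\alpha^{-\langle\alpha,v\rangle}$ is a non-negative power of $p$, and $c=1$ if and only if $v$ is dominant.

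The second step translates dominance into the stated combinatorial conditions. The $W_0$-stabilizer of $\nu(\lambda_{R'}^+)$ is $W_{0,R'}$, and the closed dominant chamber meets the $W_0$-orbit of $\nu(\lambda_{R'}^+)$ only at $w_G\nu(\lambda_{R'}^+)$; hence $v$ is dominant iff $w_Gw^{-1}\in W_{0,R'}$, i.e.\ iff $w\in W_{0,R'}w_G$. Combined with $w\in W_0^Q$: writing $w=aw_G$ with $a\in W_{0,R'}$, the inclusion $w(\Delta_Q)\subset\Sigma^+$ together with the facts that $a$ preserves $\Sigma\setminus\Sigma_{R'}$ and its sign forces $w_G(\Delta_Q)\subset\Sigma_{R'}^-$, whence $w_G(\Sigma_Q)\subset\Sigma_{R'}$. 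Conversely, this condition implies $\Delta_{Q'}:=-w_G(\Delta_Q)\subset\Delta_{R'}$, so $w_Gw_Qw_G^{-1}=w_{Q'}\in W_{0,R'}$, and therefore the cosets $W_{0,R'}w_Gw_Q$ and $W_{0,R'}w_G$ coincide. Thus, under $w\in W_0^Q$, the condition $w\in W_{0,R'}w_G$ is equivalent to the pair $w_G(\Sigma_Q)\subset\Sigma_{R'}$ and $w\in W_{0,R'}w_Gw_Q$ as desired.

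The main obstacle is the cancellation in the first step: aligning the alternating signs in the length formulas for $\ell(\mu)$, $\ell(\lambda_Q^-)$, $\ell(\mu\lambda_Q^-)$ (and their $Q$-analogues) so that every $\alpha\in\Sigma'^+$ contributes the clean exponent $-\langle\alpha,v\rangle$ on the locus $\langle\alpha,v\rangle<0$ and nothing elsewhere. Once that clean exponent for $c$ is in hand, the Weyl-group manipulation in the second step is routine.
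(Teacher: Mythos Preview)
Your argument is correct in outline, but you are working harder than necessary in the first step, and the notation $q_\alpha$ for $\alpha\in\Sigma'^+$ is never defined in the paper (the parameters $q_s$ are only attached to affine simple reflections), so your displayed product formulas for $q(Q,\mu)$ and $\trivrep_Q(E^Q_{o_{-,Q}}(\mu))$ are not literally meaningful here, though your conclusion that $c=1$ iff $v$ is dominant is right.

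The paper avoids the whole ``cancellation obstacle'' by a one-line observation you are missing: since $w\in W_0^Q$ we have $w(\Sigma_Q^+)\subset\Sigma^+$, and since $\lambda_{R'}^+$ is anti-dominant, the element $\mu=n_w^{-1}\cdot\lambda_{R'}^+$ is automatically anti-dominant with respect to $\Sigma_Q^+$. By \eqref{eq:E_o for lambda} this gives $E^Q_{o_{-,Q}}(\mu)=T^Q_\mu$ directly, so $\trivrep_Q(E^Q_{o_{-,Q}}(\mu))=q_{\mu,Q}$ without any manipulation of $\lambda_0'$. Thus $c=q(Q,\mu)\,q_{\mu,Q}$ is visibly a power of $p$, and $c=1$ iff $q(Q,\mu)=1$ (i.e.\ $\mu$ is $Q$-negative) and $\ell_Q(\mu)=0$. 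The paper then treats these two conditions separately: for the first it invokes Lemma~\ref{lem:condition on q != 1, positive version} (after the change of variables through $Q'$) to get $w\in W_{0,R'}w_Gw_Q$ directly, and for the second it reads off $w(\Sigma_Q)\subset\Sigma_{R'}$ from $\ell_Q(\mu)=0$, which under the first condition is equivalent to $w_G(\Sigma_Q)\subset\Sigma_{R'}$.

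Your second step packages the same information differently: you combine the two vanishing conditions into the single statement that $v$ is dominant, characterize this as $w\in W_{0,R'}w_G$, and then show (using $w\in W_0^Q$) that this coset condition is equivalent to the pair of conditions in the lemma. That equivalence argument is correct and a nice alternative. The paper's route, keeping the two factors separate, is shorter because it can quote Lemma~\ref{lem:condition on q != 1, positive version} off the shelf; your route is more self-contained but requires the extra coset manipulation $W_{0,R'}w_G=W_{0,R'}w_Gw_Q$ via $w_{Q'}\in W_{0,R'}$.
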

\begin{proof}
We may assume $Q_1 = Q$ by Lemma~\ref{lem:successive quotient of Bruhat filtration, small induction}.
Note that since $\lambda_{R'}^+$ is anti-dominant, we have $E_{o_-}(\lambda_{R'}^+) = T_{\lambda_{R'}^+}$.
Hence by Proposition~\ref{prop:Bruhat filtration and action of A}, the action of $T_{\lambda_{R'}^+} = E_{o_-}(\lambda_{R'}^+)$ is given by $q(Q,n_w^{-1}\cdot \lambda_{R'}^+)\trivrep(E_{o_-}^Q(n_w^{-1}\cdot\lambda_{R'}^+))$.
Since $w\in W^Q_0$, $w(\Sigma^+_Q)\subset \Sigma^+$.
Therefore $n_w^{-1}\cdot \lambda_{R'}^+$ is anti-dominant with respect to $\Sigma_Q^+$ since $\lambda_{R'}^+$ is anti-dominant with respect to $\Sigma^+$.
Hence $\trivrep(E_{o_-}^Q(n_w^{-1}\cdot\lambda_{R'}^+)) = \trivrep(T_{n_w^{-1}\cdot\lambda_{R'}^+}^Q) = q_{n_w^{-1}\cdot\lambda_{R'}^+,Q}$.
Therefore the action of $T_{\lambda_{R'}^+}$ on this subquotient is given by a power of $p$ and it is $1$ if and only if $q(Q,n_w^{-1}\cdot \lambda_{R'}^+) = 1$ and $\ell_Q(n_w^{-1}\cdot\lambda_{R'}^+) = 0$.

Put $Q' = n_{w_Gw_Q}\opposite{Q}n_{w_Gw_Q}^{-1}$.
Note that $n_w^{-1}\cdot \lambda_{R'}^+$ is $Q$-negative if and only if $(n_{w_Gw_Q}n_w^{-1})\cdot \lambda_{R'}^+$ is $Q'$-positive since $(w_Gw_Q)^{-1}(\Sigma^+\setminus\Sigma_{Q'}^+) = \Sigma^-\setminus\Sigma_Q^-$.
Therefore $n_w^{-1}\cdot \lambda_{R'}^+$ is $Q$-negative if and only if $w_Gw_Qw^{-1}\in W_{0,R'}$ by Lemma~\ref{lem:condition on q != 1, positive version}, namely $w\in W_{0,R'}w_Gw_Q$.

The length of $n_w^{-1}\cdot\lambda_{R'}^+\in W_Q(1)$ is $0$ if and only if $\langle \alpha,\nu(n_w^{-1}\cdot \lambda_{R'}^+)\rangle = 0$ for any $\alpha\in\Sigma_Q$.
Since $\langle\beta,\nu(\lambda_{R'}^+)\rangle = 0$ if and only if $\beta\in \Sigma_{R'}$, the length of $\lambda_{R'}^+\in W_Q(1)$ is $0$ if and only if $w(\Sigma_Q)\subset \Sigma_{R'}$.
Since we have $w\in W_{0,R'}w_Gw_Q$, $w(\Sigma_Q)\subset \Sigma_{R'}$ if and only if  $w_G(\Sigma_Q)\subset  \Sigma_{R'}$.
\end{proof}
The subset $W_{0,R'}\cap {}^{Q'}W_0$ is closed in ${}^{Q'}W_0$.
Hence by Proposition~\ref{prop:tensor description of I_P}, $W_{0,R'}w_Gw_Q\cap W^Q_0$ is open in $W^Q_0$.
Therefore the exactness of \eqref{eq:second exact seq, for R_R} follows from the following general lemma.
In the following lemma, we call a subset $A$ of a partially ordered set open if $a\in A$ and $b\ge a$ implies $b\in A$.
\begin{lem}\label{lem:general lemma, for preserving exactness}
Let $\Gamma$ be a partially ordered set and $M$ a $C[X]$-module with the decomposition into $C$-submodules $M = \bigoplus_{\gamma\in\Gamma}M_\gamma$.
Assume the following.
\begin{itemize}
\item For each open subset $\Delta\subset \Gamma$, $M_\Delta = \bigoplus_{\gamma\in\Delta}M_\gamma$ is $C[X]$-stable.
\item For each open subset $\Delta\subset\Gamma$ and a minimal element $\gamma\in\Delta$, the action of $X$ on $M_\Delta/M_{\Delta\setminus\{\gamma\}}$ is given by $p^{n_\gamma}$ for some $n_\gamma\in\Z_{\ge 0}$.
\item The subset $\Gamma_0 = \{\gamma\in\Gamma_0\mid n_\gamma = 0\}$ is open.
\end{itemize}
Then for each open subset $\Delta\subset\Gamma$ and a minimal element $\gamma\in\Delta$, the homomorphism
\[
\Hom_{C[X]}(C[X^{\pm 1}],M_\Delta)
\to
\Hom_{C[X]}(C[X^{\pm 1}],M_\Delta/M_{\Delta\setminus\{\gamma\}})
\]
is surjective.
\end{lem}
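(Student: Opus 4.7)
My plan is to prove the lemma by induction on $\#\Delta$, which is finite in the intended applications (where $\Gamma = W_0^Q$). The key reformulation is that $\Hom_{C[X]}(C[X^{\pm 1}], N)$ is naturally the inverse limit of the diagram $N \xleftarrow{X} N \xleftarrow{X} \cdots$, so a homomorphism corresponds to a compatible sequence $(y_n)_{n \ge 0}$ in $N$ satisfying $y_{n+1} X = y_n$. The claim then becomes: every compatible sequence $(\bar y_n)$ in the quotient $M_\gamma := M_\Delta/M_{\Delta \setminus \{\gamma\}}$ lifts to a compatible sequence in $M_\Delta$.

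The base case $\#\Delta = 1$ is immediate. For the inductive step, I would first choose arbitrary $C$-module lifts $\tilde y_n \in M_\Delta$ of the $\bar y_n$ and form the obstructions $e_n := \tilde y_n - \tilde y_{n+1} X \in M_{\Delta \setminus \{\gamma\}}$. Finding a sequence $(z_n)$ in $M_{\Delta \setminus \{\gamma\}}$ with $z_n - z_{n+1} X = e_n$ yields the desired lift $y_n := \tilde y_n - z_n$. The construction of $(z_n)$ would then proceed by refining the filtration of $M_{\Delta \setminus \{\gamma\}}$ via successive removal of its own minimal elements, chaining applications of the inductive hypothesis stratum by stratum.

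The decisive case split is on $\gamma \in \Gamma_0$ versus $\gamma \notin \Gamma_0$. If $\gamma \in \Gamma_0$, then $X$ acts as the identity on $M_\gamma$, so $(\bar y_n)$ is constant; moreover, the openness of $\Gamma_0$ guarantees that every $\gamma' \in \Delta$ with $\gamma' \ge \gamma$ also has $n_{\gamma'} = 0$, so the strata of $M_{\Delta \setminus \{\gamma\}}$ sitting above $\gamma$ behave benignly (the transition maps there are the identity). If $\gamma \notin \Gamma_0$, then $X$ acts as $p^{n_\gamma}$ with $n_\gamma > 0$, and the relation $\bar y_n p^{n n_\gamma} = \bar y_0$ makes $\bar y_0$ infinitely $p$-divisible in $M_\gamma$; this $p$-adic structure is what will absorb the potential failure of $X$ to be surjective on the ``bad'' strata of $M_{\Delta \setminus \{\gamma\}}$.

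The main obstacle will be the actual construction of $(z_n)$ when $\gamma \notin \Gamma_0$: one needs a Mittag--Leffler-type argument threading the $p$-divisibility of $(\bar y_n)$ through the filtration of $M_{\Delta \setminus \{\gamma\}}$. The openness of $\Gamma_0$ is critical here, because it forces any $\gamma' \in \Delta \setminus \{\gamma\}$ lying outside $\Gamma_0$ to be incomparable to $\gamma$ rather than above it, and this compatibility between the filtration structure and the $p$-divisibility is precisely what allows the stratum-by-stratum construction of $(z_n)$ to close up into a globally compatible sequence.
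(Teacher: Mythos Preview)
Your outline has the right ingredients (compatible sequences, the case split on $\gamma\in\Gamma_0$, infinite $p$-divisibility when $n_\gamma>0$), but there is one concrete error and one structural gap.

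\textbf{The error.} In the case $\gamma\notin\Gamma_0$ you write that openness of $\Gamma_0$ ``forces any $\gamma'\in\Delta\setminus\{\gamma\}$ lying outside $\Gamma_0$ to be incomparable to $\gamma$ rather than above it.'' This is false. Openness of $\Gamma_0$ means its complement is downward-closed, so from $\gamma\notin\Gamma_0$ one concludes nothing about elements above $\gamma$; there may well be a chain $\gamma<\gamma'$ with both outside $\Gamma_0$ (take $\Gamma=\{a<b<c\}$, $\Gamma_0=\{c\}$). The point is rather that when $\gamma\notin\Gamma_0$ the openness hypothesis is \emph{not needed}: the infinite $p$-divisibility of $(\bar y_n)$ already suffices to build a lift by a direct triangular construction over the poset $\Delta$. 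Openness of $\Gamma_0$ is used only in the other case.

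\textbf{The gap.} Your inductive hypothesis asserts surjectivity of the map on $\Hom$ for smaller $\Delta$, but the problem you reduce to --- finding $(z_n)$ in $M_{\Delta\setminus\{\gamma\}}$ with $z_n-z_{n+1}X=e_n$ --- is a $\varprojlim^1$-type statement, not another instance of the same surjectivity. It is not clear how ``chaining the inductive hypothesis stratum by stratum'' produces such a $(z_n)$.

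The paper's argument avoids both issues and uses no induction on $\#\Delta$. Set $\Delta_0=\Delta\cap\Gamma_0$ and factor the map through $M_\Delta/M_{\Delta_0\setminus\{\gamma\}}$. The first step is surjective because $X$ is invertible on $M_{\Delta_0\setminus\{\gamma\}}$ (all strata there lie in $\Gamma_0$), and for such $N$ one has $\Ext^1_{C[X]}(C[X^{\pm1}],N)=0$. For the second step: if $\gamma\in\Gamma_0$, then $M_{\Delta_0}/M_{\Delta_0\setminus\{\gamma\}}\simeq M_\gamma$ sits inside $M_\Delta/M_{\Delta_0\setminus\{\gamma\}}$ as a $C[X]$-submodule mapping isomorphically onto the quotient $M_\Delta/M_{\Delta\setminus\{\gamma\}}$, which gives a section. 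If $\gamma\notin\Gamma_0$, write $X$ in block-triangular form $X_{\delta_1,\delta_2}$ with $X_{\delta,\delta}=p^{n_\delta}$; since $n_\gamma>0$ the data $m_n^{(\gamma)}:=\bar y_n$ lies in $\bigcap_k p^kM_\gamma$, and one constructs $m_n^{(\delta)}\in\bigcap_k p^kM_\delta$ for each $\delta$ by induction on the poset order, at each step choosing $p^{n_\delta}$-th ``roots'' inside $\bigcap_k p^kM_\delta$ to enforce the compatibility $m_{n-1}^{(\delta)}=\sum_{\delta'\le\delta}X_{\delta,\delta'}m_n^{(\delta')}$.
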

\begin{proof}
Put $\Delta_0 = \Delta\cap \Gamma_0$.
We divide the map into two maps:
\begin{gather}
\Hom_{C[X]}(C[X^{\pm 1}],M_\Delta)\to \Hom_{C[X]}(C[X^{\pm 1}],M_\Delta/M_{\Delta_0\setminus\{\gamma\}})\label{eq:surjective for Hom, in kernel X is invertible},\\
\Hom_{C[X]}(C[X^{\pm 1}],M_\Delta/M_{\Delta_0\setminus\{\gamma\}})\to \Hom_{C[X]}(C[X^{\pm 1}],M_\Delta/M_{\Delta\setminus\{\gamma\}})\label{eq:surjective for Hom, power of p}.
\end{gather}
We prove that both maps are surjective.

\noindent (1)
We prove the surjectivity of \eqref{eq:surjective for Hom, in kernel X is invertible}.
Since $M_{\Delta_0\setminus\{\gamma\}}$ have a filtration such that $X$ in invertible on successive quotients, $X$ is also invertible on $M_{\Delta_0\setminus\{\gamma\}}$.
Hence the claim follows from the following claim.
\begin{claim}
Let $N$ be a $C[X]$-module and assume that $X$ is invertible on $N$.
Then $\Ext^1_{C[X]}(C[X^{\pm 1}],N) = 0$.
\end{claim}
\begin{proof}[Proof of Claim]
Let $0\to N\to L\to C[X^{\pm 1}]\to 0$ be an exact sequence of $C[X]$-modules.
Since $X$ is invertible on $N$ and $C[X^{\pm 1}]$, $X$ is also invertible on $L$.
Namely $0\to N\to L\to C[X^{\pm 1}]\to 0$ is also an exact sequence of $C[X^{\pm 1}]$-modules.
Hence $\Ext^1_{C[X]}(C[X^{\pm 1}],N) = \Ext^1_{C[X^{\pm 1}]}(C[X^{\pm 1}],N)$.
This is obviously zero.
\end{proof}

\noindent
(2)
Next we prove the surjectivity of \eqref{eq:surjective for Hom, power of p} assuming $\gamma\in\Gamma_0$.
We have $M_\Delta/M_{\Delta_0\setminus\{\gamma\}}\supset M_{\Delta_0}/M_{\Delta_0\setminus\{\gamma\}}\simeq M_\gamma\simeq M_\Delta/M_{\Delta\setminus\{\gamma\}}$.
Hence from the following diagram
\[
\begin{tikzcd}
\Hom_{C[X]}(C[X^{\pm 1}],M_{\Delta_0}/M_{\Delta_0\setminus\{\gamma\}})\arrow{rd}{\sim}\arrow[hookrightarrow]{d}\\
\Hom_{C[X]}(C[X^{\pm 1}],M_\Delta/M_{\Delta_0\setminus\{\gamma\}})\arrow{r} & \Hom_{C[X]}(C[X^{\pm 1}],M_\Delta/M_{\Delta\setminus\{\gamma\}}),
\end{tikzcd}
\]
\eqref{eq:surjective for Hom, power of p} is surjective in this case.

\noindent
(3)
Finally we prove the surjectivity of \eqref{eq:surjective for Hom, power of p} assuming $\gamma\notin \Gamma_0$.
Note that in general we have $\Hom_{C[X]}(C[X^{\pm 1}],N) = \{(m_n)_{n\in\Z_{\ge 0}}\mid m_n\in N, Xm_n = m_{n - 1}\}$ by $\varphi\mapsto (\varphi(X^{-n}))$.
Recall that the action of $X$ on $M_\Delta/M_{\Delta\setminus\{\gamma\}}$ is given by $p^{n_\gamma}$ with $n_\gamma > 0$. (We have assumed that $\gamma\notin\Gamma_0$.)
Hence to give an element in $\Hom_{C[X]}(C[X^{\pm 1}],M_\Delta/M_{\Delta\setminus\{\gamma\}})$ is equivalent to give a sequence of elements $(m^{(\gamma)}_n)$ in $M_\gamma$ such that $p^{n_\gamma}m^{(\gamma)}_n = m^{(\gamma)}_{n - 1}$.
We prove that we can extend this element to $m_n = (m^{(\delta)}_n)_{\delta\in\Delta,n\in\Z_{\ge 0}}$ in $M_\Delta = \bigoplus_{\delta\in\Delta}M_\delta$ such that $Xm_n = m_{n - 1}$.
Since $p^{n_\gamma}m_n^{(\gamma)} = m_{n - 1}^{(\gamma)}$ with $n_\gamma > 0$, we have $m_n^{(\gamma)}\in \bigcap_k p^kM_\gamma$.
We prove that we can take an extension $m_n = (m^{(\delta)}_n)$ from $\bigcap_k p^kM_\Delta$.

According to the decomposition $M = \bigoplus_{\gamma\in \Gamma}M_\gamma$, we have a linear map $X_{\gamma_1,\gamma_2}\in \Hom_C(M_{\gamma_2},M_{\gamma_1})$ such that $Xm_\gamma = \sum_{\gamma'\in \Gamma}X_{\gamma',\gamma}m_\gamma$ for $m_\gamma\in M_\gamma\subset M$.
By the assumption $X_{\gamma_1,\gamma_2} = 0$ if $\gamma_1\not\ge\gamma_2$.
We also have $X_{\gamma',\gamma'} = p^{n_{\gamma'}}$ with $n_{\gamma'} \ge 0$.
The condition $Xm_n = m_{n - 1}$ is equivalent to
\begin{equation}\label{eq:condition from Hom(C X^pm 1)}
m_{n - 1}^{(\delta_1)} = \sum_{\delta_2 \le \delta_1}X_{\delta_1,\delta_2}m_n^{(\delta_2)}.
\end{equation}
We prove the existence of $m_n = (m_n^{(\delta)})$ using a triangular argument, namely we take $m_n^{(\delta)}$ which satisfies \eqref{eq:condition from Hom(C X^pm 1)} inductively on $\delta$.

Let $\delta\in\Delta\setminus\{\gamma\}$ and assume that we have taken $m_n^{(\delta')}$ for $\delta'\in\Delta$ such that $\delta' < \delta$.
Since $m_n^{(\delta')}\in \bigcap_k p^kM_{\delta'}$ for any $\delta' < \delta$, we can take $x_n^{(\delta')}\in \bigcap_k p^kM_{\delta'}$ such that $p^{n_\delta}x_n^{(\delta')} = m_n^{(\delta')}$.
We also have $x_{n - 1}^{(\delta)}\in \bigcap_k p^kM_{\delta}$ such that $p^{n_\delta}x_{n - 1}^{(\delta)} = m_{n - 1}^{(\delta)}$.
Then define $m_n^{(\delta)} = x_{n - 1}^{(\delta)} - \sum_{\delta' < \delta}X_{\delta,\delta'}x_n^{(\delta')}$ and it satisfies $m_{n - 1}^{(\delta)} = p^{n_{\delta}}m_n^{(\delta)} + \sum_{\delta' < \delta}X_{\delta,\delta'}m_n^{(\delta')}$.
Since $X_{\delta,\delta} = p^{n_\delta}$, it means that \eqref{eq:condition from Hom(C X^pm 1)} holds for $\delta_1 = \delta$.
\end{proof}

For \eqref{eq:third exact seq, for R_R}, we use the following lemma with Lemma~\ref{lem:general lemma, for preserving exactness}.
\begin{lem}
Set $B = W_0^Q\setminus\bigcup_{Q_1\supsetneq Q}W_0^{Q_1}$.
For $w\in B$, we define $\St_{Q,w}\subset \St_Q(\trivrep)$ by the image of
\[
\{\varphi\in I_Q(\trivrep)\mid \varphi(T_{n_v}) = 0\ (v\in W_0^Q\setminus\{w\})\}
\hookrightarrow I_Q(\trivrep)\to \St_{Q}(\trivrep).
\]
Then we have $\St_Q(\trivrep) = \bigoplus_{w\in B}\St_{Q,w}$ and for any open $A\subset W_0^Q$, we have $\bigoplus_{w\in A\cap B}\St_{Q,w} = \St_{Q,A}$.
\end{lem}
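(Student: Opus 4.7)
The plan is to prove by induction on $\#A$ that
\[
\St_{Q,A} = \bigoplus_{w\in A\cap B}\St_{Q,w}
\]
for every open $A\subset W_0^Q$; the first assertion of the lemma then follows by specializing to $A = W_0^Q$, which is itself open. The base case $A=\emptyset$ is vacuous. For the inductive step, I pick a minimal element $w$ of $A$ and set $A' = A\setminus\{w\}$; then $A'$ is again open (any $v\in A'$ with $v'\ge v$ lying in $A$ cannot equal $w$, since $w$ is minimal and $v\ne w$), so the inductive hypothesis applies to give $\St_{Q,A'} = \bigoplus_{v\in A'\cap B}\St_{Q,v}$.

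If $w\notin B$, then $w\in W_0^{Q_1}$ for some $Q_1\supsetneq Q$, and Lemma~\ref{lem:successive quotient of filtration on St} gives $\St_{Q,A} = \St_{Q,A'}$; since moreover $A\cap B = A'\cap B$, the inductive hypothesis finishes this case. Assume therefore that $w\in B$. I have to check that $\St_{Q,A} = \St_{Q,w} + \St_{Q,A'}$ and that this sum is direct. The first equality is easy: given $\varphi\in I_Q(\trivrep)_A$, Proposition~\ref{prop:decomposition of I_P} produces $\varphi_w\in I_Q(\trivrep)_{\{w\}}$ with $\varphi_w(T_{n_w}) = \varphi(T_{n_w})$, and then $\varphi - \varphi_w\in I_Q(\trivrep)_{A'}$; passing to $\St_Q(\trivrep)$ gives the sum.

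The main obstacle is directness. Suppose $\overline{\varphi_w}\in \St_{Q,w}\cap \St_{Q,A'}$ with $\varphi_w\in I_Q(\trivrep)_{\{w\}}$. Then there exists $\varphi'\in I_Q(\trivrep)_{A'}$ with $\varphi_w - \varphi'\in\sum_{Q_1\supsetneq Q}I_{Q_1}(\trivrep)$; hence $\varphi_w-\varphi'$ lies in $I_Q(\trivrep)_A\cap\sum_{Q_1\supsetneq Q}I_{Q_1}(\trivrep)$. By Lemma~\ref{lem:intersection and sum, Bruhat cel} this intersection equals $\sum_{Q_1\supsetneq Q}I_{Q_1,A}$, so $\varphi_w - \varphi' = \sum_{Q_1\supsetneq Q}\psi_{Q_1}$ with $\psi_{Q_1}\in I_{Q_1,A}$. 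Because $w\in B$ we have $w\notin W_0^{Q_1}$ for every $Q_1\supsetneq Q$, and Lemma~\ref{lem:successive quotient of Bruhat filtration, small induction} then gives $I_{Q_1,A} = I_{Q_1,A'}$. Evaluating both sides of $\varphi_w = \varphi' + \sum\psi_{Q_1}$ at $T_{n_w}$ now yields $\varphi_w(T_{n_w}) = 0$, since $\varphi'$ and each $\psi_{Q_1}$ already vanish on $T_{n_w}$. Since $\varphi_w\in I_Q(\trivrep)_{\{w\}}$ is determined by its value at $T_{n_w}$, this forces $\varphi_w = 0$ and completes the induction.
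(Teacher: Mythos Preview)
Your proof is correct. It differs from the paper's in organization: the paper first proves the \emph{sum} equality $\St_{Q,A}=\sum_{w\in A\cap B}\St_{Q,w}$ by induction on $\#A$, and then proves directness of the full decomposition $\bigoplus_{w\in B}\St_{Q,w}$ separately, by a minimal-counterexample argument that passes through the isomorphism $\St_{Q,A}/\St_{Q,A'}\simeq I_{Q,A}/I_{Q,A'}$ of Lemma~\ref{lem:successive quotient of filtration on St}. You instead fold the directness into the same induction, and for the key step $\St_{Q,w}\cap\St_{Q,A'}=0$ you go one layer below, invoking Lemma~\ref{lem:intersection and sum, Bruhat cel} and Lemma~\ref{lem:successive quotient of Bruhat filtration, small induction} directly rather than their consequence Lemma~\ref{lem:successive quotient of filtration on St}. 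The two routes use the same ingredients; yours is marginally more streamlined, while the paper's separation makes the global directness statement stand on its own.
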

\begin{proof}
We prove $\sum_{v\in A\cap B}\St_{Q,v} = \St_{Q,A}$ by induction on $\#A$.
Let $w\in A$ be a minimal element and set $A' = A\setminus\{w\}$.
If $w\notin B$ then $A\cap B = A'\cap B$ and $\St_{Q,A} = \St_{Q,A'}$ by Lemma~\ref{lem:successive quotient of filtration on St}.
If $w\in B$, then $\St_{Q,A}/\St_{Q,A'}\simeq I_{Q,A}/I_{Q,A'}\simeq \{\varphi\in I_Q(\trivrep)\mid \varphi(T_{n_v}) = 0\ (v\in W_0^Q\setminus\{w\})\}$.
Therefore $\St_{Q,A'} + \St_{Q,w} = \St_{Q,A}$.
By inductive hypothesis, $\St_{Q,A'} = \sum_{v\in A'\cap B}\St_{Q,v}$.
We get $\sum_{v\in A\cap B}\St_{Q,v} = \St_{Q,A}$.

Let $x_w\in \St_{Q,w}\ (w\in B)$ such that $\sum_{w\in B} x_w = 0$.
Assume that there exists $w\in B$ such that $x_w\ne 0$ and assume that $w$ is minimal subject to $x_w\ne 0$.
Put $A' = \{v\in W_0^Q\mid \text{$v\ge v_1$ for some $v_1\in W^Q_0\setminus\{w\}$ such that $x_{v_1}\ne 0$}\}$.
Then $A'$ is open and $w\notin A'$.
Hence $\sum_{v\ne w}x_v \in \St_{Q,A'}$.
Set $A = A'\cup \{w\}$.
Let $y_w\in I_Q(e_Q(\sigma))$ such that $y_w(T_{n_v}) = 0$ for $v\in W_0^Q\setminus\{w\}$ and the image of $y_w$ in $\St_{Q,w}$ is $x_w$.
Then the image of $y_w$ under
\[
\{\varphi\in I_Q(\trivrep)\mid \varphi(T_{n_v}) = 0\ (v\in W^Q_0\setminus\{w\})\}
\simeq I_{Q,A}/I_{Q,A'}
\simeq \St_{Q,A}/\St_{Q,A'}
\]
is equal to the image of $x_w$.
Here the last isomorphism is by Lemma~\ref{lem:successive quotient of filtration on St}.
Since $\sum_{v\ne w}x_v\in \St_{Q,A'}$, it is equal to the image of $\sum_{v\in B}x_v$ by $\St_{Q,A}\to \St_{Q,A}/\St_{Q,A'}$.
Since $\sum_{v\in B}x_v = 0$, we have $y_w = 0$.
This contradicts to $x_w \ne 0$.
\end{proof}

\subsection{$R_P$ and Steinberg modules}
As in the previous subsection, let $P$ be a parabolic subgroup and $\sigma$ an $\mathcal{H}_P$-module which has the extension to $\mathcal{H}$.
We prove Proposition~\ref{prop:right adj of St} in this subsection.
As in the case of $L_R$, we start with the following lemma.
\begin{lem}\label{lem:extension and right adjoint}
Assume that $\bigcap_{n\in\Z_{\ge 0}}p^n\sigma = 0$.
We have
\[
R_R(e_G(\sigma)) =
\begin{cases}
e_R(R^P_{R\cap P}(\sigma)) & (\Delta = \Delta_R\cup\Delta_P),\\
0 & (\text{otherwise}).
\end{cases}
\]
\end{lem}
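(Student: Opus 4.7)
The plan is to compute $R_R(e_G(\sigma))$ via its inverse-limit description
\[
R_R(\pi)\simeq\{(x_n)_{n\ge 1}\mid x_n\in\pi,\ x_{n+1}T_{\lambda_{R'}^+} = x_n\},
\]
by factoring the action of $T_{\lambda_{R'}^+}$ according to the orthogonal decomposition $\Delta = \Delta_P\sqcup\Delta_{P_2}$ (which holds because $\sigma$ has an extension to $\mathcal{H}$). Mirroring the construction used in the proof of Lemma~\ref{lem:R preserves the exactness, definition of Steinberg}, I let $R_1',R_2'$ be the standard parabolics with $\Delta_{R_1'} = \Delta_{R'}\cup\Delta_P$ and $\Delta_{R_2'} = \Delta_{R'}\cup\Delta_{P_2}$, and choose $\lambda_1 = \lambda_{R_1'}^+\in W_{P_2,\aff}(1)$ and $\lambda_2 = \lambda_{R_2'}^+\in W_{P,\aff}(1)$ as in Proposition~\ref{prop:localization as Levi subalgebra}. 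Then $\lambda_1\lambda_2$ satisfies the defining conditions of $\lambda_{R'}^+$ and may replace it.

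Next I would use Proposition~\ref{prop:decomposition into tensor product} with $Q = G$ to identify $e_G(\sigma)\simeq\trivrep\otimes e_G(\sigma)$, so that by Remark~\ref{rem:action of aff to tensor product module} $T_{\lambda_1}$ acts only on the first tensor factor by the scalar $\trivrep(T_{\lambda_1}) = q_{\lambda_1}$, while $T_{\lambda_2}$ acts only on the second. Because $\nu(\lambda_2)\in\R\Delta_P^\vee$ is orthogonal to $\Sigma^+\setminus\Sigma_P^+$, the element $\lambda_2$ is simultaneously $P$-positive and $P$-negative; by Corollary~\ref{cor:image by j, positie and negative case}, $j_P^{-*}(T_{\lambda_2}^P) = T_{\lambda_2}$, and since $e_G(\sigma)|_{(\mathcal{H}_P^-,j_P^{-*})} = \sigma$ the action of $T_{\lambda_2}$ on $e_G(\sigma)$ reduces to that of $T_{\lambda_2}^P = E^P_{o_{-,P}}(\lambda_2)$ on $\sigma$. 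Altogether, $T_{\lambda_{R'}^+}$ acts on $e_G(\sigma)\simeq\sigma$ as the operator $q_{\lambda_1}\cdot E^P_{o_{-,P}}(\lambda_2)$.

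The two cases of the lemma then fall out. If $\Delta\ne\Delta_R\cup\Delta_P$, there is $\alpha\in\Delta_{P_2}\setminus\Delta_R$, so $R_1'\subsetneq G$, the defining condition on $\lambda_1$ forces $\nu(\lambda_1)\ne 0$, and $q_{\lambda_1}$ is a positive power of $p$ in $C$; iterating the relation $x_n = q_{\lambda_1}\cdot x_{n+1}\cdot E^P_{o_{-,P}}(\lambda_2)$ places each $x_n$ in $\bigcap_k p^k\sigma = 0$, giving $R_R(e_G(\sigma)) = 0$. If $\Delta = \Delta_R\cup\Delta_P$, then $\Delta_{P_2}\subset\Delta_R$ and, since $w_G|_{\Delta_{P_2}} = w_{P_2}|_{\Delta_{P_2}}$ by the orthogonality $\Delta_P\perp\Delta_{P_2}$, also $\Delta_{P_2}\subset\Delta_{R'}$; hence $R_1' = G$, so $\lambda_1$ may be taken in $\ker\nu$ with $q_{\lambda_1} = 1$, and $T_{\lambda_{R'}^+}$ acts simply as $E^P_{o_{-,P}}(\lambda_2)$. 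Using the identification $R'\cap P = (R\cap P)'$ inside $P$, which makes $\lambda_2 = \lambda^{P+}_{(R\cap P)'}$ the very element governing $R^P_{R\cap P}$, one obtains a $C$-linear isomorphism $R_R(e_G(\sigma))\simeq R^P_{R\cap P}(\sigma)$.

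The hard part will be upgrading this linear isomorphism to an $\mathcal{H}_R$-module isomorphism with $e_R(R^P_{R\cap P}(\sigma))$: one must verify $(\mathcal{H}^{R-}_{R\cap P},j^{R-*}_{R\cap P})$-equivariance and the extension condition $R_R(e_G(\sigma))(T_w^{R*}) = 1$ for every $w\in W_{R\cap P_2,\aff}(1)$. I expect this to go through by an argument parallel to the end of the proof of Lemma~\ref{lem:left adjoint of extension}: given such a $w$, choose a sufficiently dominant central $\mu\in Z(W_R(1))\cap W_{P_2,\aff}(1)$ so that $w\mu$ becomes $R$-negative, then exploit $e_G(\sigma)(T_{w\mu}^*) = 1$ (valid since $w\mu\in W_{P_2,\aff}(1)$) to compute the relevant value of $T_w^{R*}$ on the inverse system and invoke the characterization of the extension.
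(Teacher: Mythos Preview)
Your approach is essentially the paper's: the factorisation $\lambda_{R'}^+ = \lambda_1\lambda_2$ with $\lambda_i\in W_{\aff,P_i}(1)$ and the tensor splitting of $e_G(\sigma)$ are exactly the devices the paper uses (compare the reduction step in the proof of Lemma~\ref{lem:R preserves the exactness, definition of Steinberg}), and your treatment of the vanishing case is the same computation as the paper's, phrased via $q_{\lambda_1}$ rather than via passing to the larger parabolic $R_1$.

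Where your sketch drifts from the paper is in the ``hard part''. Two points. First, in the non-vanishing case the paper immediately uses the identity $w_Gw_R = w_Pw_{P\cap R}$ (valid precisely because $\Delta_{P_2}\subset\Delta_R$) together with Lemma~\ref{lem:extension and twist} to \emph{replace $R'$ by $R$}: one then compares $A=\Hom_{(\mathcal{H}_R^+,j_R^+)}(\mathcal{H}_R,e_G(\sigma))$ with $e_R\bigl(\Hom_{(\mathcal{H}_{P\cap R}^{P+},j_{P\cap R}^{P+})}(\mathcal{H}_{P\cap R},\sigma)\bigr)$. Without this untwisting step your $C$-linear isomorphism still carries the conjugation by $n_{w_Gw_R}$, and you have not said how to reconcile this with the $\mathcal{H}_R$-structure on $e_R(R^P_{P\cap R}(\sigma))$; your remark ``$R'\cap P=(R\cap P)'$ inside $P$'' is the right observation but does not by itself do the job.

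Second, you have the two verifications swapped in spirit. For the extension condition ($w\in W_{R\cap P_2,\aff}(1)$) no $\mu$ is needed: since $\Sigma^+\setminus\Sigma_R^+\subset\Sigma_P^+$ and the $\nu$-part of $w$ lies in $\mathbb{R}\Delta_{P_2}^\vee\perp\Sigma_P$, such $w$ is simultaneously $R$-positive and $R$-negative, so Corollary~\ref{cor:image by j, positie and negative case} gives $j_R^+(T_w^{R*})=T_w^*$ directly, and $e_G(\sigma)(T_w^*)=1$ finishes it. The place where a multiplicative shift \emph{is} needed is the $(\mathcal{H}_{P\cap R}^{R-},j_{P\cap R}^{R-*})$-equivariance: for $w\in W_{P\cap R}^{R-}(1)$ one takes $k\ge 0$ with $w(\lambda_R^+)^k$ $R$-\emph{positive} (not $R$-negative as you wrote --- we are on the $j_R^+$ side) and unwinds via $E^R_{o_{-,R}}(w(\lambda_R^+)^k)$, exactly as the paper does. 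Once you make these two adjustments your outline goes through and coincides with the paper's proof.
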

\begin{proof}

First assume that $\Delta\ne\Delta_R\cup\Delta_P$ and we prove $R_R(e_G(\sigma)) = 0$.
Let $R_1$ be a parabolic subgroup corresponding to $\Delta_R\cup\Delta_P$.
It is sufficient to prove that $R_{R_1}(e_G(\sigma)) = 0$.
Take $\lambda_{R'_1}^+$ as in Proposition~\ref{prop:localization as Levi subalgebra} for $R_1'$ where $R_1' = n_{w_Gw_{R_1}}\opposite{R_1}n_{w_Gw_{R_1}}^{-1}$.
Let $P_2$ be a parabolic subgroup corresponding to $\Delta\setminus\Delta_P$.
Since $R'_1\ne G$ and $P\subset R'_1$, there exists $\alpha\in\Delta\setminus\Delta_{R'_1} = \Delta_{P_2}\setminus\Delta_{R'_1}$ and for such $\alpha$, we have $\langle\alpha,\nu(\lambda_{R'_1}^+)\rangle < 0$.
Hence the length of $\lambda_{R'_1}^+$ as an element of $W_{P_2}(1)$ is positive.
Therefore $e_G(\sigma)(T_{\lambda_{R'_1}^+})$ is divided by $p$.
Hence, the image of any $\varphi\in \Hom_{(\mathcal{H}_{R'_1}^+,j_{R'_1}^+)}(\mathcal{H}_{R'_1},e_G(\sigma)) = \Hom_{C[T_{\lambda_{R'_1}^+}]}(C[(T_{\lambda_{R'_1}^+})^{\pm 1}],e_G(\sigma))$ is in $\bigcap_{n\in\Z_{\ge 0}}p^ne_G(\sigma) = 0$.
(As $C$-modules, we have $e_G(\sigma) = \sigma$.)
We get $R_{R_1}(e_G(\sigma)) = 0$.

Now assume that $\Delta_R\cup \Delta_P = \Delta$, or in other words, $\Delta_{P_2}\subset \Delta_R$.
Since $\Delta = \Delta_P\cup \Delta_{P_2}$ and $\Delta_R = \Delta_{R\cap P}\cup \Delta_{P_2}$ are orthogonal decompositions, we have decompositions $W_0 = W_{0,P}\times W_{0,P_2}$ and $W_{0,R} = W_{0,P\cap R}\times W_{0,P_2}$ as Coxeter groups.
Hence we have $w_G = w_Pw_{P_2}$ and $w_R = w_{P\cap R}w_{P_2}$.
We have  $w_Gw_R = w_Pw_{P\cap R}$.
Set $R' = n_{w_Gw_R}\opposite{R}n_{w_Gw_R}^{-1}$.
We also have $P\cap  R' = n_{w_Pw_{P\cap R}}\opposite{(P\cap R)}n_{w_Pw_{P\cap R}}^{-1}$.
By the definition of the right adjoint functors, we have
\begin{align*}
R_R(e_G(\sigma)) & = n_{w_Gw_{R}}^{-1}\Hom_{(\mathcal{H}_{R'}^+,j_{R'}^+)}(\mathcal{H}_{R'},e_G(\sigma)),\\
R^P_{P\cap R}(\sigma) & = n_{w_Pw_{P\cap R}}^{-1}\Hom_{(\mathcal{H}_{P\cap R'}^{P+},j_{P\cap R'}^{P+})}(\mathcal{H}_{P\cap R'},\sigma)
\end{align*}
Since $w_Gw_{R} = w_Pw_{P\cap R}$ from the assumption, replacing $R'$ with $R$, with Lemma~\ref{lem:extension and twist}, it is sufficient to prove that $A = \Hom_{(\mathcal{H}_R^+,j_R^+)}(\mathcal{H}_R,e_G(\sigma))$ is isomorphic to $e_R(B)$ where $B = \Hom_{(\mathcal{H}_{P\cap R}^{P+},j_{P\cap R}^{P+})}(\mathcal{H}_{P\cap R},\sigma)$.

The map $\varphi\mapsto (\varphi((T_{\lambda_R^+}^R)^{-n}))$ gives an isomorphism
\[
A \simeq \{(x_n)_{n\in\Z_{\ge 0}}\mid x_{n + 1}e_G(\sigma)(T_{\lambda_R^+}) = x_n,\ x_n\in e_G(\sigma)\}
\]
Since $\lambda_R^+$ is anti-dominant, $T_{\lambda_R^+} = E_{o_-}(\lambda_R^+)$ and $T^P_{\lambda_R^+} = E^P_{o_{-,P}}(\lambda_R^+)$ by \eqref{eq:E_o for lambda}.
Hence we have $T_{\lambda_R^+} = E_{o_-}(\lambda_R^+) = j_P^{-*}(E^P_{o_{-,P}}(\lambda_R^+)) = j_P^{-*}(T^P_{\lambda_R^+})$ by Proposition~\ref{lem:image of E by j}.
Therefore $e_G(\sigma)(T_{\lambda_R^+}) = \sigma(T^P_{\lambda_R^+})$.
Hence we have
\[
A \simeq \{(x_n)_{n\in\Z_{\ge 0}}\mid x_{n + 1}\sigma(T^P_{\lambda_R^+}) = x_n,\ x_n\in \sigma\}
\]
Since $R\supset P_2$, we have $\Sigma^+\setminus\Sigma_R^+ = \Sigma_P^+\setminus\Sigma_{P\cap R}^+$.
Therefore we can take $\lambda_R^+$ as $\lambda_{R\cap P}^{P+}$.
Hence
\[
\{(x_n)_{n\in\Z_{\ge 0}}\mid x_{n + 1}\sigma(T^P_{\lambda_R^+}) = x_n\} \simeq B.
\]
Namely there exists an isomorphism $A\simeq B$ as vector spaces which is characterized by $\varphi((T^R_{\lambda_R^+})^{-n}) = \psi((T^{P\cap R}_{\lambda_R^+})^{-n})$ for any $n\in\Z_{\ge 0}$ where $\varphi\in A$ corresponds to $\psi\in B$ by this isomorphism.

We prove that this isomorphism is $(\mathcal{H}_{P\cap R}^{R-},j_{P\cap R}^{R-*})$-equivariant.
Let $w\in W_{P\cap R}^{R-}(1)$.
By the assumption $\Delta_{P_2}\subset \Delta_R$, $\Sigma_R^+\setminus\Sigma_{P\cap R}^+ = \Sigma_{P_2}^+ = \Sigma^+\setminus\Sigma_P^+$.
Hence $w\in W_P^-(1)$.
Take $k\in\Z_{\ge 0}$ such that $w(\lambda_R^+)^k$ is $R$-positive.
Since $\varphi$ is $(\mathcal{H}_R^+,j_R^+)$-equivariant, using Lemma~\ref{lem:image of E by j}, we have
\begin{align*}
(\varphi j_{P\cap R}^{R-*}(E^{P\cap R}_{o_{-,P\cap R}}(w)))((T^R_{\lambda_R^+})^{-n})
& = \varphi(E_{o_{-,R}}^R(w)(T^R_{\lambda_R^+})^{-n})\\
& = \varphi(E_{o_{-,R}}^R(w(\lambda_R^+)^k)(T^R_{\lambda_R^+})^{-(n + k)})\\
& = \varphi((T^R_{\lambda_R^+})^{-(n + k)} j_R^+(E_{o_{-}}(w(\lambda_R^+)^k)))\\
& = \varphi((T^R_{\lambda_R^+})^{-(n + k)})e_G(\sigma)(E_{o_-}(w(\lambda_R^+)^k)).
\end{align*}

Since $\lambda_R^+$ is in the center of $W_R(1)$ and $P_2\subset R$, $\lambda_R^+$ is also in the center of $W_{P_2}(1)$, hence we have $\langle \alpha,\nu(\lambda_R^+)\rangle = 0$ for any $\alpha\in\Delta_{P_2}$.
Hence for any $\alpha\in\Sigma^+\setminus\Sigma_P^+$, we have $\langle \alpha,\nu(\lambda_R^+)\rangle = 0$ since $\Sigma^+\setminus\Sigma_P^+ = \Sigma_{P_2}^+$.
Therefore $\lambda_R^+$ is both $P$-positive and $P$-negative.
Recall that $w$ is also $P$-negative.
Therefore $w(\lambda_R^+)^k$ is also $P$-negative.
Hence $e_G(\sigma)(E_{o_-}(w(\lambda_R^+)^k)) = \sigma(E^P_{o_{-,P}}(w(\lambda_R^+)^k))$ by the definition of the extension and Lemma~\ref{lem:image of E by j}.
Therefore 
\begin{align*}
\varphi((T^R_{\lambda_R^+})^{-(n + k)})e_G(\sigma)(E_{o_-}(w(\lambda_R^+)^k))
& = \varphi((T^R_{\lambda_R^+})^{-(n + k)})\sigma(E^P_{o_{-,P}}(w(\lambda_R^+)^k))\\
& = \psi((T^{P\cap R}_{\lambda_{R}^+})^{-(n + k)})\sigma(E^P_{o_{-,P}}(w(\lambda_R^+)^k)).
\end{align*}
Since $w(\lambda_R^+)^k\in W_{P\cap R}(1)$ is $R$-positive, we have $w(\lambda_R^+)^k\in W_{P\cap R}^{P+}(1)$.
Therefore $E^P_{o_{-,P}}(w(\lambda_R^+)^k) = j_{P\cap R}^{P+}(E^{P\cap R}_{o_{-,P\cap R}}(w(\lambda_R^+)^k))$ by Lemma~\ref{lem:image of E by j}.
Hence
\begin{align*}
\psi((T^{P\cap R}_{\lambda_{R}^+})^{-(n + k)})\sigma(E^P_{o_{-,P}}(w(\lambda_R^+)^k))
& = \psi((T^{P\cap R}_{\lambda_{R}^+})^{-(n + k)}E^{P\cap R}_{o_{-,P\cap R}}(w(\lambda_R^+)^k))\\
& = (\psi E^{P\cap R}_{o_{-,P\cap R}}(w))((T^{P\cap R}_{\lambda_{R}^+})^{-n}).
\end{align*}
Hence $A\simeq B$ as $(\mathcal{H}_{P\cap R}^{R-},j_{P\cap R}^{R-*})$-modules.

Let $w\in W_{P_2\cap R,\aff}(1)$.
Then $w\in W_{P_2,\aff}(1)$.
Since $\Sigma^+\setminus\Sigma_R^+\subset \Sigma^+\setminus\Sigma_{P_2}^+ = \Sigma_{P}^+$ and for any $\alpha\in\Sigma_P^+$ is orthogonal to elements in $\nu(\Lambda(1)\cap W_{P_2,\aff}(1))$, $w$ is both $R$-positive and $R$-negative.
Hence by Corollary~\ref{cor:image by j, positie and negative case}, we have $j_R^+(T_w^{R*}) = T_w^*$.
Therefore, for $\varphi\in A$, we have
\begin{align*}
(\varphi T_w^{R*})((T^R_{\lambda_R^+})^{-n})
& =
\varphi(T_w^{R*}(T^R_{\lambda_R^+})^{-n})\\
& =
\varphi((T^R_{\lambda_R^+})^{-n} T_w^{R*})\\
& =
\varphi((T^R_{\lambda_R^+})^{-n})e_G(\sigma)(j_R^+(T_w^{R*}))\\
& =
\varphi((T^R_{\lambda_R^+})^{-n})e_G(\sigma)(T_w^{*})\\
& =
\varphi((T^R_{\lambda_R^+})^{-n}).
\end{align*}
Therefore $T_w^{R*} = 1$ on $A$.
Hence $A\simeq e_R(B)$ by the definition of the extension.
\end{proof}

\begin{proof}[Proof of Proposition~\ref{prop:right adj of St}]
By Lemma~\ref{lem:R preserves the exactness, definition of Steinberg}, we have
\[
\bigoplus_{Q_1\supsetneq Q}R_R(I_{Q_1}(e_{Q_1}(\sigma)))\to R_R(I_Q(e_Q(\sigma)))\to R_R(\St_Q(\sigma))\to 0
\]
By Proposition~\ref{prop:parabolic induction and its adjoint},
\[
	\bigoplus_{Q'\supsetneq Q}I_{Q'\cap R}^R(R_{Q'\cap R}^{Q'}(e_{Q'}(\sigma)))\to I_{Q\cap R}^R(R_{Q\cap R}^{Q}(e_Q(\sigma)))\to R_R(\St_Q(\sigma))\to 0.
\]
If $\Delta_Q\ne \Delta_{Q\cap R}\cup \Delta_P$, then $R_{Q\cap R}^{Q}(e_Q(\sigma)) = 0$ by Lemma~\ref{lem:extension and right adjoint}.
Hence $R_R(\St_Q(\sigma)) = 0$.
Assume that $\Delta_Q = \Delta_{Q\cap R}\cup \Delta_P$.
Then $R_{Q\cap R}^{Q}(e_Q(\sigma)) = e_{Q\cap R}(R_{P\cap R}^P(\sigma))$.
Let $Q'\supsetneq Q$.
If $\Delta_{Q'} = \Delta_{Q'\cap R}\cup \Delta_P$, then $R_{Q'\cap R}^{Q'}(e_{Q'}(\sigma)) = e_{Q'\cap R}(R_{P\cap R}^P(\sigma))$ and $Q'\cap R\supsetneq Q\cap R$.
Otherwise, it is zero.
Putting $Q'' = Q'\cap R$, we have
\begin{multline*}
	\bigoplus_{R\supset Q''\supsetneq Q\cap R}I_{Q''}^R(e_{Q''}(R_{P\cap R}^R(\sigma)))\to I^R_{Q\cap R}(e_{Q\cap R}(R_{P\cap R}^P(\sigma)))\\
	\to R_R(\St_Q(\sigma))\to 0.
\end{multline*}
Therefore we have $R_R(\St_Q(\sigma)) = \St^R_{Q\cap R}(R_{P\cap R}^P(\sigma))$.
\end{proof}

\subsection{Supersingular modules}
Assume that $p = 0$ in $C$.
\begin{prop}\label{prop:vanishing of adjoint+supsersingular}
Let $P\subsetneq G$ be a proper parabolic subgroup and $\pi$ a supersingular $\mathcal{H}$-module.
Then we have $L_P(\pi) = R_P(\pi) = 0$.
\end{prop}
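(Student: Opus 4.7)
The plan is to reduce both vanishing statements to a uniform nilpotence statement for a single element acting on $\pi$. For $L_P$: by Proposition~\ref{prop:localization as Levi subalgebra} and the identity $j_P^{-*}(E^P_{o_{-,P}}(\lambda_P^-)) = E_{o_-}(\lambda_P^-)$ (Lemma~\ref{lem:image of E by j}), $L_P(\pi)$ is the localization of $\pi$ at $E_{o_-}(\lambda_P^-)$, so it will suffice to prove that right multiplication by $E_{o_-}(\lambda_P^-)$ is uniformly nilpotent on $\pi$. For $R_P$: the paper identifies $R_P(\pi)$ with $\{(x_k)_{k\ge 1} : x_{k+1} T_{\lambda_{P'}^+} = x_k\}$ where $T_{\lambda_{P'}^+} = E_{o_-}(\lambda_{P'}^+)$, so uniform nilpotence of right multiplication by $E_{o_-}(\lambda_{P'}^+)$ on $\pi$ forces $R_P(\pi) = 0$. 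The two cases are formally parallel, so I sketch only $L_P$.

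Let $\mathcal{O}$ denote the $W(1)$-conjugacy class of $\lambda_P^-$. Because $\lambda_P^- \in Z(W_P(1))$ and $\Lambda(1) \subset W_P(1)$, the $W(1)$-centralizer of $\lambda_P^-$ equals $W_P(1)$, so $\mathcal{O} = \{n_v \lambda_P^- n_v^{-1} : v \in W_0/W_{0,P}\}$ is finite, with the coset $v = 1$ giving the unique element whose $\nu$-image is dominant. Since $P \subsetneq G$, there exists $\alpha \in \Sigma^+ \setminus \Sigma_P^+$ with $\langle \alpha, \nu(\lambda_P^-)\rangle > 0$; hence $\ell(\lambda_P^-) > 0$ by Lemma~\ref{lem:length zero}, and supersingularity yields $\pi z_\mathcal{O}^n = 0$ for some $n \ge 1$, where $z_\mathcal{O} = E_{o_-}(\lambda_P^-) + \sum_{v \ne 1} E_{o_-}(n_v \cdot \lambda_P^-)$.

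The crux is the identity $z_\mathcal{O}\, E_{o_-}(\lambda_P^-)^M = E_{o_-}(\lambda_P^-)^{M+1}$ in $\mathcal{H}$, for every $M \ge 1$. The $v = 1$ summand gives $E_{o_-}(\lambda_P^-)^{M+1}$ directly via the product formula \eqref{eq:product formula}, since the lengths of powers of $\lambda_P^-$ are additive. For $v \ne 1$, the same formula expresses $E_{o_-}(n_v\cdot\lambda_P^-)\, E_{o_-}(\lambda_P^-)^M$ as a scalar in $C$ times $E_{o_-}((n_v\cdot\lambda_P^-)(\lambda_P^-)^M)$, and the total $q_s$-degree of that scalar equals half the length defect
\[
\ell(n_v\cdot\lambda_P^-) + \ell((\lambda_P^-)^M) - \ell((n_v\cdot\lambda_P^-)(\lambda_P^-)^M),
\]
which is strictly positive by Lemma~\ref{lem:length, on lambda}: the vectors $v\nu(\lambda_P^-)$ and $M\nu(\lambda_P^-)$ lie in distinct closed Weyl chambers since $v \notin W_{0,P}$ and $W_{0,P}$ is the full stabilizer of $\nu(\lambda_P^-)$ in $W_0$. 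Because each $q_s$ is a positive power of $p$ and $p = 0$ in $C$, this scalar vanishes, killing the $v \ne 1$ contribution.

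Iterating the key identity $n$ times yields $z_\mathcal{O}^n\, E_{o_-}(\lambda_P^-) = E_{o_-}(\lambda_P^-)^{n+1}$, whence $\pi E_{o_-}(\lambda_P^-)^{n+1} = \pi z_\mathcal{O}^n E_{o_-}(\lambda_P^-) = 0$, proving $L_P(\pi) = 0$. The same argument with $\lambda_{P'}^+$ and its $W(1)$-conjugacy class in place of $\lambda_P^-$ and $\mathcal{O}$ yields $\pi T_{\lambda_{P'}^+}^{n+1} = 0$; for any sequence $(x_k)$ representing an element of $R_P(\pi)$, one then has $x_k = x_{k+n+1} T_{\lambda_{P'}^+}^{n+1} = 0$, so $R_P(\pi) = 0$. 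The main obstacle is verifying the strict positivity of the length defect for $v \ne 1$, where the structural fact $\Stab_{W_0}(\nu(\lambda_P^-)) = W_{0,P}$ is essential.
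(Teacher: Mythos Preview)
Your proof is correct and follows essentially the same strategy as the paper's. The paper isolates the key identity $z_\mathcal{O}^n E_o(\lambda) = E_o(\lambda)^{n+1}$ as a separate lemma (proved by first computing $z_\mathcal{O}^n = \sum_v E_o(n_v\cdot\lambda)^n$ and then multiplying by $E_o(\lambda)$), whereas you obtain the same identity by iterating $z_\mathcal{O}\,E_{o_-}(\lambda)^M = E_{o_-}(\lambda)^{M+1}$; both arguments rest on the same vanishing of cross terms via the product formula, the length-defect positivity from Lemma~\ref{lem:length, on lambda}, and $p=0$ in $C$.
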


We need a lemma.
\begin{lem}
Assume that $\lambda\in Z(\Lambda(1))$ satisfies that for any $w\in W_0$, we have $n_w\cdot \lambda = \lambda$ if and only if $w(\nu(\lambda)) = \nu(\lambda)$.
Put $\mathcal{O} = W(1)\cdot \lambda$.
Then for any orientation $o$ and $n\in\Z_{\ge 0}$, we have $z_\mathcal{O}^nE_o(\lambda) = E_o(\lambda)^{n + 1}$.
\end{lem}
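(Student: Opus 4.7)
The plan is to reduce to the case $n=1$, show that $z_\mathcal{O} E_o(\lambda) = E_o(\lambda)^2$, and conclude by induction using that $z_\mathcal{O}$ is central. The power identity $E_o(\lambda)^{k} = E_o(\lambda^{k})$ comes from the product formula \eqref{eq:product formula} together with $o\cdot\lambda=o$ (as $\lambda\in\Lambda(1)$) and $\ell(\lambda^k) = k\ell(\lambda)$: the latter holds by Lemma~\ref{lem:length, on lambda} because $\nu(\lambda)$ lies trivially in the same closed chamber as itself, so $q_{\lambda^k} = q_\lambda^k$ and the $q$-prefactor collapses to $1$. The inductive step is then routine: using centrality of $z_\mathcal{O}$,
\[
z_\mathcal{O}^n E_o(\lambda) = z_\mathcal{O}^{n-1}\bigl(z_\mathcal{O} E_o(\lambda)\bigr) = z_\mathcal{O}^{n-1} E_o(\lambda)^2 = \bigl(z_\mathcal{O}^{n-1} E_o(\lambda)\bigr) E_o(\lambda) = E_o(\lambda)^n \cdot E_o(\lambda).
\]

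For the base case, expand $z_\mathcal{O} E_o(\lambda) = \sum_{\mu\in\mathcal{O}} E_o(\mu) E_o(\lambda)$. Since $\mu\in\Lambda(1)$, we have $o\cdot\mu = o$, and the product formula gives
\[
E_o(\mu) E_o(\lambda) = q_{\mu\lambda}^{-1/2} q_\mu^{1/2} q_\lambda^{1/2} E_o(\mu\lambda).
\]
By Lemma~\ref{lem:conjugate and length}, $\ell(\mu) = \ell(\lambda)$ for $\mu\in\mathcal{O}$, so $q_\mu = q_\lambda$ (via conjugation invariance of the parameters). The $\mu=\lambda$ term contributes exactly $E_o(\lambda^2)=E_o(\lambda)^2$ as above. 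The heart of the argument is to show that every other term vanishes in $C$.

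By the discussion in subsection~\ref{subsec:The algebra H[q_s] and H[q_s^pm]}, the coefficient $q_{\mu\lambda}^{-1/2} q_\mu^{1/2} q_\lambda^{1/2}$ is the specialization of a monomial in $C[\mathbf{q}_s]$ of total degree $(\ell(\mu)+\ell(\lambda)-\ell(\mu\lambda))/2$. Since $p=0$ in $C$ and each $q_s$ is a nontrivial power of $p$, any such monomial of positive degree vanishes in $C$. It therefore suffices to establish the strict inequality $\ell(\mu\lambda) < \ell(\mu) + \ell(\lambda)$ whenever $\mu\in\mathcal{O}\setminus\{\lambda\}$. Writing $\mu = n_w\cdot\lambda$ with $w\in W_0$, we have $\nu(\mu) = w(\nu(\lambda))$, and by the hypothesis $\mu\neq\lambda$ is equivalent to $w\notin\Stab_{W_0}(\nu(\lambda))$, hence to $w(\nu(\lambda))\neq \nu(\lambda)$. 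Since the closed Weyl chambers form a fundamental domain for $W_0$ on $X_*(S)\otimes_\Z\R$, two distinct points of a single $W_0$-orbit cannot lie in a common closed chamber. Thus there exists $\alpha\in\Sigma'^+$ with $\langle\alpha,\nu(\mu)\rangle$ and $\langle\alpha,\nu(\lambda)\rangle$ of strictly opposite signs, and the triangle inequality in the length formula $\ell(\mu\lambda) = \sum_\alpha|\langle\alpha,\nu(\mu)+\nu(\lambda)\rangle|$ is strict at $\alpha$, giving the required inequality.

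The main technical point is making the monomial-degree bookkeeping rigorous in the cases where $\nu(\lambda)$ lies on walls of the Weyl arrangement: one must check that the fundamental-domain property still forbids $\nu(\mu)\neq\nu(\lambda)$ from sharing any closed chamber with $\nu(\lambda)$, and that the strict sign opposition really does arise. Once that geometric step is clear, the vanishing in characteristic $p$ and the resulting identity $z_\mathcal{O} E_o(\lambda) = E_o(\lambda^2) = E_o(\lambda)^2$ fall out immediately, and the proposition follows by the induction described above.
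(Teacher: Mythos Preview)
Your proof is correct and follows essentially the same idea as the paper's: both arguments hinge on the product formula \eqref{eq:product formula} together with the fact that distinct elements of the $W_0$-orbit of $\nu(\lambda)$ cannot share a closed Weyl chamber, forcing $E_o(\mu)E_o(\lambda)=0$ for $\mu\in\mathcal{O}\setminus\{\lambda\}$ once $p=0$ in $C$. The only structural difference is that the paper first shows directly that $z_\mathcal{O}^n=\sum_{v}E_o(n_v\cdot\lambda)^n$ and then multiplies by $E_o(\lambda)$, whereas you reduce to $n=1$ by induction using centrality; the aside $q_\mu=q_\lambda$ is not literally needed (and need not hold if the $q_s$ differ), but it plays no role in your degree count, so the argument stands.
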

\begin{proof}
Since $\lambda$ is in the center of $\Lambda(1)$, we have $\mathcal{O} = \{n_w\cdot \lambda\mid w\in W_0\}$.
We also have $\mathcal{O} = \{n_w\cdot \lambda\mid w\in W_0/\Stab_{W_0}(\nu(\lambda))\}$ by the condition on $\lambda$.
Therefore we have $z_\mathcal{O} = \sum_{w\in W_0/\Stab_{W_0}\nu(\lambda)}E_o(n_w\cdot \lambda)$.
Hence
\[
z_\mathcal{O}^2 = \sum_{v_1,v_2\in W_0/\Stab_{W_0}\nu(\lambda)}E_o(n_{v_1}\cdot \lambda)E_o(n_{v_2}\cdot \lambda).
\]
If $v_1,v_2\in W_0$ does not belong to the same coset in $W_0/\Stab_{W_0}\nu(\lambda)$, then $v_1(\nu(\lambda))$ and $v_2(\nu(\lambda))$ are not in the same closed chamber.
Hence $E_o(n_{v_1}\cdot \lambda)E_o(n_{v_2}\cdot \lambda) = 0$ by \eqref{eq:product formula} and Lemma~\ref{lem:length, on lambda}.
Therefore 
\[
z_\mathcal{O}^2 = \sum_{v\in W_0/\Stab_{W_0}\nu(\lambda)}E_o(n_{v}\cdot \lambda)^2.
\]
Inductively, we get 
\[
z_\mathcal{O}^n = \sum_{v\in W_0/\Stab_{W_0}\nu(\lambda)}E_o(n_{v}\cdot \lambda)^n.
\]
Hence 
\[
z_\mathcal{O}^nE_o(\lambda) = \sum_{v\in W_0/\Stab_{W_0}\nu(\lambda)}E_o(n_{v}\cdot \lambda)^nE_o(\lambda).
\]
If $v\notin \Stab_{W_0}\nu(\lambda)$, then $E_{o}(n_v\cdot \lambda)E_{o}(\lambda) = 0$.
Hence 
\[
z_\mathcal{O}^nE_o(\lambda) = E_o(\lambda)^{n + 1}.
\]
We get the lemma.
\end{proof}

\begin{proof}[Proof of Proposition~\ref{prop:vanishing of adjoint+supsersingular}]
Let $\lambda = \lambda_P^-\in Z(W_P(1))$ be as in Proposition~\ref{prop:localization as Levi subalgebra}.
If $w\in W_0$ fixes $\nu(\lambda)$, then $w\in W_{0,P}$.
Since $\lambda\in Z(W_P(1))$, we have $n_w\cdot \lambda = \lambda$.
Namely, $\lambda$ satisfies the condition of the above lemma.
We also have $L_P(\pi) = \pi E_{o_-}(\lambda)^{-1}$.

Set $\mathcal{O} = W(1)\cdot \lambda$.
Let $x\in \pi$.
Since $\pi$ is supersingular, there exists $n\in\Z_{>0}$ such that $xz_\mathcal{O}^n = 0$.
Hence $xz_\mathcal{O}^nE_{o_-}(\lambda) = 0$.
Therefore we have $xE_{o_-}(\lambda)^{n + 1} = 0$ by the above lemma.
Hence the image of $x$ in $L_P(\pi) = \pi E_{o_-}(\lambda)^{-1}$ is zero.
We get $L_P(\pi) = 0$.

Next we prove $R_P(\pi) = n_{w_Gw_P}^{-1}\Hom_{(\mathcal{H}_{P'}^+,j_{P'}^+)}(\mathcal{H}_{P'},\pi) = 0$ where $P' = n_{w_Gw_P}\opposite{P}n_{w_Gw_P}^{-1}$.
Let $\lambda = \lambda_{P'}^+\in Z(W_{P'}(1))$ be as in Proposition~\ref{prop:localization as Levi subalgebra}.
Then again $\lambda$ satisfies the condition of the above lemma.
Take $n\in\Z_{>0}$ such that $\pi(z_\mathcal{O}^n) = 0$.
Let $\varphi\in \Hom_{(\mathcal{H}_{P'}^+,j_{P'}^+)}(\mathcal{H}_{P'},\pi)$ and $X\in \mathcal{H}_{P'}$.
Since $j_{P'}^+(E^{P'}_{o_{-,P'}}(\lambda)) = E_{o_-}(\lambda)$ by Lemma~\ref{lem:image of E by j}, with the previous lemma, we have 
\begin{align*}
\varphi(X) & = \varphi(XE^{P'}_{o_{-,P'}}(\lambda)^{-(n + 1)})j_{P'}^+(E^{P'}_{o_{-,P'}}(\lambda)^{n + 1})\\
& = \varphi(XE^{P'}_{o_{-,P'}}(\lambda)^{-(n + 1)})E_{o_-}(\lambda)^{n + 1}\\
& = \varphi(XE^{P'}_{o_{-,P'}}(\lambda)^{-(n + 1)})z_\mathcal{O}^nE_{o_-}(\lambda) = 0.
\end{align*}
We get the proposition.
\end{proof}

\subsection{Simple modules}
Assume that $C$ is an algebraically closed field of characteristic $p$.
\begin{thm}\label{thm:adjoint and simple modules}
Let $P$ be a parabolic subgroup, $\sigma$ a simple supersingular right $\mathcal{H}_P$-module and $Q$ a parabolic subgroup between $P$ and $P(\sigma)$.
For a parabolic subgroup $R$, we have
\[
	L_R(I(P,\sigma,Q)) =
	\begin{cases}
	I_R(P,\sigma,Q\cap R) & (P\subset R,\ \Delta(\sigma)\subset \Delta_Q\cup \Delta_R),\\
	0 & (\text{otherwise}).
	\end{cases}
\]
and
\[
	R_R(I(P,\sigma,Q)) = 
	\begin{cases}
	I_R(P,\sigma,Q) & (Q\subset R),\\
	0 & (\text{otherwise}).
	\end{cases}
\]
\end{thm}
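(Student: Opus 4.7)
The plan is to reduce the computation to the three structural results already proved in the paper (the compatibility of $L_P$ and $R_P$ with parabolic induction, their effect on generalized Steinberg modules, and their vanishing on supersingular modules). Writing $I(P,\sigma,Q)=I_{P(\sigma)}(\St_Q^{P(\sigma)}(\sigma))$, the idea is to peel off the three layers $I_{P(\sigma)}$, $\St_Q^{P(\sigma)}$, and $\sigma$ one at a time.

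For $L_R$: first apply Corollary~\ref{cor:left adjoint and parabolic induction} (valid since $p=0$ in $C$) to obtain
\[
L_R(I(P,\sigma,Q))\simeq I^{R}_{P(\sigma)\cap R}\bigl(L^{P(\sigma)}_{P(\sigma)\cap R}(\St_Q^{P(\sigma)}(\sigma))\bigr).
\]
Next, a relative form of Proposition~\ref{prop:left adj of St}, applied inside $P(\sigma)$ with the parabolic $P(\sigma)\cap R\subset P(\sigma)$ in place of $R\subset G$, gives
\[
L^{P(\sigma)}_{P(\sigma)\cap R}(\St_Q^{P(\sigma)}(\sigma))=
\begin{cases}
\St^{P(\sigma)\cap R}_{Q\cap R}(L^{P}_{P\cap R}(\sigma)) & (\Delta_Q\cup\Delta_{P(\sigma)\cap R}=\Delta_{P(\sigma)}),\\
0 & (\text{otherwise}).
\end{cases}
\]
The condition $\Delta_Q\cup\Delta_{P(\sigma)\cap R}=\Delta_{P(\sigma)}$ is equivalent to $\Delta(\sigma)\subset\Delta_Q\cup\Delta_R$. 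Finally, since $\sigma$ is simple supersingular on $\mathcal{H}_P$, Proposition~\ref{prop:vanishing of adjoint+supsersingular} applied inside the Levi part of $P$ shows $L^{P}_{P\cap R}(\sigma)=0$ unless $P\cap R=P$, i.e.\ $P\subset R$, in which case the functor is the identity and the value is $\sigma$. Combining, the nonvanishing conditions are $P\subset R$ and $\Delta(\sigma)\subset\Delta_Q\cup\Delta_R$, with value $I^{R}_{P(\sigma)\cap R}(\St^{P(\sigma)\cap R}_{Q\cap R}(\sigma))$. Since $P(\sigma)\cap R=P_R(\sigma)$ (because $\Delta_R(\sigma)=\Delta(\sigma)\cap\Delta_R$ from the definition of $\Delta(-)$), this is exactly $I_R(P,\sigma,Q\cap R)$.

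For $R_R$: Proposition~\ref{prop:parabolic induction and its adjoint} applies since $\bigcap_n p^n M=0$ is automatic in characteristic $p$. It yields
\[
R_R(I(P,\sigma,Q))\simeq I^{R}_{P(\sigma)\cap R}\bigl(R^{P(\sigma)}_{P(\sigma)\cap R}(\St_Q^{P(\sigma)}(\sigma))\bigr).
\]
The relative form of Proposition~\ref{prop:right adj of St} inside $P(\sigma)$ identifies the inner term with $\St^{P(\sigma)\cap R}_{Q\cap R}(R^{P}_{P\cap R}(\sigma))$ when $\Delta_Q=\Delta_{Q\cap R}\cup\Delta_P$, and with zero otherwise. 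Since $P\subset Q$, the condition $\Delta_Q=\Delta_{Q\cap R}\cup\Delta_P$ together with the nonvanishing of $R^{P}_{P\cap R}(\sigma)$ (which, by Proposition~\ref{prop:vanishing of adjoint+supsersingular}, forces $P\subset R$) collapses to $Q\subset R$; in that case $Q\cap R=Q$ and $R^{P}_{P\cap R}(\sigma)=\sigma$, giving $I^{R}_{P(\sigma)\cap R}(\St^{P(\sigma)\cap R}_{Q}(\sigma))=I_R(P,\sigma,Q)$.

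The main obstacle is the justification of the relative forms of Propositions~\ref{prop:left adj of St} and~\ref{prop:right adj of St}, where the ambient group $G$ is replaced by the Levi part of $P(\sigma)$. The proofs of those two propositions are purely formal manipulations of the Steinberg resolution $\bigoplus_{Q_1\supsetneq Q}I_{Q_1}(e_{Q_1}(\sigma))\to I_Q(e_Q(\sigma))\to \St_Q(\sigma)\to 0$, combined with Corollary~\ref{cor:left adjoint and parabolic induction}, Proposition~\ref{prop:parabolic induction and its adjoint}, Lemma~\ref{lem:left adjoint of extension}, Lemma~\ref{lem:extension and right adjoint}, and Lemma~\ref{lem:R preserves the exactness, definition of Steinberg}; each of these input statements goes through verbatim when $G$ is replaced by the Levi part of $P(\sigma)$ (and $\sigma$ has, by assumption, the extension to $\mathcal{H}_{P(\sigma)}$), so the relative analogues follow by the same argument. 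Once these are in hand, the proof reduces to the straightforward combinatorial check of when the nonvanishing conditions survive, as carried out above.
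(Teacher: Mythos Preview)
Your proposal is correct and follows essentially the same route as the paper's own proof: apply Corollary~\ref{cor:left adjoint and parabolic induction} (resp.\ Proposition~\ref{prop:parabolic induction and its adjoint}) to peel off $I_{P(\sigma)}$, then the relative version of Proposition~\ref{prop:left adj of St} (resp.\ Proposition~\ref{prop:right adj of St}) inside $P(\sigma)$, then Proposition~\ref{prop:vanishing of adjoint+supsersingular} for $\sigma$, and finish with the same combinatorial simplifications of the nonvanishing conditions. You are in fact slightly more explicit than the paper in two places: you flag that the Steinberg propositions are being used in their relative form inside $P(\sigma)$, and you justify the identification $P(\sigma)\cap R=P_R(\sigma)$ needed to recognize the answer as $I_R(P,\sigma,Q\cap R)$, both of which the paper leaves implicit.
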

\begin{proof}
By Corollary~\ref{cor:left adjoint and parabolic induction}, we have 
\[
	L_R(I(P,\sigma,Q)) = L_R(I_{P(\sigma)}(\St^{P(\sigma)}_Q(\sigma))) = I_{P(\sigma)\cap R}^R(L^{P(\sigma)}_{P(\sigma)\cap R}(\St^{P(\sigma)}_Q(\sigma)))
\]
If $\Delta_Q\cup (\Delta(\sigma)\cap \Delta_{R}) \ne \Delta(\sigma)$, then it is zero by Proposition~\ref{prop:left adj of St}.
Since $Q\subset P(\sigma)$, we have $\Delta_Q\subset \Delta(\sigma)$.
Hence $\Delta_Q = \Delta(\sigma)\cap \Delta_Q$ and, therefore we have $\Delta_Q\cup (\Delta(\sigma)\cap \Delta_R) = \Delta(\sigma)\cap (\Delta_Q\cup \Delta_R)$.
Hence $\Delta_Q\cup (\Delta(\sigma)\cap \Delta_R) = \Delta(\sigma)$ if and only if $\Delta(\sigma)\subset \Delta_Q\cup \Delta_R$.
Also from Proposition~\ref{prop:left adj of St}, if $\Delta(\sigma)\subset \Delta_Q\cup \Delta_R$, we have 
\[
	L_R(I(P,\sigma,Q))
	=
	I_{P(\sigma)\cap R}^R(\St^{P(\sigma)\cap R}_{Q\cap R}(L_{P\cap R}^P(\sigma))),
\]
here we use $Q\subset P(\sigma)$.
By Proposition~\ref{prop:vanishing of adjoint+supsersingular}, this is zero if $P\cap R \ne P$, namely $P\not\subset R$.
If $P\subset R$, then 
\[
	L_R(I(P,\sigma,Q))
	=
	I_{P(\sigma)\cap R}^R(\St^{P(\sigma)\cap R}_{Q\cap R}(\sigma))
	=
	I(P,\sigma,Q\cap R).
\]

For the functor $R$, by Proposition~\ref{prop:parabolic induction and its adjoint}, we have
\[
R_R(I(P,\sigma,Q)) = R_R(I_{P(\sigma)}(\St_Q^{P(\sigma)}(\sigma))) = I_{P(\sigma)\cap R}^R(R_{P(\sigma)\cap R}^{P(\sigma)}(\St_Q^{P(\sigma)}(\sigma))).
\]
By Proposition~\ref{prop:right adj of St}, this is zero if $\Delta_Q\ne \Delta_{Q\cap R\cap P(\sigma)}\cup \Delta_P$.
Note that we have $Q\cap R\cap P(\sigma) = Q\cap R$ since $Q\subset P(\sigma)$.
If $\Delta_Q = \Delta_{Q\cap R}\cup \Delta_P$, then we have
\[
R_R(I(P,\sigma,Q)) = I_{P(\sigma)\cap R}^R(\St_{Q\cap R}^{P(\sigma)\cap R}(R_{P\cap R}^P(\sigma))).
\]
By Proposition~\ref{prop:vanishing of adjoint+supsersingular}, this is zero if $P\cap R\ne P$.
If $P\cap R = P$, namely $P\subset R$, then we have
\[
R_R(I(P,\sigma,Q)) = I_{P(\sigma)\cap R}^R(\St_{Q\cap R}^{P(\sigma)\cap R}(\sigma)) = I_R(P,\sigma,Q\cap R).
\]
Hence we get
\[
R_R(I(P,\sigma,Q))
=
\begin{cases}
I_R(P,\sigma,Q\cap R) & (P\subset R,\Delta_Q = \Delta_{Q\cap R}\cup \Delta_P),\\
0 & (\text{otherwise}).
\end{cases}
\]
If $P\subset R$, then $\Delta_{Q\cap R}\supset \Delta_P$.
Hence $\Delta_Q = \Delta_{Q\cap R}\cup \Delta_P$ implies $\Delta_Q = \Delta_{Q\cap R}$, namely $Q\subset R$.
On the other hand, if $Q\subset R$ then $P\subset R$ and $\Delta_Q = \Delta_Q\cup \Delta_P = \Delta_{Q\cap R}\cup \Delta_P$.
We get the theorem.
\end{proof}

\end{document}